\newtheorem{lemma}{Lemma}[section]
\newtheorem{theorem}[lemma]{Theorem}
\newtheorem{corollary}[lemma]{Corollary}
\newtheorem*{introthm}{Theorem}
\newtheorem{prop}[lemma]{Proposition}
\newtheorem{thm}[lemma]{Theorem}
\newtheorem{cor}[lemma]{Corollary}
\theoremstyle{definition}
\theoremstyle{remark}
\newcommand{\Hom}{\operatorname{Hom}}
\numberwithin{equation}{section}
\begin{document}

\title{The geometry of arithmetic noncommutative projective lines}
\author{A. Nyman}
\address{Department of Mathematics, 516 High St, Western Washington University, Bellingham, WA 98225-9063}
\email{adam.nyman@wwu.edu}
\keywords{}
\date{\today}
\thanks{2010 {\it Mathematics Subject Classification. } Primary 14A22; Secondary 16S38}

\begin{abstract}
Let $k$ be a perfect field and let $K/k$ be a finite extension of fields.  An arithmetic
noncommutative projective line is a noncommutative space of the
form ${\sf Proj }\mathbb{S}_{K}(V)$, where $V$ be a $k$-central two-sided vector space over $K$ of rank two and $\mathbb{S}_{K}(V)$ is the noncommutative symmetric
algebra generated by $V$ over $K$ defined by M. Van den Bergh \cite{vandenbergh}.   We study the geometry of these spaces.  More precisely, we prove they are integral, we classify vector bundles over them, we classify them up to isomorphism, and we classify isomorphisms between them.  Using the classification of isomorphisms, we compute the automorphism group of an arithmetic noncommutative projective line.
\end{abstract}

\maketitle

\tableofcontents

\pagenumbering{arabic}

\section{Introduction}
Throughout this paper, $k$ will denote a perfect field and $K/k$ will be a finite extension of fields.  The purpose of this paper is to study the geometry of noncommutative spaces (i.e. Grothendieck categories) of the form ${\sf Proj }\mathbb{S}_{K}(V)$, where $V$ is a $k$-central two-sided vector space over $K$ of rank two, i.e. a $k$-central $K-K$-bimodule which is two-dimensional as a vector space via the right and left actions of $K$ on $V$, $\mathbb{S}_{K}(V)$ is the noncommutative symmetric algebra defined by Van den Bergh \cite{vandenbergh}, and ${\sf Proj }\mathbb{S}_{K}(V)$ denotes the quotient of the category of graded right $\mathbb{S}_{K}(V)$-modules modulo the full subcategory of direct limits of right bounded modules.  We denote the noncommutative space by $\mathbb{P}_{K}(V)$, and we refer to this space as an {\it arithmetic} noncommutative projective line since, as we shall show, its geometry is intimately connected to data associated with $K/k$.  In the sequel, we shall drop the term ``arithmetic".  We hope that noncommutative projective lines have some utility as basic examples arising in the relatively new field of noncommutative arithmetic geometry.

Our primary motivation for the study of noncommutative projective lines is Artin's conjecture \cite{artin}, which states that the division ring of fractions of a noncommutative surface not finite over its center is the function field of a noncommutative $\mathbb{P}^{1}$-bundle over a smooth commutative curve (see \cite{vandenbergh} for a definition of the latter space).  The investigations in this paper do not directly address this conjecture, but provide what physicists might call a toy model for the geometry of noncommutative $\mathbb{P}^{1}$-bundles over smooth commutative curves, since a noncommutative projective line is just a noncommutative $\mathbb{P}^{1}$-bundle over the point $\mbox{Spec }K$.  In Theorem \ref{thm.classify}, stated below, we obtain isomorphism invariants of noncommutative projective lines.  It is not yet known whether these are also birational invariants.  If Artin's conjecture is true, then noncommutative surfaces infinite over their center will share birational invariants with noncommutative $\mathbb{P}^{1}$-bundles over smooth commutative curves, and we hope our work suggests the form these invariants take.

In order to justify the name `noncommutative projective line', these spaces should have geometric properties in common with the commutative projective line, and they do: it is known that they are noetherian \cite[Section 6.3]{vandenbergh} $\mbox{Ext }$-finite \cite[Corollary 3.6]{finite} (homological) dimension one categories \cite[Theorem 15.4]{chan} having a Serre functor \cite[Theorem 16.4]{chan}.  In this paper, we prove the following

\begin{introthm}
Noncommutative projective lines are integral in the sense of \cite{Smith} (Corollary \ref{cor.integral}).  The line bundles over a noncommutative projective line are indexed by $\mathbb{Z}$ and every vector bundle over a noncommutative projective line is a direct sum of line bundles (Corollary \ref{cor.grothendieck}).
\end{introthm}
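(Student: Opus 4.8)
The plan is to prove the two assertions in order, establishing integrality (Corollary~\ref{cor.integral}) first, since it is what supplies $\mathbb{P}_{K}(V)$ with a ``generic point'' and hence with an additive rank function $\mathrm{rk}\colon\mathrm{coh}\,\mathbb{P}_{K}(V)\to\mathbb{Z}_{\ge 0}$ (the length, over the function division ring $D$, of the localization of a coherent object at the generic point); this rank function, together with the Ext-finiteness, homological dimension one and Serre duality imported from \cite{finite} and \cite{chan}, is precisely the ``smooth projective curve'' infrastructure needed to run a Grothendieck-type splitting argument for Corollary~\ref{cor.grothendieck}.

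For integrality I would invoke Smith's criterion from \cite{Smith}: it suffices to show that the structure object $\mathcal{O}$ of $\mathbb{P}_{K}(V)$ is a domain in the appropriate sense, i.e.\ that every nonzero endomorphism of every nonzero subobject of $\mathcal{O}$ is a monomorphism. Using that $\mathbb{P}_{K}(V)$ is noetherian and hereditary, I would reduce this to the assertion that $\mathcal{O}$ is torsion-free (has no nonzero finite-length subobject), which in turn follows once the $\mathbb{Z}$-algebra $\mathbb{S}_{K}(V)$ has no homogeneous zero-divisors. To obtain the latter I would use Van den Bergh's explicit construction of $\mathbb{S}_{K}(V)=\bigoplus_{i\le j}\mathbb{S}_{ij}$, with $\mathbb{S}_{ii}=K$, $\mathbb{S}_{i,i+1}$ a twist of $V$, and one nondegenerate quadratic relation in each degree, so that each $\mathbb{S}_{ij}$ has the expected $K$-$K$-bidimension; then faithfully flat base change along $k\to\bar k$ (here perfectness of $k$ is used, so that $K\otimes_{k}\bar k$ is a product of copies of $\bar k$ and $\mathbb{S}_{K}(V)\otimes_{k}\bar k$ becomes an algebra of twisted-homogeneous-coordinate-ring type for $\mathbb{P}^{1}$) lets absence of homogeneous zero-divisors descend, provided one checks that the $k$-central, rank-two hypotheses on $V$ force enough ``connectedness'' that no product decomposition obstructs the descent.

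For the classification of bundles, let $\mathcal{O}(n)$, $n\in\mathbb{Z}$, denote the line bundles produced by the shift functor. From the graded pieces of $\mathbb{S}_{K}(V)$ one reads off $\mathrm{Hom}(\mathcal{O}(n),\mathcal{O}(m))=0$ for $m<n$ and nonzero for $m\ge n$, so the $\mathcal{O}(n)$ are pairwise non-isomorphic, and (via Serre duality of \cite[Theorem~16.4]{chan} together with the vanishing of negative cohomology of line bundles, the Serre functor being a shift followed by tensoring with the canonical bimodule $\omega$, itself $\mathcal{O}(-2)$ twisted by an invertible $K$-$K$-bimodule attached to $K/k$) one gets the sharp vanishing $\mathrm{Ext}^{1}(\mathcal{O}(m),\mathcal{O}(n))=0$ for all $m\le n+1$. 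Now let $E$ be a vector bundle, i.e.\ a torsion-free coherent object; since $E$ embeds in a finite direct sum of copies of some $\mathcal{O}(b)$ (a consequence of ampleness of $\{\mathcal{O}(n)\}$ and smoothness of $\mathbb{P}_{K}(V)$), $\mathrm{Hom}(\mathcal{O}(n),E)$ vanishes for $n\gg 0$ while being nonzero for $n\ll 0$; choose $n$ maximal with $\mathrm{Hom}(\mathcal{O}(n),E)\ne 0$ and a nonzero $\varphi\colon\mathcal{O}(n)\to E$. Torsion-freeness forces $\varphi$ to be a monomorphism with torsion-free cokernel $F$ of rank $\mathrm{rk}(E)-1$ (a torsion subobject of $F$ would, after saturation, give $\mathcal{O}(n)\subsetneq\mathcal{O}(m)\hookrightarrow E$ with $m>n$, contradicting maximality). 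If $\mathrm{rk}(E)=1$ then $F$ is both torsion and torsion-free, hence zero, so $E\cong\mathcal{O}(n)$ --- establishing that line bundles are exactly the $\mathcal{O}(n)$; the one remaining point here, that a torsion-free extension of a simple object by a line bundle is the next line bundle up, is handled by a direct computation with $\mathbb{S}_{K}(V)$. If $\mathrm{rk}(E)\ge 2$, by induction $F\cong\bigoplus_{i}\mathcal{O}(m_{i})$; applying $\mathrm{Hom}(\mathcal{O}(n+1),-)$ to $0\to\mathcal{O}(n)\to E\to F\to 0$, using $\mathrm{Ext}^{1}(\mathcal{O}(n+1),\mathcal{O}(n))=0$ and maximality of $n$, gives $\mathrm{Hom}(\mathcal{O}(n+1),F)=0$, hence every $m_{i}\le n$; then $\mathrm{Ext}^{1}(F,\mathcal{O}(n))=\bigoplus_{i}\mathrm{Ext}^{1}(\mathcal{O}(m_{i}),\mathcal{O}(n))=0$, the sequence splits, and $E\cong\mathcal{O}(n)\oplus F$.

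The main obstacle is the arithmetic bookkeeping that, on $\mathbb{P}^{1}$, is carried by ``degree''. Three points need real care: (i) computing the canonical bimodule $\omega$ precisely --- it is $\mathcal{O}(-2)$ tensored with a specific invertible $K$-$K$-bimodule built from $K/k$ --- sharply enough that Serre duality gives $\mathrm{Ext}^{1}(\mathcal{O}(m),\mathcal{O}(n))=0$ for exactly $m\le n+1$, since the whole induction hinges on the case $m=n+1$; (ii) proving the graded-domain property of $\mathbb{S}_{K}(V)$ when $V$ is not split, which is where perfectness of $k$ and the $k$-centrality and rank-two hypotheses on $V$ are genuinely used; and (iii) assembling the ``smooth projective curve'' toolkit over $\mathbb{P}_{K}(V)$ --- the additive rank function, ampleness of $\{\mathcal{O}(n)\}$, the embedding of torsion-free objects into finite sums of $\mathcal{O}(b)$, and the identification of torsion objects with finite-length objects --- so as to license the maximality choices. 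Once these are in place, the splitting induction itself is routine.
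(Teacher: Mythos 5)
Your plan for integrality has a genuine gap: the base change along $k\to\bar k$ cannot be made to work, and the issue is not merely a bookkeeping detail. Since $k$ is perfect and $K/k$ is finite, $K\otimes_k\bar k$ is a product of $[K:k]$ copies of $\bar k$, so the degree-zero components of $\mathbb{S}_K(V)\otimes_k\bar k$ are $\bar k^{[K:k]}$, which already contains orthogonal idempotents. Thus the base-changed $\mathbb{Z}$-algebra visibly \emph{does} have homogeneous zero divisors whenever $K\neq k$, and there is nothing to descend from: you would be trying to descend a domain property from something which is not a domain. The ``connectedness'' caveat you flag is not a checkable side condition; the product decomposition is unavoidable. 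The paper sidesteps this entirely: it first constructs a genuine $\mathbb{Z}$-graded ring $C$ from the even Veronese of $\mathbb{S}(V)$ using the $2$-periodicity $V\cong V^{**}$ (Lemma~\ref{lemma.periodic}, Corollary~\ref{cor.periodic}), and then proves in Theorem~\ref{thm.domain} that $\mathbb{S}(V)$ (hence $C$) has no homogeneous zero divisors by a direct homological argument modeled on the proof of Lemma~3.15 of Artin--Tate--Van den Bergh: one introduces a dimension function on $\mathsf{gr}\,\mathbb{S}(V)$ and on graded left modules, uses minimal free resolutions of length $\le 2$ (Lemma~\ref{lemma.minimal}, Proposition~\ref{prop.global}, Corollary~\ref{cor.global}), and shows the subobject $N_i\subset e_i A$ of dimension $\le 0$ elements vanishes. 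Integrality then follows from a mild adaptation of Smith's Theorem~4.5 applied to $C$ (whose degree-zero piece $K$ is not central, which the paper checks is harmless).

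For the vector-bundle classification your route is a genuinely different one from the paper's, and it is plausible. You run the classical Grothendieck splitting argument: maximal twist mapping in, torsion-free cokernel, induction on rank, and a splitting supplied by $\operatorname{Ext}^1(\mathcal{O}(m),\mathcal{O}(n))=0$ for $m\le n+1$, with Serre duality (from Chan--Nyman) providing the $\operatorname{Ext}^1$ calculus. The paper instead exploits integrality: it shows a noetherian torsion-free $\mathcal{M}$ embeds in finitely many copies of the big injective $\pi_C Q$ (Smith's Proposition~3.9 plus Lemma~\ref{lemma.sums} on $Q$), hence $\omega_A\mathcal{M}$ embeds in a free noetherian module with cokernel of projective dimension $\le 1$ (Lemma~\ref{lemma.homological}), so $\omega_A\mathcal{M}$ is projective and, by the internal-Tor criterion (Corollary~\ref{cor.free}), free; the torsion summand is then split off by exactly the Serre-duality calculation you also invoke. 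What the paper's route buys is that it never needs the sharp $\operatorname{Ext}^1$ vanishing range, only the single fact that $\operatorname{Ext}^1(\pi e_{-2i}A,\mathcal{T})\cong\operatorname{Hom}(\mathcal{T},\pi e_{-2i+2}A)=0$ for $\mathcal{T}$ torsion; what your route buys is independence from the function-field/big-injective machinery, at the price of needing the ampleness/embedding and the precise Serre-functor computation that you correctly flag as the delicate arithmetic step. Either way, you would still need a correct proof of integrality first, and the base-change argument you propose for that does not close.
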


Using our classification of vector bundles, we classify noncommutative projective lines up to $k$-linear equivalence.  To describe the classification, we introduce some notation.  We let $V^{*}$ denote the right dual of a two-sided vector space $V$ (see Section \ref{subsection.twosidedvectorspaces}).  If $\sigma \in \operatorname{Gal}(K/k)$, we let $K_{\sigma}$ denote the two-sided vector space whose left $K$ action is ordinary multiplication in $K$ and whose right $K$ action is defined as $x \cdot a := x\sigma(a)$.  We prove the following (Theorem \ref{thm.twosidedisom}):

\begin{introthm}
There is a $k$-linear equivalence $\mathbb{P}_{K}(V) \rightarrow \mathbb{P}_{K}(W)$ if and only if there exists $\delta, \epsilon \in \operatorname{Gal}(K/k)$ such that either
$$
V \cong K_{\delta} \otimes_{K} W \otimes_{K} K_{\epsilon}
$$
or
$$
V \cong K_{\delta} \otimes_{K} W^{*} \otimes_{K} K_{\epsilon}.
$$
\end{introthm}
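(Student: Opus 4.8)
The plan is to prove the two implications separately, using a pair of twisting constructions for the ``if'' direction and the classification of vector bundles (Corollary \ref{cor.grothendieck}) for the ``only if'' direction. For sufficiency I would first show that for every $\delta\in\operatorname{Gal}(K/k)$ there are $k$-linear equivalences $\mathbb{P}_{K}(K_{\delta}\otimes_{K}V)\simeq\mathbb{P}_{K}(V)$ and $\mathbb{P}_{K}(V\otimes_{K}K_{\delta})\simeq\mathbb{P}_{K}(V)$. The mechanism is that a one-sided twist interacts with dualization by an inverse one-sided twist on the other side, e.g. $(K_{\delta}\otimes_{K}V)^{*}\cong V^{*}\otimes_{K}K_{\delta^{-1}}$ (and similarly for left duals), so the consecutive bimodules of $\mathbb{S}_{K}(K_{\delta}\otimes_{K}V)$ are obtained from those of $\mathbb{S}_{K}(V)$ by tensoring on each side with an alternating sequence of rank-one two-sided vector spaces $K_{\sigma}$; the defining relations of Van den Bergh's noncommutative symmetric algebra are transported along these isomorphisms, so $\mathbb{S}_{K}(K_{\delta}\otimes_{K}V)$ is a twist of the $\mathbb{Z}$-algebra $\mathbb{S}_{K}(V)$ by a sequence of invertible two-sided vector spaces, and such a twist induces an equivalence of graded module categories carrying direct limits of right bounded modules to direct limits of right bounded modules, hence an equivalence on $\mathsf{Proj}$. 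Secondly, $\mathbb{P}_{K}(V^{*})\simeq\mathbb{P}_{K}(V)$, because by construction the consecutive bimodules of $\mathbb{S}_{K}$ are the iterated duals of the generating bimodule, so (using biduality ${}^{*}(V^{*})\cong V$ for the negative indices) $\mathbb{S}_{K}(V^{*})$ is the shift $\mathbb{S}_{K}(V)[1]$ of the $\mathbb{Z}$-algebra $\mathbb{S}_{K}(V)$, and $\mathsf{Proj}$ of a $\mathbb{Z}$-algebra is invariant under shift. Combining: $V\cong K_{\delta}\otimes_{K}W\otimes_{K}K_{\epsilon}$ gives $\mathbb{P}_{K}(V)\simeq\mathbb{P}_{K}(W)$, and $V\cong K_{\delta}\otimes_{K}W^{*}\otimes_{K}K_{\epsilon}$ gives $\mathbb{P}_{K}(V)\simeq\mathbb{P}_{K}(W^{*})\simeq\mathbb{P}_{K}(W)$.

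For necessity, let $F\colon\mathbb{P}_{K}(V)\to\mathbb{P}_{K}(W)$ be a $k$-linear equivalence. Since these categories are hereditary and $\operatorname{Hom}$-finite, the finite-length objects form a Serre subcategory which $F$ preserves, and a vector bundle is exactly an object with no nonzero finite-length subobject; hence $F$ carries vector bundles to vector bundles, and as an equivalence it preserves indecomposability. By Corollary \ref{cor.grothendieck}, on either side the indecomposable vector bundles are precisely the pairwise nonisomorphic line bundles $\mathcal{O}(n)$, $n\in\mathbb{Z}$, so $F$ induces a bijection $\phi\colon\mathbb{Z}\to\mathbb{Z}$ with $F(\mathcal{O}_{V}(n))\cong\mathcal{O}_{W}(\phi(n))$. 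The relation ``$\operatorname{Hom}(\mathcal{O}(m),\mathcal{O}(n))\neq 0$ iff $m\le n$'' is intrinsic, and $F$ is covariant, so $\phi$ is an order-automorphism of $(\mathbb{Z},\le)$; such a bijection is a translation, so $\phi(n)=n+c$ for a fixed $c\in\mathbb{Z}$.

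It remains to read off $V$. For each $n$ one has canonically $\operatorname{End}(\mathcal{O}_{V}(n))\cong K$ (the relevant vertex ring of the $\mathbb{Z}$-algebra $\mathbb{S}_{K}(V)$), and likewise on the $W$-side, so $F$ induces for each $n$ a $k$-algebra automorphism of $K$, i.e. an element $\delta_{n}\in\operatorname{Gal}(K/k)$. Pre- and post-composition equip $\operatorname{Hom}(\mathcal{O}_{V}(0),\mathcal{O}_{V}(1))$ with its two-sided vector space structure, and under the correspondence between adjacent $\operatorname{Hom}$-spaces and iterated duals of the generating bimodule this space is $V$ (up to a left dual, depending on the normalization of $\mathcal{O}(0)$). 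The induced $k$-linear isomorphism
$$V\;\cong\;\operatorname{Hom}(\mathcal{O}_{V}(0),\mathcal{O}_{V}(1))\;\xrightarrow{\sim}\;\operatorname{Hom}(\mathcal{O}_{W}(c),\mathcal{O}_{W}(c+1))$$
is semilinear for $\delta_{1}$ on the left and $\delta_{0}$ on the right, and the target is the $c$-th iterated dual of $W$; unwinding the definition of $K_{\sigma}$, this reads $V\cong K_{\delta}\otimes_{K}(c\text{-th iterated dual of }W)\otimes_{K}K_{\epsilon}$ for suitable $\delta,\epsilon\in\operatorname{Gal}(K/k)$. Finally, iterated duals of a $k$-central rank-two two-sided vector space are $2$-periodic up to twists, i.e. $W^{**}\cong K_{\alpha}\otimes_{K}W\otimes_{K}K_{\beta}$ for some $\alpha,\beta$; hence the $c$-th iterated dual of $W$ is a twist of $W$ if $c$ is even and a twist of $W^{*}$ if $c$ is odd, and absorbing the extra twists into $\delta,\epsilon$ gives $V\cong K_{\delta}\otimes_{K}W\otimes_{K}K_{\epsilon}$ or $V\cong K_{\delta}\otimes_{K}W^{*}\otimes_{K}K_{\epsilon}$.

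The main obstacle, I expect, is not the categorical bookkeeping but the two bimodule-theoretic inputs: the precise compatibility of one-sided twists with dualization inside $\mathbb{S}_{K}$, so that a twist of $V$ produces a genuine $\mathbb{Z}$-algebra twist of $\mathbb{S}_{K}(V)$, and the $2$-periodicity of iterated duals of a $k$-central rank-two two-sided vector space up to Galois twists. Both must be extracted from the structure theory of two-sided vector spaces and of the noncommutative symmetric algebra developed earlier; granted these, the identification of the semilinear $\operatorname{Hom}$-space isomorphism with a twist of $W$ or $W^{*}$ is a matter of unwinding definitions.
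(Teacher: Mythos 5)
Your overall strategy coincides with the paper's (Theorem \ref{thm.twosidedisom}, Corollary \ref{cor.equiv}, Proposition \ref{prop.izuru}): for ``if'' use the twist $T_{\delta,\epsilon}$ coming from the compatibility of one-sided $K_\sigma$-twists with dualization together with the shift $\mathbb{S}(V)(1)\cong\mathbb{S}(V^*)$; for ``only if'' use Corollary \ref{cor.grothendieck} to show an equivalence permutes the line bundles $\mathcal{P}_i$ by a translation and then extract the twist data from the endomorphism $\mathbb{Z}$-algebra $\mathcal{A}$.

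The one genuine gap is your justification that $F$ preserves vector bundles. You assert that ``a vector bundle is exactly an object with no nonzero finite-length subobject'' and then deduce preservation of vector bundles from the categorical invariance of finite length. But the paper's definition of vector bundle is \emph{noetherian torsion-free}, where torsion is defined by $\operatorname{Hom}(\mathcal{M},\mathcal{E})=0$ with $\mathcal{E}$ the big injective of \cite{Smith}. The identification of this torsion theory with the finite-length subcategory is not established in the paper and is not automatic: a priori there could be noetherian torsion objects of positive length that are not finite-length, or nonzero finite-length subobjects that are not torsion. The paper avoids this entirely by proving $\mathbb{P}(V)$ is integral (Corollary \ref{cor.integral}) and then invoking Lemma \ref{lemma.preserves}, which follows from \cite[Corollary 3.7]{Smith} and says directly that any equivalence of integral locally noetherian spaces preserves torsion-free rank $1$ objects. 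Unless you prove the torsion-equals-finite-length claim (which would require work with the dimension function of Section \ref{section.groth}), you should route through Smith's rank invariant as the paper does. A minor point: you only need $W^{**}\cong K_\alpha\otimes W\otimes K_\beta$ up to Galois twist, but in fact $W\cong W^{**}$ on the nose by \cite[Theorem 3.13]{hart}, which the paper uses and which simplifies your periodicity step.
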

The ``if" direction was proven, in a much more general context, in \cite{izuru}.  This result can be viewed as a noncommutative analogue of \cite[Proposition 2.2, p. 370]{harts}.

In \cite{papp}, simple two-sided vector spaces are classified.  Since this classification will be invoked in what follows, we recall it now.  Let $\overline{K}$ be an algebraic closure of $K$, let $\operatorname{Emb}(K)$ denote the set of $k$-algebra maps $K \rightarrow \overline{K}$, and let $G=\operatorname{Gal}(\overline{K}/K)$.  Now, $G$ acts on $\operatorname{Emb}(K)$ by left composition. Given $\lambda\in \operatorname{Emb}(K)$, we denote the orbit of $\lambda$ under this action by $\lambda^G$.  We denote the set of finite orbits of $\operatorname{Emb}(K)$ under the action
of $G$ by $\Lambda(K)$.

\begin{thm} \label{thm.papp} \cite{papp} There is a one-to-one correspondence
between $\Lambda(K)$ and isomorphism classes of simple left (and hence right) finite dimensional
two-sided vector spaces.
\end{thm}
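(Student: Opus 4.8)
The plan is to pass from bimodules to commutative algebra. First I would identify a finite-dimensional $k$-central $K$-$K$-bimodule with a finite-dimensional module over the commutative ring $R := K \otimes_{k} K$: the $k$-centrality is exactly what makes the action of $a \otimes b$ by $v \mapsto avb$ well defined over $\otimes_{k}$, and $K^{\mathrm{op}} = K$ since $K$ is commutative, so a $K$-$K$-bimodule is the same thing as a left $R$-module. Under this identification sub-bimodules correspond to $R$-submodules, so simple two-sided vector spaces correspond to simple $R$-modules. Since any simple $R$-module is cyclic and $R$ is finite-dimensional over $k$, every simple $R$-module is automatically finite-dimensional over $k$, hence finite-dimensional via both the left and the right action of $K$; this already accounts for the parenthetical ``(and hence right)'' and shows that, in our setting, the finite-dimensionality hypothesis is automatic for simples.

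Next I would describe $R$ explicitly. Because $k$ is perfect, $K/k$ is finite separable, so the primitive element theorem gives $K \cong k[t]/(f)$ with $f$ separable; hence $R \cong K[t]/(f) \cong \prod_{i=1}^{r} K[t]/(g_{i})$, where $g_{1}, \dots, g_{r}$ are the distinct monic irreducible factors of $f$ over $K$. Thus $R$ is a finite product of finite separable field extensions $L_{i} := K[t]/(g_{i})$ of $K$. It follows at once that the isomorphism classes of simple $R$-modules are in bijection with the maximal ideals of $R$, the module attached to a maximal ideal $\mathfrak{m}$ being the residue field $R/\mathfrak{m}$, of left and right dimension $[R/\mathfrak{m} : K]$.

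It remains to match the maximal ideals of $R$ with $\Lambda(K)$. Fix an algebraic closure $K \subseteq \overline{K}$, and let $\iota\colon K \hookrightarrow \overline{K}$ be the inclusion. A maximal ideal of $R$ is the kernel of some $k$-algebra homomorphism $R \to \overline{K}$, and such homomorphisms are exactly the maps $a \otimes b \mapsto \mu(a)\lambda(b)$ with $\mu, \lambda \in \operatorname{Emb}(K)$; by the isomorphism-extension theorem two of them have the same kernel if and only if they lie in a single orbit of $\operatorname{Gal}(\overline{K}/k)$ acting by post-composition on both coordinates. Since $\operatorname{Gal}(\overline{K}/k)$ acts transitively on $\operatorname{Emb}(K)$ with stabilizer $G = \operatorname{Gal}(\overline{K}/K)$ at $\iota$, every such orbit contains a representative whose first coordinate is $\iota$, and such a representative is unique up to replacing its second coordinate by a $G$-translate; this yields a bijection between the maximal ideals of $R$ and $\operatorname{Emb}(K)/G$. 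Because $K/k$ is finite, $\operatorname{Emb}(K)$ is finite, so every $G$-orbit is finite and $\operatorname{Emb}(K)/G$ is precisely $\Lambda(K)$. Composing the bijections proves the theorem; tracing them through, the simple two-sided vector space attached to an orbit $\lambda^{G}$ is the subfield $K\lambda(K) \subseteq \overline{K}$ with ordinary multiplication on the left and right action $x \cdot a = x\lambda(a)$ — which for $K/k$ Galois recovers the bimodules $K_{\sigma}$. The step demanding genuine care — and the main obstacle — is this last identification: one must verify that ``same kernel'' is exactly the $\operatorname{Gal}(\overline{K}/k)$-orbit relation, and that normalizing the first coordinate to $\iota$ leaves precisely the residual $G$-ambiguity, nothing finer or coarser.
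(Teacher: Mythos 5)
The paper never proves this theorem: the bijection is imported from \cite{papp}, and the explicit description of the correspondence (sending $\lambda^{G}$ to the class of $V(\lambda)$) is attributed separately to \cite[Proposition 2.3]{hart}. There is therefore no in-paper argument to measure yours against; the natural comparison is with the cited source.

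Your proof is correct under the paper's standing hypotheses. Since $k$ is perfect and $K/k$ is finite, $K/k$ is separable and $R=K\otimes_{k}K$ is a finite product of finite separable extensions of $K$, so simple $R$-modules are exactly the residue fields and these are automatically finite dimensional (cyclic over a finite-dimensional algebra); that handles the parenthetical. The identification of maximal ideals of $R$ with $\operatorname{Gal}(\overline{K}/k)$-orbits of pairs of embeddings, and the observation that normalizing the first coordinate to the inclusion leaves precisely a residual $G$-ambiguity, are both exactly right, and tracing them through correctly recovers $V(\lambda)$.

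The route does differ from \cite{papp}. As the proof of Lemma~\ref{lemma.autocomp} in this paper makes visible, the argument there fixes a presentation $V\cong K^{n}_{\phi}$ and attaches to a simple $V$ the $G$-orbit of a common eigenvalue in $\overline{K}$ of the commuting family $\phi(K)\subseteq M_{n}(K)$. That eigenvector viewpoint is what makes twist computations like Lemma~\ref{lemma.autocomp} immediate, and it does not presuppose a decomposition of $K\otimes_{k}K$; it is also why the statement carries the hedges ``finite orbits'' and ``finite dimensional,'' since \cite{papp} allows $K/k$ to be an arbitrary field extension, where neither finiteness is automatic. Your structural argument via the semisimple commutative ring $K\otimes_{k}K$ is cleaner for the finite separable case, but it leans essentially on finiteness of $K/k$ (primitive element, transitivity of $\operatorname{Gal}(\overline{K}/k)$ on $\operatorname{Emb}(K)$, $R$ a finite product of fields), so it is a genuine specialization of the cited result rather than a new proof of it in full generality.
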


If $\lambda \in \operatorname{Emb}(K)$, we may construct from it a canonical two-sided vector space, $V(\lambda)$, as follows: we let $K(\lambda)$ denote the composite of $K$ and $\lambda(K)$.  We let the underlying set of $V(\lambda)$ be $K(\lambda)$ and we define the $K \otimes_{k} K$-structure to be that induced by the formula $a \cdot v \cdot b := av\lambda(b)$.
By \cite[Proposition 2.3]{hart}, the correspondence in Theorem \ref{thm.papp} sends $\lambda^{G}$ to the class of $V(\lambda)$.  The canonical form $V(\lambda)$ will be employed often in the sequel.

In \cite[Theorem 1.3]{P}, David Patrick describes the form of two-sided vector spaces of rank 2.  Using Patrick's result and Theorem \ref{thm.papp} allows us to sharpen our classification of noncommutative projective lines as follows (Theorem \ref{thm.arithclass}):

\begin{thm} \label{thm.classify}
Suppose $\operatorname{char }k \neq 2$ and let $V_{i}$ be a rank 2 two-sided vector space for $i=1,2$.  There is a $k$-linear equivalence $\mathbb{P}_{K}(V_{1}) \rightarrow \mathbb{P}_{K}(V_{2})$ if and only if
\begin{enumerate}
\item{} there exists $\sigma_{i} \in
\operatorname{Gal}(K/k)$ such that $V_{i} \cong K_{\sigma_{i}} \oplus K_{\sigma_{i}}$.  In this case, $\mathbb{P}(V_{i})$ is equivalent to the commutative projective line over $K$.

\item{} There exists $\sigma_{i}, \tau_{i}
\in \operatorname{Gal}(K/k)$, with $\sigma_{i} \neq \tau_{i}$, $V_{i} \cong K_{\sigma_{i}} \oplus K_{\tau_{i}}$ and under the (right) action of ${Gal(K/k)}^{2}$ on itself defined by $(\sigma, \tau) \cdot (\delta, \epsilon):= (\delta^{-1} \sigma \epsilon,\delta^{-1} \tau \epsilon)$ the orbit of $(\sigma_{1},\tau_{1})$ contains an element of the set
$$
\{(\sigma_{2},\tau_{2}), (\sigma_{2}^{-1},\tau_{2}^{-1}), (\tau_{2},\sigma_{2}), (\tau_{2}^{-1}, \sigma_{2}^{-1})\}.
$$

\item{}  $V_{i} \cong V(\lambda_{i})$, and under the action of ${Gal(K/k)}^{2}$ on $\Lambda(K)$ defined by $\lambda^{G} \cdot (\delta, \epsilon):= (\overline{\delta}^{-1} \lambda \epsilon)^{G}$, the orbit of $\lambda_{1}^{G}$ contains either $\lambda_{2}^{G}$ or $\mu_{2}^{G}$ (where $\overline{\delta}$ denotes an extension of $\delta$ to $\overline{K}$, $\overline{\lambda_{2}}$ denotes an extension of $\lambda_{2}$ to $\overline{K}$ and $\mu_{2}:=(\overline{\lambda_{2}})^{-1}|_{K}$).
\end{enumerate}
\end{thm}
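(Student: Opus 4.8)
The plan is to feed the classification of rank $2$ two-sided vector spaces into Theorem \ref{thm.twosidedisom}. Since $k$ is perfect, $K/k$ is separable, so $K \otimes_{k} K$ is a finite product of fields; hence every finite-dimensional two-sided vector space is semisimple and its decomposition into simple two-sided vector spaces is unique. Combining this with Patrick's theorem \cite[Theorem 1.3]{P} (which uses $\operatorname{char} k \neq 2$) and Papp's theorem (Theorem \ref{thm.papp}), a rank $2$ two-sided vector space over $K$ is isomorphic to exactly one of three mutually exclusive types: (a) $K_{\sigma} \oplus K_{\sigma}$ for some $\sigma \in \operatorname{Gal}(K/k)$; (b) $K_{\sigma} \oplus K_{\tau}$ with $\sigma \neq \tau$ in $\operatorname{Gal}(K/k)$; or (c) a simple $V(\lambda)$ with $[K(\lambda):K] = 2$ (the rank $1$ two-sided vector spaces being exactly the $K_{\sigma}$). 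So the task is to transcribe the criterion of Theorem \ref{thm.twosidedisom} --- namely that $\mathbb{P}_{K}(V_{1}) \simeq \mathbb{P}_{K}(V_{2})$ if and only if $V_{1} \cong K_{\delta} \otimes_{K} V_{2} \otimes_{K} K_{\epsilon}$ or $V_{1} \cong K_{\delta} \otimes_{K} V_{2}^{*} \otimes_{K} K_{\epsilon}$ for some $\delta, \epsilon \in \operatorname{Gal}(K/k)$ --- into combinatorial data within each type.

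Next I would record the basic computations for rank $1$: $K_{\delta} \otimes_{K} K_{\sigma} \cong K_{\delta \circ \sigma}$ and $K_{\sigma}^{*} \cong K_{\sigma^{-1}}$, so that $K_{\delta} \otimes_{K} K_{\sigma} \otimes_{K} K_{\epsilon} \cong K_{\delta \sigma \epsilon}$. Since tensoring by the (invertible) bimodules $K_{\delta}, K_{\epsilon}$ and taking right duals are additive exact functors preserving $K$-dimension, simplicity, and direct sums (see Section \ref{subsection.twosidedvectorspaces}), they carry a rank $2$ two-sided vector space to one of the same type, and in type (b) they preserve the distinctness of the two summands (as $\delta\sigma\epsilon = \delta\tau\epsilon$ forces $\sigma = \tau$). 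Hence if $\mathbb{P}_{K}(V_{1}) \simeq \mathbb{P}_{K}(V_{2})$ then $V_{1}$ and $V_{2}$ are necessarily of the same type, and it remains to decode the criterion type by type; the ``if'' directions will fall out by running the same dictionary backwards through the ``if'' direction of Theorem \ref{thm.twosidedisom}.

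In type (a) one has $V_{i} \cong K_{\sigma_{i}} \otimes_{K} (K \oplus K) \otimes_{K} K$, so Theorem \ref{thm.twosidedisom} gives $\mathbb{P}_{K}(V_{i}) \simeq \mathbb{P}_{K}(K \oplus K)$, which is the commutative projective line over $K$; thus all type (a) lines are equivalent to one another and to $\mathbb{P}^{1}_{K}$, which is exactly case (1). In type (b), using $K_{\delta} \otimes_{K} (K_{\sigma_{2}} \oplus K_{\tau_{2}}) \otimes_{K} K_{\epsilon} \cong K_{\delta\sigma_{2}\epsilon} \oplus K_{\delta\tau_{2}\epsilon}$ and $(K_{\sigma_{2}} \oplus K_{\tau_{2}})^{*} \cong K_{\sigma_{2}^{-1}} \oplus K_{\tau_{2}^{-1}}$, uniqueness of the decomposition into simples turns the criterion into the statement that, for some $\delta, \epsilon$, either $\{\delta\sigma_{2}\epsilon, \delta\tau_{2}\epsilon\} = \{\sigma_{1}, \tau_{1}\}$ or $\{\delta\sigma_{2}^{-1}\epsilon, \delta\tau_{2}^{-1}\epsilon\} = \{\sigma_{1}, \tau_{1}\}$; after reindexing $\delta \leftrightarrow \delta^{-1}$ this says precisely that the orbit of $(\sigma_{1}, \tau_{1})$ under the $\operatorname{Gal}(K/k)^{2}$-action of the theorem meets $\{(\sigma_{2}, \tau_{2}), (\sigma_{2}^{-1}, \tau_{2}^{-1}), (\tau_{2}, \sigma_{2}), (\tau_{2}^{-1}, \sigma_{2}^{-1})\}$, i.e. case (2).

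Type (c) is the technical heart. Here I would prove that the canonical forms are compatible with tensoring and duality: for any extension $\overline{\delta}$ of $\delta$ to an automorphism of $\overline{K}$, a direct computation with the defining bimodule structure $a \cdot v \cdot b = av\lambda(b)$ shows $K_{\delta} \otimes_{K} V(\lambda) \otimes_{K} K_{\epsilon} \cong V(\nu)$ for any $\nu$ in the $G$-orbit $(\overline{\delta}^{-1} \circ \lambda \circ \epsilon)^{G}$, and similarly $V(\lambda)^{*} \cong V(\mu)$ with $\mu = \overline{\lambda}^{-1}|_{K}$; here one uses \cite[Proposition 2.3]{hart} (the Papp correspondence sends $\lambda^{G}$ to $V(\lambda)$) together with Theorem \ref{thm.papp} (so $V(\lambda) \cong V(\lambda')$ iff $\lambda^{G} = \lambda'^{G}$). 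One must also check these assignments are well defined --- independent of the chosen extensions, since two extensions of $\delta$ (resp.\ $\lambda$) differ on the left by an element of $G$ --- and that $\lambda^{G} \cdot (\delta, \epsilon) := (\overline{\delta}^{-1} \lambda \epsilon)^{G}$ is a genuine right action of $\operatorname{Gal}(K/k)^{2}$ on $\Lambda(K)$. Granting this, Theorem \ref{thm.twosidedisom} translates, via Papp's bijection, into: the orbit of $\lambda_{1}^{G}$ under this action contains $\lambda_{2}^{G}$ (the tensor alternative) or $\mu_{2}^{G}$ with $\mu_{2} = \overline{\lambda_{2}}^{-1}|_{K}$ (the dual alternative), which is case (3). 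The main obstacle is precisely this type (c) bookkeeping: correctly computing the effect of $K_{\delta} \otimes_{K} (-) \otimes_{K} K_{\epsilon}$ and $(-)^{*}$ on $V(\lambda)$ expressed through its defining embedding, and verifying well-definedness of the resulting operations on $\Lambda(K)$; the direct-sum cases are comparatively routine because $K_{\sigma}$ and its tensor products and duals are entirely explicit.
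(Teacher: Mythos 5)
Your proposal is correct and essentially mirrors the paper's proof of Theorem \ref{thm.arithclass}: both reduce to the ``if and only if'' criterion of Theorem \ref{thm.twosidedisom} (the paper does this via Corollary \ref{cor.equiv} for the forward direction and Proposition \ref{prop.izuru} for the converse), observe that $K_{\delta} \otimes (-) \otimes K_{\epsilon}$ and $(-)^{*}$ preserve the trichotomy of rank $2$ types, and then translate the isomorphism condition type by type, with the simple case handled by the correspondence of Lemma \ref{lemma.autocomp} / \cite[Proposition 2.3]{hart}. The one genuine (though minor) divergence is how you establish the trichotomy itself: the paper proves Lemma \ref{lemma.twosidedclass} by feeding the vanishing of $(\sigma,\sigma)$-derivations (coming from $\Omega_{K/k}=0$) into Patrick's classification, whereas you instead note that separability makes $K\otimes_{k}K$ a finite product of fields, so every finite-dimensional two-sided vector space is semisimple with unique decomposition, and the rank-$1$ simples are exactly the $K_{\sigma}$; this is a cleaner and more conceptual route to the same decomposition, at the cost of leaving Patrick's more refined normal forms on the shelf.
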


Next, we turn our attention to classifying $k$-linear equivalences $\mathbb{P}_{K}(V) \rightarrow \mathbb{P}_{K}(W)$ up to isomorphism.  To describe our main result in this direction, we must recall the definition of three types of canonical equivalences between noncommutative projective lines, studied in Section \ref{section.canonical}.  If $\delta, \epsilon \in \operatorname{Gal}(K/k)$ and we define
$$
\zeta_{i} = \begin{cases} \delta \mbox{ if $i$ is even} \\ \epsilon \mbox{ if $i$ is odd,} \end{cases}
$$
then twisting by the sequence $\{K_{\zeta_{i}}\}_{i \in \mathbb{Z}}$ in the sense of \cite[Section 3.2]{vandenbergh} induces an equivalence of categories $T_{\delta,\epsilon}:\mathbb{P}_{K}(V) \rightarrow \mathbb{P}_{K}(K_{\delta^{-1}} \otimes_{K} V \otimes_{K} K_{\epsilon})$ (see Section \ref{section.t} for more details).  Next, let $\phi:V \rightarrow W$ denote an isomorphism of two-sided vector spaces.  Then $\phi$ induces an isomorphism $\mathbb{S}_{K}(V) \rightarrow \mathbb{S}_{K}(W)$, which in turn induces an equivalence $\Phi:\mathbb{P}_{K}(V) \rightarrow \mathbb{P}_{K}(W)$.  Finally, shifting graded modules by an integer, $i$, induces an equivalence $[i]:\mathbb{P}_{K}(V) \rightarrow \mathbb{P}_{K}(V^{i*})$, and we have the following classification of equivalences between noncommutative projective lines (Theorem \ref{thm.finalmain}):

\begin{introthm}
Suppose $\operatorname{char }k \neq 2$ and let $F:\mathbb{P}_{K}(V) \rightarrow \mathbb{P}_{K}(W)$ be a $k$-linear equivalence.  Then there exists $\delta, \epsilon \in \mbox{Gal }(K/k)$, an isomorphism $\phi:K_{\delta}^{-1} \otimes_{K} V \otimes_{K} K_{\epsilon} \rightarrow W^{i^*}$ inducing an equivalence $\Phi: \mathbb{P}_{K}(K_{\delta^{-1}} \otimes_{K} V \otimes_{K} K_{\epsilon}) \rightarrow \mathbb{P}_{K}(W^{i*})$ and an integer $i$ such that
$$
F \cong [-i] \circ \Phi  \circ T_{\delta, \epsilon}.
$$
Furthermore, $\delta$, $\epsilon$ and $i$ are unique up to natural equivalence, while $\Phi$ is naturally equivalent to $\Phi'$ if and only if there exist nonzero $a, b \in K$ such that $\phi' \phi^{-1}(w) = a \cdot w \cdot b$ for all $w \in W^{i*}$.
\end{introthm}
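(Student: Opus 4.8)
The plan is to reduce the classification of an arbitrary $k$-linear equivalence $F$ to the three types of canonical equivalences by a normalization argument governed by the behavior of $F$ on the line bundles classified in the introduction. First I would use the Grothendieck-type classification (Corollary \ref{cor.grothendieck}) together with integrality (Corollary \ref{cor.integral}): since $F$ is a $k$-linear equivalence it sends line bundles to line bundles, hence the $\mathbb{Z}$-indexed family $\{\mathcal{O}(n)\}$ in $\mathbb{P}_{K}(V)$ is carried to a shift $\{\mathcal{O}(n+i)\}$ of the corresponding family in $\mathbb{P}_{K}(W)$ for a well-defined integer $i$ (well-defined because the family is totally ordered by the nonvanishing of $\operatorname{Hom}$, so an equivalence must preserve this order, possibly up to the global reflection which is absorbed into the dual). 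Composing $F$ with the shift equivalence $[-i]:\mathbb{P}_{K}(W^{i*})\to\mathbb{P}_{K}(W)$ we may thus assume $F$ fixes the distinguished line bundle $\mathcal{O}$; renaming $W^{i*}$ as $W$, the problem becomes: classify equivalences $F:\mathbb{P}_{K}(V)\to\mathbb{P}_{K}(W)$ preserving $\mathcal{O}$ and $\mathcal{O}(1)$.

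Next I would extract the data $\delta,\epsilon\in\operatorname{Gal}(K/k)$ from the action of $F$ on endomorphism and $\operatorname{Hom}$ spaces. The point is that $\operatorname{Hom}(\mathcal{O},\mathcal{O})\cong K$ and $\operatorname{Hom}(\mathcal{O},\mathcal{O}(1))$ is the two-sided vector space $V$ (more precisely its right dual, depending on conventions established in Section \ref{subsection.twosidedvectorspaces} and the definition of $\mathbb{S}_K(V)$), and a $k$-linear equivalence induces a $k$-algebra automorphism of $\operatorname{End}(\mathcal{O})=K$, i.e. an element of $\operatorname{Gal}(K/k)$; doing this at $\mathcal{O}$ and at $\mathcal{O}(1)$ (or rather reading off how the left and right $K$-module structures on the $\operatorname{Hom}$ space are transported) produces $\delta$ and $\epsilon$. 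Twisting by $T_{\delta,\epsilon}$ in the sense of \cite[Section 3.2]{vandenbergh} then allows us to further normalize so that $F$ is $K$-linear on these $\operatorname{Hom}$ spaces, i.e. $F$ together with the isomorphism it induces on global sections is an honest isomorphism of two-sided vector spaces $\phi:K_{\delta^{-1}}\otimes_K V\otimes_K K_\epsilon\to W$. I would then argue that an equivalence preserving $\mathcal{O}$, $\mathcal{O}(1)$, and the two-sided structure on $\operatorname{Hom}(\mathcal{O},\mathcal{O}(1))$ is determined by the induced isomorphism on the noncommutative symmetric algebras $\mathbb{S}_K(V)\to\mathbb{S}_K(W)$ — here I would use the noetherian, $\operatorname{Ext}$-finite, dimension-one hypotheses recalled from \cite{vandenbergh,finite,chan} to reconstruct the module category from the algebra, so that $F\cong\Phi$ for the $\Phi$ attached to $\phi$. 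Assembling the three normalizations gives $F\cong[-i]\circ\Phi\circ T_{\delta,\epsilon}$.

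For the uniqueness assertion I would run the argument in reverse. The integer $i$ is forced because it is the shift on the totally ordered family of line bundles; $\delta$ and $\epsilon$ are forced because they are the Galois automorphisms induced on $\operatorname{End}(\mathcal{O})$ and on the right $K$-structure of $\operatorname{Hom}(\mathcal{O},\mathcal{O}(1))$, and these are natural-equivalence invariants of $F$. Given $i,\delta,\epsilon$ fixed, two isomorphisms $\phi,\phi'$ give naturally equivalent $\Phi,\Phi'$ if and only if $\Phi'\circ\Phi^{-1}$ is the identity up to natural equivalence; since $\Phi'\circ\Phi^{-1}$ is the canonical equivalence attached to $\phi'\phi^{-1}:W^{i*}\to W^{i*}$, this reduces to identifying which automorphisms of a two-sided vector space induce the identity equivalence on $\mathbb{P}_K$. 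An automorphism of $\mathbb{S}_K(W^{i*})$ coming from $w\mapsto a\cdot w\cdot b$ for nonzero $a,b\in K$ is exactly an inner automorphism after identifying $K$ with $\operatorname{End}(\mathcal{O})$ in degrees $0$ and $1$, hence induces a natural equivalence (conjugation by an invertible natural transformation); conversely, a direct computation with the generators shows these are the only ones, because an automorphism fixing the isomorphism classes of all $\mathcal{O}(n)$ and acting trivially on $\operatorname{End}(\mathcal{O})=K$ must differ from the identity by such a rescaling.

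The main obstacle I expect is the middle step: proving that once $i$, $\delta$, $\epsilon$ have been stripped off, the residual equivalence is genuinely induced by an isomorphism of two-sided vector spaces $\phi$ and hence of noncommutative symmetric algebras — i.e. that $\mathbb{P}_K(-)$ is "rigid" enough that an equivalence fixing the line bundles $\mathcal{O},\mathcal{O}(1)$ and compatible with their $\operatorname{Hom}$-bimodule structure is reconstructed from degree-$\le 1$ data. This requires knowing that $\mathbb{S}_K(V)$ is generated in degree one with relations determined in degree two (the defining presentation from \cite{vandenbergh}), and that the equivalence respects this — essentially a Beilinson-type argument using the exceptional-type generators $\mathcal{O},\mathcal{O}(1)$, the dimension-one and $\operatorname{Ext}$-finiteness properties, and the fact that $\mathbb{P}_K(V)$ is recovered as (the Proj of) the endomorphism algebra of $\bigoplus_n \mathcal{O}(n)$. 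Handling the degree-two relations carefully, and making sure the left/right $K$-structures are tracked correctly through all three normalizations without sign or side errors, is where the real work lies.
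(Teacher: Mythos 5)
Your plan follows the same broad strategy as the paper (extract the shift $i$ from the permutation of line bundles, extract Galois data from the action on $\operatorname{End}$ and $\operatorname{Hom}$ spaces, normalize by a twist, reduce to an isomorphism of two-sided vector spaces), but there is a genuine gap in the normalization step that you do not identify, and it is precisely the step where the hypothesis $\operatorname{char}k\neq 2$ is actually used.

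You extract $\delta$ from $\operatorname{End}(\mathcal{O})\cong K$ and $\epsilon$ from the right $K$-structure on $\operatorname{Hom}(\mathcal{O},\mathcal{O}(1))$ (equivalently $\operatorname{End}(\mathcal{O}(1))$), and then twist by $T_{\delta,\epsilon}$ to make $F$ ``$K$-linear'' on these spaces. But $F$ a priori induces a whole sequence $\tau=\{\tau_j\}_{j\in\mathbb{Z}}$ of Galois automorphisms, one for each $\operatorname{End}(\mathcal{O}(j))\cong K$, and nothing in your normalization controls $\tau_j$ for $j\geq 2$ or $j<0$. The twist $T_{\delta,\epsilon}$ is a $2$-periodic twist, so it can cancel $\tau$ only if $\tau$ is already $2$-periodic, i.e.\ $\tau_j=\tau_0$ for $j$ even and $\tau_j=\tau_1$ for $j$ odd. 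This is not automatic from ``$\mathbb{S}(V)$ is generated in degree one with quadratic relations'': a $\mathbb{Z}$-algebra isomorphism $\mathbb{S}(V)_\tau\to\mathbb{S}(W)(i)$ can exist for a non-constant sequence $\tau$ unless the bimodules $V^{i*}$ are rigid enough to forbid it. The paper proves the required $2$-periodicity as Proposition~\ref{prop.secondperiodic} by a careful case analysis based on the classification of rank~$2$ two-sided vector spaces (Lemma~\ref{lemma.twosidedclass}) and the structure of $V\otimes V^*$ (Proposition~\ref{prop.structure}); both of these need $\operatorname{char}k\neq 2$ and the perfectness of $k$. Your sketch never engages with this, and without it the claimed factorization $F\cong[-i]\circ\Phi\circ T_{\delta,\epsilon}$ does not follow.

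The ``middle step'' you single out as the expected obstacle (showing the residual equivalence comes from a single two-sided vector space isomorphism) is real but is handled differently than you anticipate: in the paper it is Proposition~\ref{prop.newnewfactor}, which modifies the chosen isomorphisms $\theta_j\colon F\mathcal{P}_j\to\mathcal{P}'_{i+j}$ degree-by-degree using the uniqueness part of the adjunction between $-\otimes V^{i*}$ and $-\otimes V^{(i+1)*}$ (Corollaries~\ref{cor.gen1} and~\ref{cor.gen2}), rather than a Beilinson-type reconstruction. Your treatment of the uniqueness of $\Phi$ up to inner rescaling $w\mapsto a\cdot w\cdot b$ is essentially right and matches Lemmas~\ref{lemma.bigphi}(2) and~\ref{lemma.finalbigphi}. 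So the architecture is sound, but the $2$-periodicity of $\tau$ is a missing and nontrivial ingredient that you would need to supply.
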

We conclude the paper with a computation of the automorphism group of a noncommutative projective line. We define the {\it automorphism group of }$\mathbb{P}_{K}(V)$, denoted $\operatorname{Aut }\mathbb{P}_{K}(V)$, to be the set of $k$-linear shift-free equivalences $\mathbb{P}_{K}(V) \rightarrow \mathbb{P}_{K}(V)$ modulo natural equivalence, with composition induced by composition of functors.  By specializing the classification of equivalences to the case that $V=W$, we compute $\operatorname{Aut }\mathbb{P}_{K}(V)$ (Lemma \ref{lemma.aut}, Lemma \ref{lemma.stab}, and Theorem \ref{thm.finalfinalfinal}).  To describe our computation, we first introduce some notation.  Let $\operatorname{Stab}V$ denote the subgroup of $\mbox{Gal }(K/k)^{2}$ consisting of $(\delta, \epsilon)$ such that $K_{\delta^{-1}} \otimes_{K} V \otimes_{K} K_{\epsilon} \cong V$ and let $\operatorname{Aut }V$ denote the set of two-sided vector space isomorphisms $V \rightarrow V$ modulo the relation defined by setting $\phi' \equiv \phi$ if and only if there exist nonzero $a, b \in K$ such that $\phi'  \phi^{-1}(v)=a \cdot v \cdot b$ for all $v \in V$.

\begin{introthm}
Suppose $\operatorname{char }k \neq 2$.  There exists a group homomorphism $\theta:\operatorname{Stab}V^{op} \rightarrow \operatorname{Aut }(\operatorname{Aut }(V))$ (described before Theorem \ref{thm.finalfinalfinal}), such that
$$
\operatorname{Aut }\mathbb{P}_{K}(V) \cong \operatorname{Aut }V \rtimes_{\theta} \operatorname{Stab}V^{op}.
$$
Furthermore, the factors of $\operatorname{Aut}\mathbb{P}_{K}(V)$ are described as follows (where $\sigma, \tau \in \operatorname{Gal}(K/k)$ and $\sigma \neq \tau$):
\begin{enumerate}

\item{} If $V \cong K_{\sigma} \oplus K_{\sigma}$ then $\operatorname{Aut}V \cong \mbox{PGL}_{2}(K)$ and $\operatorname{Stab}V \cong \mbox{Gal }(K/k)$.

\item{} If $V \cong K_{\sigma} \oplus K_{\tau}$ then
$$
\operatorname{Aut}V \cong K^{*} \times K^{*}/\{(a\sigma(b),a\tau(b))|a, b \in K^{*}\}.
$$
and
$$
\operatorname{Stab}V = \{(\delta,\epsilon)|\{\delta^{-1} \sigma \epsilon, \delta^{-1} \tau \epsilon \}=\{\sigma, \tau\}\},
$$
\item{} If $V \cong V(\lambda)$, and if, for $\delta \in \operatorname{Gal }(K/k)$, $\overline{\delta}$ denotes an extension of $\delta$ to $\overline{K}$, then
$$
\operatorname{Aut }(V) \cong K(\lambda)^{*}/K^{*}\lambda(K)^{*}.
$$
and
$$
\operatorname{Stab}(V) = \{(\delta, \epsilon) \in \operatorname{Gal}(K/k)^{2} | (\overline{\delta}^{-1}\lambda \epsilon)^{G}=\lambda^{G}\},
$$
\end{enumerate}
\end{introthm}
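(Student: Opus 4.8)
The plan is to combine the classification of equivalences (Theorem~\ref{thm.finalmain}) with the elementary relations satisfied by the canonical equivalences $T_{\delta,\epsilon}$ and $\Phi$ from Section~\ref{section.canonical}: first present $\operatorname{Aut}\mathbb{P}_K(V)$ as a split extension of $\operatorname{Stab}V^{op}$ by $\operatorname{Aut}V$, and then read off the two factors from the three normal forms of a rank-two two-sided vector space.

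I would begin by constructing the projection $p:\operatorname{Aut}\mathbb{P}_K(V)\to\operatorname{Stab}V^{op}$. By the definition of $\operatorname{Aut}\mathbb{P}_K(V)$ and Theorem~\ref{thm.finalmain} (applied with $W=V$ and, by shift-freeness, $i=0$), every class has a representative $\Phi\circ T_{\delta,\epsilon}$ in which necessarily $K_{\delta^{-1}}\otimes_K V\otimes_K K_\epsilon\cong V$, i.e. $(\delta,\epsilon)\in\operatorname{Stab}V$, and $\phi:K_{\delta^{-1}}\otimes_K V\otimes_K K_\epsilon\to V$ is an isomorphism; moreover $(\delta,\epsilon)$ is determined by the class and $\phi$ is determined up to the relation $\phi'\equiv\phi$ precisely when $\phi'\phi^{-1}(v)=a\cdot v\cdot b$ for some nonzero $a,b\in K$. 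Thus $p([\Phi\circ T_{\delta,\epsilon}])=(\delta,\epsilon)$ is well defined, and it is surjective because for $(\delta,\epsilon)\in\operatorname{Stab}V$ some such $\phi$ exists. That $p$ is a homomorphism onto the opposite group $\operatorname{Stab}V^{op}$ comes from three relations among canonical equivalences, all established in Section~\ref{section.canonical}: $T_{\delta',\epsilon'}\circ T_{\delta,\epsilon}\cong T_{\delta\delta',\epsilon\epsilon'}$ (after the canonical identifications $K_\zeta\otimes_K K_\eta\cong K_{\zeta\eta}$), $T_{\delta,\epsilon}\circ\Phi\cong\Phi_{\delta,\epsilon}\circ T_{\delta,\epsilon}$ with $\Phi_{\delta,\epsilon}$ induced by $1_{K_{\delta^{-1}}}\otimes\phi\otimes1_{K_\epsilon}$, and $\Phi\circ\Phi'$ induced by $\phi\circ\phi'$; using these one rewrites a composite $(\Phi'\circ T_{\delta',\epsilon'})\circ(\Phi\circ T_{\delta,\epsilon})$ in normal form, and the resulting order reversal in the Galois data is precisely why the target is the opposite group. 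Finally $\ker p$ consists of the classes with $\delta=\epsilon=\mathrm{id}$, that is of $[\Phi]$ with $\phi:V\to V$ an isomorphism modulo the relation above, which is exactly $\operatorname{Aut}V$; by the uniqueness clause of Theorem~\ref{thm.finalmain} this is an injection $\operatorname{Aut}V\hookrightarrow\operatorname{Aut}\mathbb{P}_K(V)$ onto a normal subgroup. Hence $1\to\operatorname{Aut}V\to\operatorname{Aut}\mathbb{P}_K(V)\xrightarrow{p}\operatorname{Stab}V^{op}\to1$.

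To split this, for each $(\delta,\epsilon)\in\operatorname{Stab}V$ I would choose an isomorphism $\psi_{\delta,\epsilon}:K_{\delta^{-1}}\otimes_K V\otimes_K K_\epsilon\to V$ with $\psi_{\mathrm{id},\mathrm{id}}=1_V$, put $s(\delta,\epsilon)=[\Psi_{\delta,\epsilon}\circ T_{\delta,\epsilon}]$, and let $\theta_{(\delta,\epsilon)}\in\operatorname{Aut}(\operatorname{Aut}V)$ be conjugation by $\Psi_{\delta,\epsilon}\circ T_{\delta,\epsilon}$; by the relations above this sends $[\alpha]$ to $[\psi_{\delta,\epsilon}\circ(1_{K_{\delta^{-1}}}\otimes\alpha\otimes1_{K_\epsilon})\circ\psi_{\delta,\epsilon}^{-1}]$, and $\theta$ is then a homomorphism $\operatorname{Stab}V^{op}\to\operatorname{Aut}(\operatorname{Aut}V)$. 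The decomposition $\operatorname{Aut}\mathbb{P}_K(V)\cong\operatorname{Aut}V\rtimes_\theta\operatorname{Stab}V^{op}$ holds as soon as $s$ is a homomorphism, i.e. as soon as the $\psi_{\delta,\epsilon}$ are chosen so that $\psi_{\delta,\epsilon}\circ(1\otimes\psi_{\delta',\epsilon'}\otimes1)$, read through the identifications $K_\zeta\otimes_K K_\eta\cong K_{\zeta\eta}$, agrees with $\psi_{\delta\delta',\epsilon\epsilon'}$ modulo $v\mapsto a\cdot v\cdot b$. I expect this to be the main obstacle: a priori $s$ fails to be a homomorphism by an $\operatorname{Aut}V$-valued $2$-cocycle on $\operatorname{Stab}V^{op}$, and since $\operatorname{Aut}V$ is abelian in two of the three cases there is no purely formal reason this cocycle is a coboundary; I would clear it by exhibiting an explicit compatible family $\{\psi_{\delta,\epsilon}\}$ in each normal form, built from the canonical identifications together with a transposition of the two summands in the second case.

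For the description of the factors I would invoke Patrick's classification of rank-two two-sided vector spaces (already used for Theorem~\ref{thm.classify}) to reduce to the three listed forms, and then compute $\operatorname{Aut}V$ and $\operatorname{Stab}V$ directly (Lemmas~\ref{lemma.aut} and~\ref{lemma.stab}). If $V\cong K_\sigma\oplus K_\sigma$: a bimodule endomorphism of $K_\sigma$ is multiplication by an element of $K$, so $\operatorname{End}V\cong M_2(K)$ and $\operatorname{Aut}V\cong\operatorname{GL}_2(K)$, while $a\cdot(-)\cdot b$ acts on $K_\sigma\oplus K_\sigma$ as the scalar $a\sigma(b)$, which exhausts $K^{*}$; hence $\operatorname{Aut}V\cong\operatorname{PGL}_2(K)$, and $K_{\delta^{-1}}\otimes V\otimes K_\epsilon\cong K_{\delta^{-1}\sigma\epsilon}\oplus K_{\delta^{-1}\sigma\epsilon}$ is isomorphic to $V$ if and only if $\delta^{-1}\sigma\epsilon=\sigma$, so $\operatorname{Stab}V\cong\operatorname{Gal}(K/k)$. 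If $V\cong K_\sigma\oplus K_\tau$ with $\sigma\neq\tau$: $\operatorname{Hom}(K_\sigma,K_\tau)=0$, so $\operatorname{Aut}V\cong K^{*}\times K^{*}$, and modding out by the diagonal automorphisms $(a\sigma(b),a\tau(b))$ gives the stated quotient; Krull--Schmidt for finite-dimensional two-sided vector spaces, together with the fact that $K_\rho\cong K_{\rho'}$ if and only if $\rho=\rho'$, yields $\operatorname{Stab}V=\{(\delta,\epsilon)\mid\{\delta^{-1}\sigma\epsilon,\delta^{-1}\tau\epsilon\}=\{\sigma,\tau\}\}$. If $V\cong V(\lambda)$: $V(\lambda)$ is a simple $K\otimes_k K$-module with endomorphism ring $K(\lambda)$, so $\operatorname{Aut}V\cong K(\lambda)^{*}$, and since $v\mapsto a\cdot v\cdot b=av\lambda(b)$ is multiplication by $a\lambda(b)$ one mods out by the subgroup $K^{*}\lambda(K)^{*}$; finally $K_{\delta^{-1}}\otimes V(\lambda)\otimes K_\epsilon\cong V(\overline{\delta}^{-1}\lambda\epsilon)$, which by Theorem~\ref{thm.papp} is isomorphic to $V(\lambda)$ if and only if $(\overline{\delta}^{-1}\lambda\epsilon)^{G}=\lambda^{G}$, giving the stated $\operatorname{Stab}V$. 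Assembling the split extension with these factors then completes the proof.
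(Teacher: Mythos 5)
Your proposal is correct and follows essentially the same route as the paper: both rest on Theorem~\ref{thm.finalmain} to write every shift-free autoequivalence (up to natural equivalence) uniquely as $\Phi\circ T_{\delta,\epsilon}$, both reduce the semidirect-product structure to constructing an explicit compatible family of isomorphisms $\psi_{\delta,\epsilon}$ (done case-by-case from Lemma~\ref{lemma.twosidedclass}, with the coordinate transposition exactly where you predict), both use Lemma~\ref{lemma.semidirect} for the commutation relations, and both compute the factors via Lemmas~\ref{lemma.aut} and~\ref{lemma.stab} in the same way. The only real difference is one of presentation — you phrase the result as a split short exact sequence whereas the paper phrases it as a bijection $\operatorname{Aut}\mathbb{P}(V)\to\operatorname{Aut}V\times\operatorname{Stab}V$ that is then shown to transport the group law — and you correctly identify the potential $2$-cocycle obstruction to the splitting and dispose of it by the same explicit construction the paper uses in Proposition~\ref{prop.canonicalmaps}.
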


There are many notions of noncommutative projective line in the literature and we discuss some of them briefly.  The subject of noncommutative curves of genus zero is studied comprehensively by D. Kussin in \cite{kussin}.  In \cite{whengenus}, we use the results of this paper to prove that noncommutative projective lines are indeed noncommutative curves of genus zero, and we find necessary and sufficient conditions for noncommutative curves of genus zero to be noncommutative projective lines.  We then use results from this paper to address some problems posed in \cite{kussin}.  Another family of noncommutative projective lines was introduced and studied by D. Piontkovski \cite{pion}.  The noncommutative members of this family are non-noetherian noncommutative spaces, hence, are distinct from the spaces we study.  S. P. Smith and J. J. Zhang study another family of noncommutative spaces, $\{\mathbb{V}_{n}^{1}\}_{n \in \mathbb{N}}$, analogous to the commutative projective line \cite{smithzhang}.  However, unless $n=2$, their curves do not admit a Serre functor \cite[Section 7.3]{staffordvdb}.  Noncommutative curves (not necessarily of genus zero) are classified by I. Reiten and M. Van den Bergh in \cite{rvdb} over an algebraically closed field.  Hence, except for the case of a commutative projective line, the spaces we study do not fit into their classification.

{\it Notation and conventions:}  Throughout, $V$ and $W$ will denote two-sided vector spaces of finite rank (see Section \ref{subsection.twosidedvectorspaces} for precise definitions of these terms).  Starting in Section \ref{section.groth}, we will assume that $V$ and $W$ have rank $2$.  Unadorned tensor products will be over $K$.  If $\sigma \in \operatorname{Gal}(K/k)$ or $\lambda \in \operatorname{Emb}(K)$, we let $\overline{\sigma}$ and $\overline{\lambda}$ denote extensions to $\overline{K}$.

If $A$ is a $\mathbb{Z}$-algebra (see \cite[Section 2]{quadrics} for a definition) and $i \in \mathbb{Z}$, we let $e_{i}A$ denote the graded right $A$-module $\oplus_{j \in \mathbb{Z}}A_{ij}$ and we let $Ae_{i}$ denote the graded left $A$-module $\oplus_{j \in \mathbb{Z}}A_{ji}$.  If $A$ is either a right noetherian $\mathbb{Z}$-graded algebra or a right noetherian $\mathbb{Z}$-algebra, we let ${\sf Gr }A$ denote the category of graded right $A$-modules, and we let ${\sf gr }A$ denote the full subcategory of ${\sf Gr }A$ consisting of noetherian objects.  Following \cite[Section 2]{az} and \cite[Section 2]{quadrics}, we let ${\sf Tors }A$ denote the full subcategory of ${\sf Gr }A$ consisting of objects which are direct limits of right bounded modules, and we let ${\sf Proj }A$ denote the quotient of ${\sf Gr }A$ by ${\sf Tors }A$.  Furthermore, we let $\pi_{A}:{\sf Gr }A \rightarrow {\sf Proj }A$ denote the quotient functor.  We sometimes write $\pi$ instead of $\pi_{A}$ where no confusion arises.  Similarly, we let $\tau_{A}:{\sf Gr }A \rightarrow {\sf Tors }A$, or just $\tau$, denote the torsion functor, and we let $\omega_{A}:{\sf Proj }A \rightarrow {\sf Gr }A$, or just $\omega$ denote the section functor.  Unless stated otherwise, the term {\it $A$-module} will mean {\it graded right $A$-module.}

For the remainder of the paper, we will write $\mathbb{P}(V)$ (instead of $\mathbb{P}_{K}(V)$) for a noncommutative projective line defined by $V$.  Similarly, we write $\mathbb{S}(V)$ (instead of $\mathbb{S}_{K}(V)$) for the noncommutative symmetric algebra generated by $V$.

{\it Acknowledgement:}  I thank S. P. Smith for showing me, in 1998, a ``noncommutative" proof of the classification of coherent sheaves over the {\it commutative} projective line.  His proof technique works in our context and is used to prove Theorem \ref{thm.grothendieck}.

\section{Two-sided vector spaces and noncommutative symmetric algebras}

\subsection{Two-sided vector spaces} \label{subsection.twosidedvectorspaces}
By a \emph{two-sided vector
space} we mean a $K \otimes_{k}K$-module $V$.  By {\it right (resp. left) multiplication by $K$} we mean multiplication by elements in $1 \otimes_{k} K$ (resp. $K \otimes_{k} 1$).  We denote the restriction of scalars of $V$ to $K \otimes_{k} 1$ (resp. $1\otimes_{k} K$) by ${}_{K}V$ (resp. $V_{K}$).

If $M_{n}(K)$ denotes the ring of $n\times n$ matrices over $K$ and $\phi:K\rightarrow M_n(K)$ is a nonzero homomorphism,
then we denote by $K^n_\phi$ the two-sided vector space whose left action is the usual one whose right action is via $\phi$; that is,
$$
a \cdot(v_1,\dots, v_n)=(av_1,\dots,av_n),
\ \ \ (v_1,\dots, v_n)\cdot
a=(v_1,\dots,v_n)\phi(a).
$$
If $V$ is a two-sided vector space with left dimension equal to
$n$, then choosing a left basis for $V$ shows that $V\cong
{K^n_\phi}$ for some homomorphism $\phi:K\rightarrow M_n(K)$.

We say $V$ has {\it rank $n$} if $\operatorname{dim}_{K}({}_{K}V)
= \operatorname{dim}_{K}(V_{K})=n$.  Note that since $K/k$ is finite, if $\operatorname{dim}_{K}({}_{K}V)=n$ or $\operatorname{dim}_{K}(V_{K})=n$, then $V$ has rank $n$.  For the remainder of this section, we will assume two-sided vector spaces have finite rank $n$ unless otherwise stated.

\begin{lemma} \label{lemma.twosidedclass}
Suppose $\operatorname{char }k \neq 2$. If $V$ has rank two, then either
\begin{enumerate}
\item{} $V \cong K^{2}_{\phi}$ where $\phi(a)=\begin{pmatrix}
\sigma(a) & 0 \\ 0 & \sigma(a) \end{pmatrix}$ where $\sigma \in
\operatorname{Gal}(K/k)$,

\item{} $V \cong K^{2}_{\phi}$ where $\phi(a)=\begin{pmatrix}
\sigma(a) & 0 \\ 0 & \tau(a) \end{pmatrix}$, $\sigma(a), \tau(a)
\in \operatorname{Gal}(K/k)$, and $\tau \neq \sigma$, or

\item{} $V$ is simple.  In this case $V \cong K^{2}_{\phi}$ where $\phi(a)=\begin{pmatrix}
\gamma(a) &  \delta(a) \\ m \delta(a) & \gamma(a) \end{pmatrix}$ and where $\delta$ is a nonzero $(\gamma, \gamma)$-derivation of $K$, $m \in K$ is not a perfect square, and $\gamma(ab)=\gamma(a)\gamma(b)+m\delta(a)\delta(b)$.
\end{enumerate}
\end{lemma}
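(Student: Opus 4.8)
The plan is to exploit the decomposition of $K\otimes_{k}K$ as a finite product of fields. Since $k$ is perfect, $K/k$ is separable, so writing $K=k(\alpha)$ with separable minimal polynomial $f$ over $k$ and factoring $f=\prod_{i=1}^{r}g_{i}$ into distinct monic irreducibles over $K$, one has
$$
K\otimes_{k}K\;\cong\;\prod_{i=1}^{r}L_{i},\qquad L_{i}:=K[x]/(g_{i}),
$$
where each $L_{i}$ is a finite separable field extension of $K$, the left $K$-action on $L_{i}$ is the canonical inclusion $K\hookrightarrow L_{i}$, and the right $K$-action is the $k$-algebra embedding $\lambda_{i}\colon K\to L_{i}$ with $\alpha\mapsto\overline{x}$. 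A finite rank two-sided vector space $V$ is finitely generated over $K\otimes_{k}K$, hence splits as $V=\bigoplus_{i}V_{i}$ with each $V_{i}$ a free $L_{i}$-module, say $V_{i}\cong L_{i}^{m_{i}}$; computing left $K$-dimensions gives $\operatorname{dim}_{K}({}_{K}V)=\sum_{i}m_{i}[L_{i}:K]$, which here equals $2$.

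First I would enumerate the ways this sum equals $2$. If $m_{i}=2$ for an index with $[L_{i}:K]=1$ and all other $m_{j}=0$, then $L_{i}=K$ and its right action is a $k$-algebra endomorphism of $K$, necessarily an automorphism $\sigma\in\operatorname{Gal}(K/k)$ as $K/k$ is finite; thus $V\cong K_{\sigma}\oplus K_{\sigma}$, and the resulting left basis exhibits $V\cong K^{2}_{\phi}$ with $\phi(a)=\operatorname{diag}(\sigma(a),\sigma(a))$, which is case (1). If there are two indices $i\neq j$ with $m_{i}=m_{j}=1$ and $[L_{i}:K]=[L_{j}:K]=1$, the same reasoning gives $V\cong K_{\sigma}\oplus K_{\tau}$ with $\sigma,\tau\in\operatorname{Gal}(K/k)$, and distinctness of the two factors of $K\otimes_{k}K$ forces $\sigma\neq\tau$; this is case (2), with $\phi(a)=\operatorname{diag}(\sigma(a),\tau(a))$. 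The only remaining possibility is a single index with $m_{i}=1$ and $[L_{i}:K]=2$, whence $V=L_{i}$ is supported at one maximal ideal of $K\otimes_{k}K$ and one-dimensional over the residue field, hence simple; its place in the classification of Theorem \ref{thm.papp} is recorded by the $G$-orbit of $\lambda_{i}$.

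It remains to derive the matrix form in the simple case. Since $\operatorname{char }k\neq2$ and $L_{i}/K$ is separable of degree $2$, write $L_{i}=K(\theta)$ with $\theta^{2}=m\in K$, so that $m$ is not a square in $K$, and take $\{1,\theta\}$, in this order, as a left $K$-basis of $V=L_{i}$. Writing $\lambda_{i}(a)=\gamma(a)+\delta(a)\theta$ with $\gamma(a),\delta(a)\in K$ defines additive maps $\gamma,\delta\colon K\to K$, and computing right multiplication by $a$ on $\{1,\theta\}$ (using $\theta^{2}=m$ and commutativity of $L_{i}$) gives $V\cong K^{2}_{\phi}$ with
$$
\phi(a)=\begin{pmatrix}\gamma(a)&\delta(a)\\ m\delta(a)&\gamma(a)\end{pmatrix}.
$$
Expanding $\lambda_{i}(ab)=\lambda_{i}(a)\lambda_{i}(b)$ and comparing coefficients of $1$ and of $\theta$ yields $\gamma(ab)=\gamma(a)\gamma(b)+m\delta(a)\delta(b)$ and $\delta(ab)=\gamma(a)\delta(b)+\delta(a)\gamma(b)$, so $\delta$ is a $(\gamma,\gamma)$-derivation; and $\delta\neq0$, since $\delta=0$ would force $\lambda_{i}(K)\subseteq K$, hence $L_{i}=K\cdot\lambda_{i}(K)=K$, contradicting $[L_{i}:K]=2$.

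The only genuinely delicate point --- and the one I expect to be the main obstacle --- is the bookkeeping in the simple case: one must fix the basis ordering and the generator $\theta$ so that right multiplication comes out in exactly the stated symmetric shape under the row-vector convention chosen for $K^{2}_{\phi}$, and it is here that $\operatorname{char }k\neq2$ (to extract $\theta$) and the normalization $\gamma(ab)=\gamma(a)\gamma(b)+m\delta(a)\delta(b)$ are forced. Everything else is the routine count of the field factors of $K\otimes_{k}K$; alternatively the statement follows from \cite[Theorem 1.3]{P} together with \cite{papp}.
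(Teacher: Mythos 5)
Your proof is correct, but it takes a genuinely different route from the one in the paper. The paper's argument is a two-line reduction: it invokes Patrick's classification \cite[Theorem 1.3]{P} of rank-two two-sided vector spaces (which has an additional case involving a nonzero $(\sigma,\sigma)$-derivation on top of the three listed), and then kills that extra case by observing that separability of $K/k$ forces $\Omega_{K/k}=0$, hence $\operatorname{Der}_{k}(K,K)=0$, so no nonzero $(\sigma,\sigma)$-derivation can exist. Your argument bypasses Patrick's theorem entirely: you use separability in the equivalent guise that $K\otimes_{k}K\cong\prod_{i}L_{i}$ is a finite product of fields, decompose $V=\bigoplus V_{i}$ with $V_{i}\cong L_{i}^{m_{i}}$, count $\sum m_{i}[L_{i}:K]=2$, and enumerate the three possibilities directly, computing the matrix form in the simple case by choosing $\theta$ with $\theta^{2}=m$. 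Both proofs hinge on separability, but where the paper imports the shape of $\phi$ from Patrick and only shows the derivation term vanishes, your approach is self-contained and makes the three cases (and the simple case's matrix form) visible from the module decomposition, at the modest cost of a longer computation. One small remark: it is worth noting explicitly that the right $K$-dimension of $L_{i}$ also equals $[L_{i}:K]$ (since $[L_{i}:\lambda_{i}(K)]=[L_{i}:K]$ by a degree count over $k$), so that the rank-two hypothesis is indeed captured by the single equation $\sum m_{i}[L_{i}:K]=2$; the paper glosses over this with its standing remark that for $K/k$ finite the left and right dimensions agree.
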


\begin{proof}
Let $\sigma \in \operatorname{Gal }(K/k)$.  We show that there are no nonzero $(\sigma, \sigma)$-derivations.
The result will then follow from \cite[Theorem 1.3]{P}.

Suppose $\delta$ is a $(\sigma,\sigma)$-derivation.  Then $\delta
\sigma^{-1}$ is an ordinary $k$-linear $K$-derivation, $\gamma$.
We claim $\gamma=0$. To this end, since $K$ is a finite extension
of a perfect field, it is separable.  Hence, $\Omega_{K/k}=0$.  On
the other hand, $\operatorname{Der }_{k}(K,K) \cong
\operatorname{Hom }_{K}(\Omega_{K/k},K)$.  Therefore, $\gamma$,
and hence, $\delta$, equals 0.
\end{proof}
We also need to recall (from \cite{vandenbergh}), the notion of left and right dual of a two-sided vector space.  The {\it right dual of $V$}, denoted $V^{*}$, is the set
$\operatorname{Hom}_{K}(V_{K},K)$ with action $(a \cdot \psi \cdot
b)(v)=a\psi(bv)$ for all $\psi \in
\operatorname{Hom}_{K}(V_{K},K)$ and $a,b \in K$.  We note that
$V^{*}$ is a $K\otimes_{k}K$-module since $V$ is.

The {\it left dual of $V$}, denoted ${}^{*}V$, is the set
$\operatorname{Hom}_{K}({}_{K}V,K)$ with action $(a \cdot \phi
\cdot b)(v)=b \phi(va)$ for all $\phi \in
\operatorname{Hom}_{K}({}_{K}V,K)$ and $a,b \in K$.  As above,
${}^{*}V$ is a $K \otimes_{k}K$-module.  This assignment extends to morphisms between two-sided vector spaces in the obvious way.

We set
$$
V^{i*}:=
\begin{cases}
V & \text{if $i=0$}, \\
(V^{(i-1)*})^{*} & \text{ if $i>0$}, \\
{}^{*}(V^{(i+1)*}) & \text{ if $i<0$}.
\end{cases}
$$

We recall that by \cite[Theorem 3.13]{hart}, $V \cong V^{**}$.  We will use this fact routinely in what follows.

If $V$ has rank $n$, then so does $V^{i*}$ for all $i$.  Therefore, $(-\otimes V^{i*},-\otimes V^{(i+1)*})$ has an adjoint pair structure for each $i$ by \cite{hart}[Proposition 3.7 and Corollary 3.5], and the Eilenberg-Watts Theorem implies that the unit of the pair $(-\otimes V^{i*},-\otimes V^{(i+1)*})$ induces a map of two-sided vector spaces
$K \rightarrow V^{i*} \otimes  V^{(i+1)*}$.  We sometimes denote the image of this map by $Q_{i}$, and we abuse notation by calling the map $\eta_{i}$ and referring to it as the unit map.

It is straightforward to check that if $\psi:V \rightarrow W$ is a morphism of two-sided vector spaces, than the left dual of $\psi$, ${}^{*}\psi$, equals the composition
$$
{}^{*}W \rightarrow {}^{*}V \otimes V \otimes {}^{*}W  \rightarrow {}^{*}V \otimes W \otimes {}^{*}W \rightarrow {}^{*}V
$$
whose left arrow is induced by a unit, whose right arrow is induced by a counit, and whose central arrow is induced by $\psi$.  A similar result holds for the right dual of $\psi$, $\psi^{*}$.  We will use this fact without comment in the sequel.

\subsection{Simple two-sided vector spaces} \label{section.simple}
Recall that Theorem \ref{thm.papp} describes simple two-sided vector spaces in terms of arithmetic data associated to the extension $K/k$.  In this section we describe arithmetic data associated to a twist of $V(\lambda)$ (Lemma \ref{lemma.autocomp}) and the arithmetic data associated to the tensor product $V(\lambda) \otimes V(\lambda)^{*}$ (Proposition \ref{prop.structure}).  The latter result will be used in the proof of Proposition \ref{prop.secondperiodic}.

\begin{lemma} \label{lemma.autocomp}
Suppose $\sigma, \tau \in \operatorname{Gal}(K/k)$.  Under the isomorphism given in Theorem \ref{thm.papp}, the $G$-orbit of $\overline{\sigma} \lambda \tau$ corresponds to the isomorphism class of the simple two-sided vector space $K_{\sigma} \otimes V(\lambda) \otimes K_{\tau}$.
\end{lemma}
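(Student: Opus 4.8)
The plan is to reduce the statement about the twisted two-sided vector space $K_\sigma \otimes V(\lambda) \otimes K_\tau$ to an identification of the underlying $K \otimes_k K$-module structure, and then match this against the canonical form $V(\mu)$ for an appropriate $\mu \in \operatorname{Emb}(K)$ whose $G$-orbit can be read off. Concretely, first I would compute the $K \otimes_k K$-action on the tensor product $K_\sigma \otimes_K V(\lambda) \otimes_K K_\tau$ directly from the definitions: the left $K$-action comes from the left action on $K_\sigma$, which is ordinary multiplication; the right $K$-action comes from the right action on $K_\tau$, which is $x \cdot a = x\tau(a)$; and the ``internal'' action of $K$ (the right action on $K_\sigma$, equivalently the left action on $V(\lambda)$, equivalently the right action on $V(\lambda)$ against the left action on $K_\tau$) must be used to collapse the tensor factors. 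Since $V(\lambda)$ has underlying set $K(\lambda)$ with action $a \cdot v \cdot b = av\lambda(b)$, and $K_\sigma$, $K_\tau$ are each one-dimensional on both sides, the tensor product is again one-dimensional on each side, hence simple, and I would exhibit an explicit $k$-basis realizing it as a space with underlying set $K(\lambda)$ (or an isomorphic copy thereof).

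The key step is to track how $\sigma$ and $\tau$ twist the embedding. Tensoring on the right by $K_\tau$ replaces the right action $v \cdot b$ by $v \cdot \tau(b)$, which in the canonical form amounts to replacing $\lambda$ by $\lambda\tau$ (precompose the embedding with $\tau \in \operatorname{Gal}(K/k)$). Tensoring on the left by $K_\sigma$ is subtler: it conjugates the left structure, and at the level of the embedding into $\overline{K}$ it has the effect of postcomposing with (an extension of) $\sigma$, i.e. replacing $\lambda$ by $\overline{\sigma}\lambda$; here one must check that the resulting object is independent, up to isomorphism, of the choice of extension $\overline{\sigma}$ of $\sigma$ to $\overline{K}$, which is exactly why the answer is phrased in terms of the $G$-orbit $(\overline{\sigma}\lambda\tau)^G$ rather than a single embedding. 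Combining, $K_\sigma \otimes V(\lambda) \otimes K_\tau$ has underlying embedding $\overline{\sigma}\lambda\tau$ up to the $G$-action, so by \cite[Proposition 2.3]{hart} (the identification of the Papp correspondence with $\lambda^G \mapsto V(\lambda)$) together with Theorem \ref{thm.papp}, its isomorphism class corresponds to $(\overline{\sigma}\lambda\tau)^G$.

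I expect the main obstacle to be the left-twist computation and the well-definedness of $\overline{\sigma}\lambda$: one needs to verify that $K(\overline{\sigma}\lambda) = \overline{\sigma}(K(\lambda))$ (up to the obvious identifications), that the $K \otimes_k K$-module $K_\sigma \otimes V(\lambda)$ really is isomorphic to $V(\overline{\sigma}\lambda|_{\text{suitable domain}})$, and that changing $\overline{\sigma}$ to $g\overline{\sigma}$ for $g \in G$ only changes the embedding within its $G$-orbit. This is essentially a bookkeeping argument with composites of fields and restriction of scalars, and once the underlying-set/action description of all objects involved is laid out carefully, the matching with the canonical form $V(\mu)$ and hence the orbit statement follows. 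The right-twist by $K_\tau$ is comparatively routine since $\tau$ is already an automorphism of $K$ and no extension to $\overline{K}$ is needed.
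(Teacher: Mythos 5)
Your approach is correct but genuinely different from the paper's. The paper's proof goes through the matrix presentation: it writes $V(\lambda) \cong {}_{1}K^n_{\phi}$, invokes \cite[Lemma~4.4]{papp} for the identity $K_{\sigma} \otimes {}_{1}K^n_{\phi} \otimes K_{\tau} \cong {}_{1}K^n_{\sigma\phi\tau}$ and for simplicity of the latter, and then uses the eigenvalue characterization from the proof of Theorem~\ref{thm.papp}: the correspondence picks out a common eigenvalue of the matrices $\phi(a) \in M_n(\overline{K})$, and if $w$ is a common eigenvector for $\phi$ with eigenvalue $\lambda$, then $\overline{\sigma}(w)$ is one for $\sigma\phi\tau$ with eigenvalue $\overline{\sigma}\lambda\tau$. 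You instead work directly with the canonical form on the underlying set $K(\lambda)$, identifying $K_{\sigma} \otimes V(\lambda) \otimes K_{\tau}$ with a module on a composite-of-fields and applying $\overline{\sigma}$ to untwist the left action. Both are valid; your computation does check out (the right twist identifies $v \otimes x \mapsto v\lambda(x)$ and sends the right action to multiplication by $\lambda\tau(a)$, and the left twist identifies $x \otimes v \mapsto \sigma^{-1}(x)v$, after which $\overline{\sigma}$ carries the twisted module to $V(\overline{\sigma}\lambda)$ on $\overline{\sigma}(K(\lambda)) = K(\overline{\sigma}\lambda)$). Your route is more self-contained and keeps the meaning of the twists visible at the level of field composites; the paper's is shorter because it offloads the tensor calculation to \cite[Lemma~4.4]{papp} and the correspondence bookkeeping to the eigenvector picture, at the cost of relying on the internal structure of the proof of Theorem~\ref{thm.papp}. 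One thing worth making explicit in your version is simplicity of the tensor product, which the paper gets for free from the cited lemma; in your setup it follows since the construction produces $V(\mu)$ for an embedding $\mu$, and those are simple by the Papp classification.
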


\begin{proof}
By \cite[Lemma 4.4]{papp}, if $V(\lambda) \cong {}_{1}K^n_{\phi}$, then $K_{\sigma} \otimes V(\lambda) \otimes K_{\tau} \cong {}_{1}K^n_{\sigma \phi \tau}$ and the latter two-sided vector space is simple.  Therefore, the proof of Theorem \ref{thm.papp} implies that the class of embeddings corresponding to $K_{\sigma} \otimes V(\lambda) \otimes K_{\tau}$, equals the class of a common eigenvalue for the image of $\sigma \phi \tau:K \rightarrow M_{n}(K)$ considered as a subset of $M_{n}(\overline{K})$.  However, if $w \in \overline{K}^{n}$ is a common eigenvector for the image of $\phi$ with eigenvalue $\lambda$, then $\overline{\sigma}(w)$ is a common eigenvector for the image of $\sigma \phi \tau$ with eigenvalue $\overline{\sigma} \lambda \tau$.
\end{proof}

\begin{prop} \label{prop.structure}
Let $\lambda \in \operatorname{Emb}(K)$, let $\mu : =(\overline{\lambda})^{-1}$, suppose $\rho, \upsilon \in G$ are such that $\lambda^{G}=\{\lambda, \rho \lambda\}$ and $\mu^{G} = \{ \mu, \upsilon \mu\}$, and let $V:=V(\lambda)$ (so $V$ has rank 2 since $|\lambda^{G}|=2$).
\begin{enumerate}
\item{}
If $\overline{\lambda} \upsilon \mu(K) \subset K$ then $\gamma := \overline{\lambda} \upsilon \mu|_{K} \in \operatorname{Gal}(K/k)$ has order 2 and
$$
V \otimes V^{*} \cong K^{\oplus 2} \oplus K_{\gamma}^{\oplus 2},
$$

\item{}
if $\overline{\lambda} \upsilon \mu(K) \subset \rho \lambda(K)$, then $\delta := \mu \rho^{-1} \overline{\lambda} \upsilon \mu|_{K}$ is in $\operatorname{Gal}(K/k)$ and
$$
V \otimes V^{*} \cong K^{\oplus 2} \oplus (V \otimes K_{\delta}).
$$

\item{} Otherwise,
$$
V \otimes V^{*} \cong K^{\oplus 2} \oplus W
$$
where $W=V(\overline{\lambda}\upsilon \mu)$, and $W$ is not isomorphic to $V(\lambda \delta)$ for any $\delta \in \operatorname{Gal}(K/k)$.
\end{enumerate}
\end{prop}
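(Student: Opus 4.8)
I would translate the statement into the combinatorics of $G$-orbits of embeddings $K\to\overline{K}$. Since $k$ is perfect and $K/k$ is finite, $K/k$ is separable, so $K\otimes_{k}K$ is a finite product of fields; in particular it is semisimple, and any finite-rank two-sided vector space is a finite direct sum of simples, classified by $\Lambda(K)$ via Theorem~\ref{thm.papp}. The first goal is to split $V\otimes V^{*}\cong K^{\oplus 2}\oplus W$ with $W$ of rank $2$ and no trivial summand; the second is to identify $W$ by tracking common eigenvalues over $\overline{K}$ exactly as in the proof of Lemma~\ref{lemma.autocomp}. For Step~1: because $V$ has finite rank, $(-\otimes V,-\otimes V^{*})$ is an adjoint pair, and since $K\otimes V\cong V$,
$$
\operatorname{Hom}_{K\otimes_{k}K}(K,\,V\otimes V^{*})\;\cong\;\operatorname{Hom}_{K\otimes_{k}K}(V,V).
$$
As $V=V(\lambda)$ is simple over the semisimple ring $K\otimes_{k}K$, $\operatorname{End}_{K\otimes_{k}K}(V)$ is a field with $\dim_{k}\operatorname{End}_{K\otimes_{k}K}(V)=\dim_{k}V(\lambda)=[K(\lambda):k]=[K(\lambda):K]\,[K:k]=2[K:k]$, the last equality because $V(\lambda)$ has rank $2$. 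Since $\operatorname{Hom}_{K\otimes_{k}K}(K,K)=K$ and $\operatorname{Hom}_{K\otimes_{k}K}(K,V(\nu))=0$ for every simple $V(\nu)\not\cong K$, the trivial module occurs in $V\otimes V^{*}$ with multiplicity exactly $2$; as $V\otimes V^{*}$ has rank $4$, the complementary summand $W$ has rank $2$ and no trivial part, and this is the $K^{\oplus 2}$ in all three cases.

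\textbf{Step 2: the common eigenvalues of $W$.} Writing $V\cong K^{2}_{\phi}$, over $\overline{K}$ the map $\phi$ has the two common eigenvalues $\lambda$ and $\rho\lambda$; and, $V^{*}$ being simple of rank $2$ with associated orbit $\mu^{G}$ (this is the reason $\mu$ is defined as it is), writing $V^{*}\cong K^{2}_{\psi}$, the common eigenvalues of $\psi$ are $\mu$ and $\upsilon\mu$. Arguing as in Lemma~\ref{lemma.autocomp}, the simple summands of $V\otimes V^{*}$ are the $V(\xi)$ for $\xi$ running over the multiset $\{\,\overline{\chi}\circ\eta:\chi\in\{\lambda,\rho\lambda\},\ \eta\in\{\mu,\upsilon\mu\}\,\}$, where $\overline{\lambda}$ is the fixed extension of $\lambda$ and the extension of $\rho\lambda$ is taken to be $\rho\overline{\lambda}$. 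Using $\mu=(\overline{\lambda})^{-1}|_{K}$ and $\rho,\upsilon\in G$,
$$
\overline{\lambda}\circ\mu=\operatorname{id}_{K},\quad (\rho\overline{\lambda})\circ\mu=\rho|_{K}=\operatorname{id}_{K},\quad \overline{\lambda}\circ\upsilon\mu=:\nu,\quad (\rho\overline{\lambda})\circ\upsilon\mu=\rho\nu ,
$$
so the multiset is $\{\operatorname{id},\operatorname{id},\nu,\rho\nu\}$: the two copies of $\operatorname{id}$ recover the $K^{\oplus 2}$ of Step~1, and $W$ has common-eigenvalue multiset $\{\nu,\rho\nu\}$ with $\nu=\overline{\lambda}\upsilon\mu$.

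\textbf{Step 3: the trichotomy.} A rank-$2$ simple has two distinct common eigenvalues, a rank-$1$ simple has its eigenvalue in $\operatorname{Gal}(K/k)$, and $\nu,\rho\nu\in\operatorname{Gal}(K/k)$ would force $\rho\nu=\nu$ (as $\rho\in G$ fixes $K$). Hence $W$ is decomposable iff $\nu=\rho\nu$ iff $\nu(K)\subseteq K$, which is case~(1); then $W\cong K_{\gamma}^{\oplus 2}$ with $\gamma:=\nu=\overline{\lambda}\upsilon\mu|_{K}$, and $\operatorname{ord}\gamma=2$ because $|\mu^{G}|=2$ with $\upsilon\mu\ne\mu$ forces $\upsilon^{2}\mu=\mu$, so $\upsilon^{2}$ fixes $\mu(K)=\overline{\lambda}^{-1}(K)$ pointwise and $\gamma^{2}=\overline{\lambda}\,\upsilon^{2}\,\overline{\lambda}^{-1}|_{K}=\operatorname{id}_{K}$ (while $\gamma\ne\operatorname{id}$ since $W$ has no trivial summand). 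If instead $\nu\ne\rho\nu$, then $W=V(\nu)=V(\overline{\lambda}\upsilon\mu)$ is simple of rank $2$, and I split further. In case~(2), $\nu(K)\subseteq\rho\lambda(K)$, so $\delta:=\mu\rho^{-1}\overline{\lambda}\upsilon\mu|_{K}=\overline{\lambda}^{-1}\rho^{-1}\nu|_{K}$ has image inside $\overline{\lambda}^{-1}\rho^{-1}\rho\lambda(K)=K$, hence $\delta\in\operatorname{Gal}(K/k)$; by Lemma~\ref{lemma.autocomp}, $V\otimes K_{\delta}=V(\lambda)\otimes K_{\delta}\cong V(\lambda\circ\delta)$, and $\lambda\circ\delta=\overline{\lambda}\circ\overline{\lambda}^{-1}\rho^{-1}\nu=\rho^{-1}\nu$ has $G$-orbit $\nu^{G}$, so $V\otimes K_{\delta}\cong V(\nu)\cong W$. (Cases~(1),(2) are disjoint: $\nu(K)\subseteq K\cap\rho\lambda(K)$ would force $\rho\lambda(K)=K$ by comparing $k$-dimensions, hence $\lambda(K)=K$, contradicting that $V(\lambda)$ has rank $2$.) In case~(3) neither containment holds and $W=V(\overline{\lambda}\upsilon\mu)$; if $W\cong V(\lambda\delta')$ for some $\delta'\in\operatorname{Gal}(K/k)$ then $\nu=g\lambda\delta'$ with $g\in G$, so $g\lambda\in\lambda^{G}=\{\lambda,\rho\lambda\}$, and either $\nu=\rho\lambda\delta'$ (forcing $\nu(K)=\rho\lambda(K)$, contradicting case~(3)) or $\nu=\lambda\delta'$ (forcing $\upsilon\mu=\overline{\lambda}^{-1}\nu|_{K}=\delta'\in\operatorname{Gal}(K/k)$, contradicting $|(\upsilon\mu)^{G}|=|\mu^{G}|=2$).

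\textbf{Main obstacle.} The delicate part is Step~2: making rigorous the common-eigenvalue bookkeeping for a tensor product of two \emph{rank-two} simple two-sided vector spaces --- Lemma~\ref{lemma.autocomp} only treats the case where one tensor factor has rank one --- and, in particular, choosing the extensions of the embeddings in $\lambda^{G}$ consistently so that $(\rho\overline{\lambda})\circ\mu$ is literally $\operatorname{id}_{K}$ rather than just some element of the trivial orbit.
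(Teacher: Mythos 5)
Your Step~3 trichotomy is essentially identical to the paper's argument: the $\upsilon^{2}\mu=\mu$ step for order~$2$ in case~(1), the orbit-matching identity $\rho\lambda\delta=\overline{\lambda}\upsilon\mu|_{K}$ in case~(2), and ruling out the two $G$-translates $\lambda\delta'$ and $\rho\lambda\delta'$ in case~(3). The real difference is in how you establish the splitting $V(\lambda)\otimes V(\mu)\cong K^{\oplus 2}\oplus W$ with $W$ identified. The paper handles this in one sentence by invoking \cite[Theorem 3.13]{hart} for $V^{*}\cong V(\mu)$ and then \emph{the proof of} \cite[Theorem 4.7]{papp}, which says directly that the tensor product is $K^{\oplus 2}$ plus $V(\overline{\lambda}\upsilon\mu)$ or $K_{\overline{\lambda}\upsilon\mu}^{\oplus 2}$ according to the $G$-orbit size of $\overline{\lambda}\upsilon\mu$. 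Your Step~1 is a clean self-contained alternative for half of that package: the multiplicity-two count via the bimodule-level adjunction $\operatorname{Hom}_{K\otimes_{k}K}(K,V\otimes V^{*})\cong\operatorname{End}_{K\otimes_{k}K}(V)$ together with $\dim_{k}\operatorname{End}(V)=[K(\lambda):k]=2[K:k]$ is correct and avoids the reference for the multiplicity. Your Step~2 eigenvalue bookkeeping, however --- the multiset $\{\operatorname{id},\operatorname{id},\nu,\rho\nu\}$ for a tensor of two rank-$2$ simples --- is exactly the rank-$2\times 2$ content of \cite[Theorem 4.7]{papp}; as you flag yourself, Lemma~\ref{lemma.autocomp} only covers twists by rank-$1$ factors and does not give it. You have correctly identified the obstacle, but the intended resolution is simply to cite Pappacena's theorem rather than reconstruct it, and that is what the paper does.
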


\begin{proof}
By \cite[Theorem 3.13]{hart}, there is an isomorphism $V(\lambda)^{*} \rightarrow V(\mu)$, and by the proof of \cite[Theorem 4.7]{papp}, the tensor product $V(\lambda) \otimes V(\mu)$ is isomorphic to the direct sum of $K^{\oplus 2}$ with either $V(\overline{\lambda}\upsilon \mu)$ if $\overline{\lambda}\upsilon \mu$ has $G$-orbit size 2 or with $K_{\overline{\lambda}\upsilon \mu}^{\oplus 2}$ if $\overline{\lambda}\upsilon \mu$ has $G$-orbit size 1.

Suppose $\overline{\lambda} \upsilon \mu|_{K} \subset K$ so that $\overline{\lambda} \upsilon \mu|_{K} \in \operatorname{Gal}(K/k)$.  Then $\overline{\lambda} \upsilon \mu|_{K}$ is stable under the action of $G$, so has orbit size 1.  We first show that $\overline{\lambda} \upsilon \mu$ has order 2 in $\operatorname{Gal}(K/k)$.  To this end, we have $\overline{\lambda} \upsilon \mu|_{K}\overline{\lambda} \upsilon \mu|_{K}=\overline{\lambda} \upsilon^{2} \mu|_{K}$.  If $\upsilon^{2} \mu|_{K} = \upsilon \mu|_{K}$, then this would contradict the assumption on $\upsilon$.  Therefore, since $|\mu^{G}|=2$, we must have $\upsilon^{2} \mu|_{K} = \mu|_{K}$ so that $\overline{\lambda} \upsilon^{2} \mu|_{K}$ is the identity.  It follows that the order of $\overline{\lambda} \upsilon \mu|_{K}$ is at most 2.  To show the order is exactly 2, we suppose $|\overline{\lambda} \upsilon \mu|_{K}|=1$.  Then $\overline{\lambda} \upsilon \mu|_{K}=\overline{\lambda} \mu|_{K}$, which implies that $\upsilon \mu|_{K}=\mu|_{K}$.  This again contradicts the assumption on $\upsilon$ and the fact that $\overline{\lambda} \upsilon \mu$ has order 2 follows.  The second part of (1) follows from the proof of \cite[Theorem 4.7]{papp} in light of the fact that in this case, $\overline{\lambda}\upsilon \mu$ has orbit size 1.

Next, suppose that $\overline{\lambda} \upsilon \mu(K) \subset \rho \lambda(K)$.  Then $\delta := \mu \rho^{-1} \overline{\lambda} \upsilon \mu|_{K}$ is an element of $\operatorname{Gal}(K/k)$.  By definition of $\delta$, $\lambda \delta = \rho^{-1} \overline{\lambda} \upsilon \mu_{K}$, and thus
\begin{equation} \label{eqn.deltarel}
\rho \lambda \delta = \overline{\lambda} \upsilon \mu|_{K}.
\end{equation}
Now, on the one hand, $V \otimes K_{\delta}$ corresponds, via Lemma \ref{lemma.autocomp}, to the $G$-orbit of $\lambda \delta$.  This orbit is $\{\lambda \delta, \rho \lambda \delta\}$.  On the other hand, the nontrivial summand in $V \otimes V^{*}$ corresponds to the $G$-orbit of $\overline{\lambda} \upsilon \mu$.  Since these orbits are equal by (\ref{eqn.deltarel}), the second part of (2) follows.

In the final case, we first note that $\overline{\lambda} \upsilon \mu (K)$ is not contained in $K$, so that the $G$-orbit size of $\overline{\lambda} \upsilon \mu$ is two.  We next note that if $\overline{\lambda} \upsilon \mu (K) = \lambda(K)$, then $\upsilon \mu \in \operatorname{Gal}(K/k)$, which is a contradiction.  Therefore, $\overline{\lambda} \upsilon \mu(K)$ is not equal to $\lambda(K)$ or $\rho \lambda (K)$.  It follows that $\overline{\lambda} \upsilon \mu|_{K} \neq \lambda \delta$ and $\overline{\lambda} \upsilon \mu|_{K} \neq \rho \lambda \delta$ for any $\delta \in \operatorname{Gal}(K/k)$.
\end{proof}

\subsection{Noncommutative symmetric algebras}

In this section we recall (from \cite{vandenbergh}) the definition of the noncommutative symmetric algebra of a rank $n$ two-sided vector space $V$.  The {\it noncommutative symmetric algebra generated by $V$}, denoted $\mathbb{S}(V)$, is the $\mathbb{Z}$-algebra (see \cite[Section 2]{quadrics} for a definition of $\mathbb{Z}$-algebra) $\underset{i,j \in
\mathbb{Z}}{\oplus}A_{ij}$ with components defined as follows:
\begin{itemize}
\item{} $A_{ij}=0$ if $i>j$.

\item{} $A_{ii}=K$.

\item{} $A_{i,i+1}=V^{i*}$.
\end{itemize}
In order to define $A_{ij}$ for $j>i+1$, we introduce some notation: we define $T_{i,i+1} := A_{i,i+1}$, and, for $j>i+1$, we define
$$
T_{ij} := A_{i,i+1} \otimes A_{i+1,i+2} \otimes \cdots \otimes A_{j-1,j}.
$$
We let $R_{i,i+1}:= 0$, $R_{i,i+2}:=Q_{i}$,
$$
R_{i,i+3}:=Q_{i} \otimes V^{(i+2)*}+V^{i*} \otimes Q_{i+1},
$$
and, for $j>i+3$, we let
$$
R_{ij} := Q_{i} \otimes T_{i+2,j}+T_{i,i+1}\otimes Q_{i+1} \otimes T_{i+3,j}+\cdots + T_{i,j-2} \otimes Q_{j-2}.
$$

\begin{itemize}
\item{} For $j>i+1$, we define $A_{ij}$ as the quotient $T_{ij}/R_{ij}$.
\end{itemize}
Multiplication in $\mathbb{S}(V)$ is defined as follows:
\begin{itemize}

\item{} if $x \in A_{ij}$, $y \in A_{lk}$ and $j \neq l$, then $xy=0$,

\item{} if $x \in A_{ij}$ and $y \in A_{jk}$, with either $i=j$ or $j=k$, then $xy$ is induced by the usual scalar action,

\item{}  otherwise, if $i<j<k$, we have
\begin{eqnarray*}
A_{ij} \otimes A_{jk} & = & \frac{T_{ij}}{R_{ij}} \otimes \frac{T_{jk}}{R_{jk}}\\
& \cong & \frac{T_{ik}}{R_{ij}\otimes T_{jk}+T_{ij} \otimes
R_{jk}}.
\end{eqnarray*}
Since $R_{ij} \otimes T_{jk}+T_{ij} \otimes R_{jk}$ is a submodule of $R_{ik}$,
there is thus an epi $\mu_{ijk}:A_{ij} \otimes A_{jk} \rightarrow
A_{ik}$.
\end{itemize}

\begin{lemma} \label{lemma.periodic}
For $i,j \in \mathbb{Z}$ with $j>i$, there is an isomorphism $V^{i*} \otimes \cdots \otimes V^{j-1*} \rightarrow V^{i+2*} \otimes \cdots \otimes V^{j+1*}$ compatible with the relations in $\mathbb{S}(V)$.
\end{lemma}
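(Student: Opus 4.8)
The plan is to construct the desired map factor by factor from the canonical biduality identifications of the iterated duals, and then to verify that these identifications are compatible with the unit maps $\eta_\ell$ (equivalently, with the subspaces $Q_\ell$) out of which the relations $R_{ij}$ are built.

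\emph{Step 1 (canonical period-two identifications of the duals).} Since $V$ has finite rank, so does every $V^{\ell*}$, and by \cite[Theorem 3.13]{hart} the right-dual functor $(-)^{*}$ on finite-rank two-sided vector spaces is an autoequivalence with $(-)^{**}\cong\operatorname{id}$; moreover the left-dual functor ${}^{*}(-)$ is quasi-inverse to it (via the adjunction isomorphisms ${}^{*}(M^{*})\cong M$ and $({}^{*}M)^{*}\cong M$), so ${}^{*}(-)\cong(-)^{*}$ as well. I would extract from this, for each $\ell\in\mathbb{Z}$, a canonical isomorphism of two-sided vector spaces $c_{\ell}\colon V^{\ell*}\to V^{(\ell+2)*}$: for $\ell\ge 0$ the biduality isomorphism of $V^{\ell*}$ (note $V^{(\ell+2)*}=(V^{\ell*})^{**}$); for $\ell\le-2$ the inverse of the left-biduality isomorphism of $V^{(\ell+2)*}$ (note $V^{\ell*}={}^{*}({}^{*}(V^{(\ell+2)*}))$); and for $\ell=-1$ the instance ${}^{*}V\cong V^{*}$ of ${}^{*}(-)\cong(-)^{*}$. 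These choices are compatible with the recursive definition of the $V^{\ell*}$.

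\emph{Step 2 (compatibility with the unit maps).} I would then prove $(c_{\ell}\otimes c_{\ell+1})\circ\eta_{\ell}=\eta_{\ell+2}$ for all $\ell$, whence $(c_{\ell}\otimes c_{\ell+1})(Q_{\ell})=Q_{\ell+2}$. Abstractly, $c_{\ell}$ and $c_{\ell+1}$ transport the adjoint pair structure on $(-\otimes V^{\ell*},-\otimes V^{(\ell+1)*})$ to the one on $(-\otimes V^{(\ell+2)*},-\otimes V^{(\ell+3)*})$ and hence carry unit to unit; to make this precise I would use the Eilenberg--Watts description, whereby $\eta_{\ell}(1)$ is the coevaluation element $\sum_{t}e_{t}\otimes e_{t}^{\vee}$ for a right $K$-basis $\{e_{t}\}$ of $V^{\ell*}$ with dual basis $\{e_{t}^{\vee}\}$ of $V^{(\ell+1)*}=(V^{\ell*})^{*}$, and observe that biduality sends $\{e_{t}\}$ to the bidual basis and sends dual bases to dual bases, so $c_{\ell}\otimes c_{\ell+1}$ carries $\eta_{\ell}(1)$ to the coevaluation element of $V^{(\ell+2)*}$, i.e. to $\eta_{\ell+2}(1)$. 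I expect this to be the main obstacle: one must be certain that the unit supplied by \cite{hart} is the canonical, biduality-compatible one rather than one differing from it by an automorphism of $-\otimes V^{(\ell+2)*}$ (which would move $Q_{\ell+2}$), and the coevaluation description is what secures this.

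\emph{Step 3 (assembling the isomorphism).} Finally I would set
\[
\Psi_{ij}:=c_{i}\otimes\cdots\otimes c_{j-1}\colon\ V^{i*}\otimes\cdots\otimes V^{(j-1)*}\ \longrightarrow\ V^{(i+2)*}\otimes\cdots\otimes V^{(j+1)*},
\]
an isomorphism of two-sided vector spaces, and check term by term, using Step 2, that $\Psi_{ij}$ carries the summand $T_{i,\ell}\otimes Q_{\ell}\otimes T_{\ell+2,j}$ of $R_{ij}$ onto the summand $T_{i+2,\ell+2}\otimes Q_{\ell+2}\otimes T_{\ell+4,j+2}$ of $R_{i+2,j+2}$; summing over $\ell$ gives $\Psi_{ij}(R_{ij})=R_{i+2,j+2}$, so $\Psi_{ij}$ descends to an isomorphism $A_{ij}=T_{ij}/R_{ij}\to T_{i+2,j+2}/R_{i+2,j+2}=A_{i+2,j+2}$. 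Because $\Psi$ is defined factor by factor it also commutes with the concatenation isomorphisms $T_{ij}\otimes T_{jk}\cong T_{ik}$ and with the scalar actions, hence with the multiplication maps $\mu_{ijk}$; this gives the asserted compatibility with the relations in $\mathbb{S}(V)$.
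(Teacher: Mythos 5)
Your Step 1 and Step 3 are fine in outline, and Step 3 (checking that $R_{ij}$ is carried to $R_{i+2,j+2}$ term by term once the units match) is in fact spelled out more explicitly than in the paper. However there is a genuine gap in Step 2, and you have essentially put your finger on it yourself.

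The paper does not try to prove that any previously-chosen ``canonical'' isomorphisms $c_\ell\colon V^{\ell*}\to V^{(\ell+2)*}$ satisfy $(c_\ell\otimes c_{\ell+1})\circ\eta_\ell=\eta_{\ell+2}$. Instead it picks a single isomorphism $\phi_0\colon V\to V^{**}$ (from \cite[Theorem 3.13]{hart}) and then \emph{defines} $\phi_1$ to be the unique conjugate partner of $\phi_0$ in the sense of \cite[Theorem 2, p.~100]{cats}, and inductively $\phi_{i+1}$ from $\phi_i$; the equation $(\phi_i\otimes\phi_{i+1})\circ\eta_i=\eta_{i+2}$ then holds by the very definition of conjugate pair, and nothing has to be verified. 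Your plan inverts this: you want to fix each $c_\ell$ independently as ``the biduality isomorphism'' and then verify the unit compatibility afterwards. The problem is that in the two-sided setting there is no naive biduality map to appeal to: the evaluation map $v\mapsto(\psi\mapsto\psi(v))$ from $V$ to $V^{**}$ is \emph{not} a $K$-bimodule map (it fails left $K$-linearity because $\psi$ is only right $K$-linear), which is precisely why $V\cong V^{**}$ is a theorem in \cite{hart} rather than a formal identity. Your Step 2 calculation (``biduality sends dual bases to dual bases, so $c_\ell\otimes c_{\ell+1}$ carries coevaluation to coevaluation'') silently imports the ordinary-vector-space evaluation picture; without an explicit, correct description of the map that \cite[Theorem 3.13]{hart} actually produces, the claim that it respects dual bases and units is unproved, and the $c_\ell$ you have ``extracted'' may differ from the conjugate ones by an automorphism of $-\otimes V^{(\ell+2)*}$ (for $V$ simple, a torsor over $K^*$), which would move $Q_{\ell+2}$. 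You flagged this yourself as ``the main obstacle'' — and the paper's move is exactly to sidestep it by building the isomorphisms so that the obstacle never arises.

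A secondary, smaller issue: your piecewise definition of $c_\ell$ (right biduality for $\ell\ge0$, left biduality for $\ell\le-2$, and a separate identification ${}^{*}V\cong V^{*}$ at $\ell=-1$) stitches together three different constructions, and you would need to check that these agree at the seams in a way compatible with the units. The paper's uniform inductive recipe (conjugate partner on the right, conjugate partner on the left) avoids having to reconcile distinct constructions.

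If you want to repair Step 2 along your lines, the cleanest fix is to define $c_1$ (and then all $c_\ell$) exactly as the conjugate partner of $c_0$ — at which point you have reproduced the paper's proof — or else to write down the isomorphism $V\to V^{**}$ from \cite{hart} explicitly and verify the coevaluation computation directly for it. As written, the coevaluation step is a plausible heuristic but not a proof.
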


\begin{proof}
For each $i\in \mathbb{Z}$ we construct an isomorphism $\phi_{i}:V^{i*} \rightarrow V^{i+2*}$.  The desired isomorphism will be the tensor product of these.

We begin by inductively constructing isomorphisms $\phi_{n}: V^{n*} \rightarrow V^{n+2*}$ for $n \geq 0$.  There exists an isomorphism $\phi_{0}: V \rightarrow V^{2*}$ by \cite[Theorem 3.13]{hart}.  Now, suppose there is an isomorphism $\phi_{i}:V^{i*} \rightarrow V^{i+2*}$.  This induces an isomorphism of functors
$$
-\otimes  V^{i*} \rightarrow -\otimes  V^{i+2*},
$$
and we abuse notation by denoting it by $\phi_{i}$.  On the other hand, by \cite[Proposition 3.7]{hart}, there is an adjunction with left and right adjoint $-\otimes  V^{l*}$ and $-\otimes  V^{l+1*}$ respectively, for $l=i$ and $l=i+2$.  It follows that there exists a unique natural equivalence $\psi:-\otimes V^{i+3*} \rightarrow -\otimes V^{i+1*}$ between right adjoints such that $(\phi_{i}, \psi)$ is a conjugate pair (see \cite[Theorem 2, p.100]{cats} for a definition of conjugate pair and a proof of this fact).  Therefore, if we again abuse notation by letting $\psi:V^{i+3*} \rightarrow V^{i+1*}$ denote the isomorphism corresponding, via the Eilenberg-Watts Theorem, to the natural transformation $\psi$, then we set $\phi_{i+1}=\psi^{-1}$.

To construct $\phi_{i}$ for $i<0$ we proceed inductively as above using left adjoints instead of right adjoints to define $\psi$.

It remains to prove that the isomorphism is compatible with the relations in $\mathbb{S}(V)$.  To this end, by the definition of conjugate pair in \cite[Theorem 2, p.100]{cats}, the diagram
$$
\begin{CD}
K & \rightarrow & V^{i*} \otimes  V^{i+1*} \\
@VVV @VV{\phi_{i} \otimes  V^{i+1*}}V \\
V^{i+2*} \otimes  V^{i+3*} & \underset{V^{i+2**} \otimes \phi_{i+1}^{-1}}{\rightarrow} & V^{i+2*} \otimes V^{i+1*}
\end{CD}
$$
whose unlabeled arrows are induced by counits of the appropriate adjoint pair, commutes.  The lemma follows from this.
\end{proof}

Following \cite[Section 2]{quadrics}, if $A$ is a $\mathbb{Z}$-algebra and $i \in \mathbb{Z}$, we let $A(i)$ denote the $\mathbb{Z}$-algebra such that $A(i)_{jl}=A_{i+j,i+l}$ and with multiplication induced from that on $A$.  As in the proof of Lemma \ref{lemma.periodic}, the identity map $V^{i*} \rightarrow V^{i*}$ induces an isomorphism of $\mathbb{Z}$-algebras over $K$,
\begin{equation} \label{eqn.canonicalz}
\mathbb{S}(V)(i) \rightarrow \mathbb{S}(V^{i*}).
\end{equation}
Let $J \subset \mathbb{Z}$ denote the set of all even numbers, and let $B$ denote the $J$-Veronese of $\mathbb{S}(V)$ \cite[Section 2]{quadrics}.  The following result is implicit in \cite{vandenbergh}, but since it is not stated explicitly there we include it here for the readers convenience.

\begin{cor} \label{cor.periodic}
The algebra $B$ is $2$-periodic, that is, there is an isomorphism of $\mathbb{Z}$-algebras $B \cong B(2)$.
\end{cor}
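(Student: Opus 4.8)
The plan is to derive the $2$-periodicity of $B$ from a $2$-periodicity of $\mathbb{S}(V)$ itself. Concretely, I claim the isomorphisms $\phi_{i}\colon V^{i*} \to V^{(i+2)*}$ produced in the proof of Lemma \ref{lemma.periodic} assemble into an isomorphism of $\mathbb{Z}$-algebras $\Theta\colon \mathbb{S}(V) \to \mathbb{S}(V)(2)$. Tensoring the $\phi_{i}$ gives isomorphisms $T_{ij} \to T_{i+2,j+2}$, and the assertion of Lemma \ref{lemma.periodic} that these are ``compatible with the relations'' says precisely that they carry $R_{ij}$ onto $R_{i+2,j+2}$; hence they descend to isomorphisms on each graded piece $A_{ij} \to A_{i+2,j+2} = \mathbb{S}(V)(2)_{ij}$. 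Since on each piece the map is a quotient of a tensor product of the $\phi_{i}$, it is automatically compatible with the multiplications $\mu_{ijk}$, so $\Theta$ is an isomorphism of $\mathbb{Z}$-algebras. (Alternatively, one may take $\Theta$ to be the composite of the isomorphism $\mathbb{S}(V)(2) \cong \mathbb{S}(V^{2*})$ of \eqref{eqn.canonicalz} with the isomorphism $\mathbb{S}(V^{2*}) \cong \mathbb{S}(V)$ induced by any isomorphism $V^{2*} \cong V$, which exists because $V \cong V^{**}$.)

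Granting this, applying the shift-by-$2$ functor to $\Theta$ produces an isomorphism $\mathbb{S}(V)(2) \cong \mathbb{S}(V)(4)$, and composing with $\Theta$ gives $\mathbb{S}(V) \cong \mathbb{S}(V)(4)$ as $\mathbb{Z}$-algebras. It remains to pass to the $J$-Veronese. Here I would record the elementary fact that if $C$ denotes the $J$-Veronese of a $\mathbb{Z}$-algebra $D$, then the $J$-Veronese of $D(2m)$ is $C(m)$: both have $(i,j)$-component $D_{2i+2m,\,2j+2m}$, with matching multiplication maps inherited from $D$. Taking $D = \mathbb{S}(V)$ and $m = 2$, the $J$-Veronese of $\mathbb{S}(V)(4)$ is $B(2)$, so applying the $J$-Veronese to the isomorphism $\mathbb{S}(V) \cong \mathbb{S}(V)(4)$ yields the required isomorphism $B \cong B(2)$.

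I do not expect a genuine obstacle: the corollary is essentially a restatement of Lemma \ref{lemma.periodic}. The only points needing care are (i) checking that the component-wise isomorphisms of Lemma \ref{lemma.periodic} respect the multiplication maps $\mu_{ijk}$, so that $\Theta$ is a homomorphism of $\mathbb{Z}$-algebras and not merely an isomorphism of graded objects, and (ii) fixing the conventions for the $J$-Veronese carefully enough that the shift bookkeeping comes out right (a shift by $4$ in $\mathbb{S}(V)$ becoming a shift by $2$ in $B$). In fact the same argument shows $\mathbb{S}(V) \cong \mathbb{S}(V)(2)$, hence that $B$ is even $1$-periodic; but the weaker statement $B \cong B(2)$ is all that will be needed.
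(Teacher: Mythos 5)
Your overall strategy matches the paper's: Lemma~\ref{lemma.periodic} supplies the component-wise isomorphisms, and compatibility with the multiplications $\mu_{ijk}$ is automatic because your maps on the $T_{ij}$ are tensor products of the $\phi_{\ell}$. Lifting this to a $\mathbb{Z}$-algebra isomorphism $\mathbb{S}(V)\cong\mathbb{S}(V)(2)$ and then restricting to the Veronese is exactly the content of the paper's (very terse) proof, stated a bit more globally. The alternative via \eqref{eqn.canonicalz} and $V\cong V^{**}$ is also reasonable, with the caveat that it depends on a choice of isomorphism $V\to V^{**}$.

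However, the Veronese bookkeeping is off by a factor of $2$. In the paper, $B$ is indexed by $J=2\mathbb{Z}$ --- its proof writes $\phi_{ij}\colon B_{ij}\to B_{i+2,j+2}$ for $i,j\in 2\mathbb{Z}$ --- so $B(2)_{ij}=B_{i+2,j+2}$ is a shift by $2$ in the $2\mathbb{Z}$-indexing, i.e.\ the smallest nontrivial shift. Thus the $J$-Veronese of $\mathbb{S}(V)(2)$ is $B(2)$, and the $J$-Veronese of $\mathbb{S}(V)(4)$ is $B(4)$. Passing through $\mathbb{S}(V)\cong\mathbb{S}(V)(4)$ as you do therefore gives only $B\cong B(4)$, which is weaker than the corollary. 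You should instead pass through $\mathbb{S}(V)\cong\mathbb{S}(V)(2)$ directly, which your argument already establishes. Your parenthetical remark that this would make ``$B$ even $1$-periodic'' is, once the indexing conventions are aligned, exactly the statement $B\cong B(2)$ --- not a strictly stronger conclusion, but precisely the one needed.
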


\begin{proof}
It follows from Lemma \ref{lemma.periodic} that there are $K-K$-bimodule isomorphisms $\phi_{ij}:B_{ij} \rightarrow B_{i+2,j+2}$ for all $i,j \in 2 \mathbb{Z}$.  The fact that the $\phi_{ij}$ are compatible with multiplication in $B$, follows from a routine but tedious verification, which we omit.
\end{proof}

For the remainder of this section, we will work towards proofs of Corollary \ref{cor.gen1} and Corollary \ref{cor.gen2}, which will be used to prove Proposition \ref{prop.newnewfactor}.  We now introduce some notation we will utilize in this section:  if ${\sf E}$ is a category, we let $I_{\sf E}$ denote the identity functor on ${\sf E}$.
\begin{lemma} \label{lemma.categories}
Let ${\sf C}$ and ${\sf D}$ be categories, let $F, F':{\sf C} \rightarrow {\sf D}$ and $G, G':{\sf D} \rightarrow {\sf C}$ denote functors, and let $(F,G,\eta,\epsilon)$, $(F',G',\eta',\epsilon')$ denote adjunctions.  Suppose that $\alpha:I_{\sf C} \rightarrow I_{\sf C}$, $\beta:I_{\sf D} \rightarrow I_{\sf D}$, $\gamma_{i}:F \rightarrow F'$, and $\delta_{i}:G \rightarrow G'$ are isomorphisms for $i=1,2$, and consider the diagrams
$$
\begin{CD}
I_{\sf C} & \overset{\eta}{\rightarrow} & GF & &\hskip .5in & & FG & \overset{\epsilon}{\rightarrow} & I_{\sf D} \\
@V{\alpha}VV @VV{\delta_{1} \gamma_{i}}V &  \hskip .5in & @V{\gamma_{1} \delta_{i}}VV @VV{\beta}V \\
I_{\sf C} & \underset{\eta'}{\rightarrow} & G'F' & & \hskip .5in & & F'G' & \underset{\epsilon'}{\rightarrow} & I_{\sf D}
\end{CD}
$$
If the left diagram commutes for $i=1,2$, then $\gamma_{1}=\gamma_{2}$, and if the right diagram commutes for $i=1,2$, then $\delta_{1}=\delta_{2}$.
\end{lemma}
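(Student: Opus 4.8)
The plan is to reduce both assertions to a standard fact about an adjunction $(F,G,\eta,\epsilon):{\sf C}\to{\sf D}$: for any functor $H:{\sf C}\to{\sf D}$ the assignment $\sigma\mapsto (G\sigma)\circ\eta$ is a bijection from natural transformations $F\to H$ onto natural transformations $I_{\sf C}\to GH$, with inverse $\theta\mapsto(\epsilon H)\circ(F\theta)$ (the two triangle identities together with naturality of $\epsilon$ show these are mutually inverse); dually, $\tau\mapsto\epsilon\circ(F\tau)$ is a bijection from natural transformations $H\to G$ onto natural transformations $FH\to I_{\sf D}$. I would record these two bijections first; only their injectivity is needed below.

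For the left diagram, commutativity for a fixed $i$ says $(\delta_1\gamma_i)\circ\eta=\eta'\circ\alpha$. Rewriting the horizontal composite by the interchange law as $\delta_1\gamma_i=(\delta_1 F')\circ(G\gamma_i)$ and using that $\delta_1$, hence $\delta_1 F'$, is an isomorphism, I post-compose on the left with $(\delta_1^{-1}F')$ to obtain
\[
(G\gamma_i)\circ\eta=(\delta_1^{-1}F')\circ\eta'\circ\alpha.
\]
The right-hand side is independent of $i$, so $(G\gamma_1)\circ\eta=(G\gamma_2)\circ\eta$; since $\sigma\mapsto(G\sigma)\circ\eta$ is injective (applied with $H=F'$ for the adjunction $(F,G,\eta,\epsilon)$), this forces $\gamma_1=\gamma_2$.

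For the right diagram the argument is the mirror image: commutativity reads $\epsilon'\circ(\gamma_1\delta_i)=\beta\circ\epsilon$, and writing $\gamma_1\delta_i=(F'\delta_i)\circ(\gamma_1 G)$ and cancelling the isomorphism $\gamma_1 G$ on the right gives $\epsilon'\circ(F'\delta_i)=\beta\circ\epsilon\circ(\gamma_1^{-1}G)$, which is again independent of $i$, so $\epsilon'\circ(F'\delta_1)=\epsilon'\circ(F'\delta_2)$; injectivity of $\tau\mapsto\epsilon'\circ(F'\tau)$ for the adjunction $(F',G',\eta',\epsilon')$ (applied with $H=G$) then yields $\delta_1=\delta_2$. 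The only point needing care is the $2$-categorical bookkeeping — rewriting $\delta_1\gamma_i$ and $\gamma_1\delta_i$ via the interchange law so that the \emph{fixed} isomorphism occupies a slot from which it can be cancelled, and checking routine identities such as $(\delta_1^{-1}F')\circ(\delta_1 F')=\operatorname{id}_{GF'}$ — but there is no essential obstacle beyond this.
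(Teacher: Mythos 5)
Your proof is correct and follows essentially the same route as the paper: the paper applies the universal property of the unit $\eta_C$ objectwise to produce a unique $g_C:FC\to F'C$ satisfying the relation obtained after cancelling $\delta_1$, and concludes $\gamma_{1C}=g_C=\gamma_{2C}$. You state this same fact globally as the injectivity of $\sigma\mapsto(G\sigma)\circ\eta$ and carry out the same interchange-law manipulation to isolate and cancel $\delta_1$, so the two arguments coincide.
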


\begin{proof}
We prove the first statement.  The proof of the second is dual to that of the first, and is omitted.

The universal property of $\eta$ implies that for each object $C$ in ${\sf C}$, there exists a unique $g_{C}:FC \rightarrow F'C$ such that the diagram
$$
\begin{CD}
C & \overset{\eta_{C}}{\rightarrow} & GFC \\
@V{\eta_{C}' \alpha_{C}}VV @VV{Gg_{C}}V \\
G'F'C & \underset{{\delta^{-1}_{1}}_{F'C}}{\rightarrow} & GF'C
\end{CD}
$$
commutes.  Therefore, $\gamma_{iC}=g_{C}$ for $i=1,2$ as desired.
\end{proof}

\begin{cor} \label{cor.gen1}
Suppose that $\eta:K \rightarrow {}^{*}V \otimes V$ and $\eta':K \rightarrow {}^{*}W \otimes W$ are unit maps, and that $\psi:V \rightarrow W$ is an isomorphism.  Let $0 \neq a \in K$ and let ${}_{a}\mu:K \rightarrow K$ denote left-multiplication by $a$.  If $\phi:{}^{*}V \rightarrow {}^{*}W$ is an isomorphism such that
$$
\begin{CD}
K & \overset{\eta}{\rightarrow} & {}^{*}V \otimes V \\
@V{{}_{a}\mu}VV @VV{\phi \otimes \psi}V \\
K & \underset{\eta'}{\rightarrow} & {}^{*}W \otimes W
\end{CD}
$$
commutes, then $\phi={}_{a}\mu ({}^{*}\psi)^{-1}$.
\end{cor}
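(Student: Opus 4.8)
The plan is to read the statement through the Eilenberg-Watts correspondence and then invoke the uniqueness assertion in Lemma~\ref{lemma.categories}. Regard $F := -\otimes {}^{*}V$ and $G := -\otimes V$, and likewise $F' := -\otimes {}^{*}W$ and $G' := -\otimes W$, as $k$-linear endofunctors of the category of right $K$-modules. By \cite[Proposition 3.7 and Corollary 3.5]{hart}, $F$ is left adjoint to $G$ and $F'$ is left adjoint to $G'$; after extending the given maps to natural transformations via $M\mapsto \operatorname{id}_{M}\otimes\eta$ and $M\mapsto\operatorname{id}_{M}\otimes\eta'$, we may (by the very meaning of ``unit map'') take $\eta$ and $\eta'$ to be the units of adjunctions $(F,G,\eta,\epsilon)$ and $(F',G',\eta',\epsilon')$ for appropriate counits $\epsilon$, $\epsilon'$. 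Now $\phi$ induces a natural isomorphism $-\otimes\phi:F\to F'$ and $\psi$ induces $-\otimes\psi:G\to G'$, whose horizontal composite $GF\to G'F'$ is $-\otimes(\phi\otimes\psi)$, equal at $K$ to $\phi\otimes\psi:{}^{*}V\otimes V\to{}^{*}W\otimes W$. Taking $\alpha=\beta$ to be the natural transformation of the identity functor given by multiplication by $a$ (so $\alpha_{K}={}_{a}\mu$), the square in the hypothesis is precisely the left-hand square of Lemma~\ref{lemma.categories}, evaluated at $K$, for these data.

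The idea is then to exhibit a second isomorphism $\phi_{0}:{}^{*}V\to{}^{*}W$ satisfying the same square, namely $\phi_{0}:={}_{a}\mu\,({}^{*}\psi)^{-1}$, and conclude via Lemma~\ref{lemma.categories}: with $\gamma_{1}:=-\otimes\phi$, $\gamma_{2}:=-\otimes\phi_{0}$ and $\delta_{1}=\delta_{2}:=-\otimes\psi$, if the left-hand square commutes for $i=1$ (the hypothesis) and for $i=2$ (the new square), then $\gamma_{1}=\gamma_{2}$, hence $\phi=\phi_{0}$ by Eilenberg-Watts. So it remains to verify $(\phi_{0}\otimes\psi)\circ\eta=\eta'\circ{}_{a}\mu$. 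Since $K$ is a field, left multiplication by $a$ is invertible and commutes past every morphism of two-sided vector spaces, and one has $\phi_{0}\otimes\psi={}_{a}\mu\circ\bigl(({}^{*}\psi)^{-1}\otimes\psi\bigr)$ and $\eta'\circ{}_{a}\mu={}_{a}\mu\circ\eta'$; cancelling ${}_{a}\mu$ reduces the claim to
$$
\bigl(({}^{*}\psi)^{-1}\otimes\psi\bigr)\circ\eta=\eta',\qquad\text{equivalently}\qquad\bigl({}^{*}\psi\otimes\psi^{-1}\bigr)\circ\eta'=\eta.
$$

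To prove this last identity I would insert the description of the left dual recalled at the end of Section~\ref{subsection.twosidedvectorspaces}: ${}^{*}\psi$ equals the composite ${}^{*}W\to{}^{*}V\otimes V\otimes{}^{*}W\to{}^{*}V\otimes W\otimes{}^{*}W\to{}^{*}V$ with outer arrows induced by a unit and a counit and middle arrow induced by $\psi$; since this holds for any choice of unit and counit, take the unit to be $\eta$ and the counit to be the $\epsilon'$ paired with $\eta'$. Substituting this into $\bigl({}^{*}\psi\otimes\psi^{-1}\bigr)\circ\eta'$ and reorganizing by the naturality of $\eta'$ and functoriality of the tensor product, the composite rearranges into an instance of the triangle (``zig-zag'') identity $(G'\epsilon')\circ(\eta'G')=\operatorname{id}_{G'}$ for the $W$-adjunction, together with the cancellation $\psi^{-1}\psi=\operatorname{id}_{V}$; these collapse it to $\eta$. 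This rearrangement is the one genuinely calculational step and is where I expect the only real difficulty to lie — everything else is formal manipulation with adjunctions and Eilenberg-Watts. Combined with the previous paragraph, it yields $\phi={}_{a}\mu\,({}^{*}\psi)^{-1}$, as claimed.
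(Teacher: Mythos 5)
Your proof is correct and follows the same architecture as the paper's: invoke Lemma~\ref{lemma.categories} (together with Eilenberg--Watts, via \cite[Theorem 1.2]{ns}) to reduce to showing that $\phi_{0}={}_{a}\mu\,({}^{*}\psi)^{-1}$ makes the square commute. The only difference is that the paper disposes of this last verification by citing \cite[Corollary 6.7]{duality}, whereas you carry it out directly from the composite description of ${}^{*}\psi$ and the zig-zag identity for the $W$-adjunction; that computation is correct.
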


\begin{proof}
By \cite[Theorem 1.2]{ns} and Lemma \ref{lemma.categories}, it suffices to show that $\phi={}_{a}\mu ({}^{*}\psi)^{-1}$ makes the diagram commute.  This fact follows from \cite[Corollary 6.7]{duality}.
\end{proof}

We now work towards proving Corollary \ref{cor.gen2}.  To this end, we recall that there is a canonical isomorphism
$$
V^{**} \otimes V^{*} \rightarrow (V \otimes V^{*})^{*}
$$
defined as follows: by \cite[Proposition 3.7]{hart}, there are adjunctions
$$
(-\otimes V,-\otimes V^{*}, \eta_{0},\epsilon_{0}),
$$
$$(-\otimes V^{*},-\otimes V^{**}, \eta_{1},\epsilon_{1}),
$$
and
$$
(-\otimes (V \otimes V^{*}),-\otimes (V \otimes V^{*})^{*},\eta_{01},\epsilon_{01}).
$$
By \cite[Theorem 1, p. 103]{cats}, we may compose the first two adjunctions to get another adjunction.  Thus, by the uniqueness of right adjoints, there exists a unique isomorphism of functors $\psi:-\otimes (V^{**} \otimes V^{*}) \rightarrow - \otimes (V \otimes V^{*})^{*}$ such that the diagram
\begin{equation} \label{eqn.adjoint}
\begin{CD}
-\otimes (V^{**} \otimes V^{*}) \otimes (V \otimes V^{*}) & \rightarrow & -\otimes (V \otimes V^{*})^{*} \otimes (V \otimes V^{*}) \\
@VVV @VV{\epsilon_{01}}V \\
I_{{\sf Mod }K} & = & I_{{\sf Mod }K}.
\end{CD}
\end{equation}
whose top horizontal is induced by $\psi$ and whose left vertical is induced by $\epsilon_{0}$ and $\epsilon_{1}$, commutes.  The canonical isomorphism $V^{**} \otimes V^{*} \rightarrow (V \otimes V^{*})^{*}$ is the isomorphism corresponding to $\psi$ via the Eilenberg-Watts Theorem.

\begin{lemma} \label{lemma.mess}
Retain the notation in the preceding paragraph and let $\epsilon:V^{**} \otimes V^{*} \rightarrow K$ denote the composition
$$
V^{**} \otimes V^{*} \rightarrow (V \otimes V^{*})^{*} \overset{\eta^{*}_{0}}{\rightarrow} K^{*} \rightarrow K
$$
whose first and last maps are canonical.  Then
$$
(-\otimes V^{*},-\otimes V^{**}, \eta_{1},\epsilon)
$$
is an adjunction.
\end{lemma}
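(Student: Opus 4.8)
The plan is to reduce everything to a single identity of maps of two-sided vector spaces. For any adjunction $(L,R)$ the counit is determined by $L$, $R$ and the unit (indeed $\epsilon_{D}$ is the unique morphism with $R(\epsilon_{D})\circ\eta_{RD}=\mathrm{id}_{RD}$). Since $(-\otimes V^{*},-\otimes V^{**},\eta_{1},\epsilon_{1})$ is already known to be an adjunction, the data $(-\otimes V^{*},-\otimes V^{**},\eta_{1},\epsilon)$ form an adjunction if and only if the natural transformation $-\otimes V^{**}\otimes V^{*}\to I$ induced, via the Eilenberg--Watts theorem, by the map $\epsilon:V^{**}\otimes V^{*}\to K$ of the statement agrees with $\epsilon_{1}$; equivalently, if and only if $\epsilon$ equals the map of two-sided vector spaces underlying the counit $\epsilon_{1}$. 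So it suffices to prove that equality.

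To prove it I would use diagram~(\ref{eqn.adjoint}). Composing the adjunctions $(-\otimes V,-\otimes V^{*},\eta_{0},\epsilon_{0})$ and $(-\otimes V^{*},-\otimes V^{**},\eta_{1},\epsilon_{1})$ produces the adjunction $(-\otimes(V\otimes V^{*}),-\otimes(V^{**}\otimes V^{*}))$, whose counit is, by the standard formula for the counit of a composite adjunction \cite[p.~103]{cats}, the transformation $\epsilon_{1}$ whiskered with $\epsilon_{0}$; diagram~(\ref{eqn.adjoint}) identifies this composite counit with $\epsilon_{01}$ precomposed with the functor induced by the canonical isomorphism $\mathrm{can}:V^{**}\otimes V^{*}\to(V\otimes V^{*})^{*}$. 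Passing to underlying maps of two-sided vector spaces (and suppressing the evident associativity and unit constraints) this becomes
$$
\epsilon_{1}\circ(\mathrm{id}_{V^{**}}\otimes\epsilon_{0}\otimes\mathrm{id}_{V^{*}})\;=\;\epsilon_{01}\circ(\mathrm{can}\otimes\mathrm{id}_{V\otimes V^{*}})
$$
as maps $V^{**}\otimes V^{*}\otimes V\otimes V^{*}\to K$.

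Now precompose both sides with $\mathrm{id}_{V^{**}\otimes V^{*}}\otimes\eta_{0}:V^{**}\otimes V^{*}\to V^{**}\otimes V^{*}\otimes V\otimes V^{*}$. On the left, a triangle identity for $(-\otimes V,-\otimes V^{*},\eta_{0},\epsilon_{0})$ collapses $(\mathrm{id}\otimes\epsilon_{0}\otimes\mathrm{id})\circ(\mathrm{id}\otimes\eta_{0})$ to $\mathrm{id}_{V^{**}\otimes V^{*}}$, so the left side is just $\epsilon_{1}$. On the right, naturality of $\otimes$ lets me slide $\mathrm{can}$ to the outside, rewriting the composite as $\epsilon_{01}\circ(\mathrm{id}_{(V\otimes V^{*})^{*}}\otimes\eta_{0})\circ\mathrm{can}$; and by the description of the right dual of a morphism recalled in Section~\ref{subsection.twosidedvectorspaces}, applied to the morphism $\eta_{0}:K\to V\otimes V^{*}$, the map $\epsilon_{01}\circ(\mathrm{id}_{(V\otimes V^{*})^{*}}\otimes\eta_{0})$ is exactly $\eta_{0}^{*}$ followed by the canonical isomorphism $K^{*}\to K$. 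Hence the right side is precisely the composite defining $\epsilon$, and $\epsilon=\epsilon_{1}$ follows.

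I expect the main obstacle to be bookkeeping rather than any real difficulty: one must form the whiskered composite counit correctly, keep careful track of the several canonical isomorphisms involved (the associativity and unit constraints, $K^{*}\cong K$ and the identification of $(-\otimes K,-\otimes K^{*})$ with the identity adjunction, and the nontrivial $\mathrm{can}$), and reconcile the sidedness conventions hidden in the definitions of $V^{*}$, $\eta_{0}^{*}$ and the dual-of-a-morphism formula, so that the chase genuinely closes. An alternative that avoids~(\ref{eqn.adjoint}) is to verify one triangle identity for $(\eta_{1},\epsilon)$ directly from explicit coevaluation and evaluation formulas for $\eta_{0}$, $\eta_{1}$ and $\epsilon_{0}$, but this is at least as computational and no more enlightening.
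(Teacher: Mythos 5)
Your proposal is correct, and it reaches the conclusion along a path that is related to but cleaner than the paper's. The paper proves the lemma by directly verifying both triangle identities for the data $(-\otimes V^{*},-\otimes V^{**},\eta_{1},\epsilon)$: it expands $\epsilon$ via its definition and $\eta_{0}^{*}$, then appeals to the commutativity of (\ref{eqn.adjoint}) together with the known triangle identities for $(\eta_{0},\epsilon_{0})$ and $(\eta_{1},\epsilon_{1})$. You instead make the preliminary observation that, with the adjunction $(-\otimes V^{*},-\otimes V^{**},\eta_{1},\epsilon_{1})$ already in hand, the counit compatible with $\eta_{1}$ is unique (by the universal property of the unit), so the whole lemma reduces to the single identity $\epsilon=\epsilon_{1}$ of two-sided vector space maps. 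You then establish that identity by precomposing the commutative square (\ref{eqn.adjoint}) with $\mathrm{id}_{V^{**}\otimes V^{*}}\otimes\eta_{0}$, using a zig-zag identity for $(\eta_{0},\epsilon_{0})$ to collapse the composite-counit side to $\epsilon_{1}$, and using the formula for the right dual of a morphism recalled in Section~\ref{subsection.twosidedvectorspaces} to recognize the other side as the defining composite for $\epsilon$. The reduction to $\epsilon=\epsilon_{1}$ is a genuine simplification: it isolates the single relation that does the work and trades two triangle-identity verifications for one chain of equalities. The residual bookkeeping (the canonical $K^{*}\cong K$, the unit and associativity constraints, and the ordering of tensor factors in the composite counit) is of the same nature as in the paper's approach, and you have flagged that correctly; the alternative you mention at the end is in fact exactly what the paper does.
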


\begin{proof}
It suffices to show that the compositions
$$
V^{**} \rightarrow V^{**} \otimes K \overset{V^{**} \otimes \eta_{1}}{\rightarrow} V^{**} \otimes V^{*} \otimes V^{**} \overset{\epsilon \otimes V^{**}}{\rightarrow} K \otimes V^{**} \rightarrow V^{**}
$$
and
$$
V^{*} \rightarrow K \otimes V^{*} \overset{\eta_{1}\otimes V^{*}}{\rightarrow} V^{*} \otimes V^{**} \otimes V^{*} \overset{V^{*} \otimes \epsilon}{\rightarrow} V^{*} \otimes K \rightarrow V^{*}
$$
whose unlabeled morphisms are canonical, are the identity.  To this end, after expanding the compositions using the definition of $\epsilon$ and $\eta_{0}^{*}$, one utilizes the commutativity of (\ref{eqn.adjoint}) as well as the fact that the compositions
$$
V^{**} \rightarrow V^{**} \otimes K \overset{V^{**} \otimes \eta_{i}}{\rightarrow} V^{**} \otimes V^{*} \otimes V^{**} \overset{\epsilon_{i} \otimes V^{**}}{\rightarrow} K \otimes V^{**} \rightarrow V^{**}
$$
and
$$
V^{*} \rightarrow K \otimes V^{*} \overset{\eta_{i}\otimes V^{*}}{\rightarrow} V^{*} \otimes V^{**} \otimes V^{*} \overset{V^{*}\otimes \epsilon_{i}}{\rightarrow} V^{*} \otimes K \rightarrow V^{*}
$$
are the identity for $i=0,1$.  The straightforward but tedious details are left to the reader.
\end{proof}

\begin{cor} \label{cor.gen2}
Suppose that $\eta:K \rightarrow {}^{*}V \otimes V$ and $\eta':K \rightarrow {}^{*}W \otimes W$ are unit maps and that $\psi:V \rightarrow W$ is an isomorphism.  Let $0 \neq a \in K$ and let $\mu_{a}:K \rightarrow K$ denote right multiplication by $a$.  If $\phi:V^{*} \rightarrow W^{*}$ is an isomorphism such that
\begin{equation} \label{eqn.corelation}
\begin{CD}
K & \overset{\eta}{\rightarrow} & V \otimes V^{*} \\
@V{\mu_{a}}VV @VV{\psi \otimes \phi}V \\
K & \underset{\eta'}{\rightarrow} & W \otimes W^{*}
\end{CD}
\end{equation}
commutes, then $\phi=(\psi^{*})^{-1} \mu_{a}$.
\end{cor}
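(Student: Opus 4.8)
The plan is to imitate the proof of Corollary~\ref{cor.gen1}. As there, the statement reduces to an existence assertion: it is enough to show that \emph{at most} one isomorphism $\phi:V^{*}\rightarrow W^{*}$ makes (\ref{eqn.corelation}) commute, and then to verify that $\phi:=(\psi^{*})^{-1}\mu_{a}$ is such an isomorphism.

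For the uniqueness, I would apply the right dual functor to the commutative square (\ref{eqn.corelation}). Using the canonical isomorphisms $V^{**}\otimes V^{*}\rightarrow(V\otimes V^{*})^{*}$ (and its analogue for $W$), $K\rightarrow K^{*}$, and the compatibility of these isomorphisms with morphisms in each variable, the dualized square is identified with a square whose two horizontal arrows are the counits of the adjunctions $(-\otimes V^{*},-\otimes V^{**})$ and $(-\otimes W^{*},-\otimes W^{**})$, whose left vertical arrow is $\phi^{*}\otimes\psi^{*}$, and whose right vertical arrow is (left, equivalently right) multiplication by $a$. The point at which Lemma~\ref{lemma.mess} enters is precisely this identification of the dual of $\eta$ with the relevant counit. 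Since the datum varying with $\phi$ in this counit square is the component $-\otimes\phi^{*}$ of the right adjoints, while the component $-\otimes\psi^{*}$ of the left adjoints is fixed (as it may be, because $\psi$ is given), the second part of Lemma~\ref{lemma.categories} shows that $\phi^{*}$, hence $\phi$, is uniquely determined. As in Corollary~\ref{cor.gen1}, \cite[Theorem 1.2]{ns} is used here to supply the adjunction and conjugate-pair structures that make this argument available.

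For the existence, I would take $\phi:=(\psi^{*})^{-1}\mu_{a}$ and check (\ref{eqn.corelation}) by hand. Unwinding $\psi^{*}$ through its description in Section~\ref{subsection.twosidedvectorspaces} as the composite $W^{*}\rightarrow W^{*}\otimes V\otimes V^{*}\rightarrow W^{*}\otimes W\otimes V^{*}\rightarrow V^{*}$ (built from the unit $\eta$, the map $\psi$, and a counit), both composites $K\rightarrow W\otimes W^{*}$ appearing in (\ref{eqn.corelation}) can be rewritten in terms of units, counits, and the canonical isomorphisms of (\ref{eqn.adjoint}) and Lemma~\ref{lemma.mess}; the desired equality then follows from the triangle identities together with the right dual analogue of \cite[Corollary 6.7]{duality}, which is the computation used for the same purpose in Corollary~\ref{cor.gen1}. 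I expect this verification, rather than the uniqueness, to be the main obstacle: it is purely a matter of bookkeeping among the several canonical isomorphisms relating $V$, $V^{**}$, $V^{*}$, $(V\otimes V^{*})^{*}$ and $K$, and of confirming that the scalar produced by dualizing is again multiplication by $a$ itself rather than by $\sigma(a)$ for some $\sigma\in\operatorname{Gal}(K/k)$. This is exactly the step condensed into the sentence ``this fact follows from \cite[Corollary 6.7]{duality}'' in the proof of Corollary~\ref{cor.gen1}, now carried out for right duals, which is the reason Lemma~\ref{lemma.mess} is needed here but was not there.
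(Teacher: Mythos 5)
Your proposal follows the same route as the paper's proof: dualize the square, identify the dualized unit with the counit $\epsilon$ of Lemma~\ref{lemma.mess}, and invoke Lemma~\ref{lemma.categories} together with \cite[Theorem 1.2]{ns} and \cite[Corollary 6.7]{duality} to pin down $\phi^{*}$. The one genuine gap is the final step ``$\phi^{*}$, hence $\phi$, is uniquely determined.'' This inference is not automatic: the right-dual functor $(-)^{*}$ need not a priori be injective on morphisms, so knowing $\phi^{*}$ uniquely does not by itself give $\phi$ uniquely. The paper supplies the missing justification explicitly: since $V^{*}$ and $W^{*}$ have finite rank, one can check from the proof of \cite[Proposition 3.7]{hart} that right duality is faithful on isomorphisms $V^{*}\rightarrow W^{*}$. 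Without some such faithfulness statement your uniqueness argument is incomplete.

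A smaller structural difference: you propose to verify existence of $\phi=(\psi^{*})^{-1}\mu_{a}$ by a direct computation in the original diagram~(\ref{eqn.corelation}), whereas the paper instead verifies it at the level of the dualized diagram (showing that $\phi^{*}={}_{a}\mu(\psi^{**})^{-1}$ makes the outer circuit of the dual square commute via \cite[Corollary 6.7]{duality}) and then descends using $\mu_{a}^{*}={}_{a}\mu$ and faithfulness. Your plan can work, but staying on the dual side as the paper does lets the same two facts (faithfulness, $\mu_{a}^{*}={}_{a}\mu$) close both the existence and uniqueness halves at once, avoiding the bookkeeping you anticipate as the ``main obstacle.''
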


\begin{proof}
By taking the right dual of (\ref{eqn.corelation}), we get a diagram
\begin{equation} \label{eqn.newcorelation}
\begin{CD}
V^{**} \otimes V^{*} & \rightarrow & (V \otimes V^{*})^{*} & \overset{\eta^{*}}{\rightarrow} & K^{*} & \rightarrow & K \\
@A{\phi^{*}\otimes \psi^{*}}AA @A{(\psi \otimes \phi)^{*}}AA  @AA{\mu_{a}^{*}}A @AA{{}_{a}\mu}A \\
W^{**} \otimes W^{*} & \rightarrow & (W \otimes W^{*})^{*} & \underset{{\eta'}^{*}}{\rightarrow} & K^{*} & \rightarrow & K
\end{CD}
\end{equation}
whose unlabeled maps are canonical.  The diagram commutes by the functoriality of $(-)^{*}$ and by \cite[Corollary 6.3]{duality}, and the compositions of the horizontal maps equal the map $\epsilon$ defined in the statement of Lemma \ref{lemma.mess}.  On the one hand, if we replace $\phi^{*}$ by ${}_{a}\mu (\psi^{**})^{-1}$, the outer circuit of (\ref{eqn.newcorelation}) commutes by \cite[Corollary 6.7]{duality}.  On the other hand, by \cite[Theorem 1.2]{ns}, and Lemma \ref{lemma.categories}, this choice of $\phi^{*}$ is unique making the outer circuit of (\ref{eqn.newcorelation}) commute.  Finally, since $V^{*}$ and $W^{*}$ are finite rank, one can explicitly check, using the proof of \cite[Proposition 3.7]{hart}, that right duality is faithful on isomorphisms $V^{*} \rightarrow W^{*}$.  The result now follows from the fact that $\mu_{a}^{*}={}_{a}\mu$.
\end{proof}

\section{Vector bundles over $\mathbb{P}(V)$} \label{section.groth}
Throughout this section, we let $A$ denote the $\mathbb{Z}$-algebra $\mathbb{S}(V)$ where $V$ is rank 2, we let $J \subset \mathbb{Z}$ denote the subset of even integers, and we let $B$ denote the $J$-Veronese of $A$.  We recall that by \cite[Section 6.3]{vandenbergh}, the category ${\sf Gr }A$ is locally noetherian.  We define a $\mathbb{Z}$-graded ring $C$ by letting
$$
C_{i} := B_{0,2i}
$$
and by letting multiplication in $C$ be defined by the composition
\begin{eqnarray*}
C_{i} \otimes C_{j} & = & B_{0,2i} \otimes B_{0, 2j} \\
& \cong & B_{0, 2i} \otimes B_{2i, 2i+2j} \\
& \rightarrow & B_{0, 2i+2j} \\
& = & C_{i+j}
\end{eqnarray*}
whose second map is from Corollary \ref{cor.periodic}, and whose third map is multiplication in $B$.

In this section, we prove that $C$ is a domain (Corollary \ref{cor.domain}), we prove that $\mathbb{P}(V)$ is an integral noncommutative space (Corollary \ref{cor.integral}), and we classify vector bundles over $\mathbb{P}(V)$ (Corollary \ref{cor.grothendieck}).  This last result generalizes the main result in \cite{groth} when the base field is perfect.

Our proof that $C$ is a domain follows from a variant of the proof of \cite[Lemma 3.15]{ATV2}, and is thus homological in nature.  For this reason, we begin this section by developing some basic homological results for $A$.  Many of these results (and their proofs) are motivated by the intuition that $A$ behaves as if it were a noetherian regular algebra (in the sense of \cite{ATV2}) of global dimension $2$.

Since $K/k$ is finite and $k$ is perfect, $\operatorname{Spec }K$ is smooth of finite type over $k$ so that we may use most of the results of \cite[Sections 4-7]{vandenbergh}, all of \cite[Sections 1-4]{duality} and most of \cite[Section 13-15]{chan} (in the last reference, the global hypothesis that the base field is algebraically closed is not needed for the results we will use).  In particular, we will employ internal hom and tensor functors on ${\sf Gr }A$, as well as their derived functors.  More specific references will be given when these functors are invoked.

\subsection{Homological preliminaries}
We recall \cite[Section 2.2]{duality} that $(e_{k}A)_{\geq k+n}$ is defined to be the sum of $K-K$-bimodules $\underset{i \geq 0}{\oplus}e_{k}{A}_{k+n+i}$.  We define $A_{\geq n} := \underset{k}{\oplus} (e_{k}A)_{\geq k+n}$ with its natural $A-A$-bimodule structure and we let $A_{0}$ denote the $A-A$-bimodule $A/A_{\geq 1}$.

An object in ${\sf Gr }A$ is called {\it free} if it is isomorphic to a module of the form $\oplus_{i} e_{i}A$.

Let $M \in {\sf gr }A$.  We call a resolution
$$
\cdots F_{i} \overset{f_{i}}{\rightarrow} F_{i-1} \overset{f_{i-1}}{\rightarrow} \cdots \rightarrow F_{0} \overset{f_{0}}{\rightarrow} M \rightarrow 0
$$
{\it minimal} if the $\operatorname{ker }f_{i} \subset F_{i}A_{\geq 1}$ and if $F_{i}$ is free and noetherian.

The following result allows us to construct minimal projective resolutions of noetherian $A$-modules, following \cite[Chapter 10, Lemma 2.4]{smith}:
\begin{lemma} \label{lemma.minimal}
Let $M$ be a noetherian $A$-module.  Then there exists a finite-dimensional $K$-subspace $U \subset M$ with a homogeneous basis such that $M=U \oplus UA_{\geq 1}$.  It follows that there exists an epimorphism $f$ from a noetherian free module $P$ to $M$ with the property that $\operatorname{ker }f \subset PA_{\geq 1}$.
\end{lemma}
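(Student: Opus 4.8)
The plan is to mimic the construction of minimal free resolutions over graded connected algebras, as carried out in \cite[Chapter 10, Lemma 2.4]{smith}, paying attention to the fact that $A$ is a $\mathbb{Z}$-algebra rather than an $\mathbb{N}$-graded algebra so that ``connectedness'' must be interpreted componentwise: each $A_{ii} = K$ is a field and $A_{\geq 1}$ plays the role of the augmentation ideal. First I would form the graded $K-K$-bimodule $\overline{M} := M/MA_{\geq 1}$, which is naturally a graded right $A_0$-module, hence just a graded $K$-vector space since $A_0 = A/A_{\geq 1}$ has $(A_0)_{ij} = 0$ for $i \neq j$ and $(A_0)_{ii} = K$. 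Because $M$ is noetherian, $M$ is bounded below in each ``row'' $e_iM$ and each graded piece is finite-dimensional over $K$; one checks that $\overline{M}$ is therefore finite-dimensional over $K$ (this is where the noetherian hypothesis is essential — it guarantees $M$ is generated in finitely many degrees and each degree is finite-dimensional). Choose a homogeneous $K$-basis of $\overline{M}$ and lift it to homogeneous elements of $M$; let $U \subset M$ be their $K$-span, a finite-dimensional homogeneous $K$-subspace.

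The next step is to verify $M = U \oplus UA_{\geq 1}$, i.e.\ that $M = U + MA_{\geq 1}$ and $U \cap MA_{\geq 1} = 0$. The first equality is immediate since $U$ surjects onto $\overline{M} = M/MA_{\geq 1}$ by construction. The second — that the sum is direct — is the point requiring a graded Nakayama-type argument: the inclusion $U \hookrightarrow M$ followed by projection to $\overline{M}$ is injective on $U$ by choice of basis, and $UA_{\geq 1} \subseteq MA_{\geq 1}$, so $U \cap MA_{\geq 1}$ maps to $0$ in $\overline{M}$, forcing $U \cap MA_{\geq 1} \subseteq U \cap \ker(U \to \overline{M}) = 0$. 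Here one should be slightly careful: the assertion ``$M = U \oplus UA_{\geq 1}$'' as an internal direct sum of $K$-spaces needs that $UA_{\geq 1}$ is exactly the image of $U \otimes_K A_{\geq 1} \to M$ and that $M = U + UA_{\geq 1}$, which follows by iterating $M = U + MA_{\geq 1}$ degree by degree using that $A_{\geq 1}$ raises degree strictly and $M$ is bounded below in each row; this bounded-below property is again a consequence of noetherianity.

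Finally, to get the epimorphism $f$, I would set $P := \bigoplus_u e_{i_u}A(-d_u)$ with one free summand $e_{i_u}A$ for each basis element $u$ of $U$, placed in the appropriate row $i_u$ and internal degree $d_u$ so that the generator $e_{i_u}\mapsto u$; this defines a graded $A$-module map $f:P \to M$. Surjectivity is clear from $M = U + UA_{\geq 1}$. For minimality, suppose $x \in \ker f$ but $x \notin PA_{\geq 1}$; writing $P/PA_{\geq 1} = \overline{P}$, the image $\bar x \neq 0$ in $\overline P$, but $f$ induces an isomorphism $\overline P \xrightarrow{\sim} \overline M$ (since $f$ carries the chosen generators of $\overline P$ bijectively to the chosen basis of $\overline M$), contradicting $f(x)=0$; hence $\ker f \subseteq PA_{\geq 1}$. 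Iterating this construction on $\ker f$ (which is again noetherian, as ${\sf Gr}A$ is locally noetherian and $P$ is noetherian) produces the minimal resolution. I expect the main obstacle — really the only nonroutine point — to be the careful bookkeeping of the $\mathbb{Z}$-algebra grading in the Nakayama argument: one must ensure that the relevant modules are bounded below in each row and finite-dimensional in each bidegree, so that the inductive ``degree-by-degree'' descent terminates and $U$ comes out finite-dimensional.
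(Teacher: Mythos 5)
Your proposal follows essentially the same approach as the paper: lift a homogeneous $K$-basis of $\overline{M}:=M/MA_{\geq 1}$ (which the paper assembles degree by degree as $U_d,U_{d+1},\dots,U_{d+m}$, using noetherianity to bound $m$) to a finite-dimensional homogeneous subspace $U\subset M$, and then deduce $\ker f\subset PA_{\geq 1}$ from the fact that $f$ induces an isomorphism $P/PA_{\geq 1}\to M/MA_{\geq 1}$. The only slips are notational and harmless: for a $\mathbb{Z}$-algebra the free summands are simply $e_{d_u}A$, where $d_u$ is the degree of the basis element $u$, with no further shift $(-d_u)$, and for a graded right $A$-module $M$ there is no notion of ``rows $e_iM$'' — boundedness below refers to the single $\mathbb{Z}$-grading $M=\oplus_j M_j$.
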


\begin{proof}
We begin with the proof of the first part of the lemma.  Suppose $M$ is left bounded by degree $d$.  We let $U_{d}:= M_{d}$.  If $M=U_{d} \oplus U_{d}A_{\geq 1}$ we are done.  Otherwise, for some $n>0$ we may define nonzero vector spaces $U_{d+1}, \ldots, U_{d+n}$ by
$$
U_{d+i} := \operatorname{Span}_{K}\{u \in M_{d+i}| u \notin (U_{d} \oplus \cdots \oplus U_{d+i-1})A_{\geq 1}\}.
$$
Since $M$ is noetherian, there must exist an $m>0$ such that
$$
M=U_{d} \oplus \cdots \oplus U_{d+m} + (U_{d} \oplus \cdots \oplus U_{d+m})A_{\geq 1}.
$$
Furthermore, by degree considerations and the definition of the subspaces $U_{d+i}$,
$$
(U_{d} \oplus \cdots \oplus U_{d+m}) \cap (U_{d} \oplus \cdots \oplus U_{d+m})A_{\geq 1}=0.
$$
The first part of the lemma follows with $U:= U_{d} \oplus \cdots \oplus U_{d+m}$.

For the second part of the lemma, we let $P:= \oplus_{i=0}^{m} U_{d+i} \otimes  e_{d+i}A$ and we let $f:P \rightarrow M$ be induced by the $A$-module action on $M$.  The fact that $f$ is epic follows from the fact that $M$ is generated by $U$.

Finally, we prove $\operatorname{ker }f \subset PA_{\geq 1}$.  First, suppose $0 \neq p \in P$ is homogeneous of degree $d+n$ for some $0 \leq n \leq m$, and $f(p)=0$.  Then $n>0$ and
$$
p=\sum_{i_{0}}u_{i_{0}}\otimes a_{i_{0}} + \cdots + \sum_{i_{n}}u_{i_{n}} \otimes a_{i_{n}}
$$
where $u_{i_{l}} \in U_{d+l}$ and $a_{i_{l}} \in A_{d+l, d+n}$.  Since $f(p)=0$,
\begin{equation} \label{eqn.minorsum}
\sum_{i_{0}}u_{i_{0}}a_{i_{0}} + \cdots + \sum_{i_{n}}u_{i_{n}}a_{i_{n}}=0.
\end{equation}
Since $a_{i_{n}} \in A_{d+n,d+n}$, if the last sum of (\ref{eqn.minorsum}) were nonzero, then some element of $U_{d+n}$ would be in $(U_{d}+\cdots+U_{d+n-1})A_{\geq 1}$ which contradicts the definition of $U_{d+n}$.  Therefore, the last sum in (\ref{eqn.minorsum}) is zero and $p \in P A_{\geq 1}$.

Now suppose $p \in \operatorname{ker }f$ and $p$ is homogeneous of degree $\geq d+m+1$.  Then by degree considerations, we must have $p \in PA_{\geq 1}$.  The result follows.
\end{proof}

In the next result we employ the functors $\underline{\otimes}_{A}$ and $\underline{Tor}_{i}^{A}(-,-)$ defined and studied in \cite[Section 14]{chan}.  For the readers convenience, we recall the definitions.  Let $M$ denote a graded right $A$-module and let $N$ denote a graded $A-K$-bimodule, i.e. $N=\oplus_{m \in \mathbb{Z}}N_{m}$, where $N_{m}$ are $k$-central $K-K$-bimodules, and there are multiplication maps $A_{m,n} \otimes_{K}N_{n} \rightarrow N_{m}$ satisfying the usual associativity and unit axiom.  We let

$$
M \underline{\otimes}_{A} N := \mbox{coker}\ \bigl(\bigoplus_{l,m} M_l \otimes_K A_{l,m} \otimes_K N_m \xrightarrow{\mu \otimes 1 - 1 \otimes \mu} \bigoplus_n M_n \otimes_K N_n \bigr),
$$
where $\mu$ denotes the appropriate multiplication.  A graded $A-K$-module $N$ is called {\it internally flat} if $-\underline{\otimes}_{A}N$ is exact.  The bimodule $N$ is said to have {\it finite internal flat dimension} if it has a finite resolution by internally flat bimodules.  The notions of internally flat right $A$-module and finite internal flat dimension of an $A$-module are defined similarly.  Finally, for a graded $A-K$-module $N$ of finite internal flat dimension, the derived functors $\underline{Tor}_{i}^{A}(-,N)$ are defined using internally flat resolutions in the first component.

\begin{prop} \label{prop.global}
If $F$ is a noetherian free module and $i:FA_{\geq 1} \rightarrow F$ is inclusion, then $i \underline{\otimes}_{A}A_{0}$ vanishes.  It follows that if $M$ is an object in ${\sf gr }A$ with a minimal resolution
$$
\cdots F_{i} \overset{f_{i}}{\rightarrow} F_{i-1} \overset{f_{i-1}}{\rightarrow} \cdots \rightarrow F_{0} \overset{f_{0}}{\rightarrow} M \rightarrow 0
$$
by free noetherian objects $F_{i}$, and if $F_{j} \neq 0$, for some $j$, then $\underline{Tor}_{j}^{A}(M,A_{0}) \neq 0$.  Therefore, the length of a minimal resolution of a noetherian module is at most $2$.
\end{prop}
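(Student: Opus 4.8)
The plan is to prove the three assertions in order. The vanishing of $i\underline{\otimes}_{A}A_{0}$ is the crux; the statement about minimal resolutions and the length bound then follow formally, the latter also using the ``global dimension $2$'' structure of $\mathbb{S}(V)$.

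I would begin by recording the identification $M\underline{\otimes}_{A}A_{0}\cong M/MA_{\geq 1}$ for every graded right $A$-module $M$. This is immediate from the cokernel presentation of $\underline{\otimes}_{A}$: writing $A_{0}=A/A_{\geq 1}=\oplus_{i}A_{0}e_{i}$ as a graded $A$-$K$-bimodule, one has $(A_{0})_{m}\cong K$ and $A_{l,m}$ acts as zero on $(A_{0})_{m}$ whenever $l\neq m$, so the relations defining $M\underline{\otimes}_{A}A_{0}$ impose exactly, in degree $m$, that the image of $M_{l}A_{l,m}$ for $l<m$ be killed; equivalently, for a free module $F=\oplus_{\alpha}e_{i_{\alpha}}A$ one reads off $e_{i}A\underline{\otimes}_{A}N=N_{i}$ directly. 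Granting this, apply $-\underline{\otimes}_{A}A_{0}$ to the short exact sequence $0\to FA_{\geq 1}\xrightarrow{i}F\to F/FA_{\geq 1}\to 0$. Since $F/FA_{\geq 1}$ is annihilated by $A_{\geq 1}$, we get $(F/FA_{\geq 1})\underline{\otimes}_{A}A_{0}=F/FA_{\geq 1}=F\underline{\otimes}_{A}A_{0}$, and the natural surjection $F\underline{\otimes}_{A}A_{0}\to(F/FA_{\geq 1})\underline{\otimes}_{A}A_{0}$ is therefore an isomorphism. By right-exactness of $-\underline{\otimes}_{A}A_{0}$, the image of $i\underline{\otimes}_{A}A_{0}$ is the kernel of this surjection, hence $i\underline{\otimes}_{A}A_{0}=0$.

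For the second assertion, free right modules $e_{i}A$ are internally flat because $e_{i}A\underline{\otimes}_{A}(-)$ is the exact functor $N\mapsto N_{i}$; hence a minimal free resolution $\cdots\to F_{i}\xrightarrow{f_{i}}F_{i-1}\to\cdots\to F_{0}\to M\to 0$ computes $\underline{Tor}_{\bullet}^{A}(M,A_{0})$ (that $A_{0}$ has finite internal flat dimension, so this is legitimate, is seen below). Minimality means $\operatorname{im}f_{j}=\ker f_{j-1}\subseteq F_{j-1}A_{\geq 1}$, so each $f_{j}$ factors as $F_{j}\to F_{j-1}A_{\geq 1}\hookrightarrow F_{j-1}$; by the first part and functoriality, $f_{j}\underline{\otimes}_{A}A_{0}=0$ for every $j$. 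Thus the complex $F_{\bullet}\underline{\otimes}_{A}A_{0}$ has zero differentials, so $\underline{Tor}_{j}^{A}(M,A_{0})=F_{j}\underline{\otimes}_{A}A_{0}=F_{j}/F_{j}A_{\geq 1}$. If $F_{j}=\oplus_{\alpha}e_{i_{\alpha}}A$ with a nonempty index set, then $F_{j}/F_{j}A_{\geq 1}\cong\oplus_{\alpha}K\neq 0$, so $\underline{Tor}_{j}^{A}(M,A_{0})\neq 0$.

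Finally, to bound the length it suffices to show $\underline{Tor}_{j}^{A}(M,A_{0})=0$ for all $j\geq 3$, as the previous paragraph then forces $F_{j}=0$ for $j\geq 3$. For this I would invoke the fact that the point modules $A_{0}e_{i}$ admit free resolutions of length $2$ as graded left $A$-modules --- a basic structural feature of the noncommutative symmetric algebra $\mathbb{S}(V)$, and the precise sense in which $A$ behaves like a regular algebra of global dimension $2$ (cf.\ \cite{vandenbergh} and \cite[Sections 13--15]{chan}). Since free left modules are internally flat and internal flat dimension is preserved under direct sums, $A_{0}=\oplus_{i}A_{0}e_{i}$ has internal flat dimension at most $2$, whence $\underline{Tor}_{j}^{A}(-,A_{0})=0$ for $j\geq 3$. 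I expect the only real work to be the bookkeeping around $\underline{\otimes}_{A}$ --- the identification $M\underline{\otimes}_{A}A_{0}\cong M/MA_{\geq 1}$, internal flatness of free modules on the correct side, and the vanishing of the tensored differentials --- rather than any new idea; the one non-formal ingredient, the length-$2$ resolution of point modules, is imported from Van den Bergh's construction.
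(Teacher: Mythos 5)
Your proof is correct and follows essentially the same strategy as the paper's: identify $-\underline{\otimes}_A A_0$ concretely, observe that minimality forces every differential in the tensored complex to vanish so $\underline{Tor}_j^A(M,A_0)\cong F_j/F_jA_{\geq 1}$, and bound the length via the internal flat dimension of $A_0$. The only difference is that you unwind the definitions directly (the isomorphism $M\underline{\otimes}_A A_0\cong M/MA_{\geq 1}$ and internal flatness of free modules) where the paper instead cites \cite[Proposition 3.5]{duality}, \cite[Lemma 15.2]{chan}, and \cite[Proposition 14.2]{chan}.
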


\begin{proof}
To prove the first assertion, we first note that $e_{m}A \underline{\otimes}_{A}A_{0} \cong A_{mm}=K$ by \cite[Proposition 3.5]{duality}.  On the other hand, by the right exactness of $-\underline{\otimes}_{A}A_{0}$, and by \cite[Proposition 15.2]{chan}, the module $(e_{m}A/e_{m}A_{\geq m+1}) \underline{\otimes}_{A} A_{0}$ is also isomorphic to $K$ as a $K$-module.  Thus, if
$$
i:e_{m}A_{\geq m+1} \rightarrow e_{m}A
$$
is inclusion, then $i \underline{\otimes}_{A}A_{0}$ vanishes.  Therefore, if $F$ is a noetherian free module and $i:FA_{\geq 1} \rightarrow F$ is inclusion, then it follows that $i \underline{\otimes}_{A}A_{0}=0$.

Next, we prove the second assertion.  Since $f_{j}$ factors through the inclusion $F_{j-1}A_{\geq 1} \rightarrow F_{j-1}$, the first assertion implies that $f_{j} \underline{\otimes}_{A} A_{0}=0$ for $j \geq 0$.  It follows that $\underline{Tor}_{j}^{A}(M,A_{0}) \cong F_{j} \underline{\otimes}_{A} A_{0}$.  Therefore, if $F_{j} \neq 0$, then $\underline{Tor}_{j}^{A}(M,A_{0})\neq 0$ by \cite[Lemma 15.2]{chan}.

To prove the final assertion, we recall from \cite[Section 14]{chan} that internal tor can also be computed using internally flat resolutions of $A_{0}$.  Since $A_{0}$ has an internally flat resolution of length 2 \cite[Propostion 14.2]{chan}, the length of a minimal resolution of a noetherian module is at most 2.
\end{proof}

\begin{corollary} \label{cor.free}
Suppose $P$ is a noetherian projective object in ${\sf Gr }A$.  Then $P$ is free.
\end{corollary}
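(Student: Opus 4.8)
The plan is to show that the kernel of a minimal presentation of $P$ vanishes. By Lemma \ref{lemma.minimal} there is an epimorphism $f:F \rightarrow P$ with $F$ free and noetherian and $\operatorname{ker }f \subseteq FA_{\geq 1}$. Since $P$ is projective, the exact sequence
$$
0 \rightarrow \operatorname{ker }f \rightarrow F \overset{f}{\rightarrow} P \rightarrow 0
$$
splits; in particular the inclusion $\operatorname{ker }f \hookrightarrow F$ is a split monomorphism, and it factors through the inclusion $i:FA_{\geq 1} \hookrightarrow F$ (being an inclusion of $A$-submodules of $F$).

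First I would apply the functor $-\underline{\otimes}_{A}A_{0}$ to the inclusion $\operatorname{ker }f \hookrightarrow F$. On the one hand, since it factors through $i$ and $i\underline{\otimes}_{A}A_{0}=0$ by the first assertion of Proposition \ref{prop.global}, the induced map $\operatorname{ker }f\underline{\otimes}_{A}A_{0} \rightarrow F\underline{\otimes}_{A}A_{0}$ is zero. On the other hand, $-\underline{\otimes}_{A}A_{0}$ is additive, hence carries the split monomorphism $\operatorname{ker }f \hookrightarrow F$ to a (split) monomorphism, so this same induced map is a monomorphism. Combining the two observations gives $\operatorname{ker }f \underline{\otimes}_{A}A_{0}=0$.

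It then remains to observe that a nonzero noetherian $A$-module $M$ always satisfies $M\underline{\otimes}_{A}A_{0}\neq 0$: iterating Lemma \ref{lemma.minimal} produces a minimal resolution of $M$ by free noetherian modules whose zeroth term $F_{0}$ is nonzero (as $M\neq 0$ and $F_0 \to M$ is epic), so by Proposition \ref{prop.global} we get $M\underline{\otimes}_{A}A_{0}=\underline{Tor}_{0}^{A}(M,A_{0})\neq 0$. Applying this with $M=\operatorname{ker }f$ together with the previous paragraph forces $\operatorname{ker }f=0$, so $f:F \rightarrow P$ is an isomorphism and $P$ is free.

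I do not expect a genuine obstacle: the argument is a formal manipulation of the internal tensor and internal Tor functors of \cite{chan}. The only points requiring care are that $-\underline{\otimes}_{A}A_{0}$ is being applied to bona fide graded right $A$-modules and to the $A$-$K$-bimodule $A_{0}$, so that Proposition \ref{prop.global} and the underlying results of \cite{chan} are available, and that an additive functor preserves split monomorphisms; both are immediate.
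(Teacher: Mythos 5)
Your proof is correct and follows essentially the same path as the paper's: both take the minimal presentation $F \twoheadrightarrow P$ from Lemma~\ref{lemma.minimal}, use projectivity to see $\operatorname{ker}f$ is a direct summand (hence its inclusion into $F$ stays injective after applying $-\underline{\otimes}_{A}A_{0}$), and combine with the first part of Proposition~\ref{prop.global} to get $(\operatorname{ker}f)\underline{\otimes}_{A}A_{0}=0$. You then conclude $\operatorname{ker}f=0$ by deriving the Nakayama-type statement from the second assertion of Proposition~\ref{prop.global}, whereas the paper concludes by noting the inclusion $P\hookrightarrow F$ becomes surjective after $\underline{\otimes}_{A}A_{0}$ and invoking \cite[Lemma 15.2]{chan} directly; these two finishes are equivalent.
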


\begin{proof}
By Lemma \ref{lemma.minimal}, there exists an epimorphism $f$ from a noetherian free module $F$ to $P$ with the property that $\operatorname{ker }f \subset FA_{\geq 1}$.  Since $P$ is projective, $f$ splits.  Since $\operatorname{ker }f$ is a summand of $F$, if $i:\operatorname{ker }f \rightarrow F$ denotes inclusion, then $i \underline{\otimes}_{A} A_{0}$ is injective.  By the first part of Proposition \ref{prop.global}, $i \underline{\otimes}_{A} A_{0}=0$, so that $(\operatorname{ker }f) \underline{\otimes}_{A} A_{0}=0$.  Therefore, inclusion of $P$ in $F$ induces a surjection $P \underline{\otimes}_{A} A_{0} \rightarrow F \underline{\otimes}_{A} A_{0}$.  It thus follows from \cite[Lemma 15.2]{chan} that inclusion is a surjection.
\end{proof}
The following is inspired by \cite[Proposition 2.40(i)]{ATV2}.
\begin{corollary} \label{cor.global}
Suppose $M$ is a graded noetherian right $A$-module and there is an exact sequence
\begin{equation} \label{eqn.mm}
0 \rightarrow K \rightarrow F_{1} \rightarrow F_{0} \overset{f_{0}}{\rightarrow}  M \rightarrow 0
\end{equation}
with $F_{0}$ and $F_{1}$ noetherian and free, $\operatorname{ker }f_{0} \subset F_{0}A_{\geq 1}$, and $K \subset F_{1}A_{\geq 1}$.  Then $K$ is free.
\end{corollary}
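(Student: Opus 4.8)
The plan is to prove that $K$ is projective and then conclude via Corollary \ref{cor.free}. First I would note that, since ${\sf Gr }A$ is locally noetherian and $K$ is a submodule of the noetherian free module $F_{1}$, the module $K$ is itself noetherian. Iterating Lemma \ref{lemma.minimal} then produces a minimal resolution
$$
\cdots \rightarrow G_{1} \overset{g_{1}}{\rightarrow} G_{0} \overset{g_{0}}{\rightarrow} K \rightarrow 0
$$
of $K$ by free noetherian modules, with $\operatorname{ker }g_{i} \subset G_{i}A_{\geq 1}$ for all $i$.

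The key move is to splice this resolution onto the given exact sequence. Composing $g_{0}$ with the inclusion $K \hookrightarrow F_{1}$ gives a map $G_{0} \rightarrow F_{1}$ whose image is $K = \operatorname{ker}(F_{1} \rightarrow F_{0})$ and whose kernel is $\operatorname{ker }g_{0}$ (the inclusion being a monomorphism), so the complex
$$
\cdots \rightarrow G_{1} \rightarrow G_{0} \rightarrow F_{1} \rightarrow F_{0} \overset{f_{0}}{\rightarrow} M \rightarrow 0
$$
is exact. I would then verify that it is a \emph{minimal} resolution of $M$: every term is free and noetherian, and the kernels of the successive maps are $\operatorname{ker }f_{0} \subset F_{0}A_{\geq 1}$, then $K \subset F_{1}A_{\geq 1}$, then $\operatorname{ker }g_{0} \subset G_{0}A_{\geq 1}$, then $\operatorname{ker }g_{1} \subset G_{1}A_{\geq 1}$, and so on — each contained in its source times $A_{\geq 1}$, the first two by the hypotheses on the given sequence and the remainder by minimality of the resolution of $K$.

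Finally I would invoke Proposition \ref{prop.global}, which asserts that any minimal resolution of a noetherian module has length at most $2$. In the spliced resolution, $G_{0}$ sits in homological degree $2$ and $G_{1}$ in degree $3$, so $G_{1} = 0$; since $g_{1}$ surjects onto $\operatorname{ker }g_{0}$, this forces $\operatorname{ker }g_{0} = 0$, i.e. $g_{0}:G_{0} \rightarrow K$ is an isomorphism, whence $K$ is free. (Equivalently, the length of the spliced minimal resolution is $2$ plus the length of the chosen minimal resolution of $K$, so the latter must be $0$.) The argument is essentially formal once Proposition \ref{prop.global} is available; the only point needing care is checking that the spliced complex is again minimal — specifically that the splicing map $G_{0} \rightarrow F_{1}$ has image inside $F_{1}A_{\geq 1}$, which is precisely the hypothesis $K \subset F_{1}A_{\geq 1}$, and that its kernel lies in $G_{0}A_{\geq 1}$, which is the minimality of $g_{0}$. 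I do not expect a genuine obstacle here.
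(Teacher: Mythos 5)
Your proof is correct and follows essentially the same route as the paper: splice a minimal resolution of $K$ onto the given exact sequence to obtain a minimal resolution of $M$, then invoke Proposition \ref{prop.global} to bound the length by $2$ and conclude $K$ must already be free. The paper states this as a two-sentence contrapositive; you have merely spelled out the verification that the spliced complex is minimal, which is the only point needing care.
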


\begin{proof}
If $K$ were not free, it would have a minimal resolution of length at least 1.  Attaching this resolution to (\ref{eqn.mm}) would yield a minimal resolution of $M$ of length greater than 2, violating Proposition \ref{prop.global}.
\end{proof}

In the next result, we use the internal hom functors and their derived functors introduced in \cite[Section 3.2 and Section 4.1]{duality}.  The statement and proof are motivated by \cite[Proposition 2.46(i)]{ATV2}.

\begin{lemma} \label{lemma.homological}
Suppose $N$ and $P$ are noetherian free $A$-modules and $f:N \rightarrow P$ is a morphism whose image is contained in $PA_{\geq 1}$.  Then the induced morphism, $\underline{Ext}_{{\sf Gr }A}^{2}(A_{0},f)=0$.  Therefore, if $M$ is noetherian, $\underline{Hom}_{{\sf Gr}A}(A_{0},M)=0$ if and only if the length of a minimal resolution of $M$ is at most $1$.
\end{lemma}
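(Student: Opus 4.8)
The plan is to run the argument in parallel with the proof of Proposition~\ref{prop.global}, interchanging the roles of $A_{0}$ and the free modules and replacing internal $\underline{Tor}$ by internal $\underline{Ext}$. For the first assertion I would first reduce to one inclusion: factor $f$ as $N\overset{\overline{f}}{\rightarrow}PA_{\geq 1}\overset{\iota}{\rightarrow}P$ with $\iota$ the inclusion; since $\underline{Ext}^{2}_{{\sf Gr }A}(A_{0},-)$ is additive and $\iota$ is the direct sum, over the free summands $e_{m}A$ of $P$, of the inclusions $e_{m}A_{\geq m+1}\hookrightarrow e_{m}A$, it suffices to prove $\underline{Ext}^{2}_{{\sf Gr }A}(A_{0},\iota)=0$ for a single $\iota\colon e_{m}A_{\geq m+1}\hookrightarrow e_{m}A$. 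Writing $S_{j}:=e_{j}A/e_{j}A_{\geq j+1}$ and using $A_{0}\cong\bigoplus_{j\in\mathbb{Z}}S_{j}$ as a right $A$-module, one has $\underline{Ext}^{2}_{{\sf Gr }A}(A_{0},-)\cong\prod_{j}\underline{Ext}^{2}_{{\sf Gr }A}(S_{j},-)$, so it is enough to show $\underline{Ext}^{2}_{{\sf Gr }A}(S_{j},\iota)=0$ for every $j$.

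The computational heart is that, by Proposition~\ref{prop.global}, Corollary~\ref{cor.global} and the rank-$2$ hypothesis on $V$, each $S_{j}$ has a minimal free resolution
$$
0 \rightarrow e_{j+2}A \rightarrow e_{j+1}A\oplus e_{j+1}A \rightarrow e_{j}A \rightarrow S_{j} \rightarrow 0
$$
(the $\mathbb{Z}$-algebra analogue of the Koszul resolution of a residue field of a two-dimensional regular ring; the rank counts reflect $\operatorname{rank}V^{j*}=2$ and $\dim_{K}Q_{j}=1$, the latter because the unit $\eta_{j}\colon K\rightarrow V^{j*}\otimes V^{(j+1)*}$ is injective, and the element spanning $Q_{j}$ is moreover of maximal rank $2$, which is used below). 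Applying $\underline{Hom}_{{\sf Gr }A}(-,e_{n}A)$ to this resolution identifies $\underline{Ext}^{2}_{{\sf Gr }A}(S_{j},e_{n}A)$ with the cokernel of the multiplication map $A_{n,j+1}\oplus A_{n,j+1}\rightarrow A_{n,j+2}$; this cokernel is $0$ for $n\neq j+2$ (trivially when $n>j+2$, and because the map is surjective when $n\leq j+1$, where again rank $2$ and the generation of $A$ in degree $1$ enter) and is $\cong K$ when $n=j+2$. These are the internal-$\underline{Ext}$ analogues of the tensor computations cited in the proof of Proposition~\ref{prop.global}, and are in any case available from \cite[Sections~1--4]{duality}. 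Granting them, $\underline{Ext}^{2}_{{\sf Gr }A}(S_{j},\iota)=0$ is bookkeeping: if $j\neq m-2$ the target $\underline{Ext}^{2}_{{\sf Gr }A}(S_{j},e_{m}A)$ already vanishes; if $j=m-2$, the source $\underline{Ext}^{2}_{{\sf Gr }A}(S_{m-2},e_{m}A_{\geq m+1})$ vanishes because $e_{m}A_{\geq m+1}$ has no components in degrees $\leq m$, so the relevant $\underline{Hom}$ groups out of $e_{m-1}A\oplus e_{m-1}A$ and $e_{m}A$ are zero. This gives $\underline{Ext}^{2}_{{\sf Gr }A}(A_{0},\iota)=0$, hence $\underline{Ext}^{2}_{{\sf Gr }A}(A_{0},f)=0$.

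For the second assertion, let $M$ be noetherian and fix a minimal free resolution $0\rightarrow F_{2}\overset{f_{2}}{\rightarrow}F_{1}\overset{f_{1}}{\rightarrow}F_{0}\overset{f_{0}}{\rightarrow}M\rightarrow 0$ with $F_{2}$ possibly zero (it exists by Lemma~\ref{lemma.minimal} and Corollary~\ref{cor.global}, and has length $\leq 2$ by Proposition~\ref{prop.global}); set $N:=\operatorname{ker }f_{0}$. Apply $\underline{Hom}_{{\sf Gr }A}(A_{0},-)$ to $0\rightarrow N\rightarrow F_{0}\rightarrow M\rightarrow 0$ and to $0\rightarrow F_{2}\overset{f_{2}}{\rightarrow}F_{1}\rightarrow N\rightarrow 0$. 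Using $\underline{Hom}_{{\sf Gr }A}(A_{0},F_{i})=\underline{Ext}^{1}_{{\sf Gr }A}(A_{0},F_{i})=0$ for free $F_{i}$ (once more the internal-$\underline{Ext}$ analogue of the facts about $A_{0}$ used in Proposition~\ref{prop.global}, and available from \cite[Sections~1--4]{duality}), the long exact sequences collapse to
$$
\underline{Hom}_{{\sf Gr }A}(A_{0},M)\cong\underline{Ext}^{1}_{{\sf Gr }A}(A_{0},N)\cong\operatorname{ker}\bigl(\underline{Ext}^{2}_{{\sf Gr }A}(A_{0},F_{2})\xrightarrow{\underline{Ext}^{2}(A_{0},f_{2})}\underline{Ext}^{2}_{{\sf Gr }A}(A_{0},F_{1})\bigr).
$$
Since $\operatorname{im }f_{2}\subset F_{1}A_{\geq 1}$ by minimality, the first assertion forces $\underline{Ext}^{2}_{{\sf Gr }A}(A_{0},f_{2})=0$, so $\underline{Hom}_{{\sf Gr }A}(A_{0},M)\cong\underline{Ext}^{2}_{{\sf Gr }A}(A_{0},F_{2})$. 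Finally $\underline{Ext}^{2}_{{\sf Gr }A}(A_{0},e_{n}A)\neq 0$ for every $n$ (the $n=j+2$ case above together with the decomposition of $A_{0}$; this is the analogue of \cite[Lemma 15.2]{chan}), so the right-hand side is zero exactly when $F_{2}=0$, that is, exactly when the minimal resolution of $M$ has length at most $1$.

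I expect the main obstacle to be the $\underline{Ext}^{2}$ computations driving the first assertion: the homological-duality facts that $\underline{Hom}_{{\sf Gr }A}(A_{0},e_{n}A)$ and $\underline{Ext}^{1}_{{\sf Gr }A}(A_{0},e_{n}A)$ vanish while $\underline{Ext}^{2}_{{\sf Gr }A}(A_{0},e_{n}A)$ does not, together with the precise shape of the minimal free resolutions of the $S_{j}$. Unlike the $\underline{Tor}$ situation in Proposition~\ref{prop.global}, where right exactness makes the decisive reduction immediate, one here works at the top cohomological degree, so the long exact sequences are much less forgiving; this is exactly where the rank-$2$ hypothesis on $V$ is genuinely needed, via injectivity and non-degeneracy of the units $\eta_{j}$, and where it is most convenient to invoke the dualizing-complex machinery of \cite[Sections~1--4]{duality}.
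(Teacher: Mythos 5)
Your proof is correct, and the high-level strategy matches the paper's (degree concentration of $\underline{Ext}^{2}_{{\sf Gr }A}(A_{0},e_{k}A)$ for the first assertion; the ATV2-style argument for the second), but the reduction in the first assertion is genuinely different and considerably longer than what the paper does. The paper simply uses additivity to reduce to $f\colon e_{i}A\to e_{j}A$ and then splits into two cases: if $i\neq j$ the map $\underline{Ext}^{2}(A_{0},f)$ goes between modules concentrated in the distinct degrees $i-2$ and $j-2$ (cited directly from \cite[Theorem 4.4]{duality}) and hence vanishes; if $i=j$ every morphism is a scalar, so a nonzero one is an isomorphism and cannot have image in $PA_{\geq 1}$, meaning the hypothesis forces $f=0$. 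You instead factor $f$ through $PA_{\geq 1}$, which introduces the non-free module $e_{m}A_{\geq m+1}$ and forces you to compute $\underline{Ext}^{2}(S_{j},-)$ of it, via an explicit Koszul-type minimal resolution $0\to e_{j+2}A\to e_{j+1}A^{\oplus 2}\to e_{j}A\to S_{j}\to 0$. That resolution and the resulting Ext computation are correct (the rank-2 and nondegeneracy-of-$\eta_{j}$ observations you make are exactly what is needed for surjectivity of $A_{n,j+1}^{\oplus 2}\to A_{n,j+2}$ when $n\leq j+1$), but they essentially re-derive the content of \cite[Theorem 4.4]{duality}, which the paper simply invokes. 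What your route buys is independence from the cited computation and a uniform treatment that avoids the $i=j$ case split; what it costs is the extra machinery. For the second assertion your write-out of the ATV2 argument is precisely what the paper has in mind when it says "the proof of \cite[Proposition 2.46i]{ATV2} works in our context," including the key use of the first assertion to kill $\underline{Ext}^{2}(A_{0},f_{2})$ and the identification $\underline{Hom}(A_{0},M)\cong\underline{Ext}^{2}(A_{0},F_{2})$, which is nonzero exactly when $F_{2}\neq 0$.
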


\begin{proof}
Since the functor $\underline{Ext}_{{\sf Gr }A}^{2}(A_{0},-)$ is additive, it suffices to prove the result when $N=e_{i}A$ and $P=e_{j}A$.  By \cite[Theorem 4.4]{duality}, $\underline{Ext}_{{\sf Gr }A}^{2}(A_{0},e_{k}A)$ is concentrated in degree $k-2$.  Therefore, the result holds when $i \neq j$.  Now, suppose $i=j$.  Then the only graded right $A$-module homomorphisms $f: N \rightarrow P$ are induced by scalar multiplication.  Therefore, if $f \neq 0$, the image of $f$ is not in $PA_{\geq 1}$.  The first result follows.

To prove the second result, we note that by \cite[Theorem 4.4]{duality}, Proposition \ref{prop.global}, and the first part of this lemma, the proof of \cite[Proposition 2.46i]{ATV2} works in our context, and establishes the second part of the lemma.
\end{proof}

\subsection{Proof that $\mathbb{P}(V)$ is integral}
By \cite[Theorem 6.1.2]{vandenbergh}, we know that
$$
\operatorname{dim }_{K}(e_{j}A_{j+i})_{K}=i+1.
$$
It follows from this, Lemma \ref{lemma.minimal} and Proposition \ref{prop.global} that if $M \in {\sf gr }A$, the function $f(n):= \operatorname{dim}_{K}M_{n}$ is eventually of the form $cn+d$, with $c, d \in \mathbb{Z}$ and $c \geq 0$. If $M \neq 0$, we let $\operatorname{dim }M=1$ if $c>0$ and $0$ otherwise.  If $M=0$ define $\operatorname{dim }M = -1$.  Furthermore, if $M \in {\sf Gr }A$ is not noetherian, let $\operatorname{dim }M = \operatorname{sup }\{\operatorname{dim }N | N \leq M \mbox{ is noetherian}\}$.  It is now straightforward to check that $\operatorname{dim }(-)$ is a dimension function on ${\sf Gr }A$ in the sense of \cite[Definition 3.1]{papp2}.

In the proof of Theorem \ref{thm.domain}, below, and in the proof that ${\sf Proj }A$ is integral (completed in Corollary \ref{cor.integral}), we will need the following

\begin{prop}
The category of graded left $A$-modules is locally noetherian.
\end{prop}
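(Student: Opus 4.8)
The plan is to reduce this to the corresponding statement for graded \emph{right} modules, recalled from \cite[Section 6.3]{vandenbergh}, by passing to the opposite $\mathbb{Z}$-algebra. Recall that for any $\mathbb{Z}$-algebra $A$ in the sense of \cite[Section 2]{quadrics}, the category of graded left $A$-modules is isomorphic (after the evident relabeling $N_{j}:=M_{-j}$) to the category of graded right $A^{op}$-modules, where $A^{op}$ is defined by $A^{op}_{ij}:=A_{-j,-i}$ with multiplication $x\cdot_{op}y:=yx$. One checks at once that $A^{op}_{ii}=K$ and that $A^{op}_{ij}=0$ for $i>j$, so that $A^{op}$ is again a $\mathbb{Z}$-algebra — the reversal of indices is exactly what keeps it supported on and above the diagonal. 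Hence it suffices to prove that ${\sf Gr }A^{op}$ is locally noetherian.

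The heart of the argument is to recognize $A^{op}$ as a noncommutative symmetric algebra. From the definition of $\mathbb{S}(V)$ one has $A^{op}_{i,i+1}=A_{-i-1,-i}=V^{(-i-1)*}$, and I claim $A^{op}$ is isomorphic, as a $\mathbb{Z}$-algebra, to $\mathbb{S}({}^{*}V)$. The degree-one components agree up to isomorphism: by the $2$-periodicity of the family $\{V^{j*}\}$ furnished by Lemma \ref{lemma.periodic}, together with the standard duality identities of \cite{hart}, one has $({}^{*}V)^{i*}\cong V^{(i-1)*}\cong V^{(-i-1)*}$, the outer indices differing by the even integer $2i$. What remains is to check that these identifications are compatible with the defining relations: that the image of the unit map $K\rightarrow ({}^{*}V)^{i*}\otimes ({}^{*}V)^{(i+1)*}$ defining $\mathbb{S}({}^{*}V)$ is carried to the relation space $R_{-i-2,-i}=Q_{-i-2}$ of $\mathbb{S}(V)$, read after interchanging the two tensor factors — the swap being precisely the passage from $xy$ to $yx$ in $A^{op}$. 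This compatibility is a routine but somewhat lengthy diagram chase with adjoint pairs and their conjugates, of exactly the kind carried out in the proof of Lemma \ref{lemma.periodic}, with Lemma \ref{lemma.categories} used to pin down the relevant isomorphisms; I expect this verification to be essentially the only work in the proof.

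Granting the identification $A^{op}\cong\mathbb{S}({}^{*}V)$, the conclusion is immediate: since ${}^{*}V=V^{-1*}$ again has rank $2$, \cite[Section 6.3]{vandenbergh} shows that ${\sf Gr }\mathbb{S}({}^{*}V)$ is locally noetherian, hence so is ${\sf Gr }A^{op}$, and therefore so is the category of graded left $A$-modules. (Alternatively, one could bypass the explicit identification of $A^{op}$ by observing that the noetherianity argument of \cite[Section 6.3]{vandenbergh}, which rests only on the Hilbert-series estimates for $e_{j}A$, is symmetric under interchanging left and right modules; but the route through $A^{op}$ seems cleanest, and it also sets up the comparison with $\mathbb{S}({}^{*}V)$ that may be useful elsewhere.)
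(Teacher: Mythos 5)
Your main route --- pass to the opposite $\mathbb{Z}$-algebra $A^{op}$, identify it with a noncommutative symmetric algebra, and invoke Van den Bergh's right-noetherianity result --- is genuinely different from the paper's proof. The paper takes exactly the ``alternative'' you mention and set aside at the end: it re-examines Van den Bergh's noetherianity argument directly, checking that Lemmas 3.2.1 and 3.2.2, Definition 3.3.2, Theorems 3.3.3 and 5.2.1, and the argument of Step 9 of Section 6.3 of \cite{vandenbergh} all carry over verbatim to graded left modules.

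Your argument, however, contains an error in the identification $A^{op}\cong\mathbb{S}({}^{*}V)$: passing to $A^{op}$ reverses the bimodule structure along with the multiplication. With $A^{op}_{ij}:=A_{-j,-i}$ and $x\cdot_{op}y:=yx$, the left action of $K=A^{op}_{ii}$ on $A^{op}_{ij}$ is the \emph{right} action of $K=A_{-i,-i}$ on $A_{-j,-i}$, and vice versa, so $A^{op}_{ij}=(A_{-j,-i})^{op}$ as a $K$-$K$-bimodule. Hence $A^{op}_{i,i+1}=(V^{(-i-1)*})^{op}$, not $V^{(-i-1)*}$. These are genuinely different in general: for $V\cong K_{\sigma}\oplus K_{\tau}$ one has $V^{op}\cong K_{\sigma^{-1}}\oplus K_{\tau^{-1}}$, which is not isomorphic to $V$ unless $\sigma^{2}=\tau^{2}=1$. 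The correct statement is $A^{op}\cong\mathbb{S}\bigl(({}^{*}V)^{op}\bigr)=\mathbb{S}\bigl((V^{op})^{*}\bigr)$; since $({}^{*}V)^{op}$ is still a rank-$2$ two-sided vector space, your conclusion does still follow from \cite{vandenbergh}. A telltale symptom of the slip is that you need the $2$-periodicity of Lemma \ref{lemma.periodic} to bridge a gap of $2i$ between $V^{(i-1)*}$ and $V^{(-i-1)*}$ --- with the correct input bimodule, the identity $(M^{op})^{*}=({}^{*}M)^{op}$ gives $\bigl(({}^{*}V)^{op}\bigr)^{i*}=(V^{(-i-1)*})^{op}=A^{op}_{i,i+1}$ on the nose, no periodicity required. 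Finally, the relation-compatibility check you defer is not optional: it amounts to the observation that $M\mapsto M^{op}$ is an anti-monoidal autoequivalence of $k$-central $K$-$K$-bimodules and therefore interchanges left and right duals together with their units and counits (so that the swap carries $Q_{-i-2}$ to the unit image defining $\mathbb{S}(({}^{*}V)^{op})$). That observation should be recorded, not relegated to a ``routine but lengthy diagram chase,'' since the chase as you set it up, with ${}^{*}V$ in place of $({}^{*}V)^{op}$, would not close.
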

\noindent{\it Proof sketch.}  As one can check, the statements of \cite[Lemma 3.2.1 and Lemma 3.2.2]{vandenbergh} are true if the term {\it noetherian} is interpreted as meaning that the category of graded {\it left} modules over the algebra in question is locally noetherian.  Furthermore, there is a graded left module analogue of \cite[Definition 3.3.2]{vandenbergh}, and the corresponding versions of \cite[Theorem 3.3.3 and Theorem 5.2.1]{vandenbergh} hold in our context.  Therefore, again in our context, it is straightforward to check that if $\mathcal{D}$ is the $\mathbb{Z}$-algebra in \cite[Step 9 of Section 6.3]{vandenbergh}, then the category of graded left $\mathcal{D}$-modules is locally noetherian.  The result then follows from the graded left module version of \cite[Lemma 3.2.2]{vandenbergh} mentioned in the first line of this proof. \hfill $\Box$
\newline

We call a graded left module {\it free} if it is isomorphic to a module of the form $\oplus_{i}Ae_{i}$.  There are graded left module analogues of Lemma \ref{lemma.minimal} and Proposition \ref{prop.global}, which allows one to deduce that every noetherian object in the category of graded left $A$-modules has a minimal resolution of length at most 2.  It follows that if $M$ is a noetherian graded left $A$-module, the function $f(n) := \operatorname{dim}_{K}M_{-n}$ is eventually of the form $cn+d$, with $c, d \in \mathbb{Z}$ and $c \geq 0$.  The dimension of a graded left module is defined analogously to the dimension of a graded right module.

The proof of the following result is an adaptation of the proof of \cite[Lemma 3.15]{ATV2} to the present context.

\begin{thm} \label{thm.domain}
If $x,y \in A$ are homogeneous elements such that $xy=0$, then either $x=0$ or $y=0$.
\end{thm}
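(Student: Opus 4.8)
The plan is to reduce the claim to the assertion that, for every nonzero homogeneous $x\in A_{pq}$ (necessarily $p\le q$), left multiplication $L_{x}\colon e_{q}A\to e_{p}A$ is an injective map of graded right $A$-modules. Indeed, a product of homogeneous elements can be nonzero only if it has the shape $xy$ with $y\in A_{qr}\subseteq e_{q}A$, and then $xy=L_{x}(y)$; so injectivity of all such $L_{x}$ yields the theorem. If $p=q$ then $x\in K^{\times}$ and $L_{x}$ is an isomorphism, so assume henceforth $p<q$.

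Put $K_{x}:=\operatorname{ker}L_{x}$ and $M:=\operatorname{coker}L_{x}$, so there is an exact sequence $0\to K_{x}\to e_{q}A\xrightarrow{L_{x}}e_{p}A\to M\to 0$. First I would note two containments. The restriction of $L_{x}$ to $A_{qq}=K$ is $c\mapsto xc$, which is injective because the right $K$-action makes $A_{pq}$ into a $K$-vector space and $x\ne 0$; hence $K_{x}$ vanishes in degree $q$, so $K_{x}\subseteq (e_{q}A)A_{\ge 1}$. Since $\mathbb{S}(V)$ is generated in degree $1$ and $q>p$, we also have $x\in A_{pq}=((e_{p}A)A_{\ge 1})_{q}$, hence $\operatorname{im}L_{x}\subseteq (e_{p}A)A_{\ge 1}$. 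Now Corollary \ref{cor.global}, applied to the displayed sequence with $F_{0}=e_{p}A$ and $F_{1}=e_{q}A$, gives that $K_{x}$ is free. As $\operatorname{dim}_{K}(e_{j}A_{j+i})_{K}=i+1$ by \cite[Theorem 6.1.2]{vandenbergh}, the Hilbert polynomial of a free submodule of $e_{q}A$ has leading coefficient at most $1$; therefore $K_{x}$ is either $0$ or free of rank one, and in the second case its inclusion into $e_{q}A$ is left multiplication $L_{z}$ by some nonzero $z\in A_{qs}$ with $\deg z=s-q\ge 1$ (were $\deg z=0$ we would have $z\in K^{\times}$, hence $K_{x}=e_{q}A$ and $L_{x}=0$, contradicting $x\ne 0$).

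Assume for contradiction that $K_{x}\ne 0$. Then $0\to e_{s}A\xrightarrow{L_{z}}e_{q}A\xrightarrow{L_{x}}e_{p}A\to M\to 0$ is a free resolution of the noetherian module $M$ of length $2$, from which I would compute $\underline{Ext}^{2}_{{\sf Gr }A}(M,A)$. Applying $\underline{Hom}_{{\sf Gr }A}(-,A)$, using $\underline{Hom}_{{\sf Gr }A}(e_{j}A,A)=Ae_{j}$, converts the resolution into the complex of free left modules $Ae_{p}\to Ae_{q}\to Ae_{s}$ whose maps are right multiplication by $x$ and by $z$; hence $\underline{Ext}^{2}_{{\sf Gr }A}(M,A)\cong Ae_{s}/Az$, where $Az$ is the image of right multiplication by $z$. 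Counting dimensions with $\operatorname{dim}_{K}(e_{j}A_{j+i})_{K}=i+1$ (and its left-module analogue) shows that $Ae_{s}/Az$ is infinite-dimensional over $K$, as it is nonzero in cofinitely many graded components. On the other hand, $A$ is Gorenstein of homological dimension $2$ — the content of \cite[Theorem 4.4]{duality} in the light of the duality formalism of \cite[Sections 1--4]{duality} — so $\underline{Ext}^{2}_{{\sf Gr }A}(N,A)$ is finite-dimensional over $K$ for every noetherian right $A$-module $N$, being by local duality the $K$-dual of the $A_{\ge 1}$-torsion (hence bounded) submodule of $N$. This contradiction forces $K_{x}=0$, proving the theorem.

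I expect the last step to be the crux: identifying the exact form of the Gorenstein/duality property of the $\mathbb{Z}$-algebra $A$ that is needed and verifying it is available from \cite{duality} and \cite{chan} — concretely, the finiteness of $\operatorname{dim}_{K}\underline{Ext}^{2}_{{\sf Gr }A}(-,A)$ on noetherian modules. The remaining points are routine: checking the containment hypotheses of Corollary \ref{cor.global}, tracking summand counts and degree shifts in the Hilbert-polynomial arithmetic, and handling the left/right and $\mathbb{Z}$-algebra index conventions carefully when dualizing.
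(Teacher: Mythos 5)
Your proposal is correct in outline and takes a genuinely different route from the paper, which adapts the argument of Artin--Tate--Van den Bergh's Lemma 3.15 in \cite{ATV2}: the paper introduces, for each $i$, the submodule $N_i \subseteq e_iA$ which is the sum of all submodules of dimension $\le 0$, reduces the statement to showing $N_i = 0$, proves that $\oplus_i N_i$ is a graded \emph{left} $A$-module of left dimension $\le 0$ (using the $2$-periodicity $e_iA \cong e_{i+2j}A[2j]$), and then finds an annihilator $z_i$ of $N_i$ outside $\oplus_j N_j$, which together with Corollary \ref{cor.global} forces $N_i = 0$. You instead prove injectivity of left multiplication $L_x$ directly: you use Corollary \ref{cor.global} to conclude $\ker L_x$ is free, Hilbert-function arithmetic to see it is $0$ or rank one, and then a local-duality/Gorenstein argument to rule out the rank-one case by computing $\underline{Ext}^2(M,A)$ two ways. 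Both proofs lean on Corollary \ref{cor.global} (global dimension $2$) and the Hilbert-function formula $\dim_K(e_jA_{j+i})_K = i+1$; they diverge at the contradiction step.

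The weak link in your argument is exactly where you flag it: the claim that $\underline{Ext}^2_{{\sf Gr}A}(N,A)$ is finite-dimensional over $K$ for every noetherian right $A$-module $N$. This is the AS--Gorenstein/local-duality statement $\underline{Ext}^2(N,A)^{\vee} \cong \tau N$ (up to shift), which is not among the results the present paper actually quotes. The references it does use from \cite{duality} and \cite{chan} are all of the ``opposite'' variance --- $\underline{Ext}^2(A_0,e_kA)$ concentrated in degree $k-2$, the $4$-term sequence relating $\tau$, $R^1\tau$, and $\omega\pi$, and internal Tor against $A_0$ --- so converting those into the finiteness you need is a nontrivial duality argument in the $\mathbb{Z}$-algebra setting that would have to be supplied. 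Everything else in your proposal (the reduction to injectivity of $L_x$, the verification of the hypotheses of Corollary \ref{cor.global} including the two containments in $PA_{\ge 1}$, the rank-at-most-one estimate for $\ker L_x$, and the identification $\underline{Ext}^2(M,A) \cong Ae_s/Az$ with its infinite-dimensionality because $\dim_K(Az)_j \le q-j+1 < s-j+1 = \dim_K(Ae_s)_j$ for $j\le q$) is sound. If you can nail down the Gorenstein property from \cite{duality}, your proof is a clean alternative; if not, you should fall back on the paper's ATV2-style argument, which avoids any appeal to $\underline{Ext}(-,A)$ and instead runs a dimension-count on the left module $\oplus_iN_i$.
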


\begin{proof}
We note that it suffices to prove that if $y \in A_{jl}$ has the property that there exists a nonzero $x \in A_{ij}$ such that $xy=0$, then $y=0$.  We prove this in several steps.  Throughout the proof, we will let $N_{j} \subset e_{j}A$ denote the sum of all submodules of dimension $\leq 0$.
\newline
\newline
\noindent
{\it Step 1:  We prove that if $y \in A_{jl}$ has the property that there exists a nonzero $x \in A_{ij}$ such that $xy=0$, then either $x \in N_{i}$ or $y \in N_{j}$.}  To prove this, we let $K_{j}$ denote the graded submodule of $e_{j}A$ consisting of all $m$ such that $xm=0$.  Then $y \in K_{j}$, and if ${}_{x}\mu:e_{j}A \rightarrow e_{i}A$ denotes left multiplication by $x$, there is a short exact sequence
$$
0 \rightarrow K_{j} \rightarrow e_{j}A \overset{{}_{x}\mu}{\rightarrow} x e_{j}A \rightarrow 0.
$$
There are three possibilities for $\operatorname{dim }K_{j}$.  Either $\operatorname{dim }K_{j}=-1$, in which case $y = 0$ so that $y \in N_{j}$, or $\operatorname{dim }K_{j}=0$, in which case $K_{j} \subset N_{j}$ so that $y \in N_{j}$, or $\operatorname{dim }K_{j}=1$.  In the third case, $\operatorname{dim}_{K}(K_{j})_{l}$ will eventually have the form $cl+d$, where $c$ is an integer $\geq 1$.  Since $\operatorname{dim}_{K}(e_{j}A)_{l}$ is eventually of the form $l-j+1$, we must have $c=1$ so that $\operatorname{dim}(e_{j}A/K_{j}) \leq 0$.  It follows $\operatorname{dim}(x e_{j}A) \leq 0$ so that $x \in N_{i}$.
\newline
\newline
\noindent
{\it Step 2:  We prove that it suffices to show that $N_{i}=0$ for all $i$.}  For, if this is the case, then by Step 1, if $y \in A_{jl}$ has the property that there exists a nonzero $x \in A_{ij}$ such that $xy=0$, then $y=0$.
\newline
\newline
\noindent
{\it Step 3:  We prove that it suffices to show that for all $i$, there exists a homogeneous $z_{i} \in Ae_{i}$ such that $z_{i}N_{i}=0$ and such that $z_{i}$ is not an element of $\oplus_{j} N_{j}$.}  Suppose this is the case.  By Step 2, it suffices to prove that $N_{i}=0$ for all $i$.  To this end, suppose $z_{i} \in A_{li}$, let ${}_{z_{i}}\mu:e_{i}A \rightarrow e_{l}A$ denote left multiplication by $z_{i}$ and consider the exact sequence
\begin{equation} \label{eqn.firstsequence}
0 \rightarrow L \rightarrow e_{i}A \overset{{}_{z_{i}}\mu}{\rightarrow} e_{l}A \rightarrow e_{l}A/z_{i}e_{i}A \rightarrow 0.
\end{equation}
By the hypothesis on $z_{i}$, $N_{i} \subset L$.  On the other hand, since $z_{i}$ is not an element of $\oplus_{j} N_{j}$, $\operatorname{dim }z_{i}e_{i}A=1$.  Therefore, since there is a short exact sequence
$$
0 \rightarrow L \rightarrow e_{i}A \rightarrow z_{i}e_{i}A \rightarrow 0
$$
and since $\operatorname{dim}_{K}(e_{i}A)_{j}$ is eventually $j-i+1$, it follows that $\operatorname{dim }L \leq 0$.  Therefore, $L \subset N_{i}$ so that $L=N_{i}$.  Either $z_{i} \in A_{ii}$ which implies that $N_{i}=0$, or (\ref{eqn.firstsequence}) satisfies the hypotheses of Corollary \ref{cor.global}, in which case it follows that $L=N_{i}$ is free and has dimension $\leq 0$.  Therefore, $N_{i}=0$ as desired.
\newline
\newline
\noindent
{\it Step 4:  We prove that $\oplus_{i}N_{i}$ is a graded left module with $i$th component $N_{i}$.}  It suffices to show that if $x \in A_{ij}$ and $n \in N_{j}$ then $xn \in N_{i}$.  This follows from the fact that left multiplication by $x$ induces a morphism of right modules $N_{j} \rightarrow e_{i}A$ and the dimension of the image of a homomorphism is less than or equal to the dimension of the domain.
\newline
\newline
\noindent
{\it Step 5:  We prove that if $n \in N_{i}$ is homogeneous, the graded left module generated by $n$ has left dimension $\leq 0$.  Therefore, the left dimension of $\oplus_{i}N_{i}$ is $\leq 0$.}  The second statement follows from the first since left dimension is a dimension function on the category of graded left $A$-modules.  We now prove the first statement.  To this end, we begin with the observation that since $e_{i}A \cong e_{i+2j}A[2j]$ for any $j \in \mathbb{Z}$ by the proof of \cite[Proposition 13.2]{chan}, it follows that $N_{i} \cong N_{i+2j}[2j]$.  Next, we note that since $e_{i}A$ is noetherian, there exists integers $d_{i}, l_{i}$ such that $\operatorname{dim }_{K}(N_{i})_{l}=d_{i}$ for all $l>l_{i}$.  We claim that for all $j \in \mathbb{Z}$, $d_{i+2j}=d_{i}$ and $l_{i+2j}$ can be taken to be $l_{i}+2j$.  To prove the claim, we note that $\operatorname{dim}_{K}(N_{i+2j}[2j])_{l}=d_{i}$ for all $l>l_{i}$ by the initial observation, which implies that $\operatorname{dim}_{K}(N_{i+2j})_{l+2j}=d_{i}$ for all $l>l_{i}$ and therefore $\operatorname{dim}_{K}(N_{i+2j})_{l}=d_{i}$ for all $l>l_{i}+2j$.  The claim follows.

Now let $m \in N_{i}$ with $m \in A_{in}$.  Then $A_{i-2j,i}m$ is a left $K$-subspace of $(N_{i-2j})_{n}$, and by the claim, the latter space has dimension $d_{i}$ for all $n>l_{i}-2j$, i.e. for all $j>(l_{i}-n)/2$.  It follows that $Ae_{i}m$ has left dimension $\leq 0$, as desired.
\newline
\newline
\noindent
{\it Step 6:  We prove that for all $i$, there exists a homogeneous $z_{i} \in Ae_{i}$ such that $z_{i}N_{i}=0$ and such that $z_{i}$ is not an element of $\oplus_{j} N_{j}$.  The theorem will then follow from Step 3.}  Let $I \subset Ae_{i}$ denote the graded left ideal of all $x$ such that $xN_{i}=0$, let $n_{1},\ldots,n_{l}$ denote a set of homogeneous generators of $N_{i}$ as a graded right $A$-module, and let $I_{\nu} \subset Ae_{i}$ denote the graded left ideal of all $x$ such that $xn_{\nu}=0$.  Then $I= \cap I_{\nu}$ and $Ae_{i}n_{\nu} \cong Ae_{i}/I_{\nu}$ so that
\begin{eqnarray*}
\operatorname{dim }(Ae_{i}/I) & \leq & \operatorname{max}\{\operatorname{dim }(Ae_{i}/I_{\nu})\} \\
& = & \operatorname{max }\{\operatorname{dim }(Ae_{i}n_{\nu})\} \\
& \leq & \operatorname{dim }(\oplus_{j}N_{j}) \\
& \leq & 0
\end{eqnarray*}
where the second to last inequality follows from Step 4 and the last inequality follows from Step 5.  Hence, $\operatorname{dim }I=1$, which establishes the result.
\end{proof}
The next result now follows easily from Theorem \ref{thm.domain} in light of the definition of $C$.
\begin{cor} \label{cor.domain}
The algebra $C$ is a domain.
\end{cor}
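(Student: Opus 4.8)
The plan is to deduce this directly from Theorem \ref{thm.domain} together with the explicit description of the multiplication on $C$ recalled at the start of this section. Since $C$ is a $\mathbb{Z}$-graded ring, it suffices to prove that the product of any two nonzero \emph{homogeneous} elements of $C$ is nonzero: for arbitrary nonzero $c, c' \in C$, writing each as a sum of homogeneous components and comparing the top-degree component of $cc'$ with the product of the top-degree components of $c$ and $c'$ then shows $cc' \ne 0$.

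So let $0 \ne c \in C_{i} = B_{0,2i}$ and $0 \ne c' \in C_{j} = B_{0,2j}$; since $C_{m} = B_{0,2m} = A_{0,2m}$ vanishes for $m < 0$, we may assume $i, j \geq 0$. Applying the isomorphism $B \cong B(2)$ of Corollary \ref{cor.periodic} a total of $i$ times produces a $K$-$K$-bimodule isomorphism $\theta : B_{0,2j} \rightarrow B_{2i,\,2i+2j}$; in particular $\theta$ is injective, so $\widetilde{c'} := \theta(c')$ is a nonzero element of $B_{2i,\,2i+2j}$. By the definition of the multiplication on $C$, $cc' \in C_{i+j} = B_{0,\,2i+2j}$ is the image of $c \otimes \widetilde{c'}$ under the multiplication map $B_{0,2i} \otimes B_{2i,\,2i+2j} \rightarrow B_{0,\,2i+2j}$ of the Veronese $B$, which is the restriction of the multiplication of $A = \mathbb{S}(V)$ under the identifications $B_{ml} = A_{ml}$ for $m, l$ even. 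Hence $cc'$ equals the product $c \cdot \widetilde{c'}$ computed in $A$, and this is nonzero by Theorem \ref{thm.domain} since $c$ and $\widetilde{c'}$ are nonzero homogeneous elements of $A$. Therefore $cc' \ne 0$, as required.

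There is no real obstacle here; the result is essentially a formal consequence of Theorem \ref{thm.domain}. The only points requiring (minor) care are checking that the $2$-periodicity isomorphism of Corollary \ref{cor.periodic} is injective on the relevant graded piece, which is immediate since it is an isomorphism, and keeping track of the bookkeeping identifying the components of the Veronese $B$ with those of $A$ and the multiplication of $B$ with that of $A$.
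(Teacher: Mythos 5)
Your proof is correct and fills in exactly the details the paper leaves implicit when it says the corollary "follows easily from Theorem \ref{thm.domain} in light of the definition of $C$": you reduce to homogeneous elements (legitimate since $C$ is $\mathbb{N}$-graded), transport $c'$ via the $2$-periodicity isomorphism of Corollary \ref{cor.periodic} to $B_{2i,2i+2j}$, identify the resulting $C$-product with the $A$-product of nonzero homogeneous elements in matching graded pieces, and invoke Theorem \ref{thm.domain}. This is the intended argument.
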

Since $A$ is right and left noetherian, so is $C$.  Since $C$ is a domain, the set of nonzero homogeneous elements forms both a left and a right Ore set, and
$$
Q := \mbox{Frac}_{gr}C
$$
is both a left and a right ring of fractions.

The following result invokes the notion of an integral noncommutative space and big injective defined and studied in \cite{Smith}.  In order to understand the notation employed in the result and in the following section, the reader may wish to review the comments on notation and conventions concluding the introduction.

\begin{cor} \label{cor.integral}
The space ${\sf Proj }C$ is integral with big injective $\pi_{C} Q$.
\end{cor}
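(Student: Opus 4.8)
\emph{Proof proposal.} The plan is to verify, with $\mathcal{E}:=\pi_{C}Q$, the conditions of \cite{Smith} under which ${\sf Proj }C$ is integral with big injective $\mathcal{E}$. Apart from Corollary \ref{cor.domain}, the only numerical input needed is that $\operatorname{dim}_{K}C_{i}=\operatorname{dim}_{K}A_{0,2i}=2i+1$, which follows from \cite[Theorem 6.1.2]{vandenbergh} and the definition of $C$, and which shows that $C$ is positively graded with $C_{i}\neq 0$ for every $i\geq 0$.

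First I would record the properties of $Q$ as a graded right $C$-module. Since the nonzero homogeneous elements of the domain $C$ form a two-sided Ore set, $Q$ is a graded ring in which every nonzero homogeneous element is a unit; in particular $Q_{0}$ is a division ring, and $C\hookrightarrow Q$ is an essential extension in ${\sf Gr }C$ (a nonzero graded submodule $M\subseteq Q$ contains a nonzero homogeneous $q=ab^{-1}$, whence $0\neq qb=a\in M\cap C$). Moreover $Q$ is torsion-free as an object of ${\sf Gr }C$: a nonzero homogeneous $c\in C$ generates a submodule $cC$ unbounded above, because $C$ is a domain with $C_{j}\neq 0$ for all $j\geq 0$, so $Q$ has no nonzero right bounded submodule, and as $C$ is noetherian this forces $\tau_{C}(Q)=0$.

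Next I would show that $Q$ is graded-injective over $C$. Since $C$ is graded right noetherian and a domain, it is graded prime, graded right Goldie, and graded right nonsingular, so the graded form of Goldie's theorem identifies the graded injective hull of $C_{C}$ with its graded classical right ring of fractions; thus $Q$ is graded-injective and $C\hookrightarrow Q$ realizes $Q$ as $E_{{\sf Gr }C}(C_{C})$. A torsion-free graded-injective module is ${\sf Tors}$-closed, so $\omega_{C}\pi_{C}Q\cong Q$; and a routine adjunction argument shows that $\pi_{C}$ carries a ${\sf Tors}$-closed injective object of ${\sf Gr }C$ to an injective object of ${\sf Proj }C$, so $\pi_{C}Q$ is injective in ${\sf Proj }C$. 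Now $\operatorname{End}_{{\sf Proj }C}(\pi_{C}Q)\cong\operatorname{End}_{{\sf Gr }C}(Q)\cong Q_{0}$, since (using closedness) both sides consist of left multiplications by elements of $Q_{0}$; hence $\pi_{C}Q$ is a nonzero injective object with division-ring endomorphism ring, so it is indecomposable. Finally, $\pi_{C}$ preserves essential monomorphisms, so $\pi_{C}C\hookrightarrow\pi_{C}Q$ is essential and $\pi_{C}Q$ is the injective hull of the structure object $\pi_{C}C$. This is precisely the data making ${\sf Proj }C$ integral with big injective $\pi_{C}Q$ in the sense of \cite{Smith}.

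The main obstacle is the graded-injectivity of $Q$, i.e. the identification $Q\cong E_{{\sf Gr }C}(C_{C})$; everything else is formal manipulation with the Ore localization $C\to Q$ and with the quotient functor $\pi_{C}$. This is the one place where I use that $C$ is noetherian rather than merely a domain; lacking a convenient reference, one proves it directly by verifying that $C$ is graded right nonsingular and that $Q$ is the maximal graded right ring of quotients of $C$, hence graded-injective.
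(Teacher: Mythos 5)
Your proposal is, in substance, a re-derivation of \cite[Theorem 4.5]{Smith} in the present setting, whereas the paper disposes of the corollary in two sentences by citing that theorem and remarking that its one unsatisfied hypothesis (that the degree-zero part be the central base field $k$; here $C_{0}=K$ and $K$ is not central) does not affect the argument. So the route is the same; you have simply unfolded the proof that Smith's theorem encapsulates. That said, two points deserve scrutiny before the argument can be regarded as complete.

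First, you identify the real crux yourself: the graded-injectivity of $Q$ as an object of ${\sf Gr}\,C$. Your last paragraph does not prove this; it only names what would have to be proved (that $C$ is graded right nonsingular and that $Q$ is its maximal graded right ring of quotients). Graded nonsingularity is immediate for a domain, but the identification of the graded classical ring of fractions with the graded maximal ring of quotients, and the fact that the latter is the graded injective hull of $C_{C}$, is exactly the content of the graded Goldie/Gabriel theory that the citation to \cite{Smith} is meant to carry. Either supply a reference for the graded statement or write out the argument; as it stands this is a gap, not a deferred routine verification.

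Second, the closing assertion that the three properties you establish (that $\pi_{C}Q$ is injective, that it is the injective hull of $\pi_{C}C$, and that $\operatorname{End}_{{\sf Proj}\,C}(\pi_{C}Q)$ is a division ring) are ``precisely the data'' required by Smith's definition of an integral space with big injective should be checked against the definition in \cite{Smith} rather than asserted. Smith's notion of a big injective imposes a condition on arbitrary noetherian objects of the category (controlling their torsion/torsion-free structure relative to $\mathcal{E}$), not merely a condition on $\pi_{C}C$ and its injective hull; Lemma \ref{lemma.preserves} and the appeal to \cite[Proposition 3.9]{Smith} in the proof of Theorem \ref{thm.grothendieck} rely on that stronger content. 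Relatedly, the claim that ``$\pi_{C}$ preserves essential monomorphisms'' is not true of Gabriel quotient functors in general; in the case at hand it holds because $Q$ is $\tau$-closed and $C$ has no nonzero torsion submodule, and you should say that rather than invoke a false general principle. Finally, note that $C$ being generated in degree one is an explicit hypothesis of Smith's theorem that the paper records; your argument uses only the weaker fact $C_{i}\neq 0$ for $i\geq 0$, so if you are going to reprove the theorem you should either verify that degree-one generation is not needed for the steps you carry out or confirm that $C$ is generated in degree one (it is, being a Veronese of an algebra generated in degree one).

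Overall: correct skeleton, same approach as the paper, but the graded-injectivity of $Q$ and the match with Smith's actual definition are substantive items left open.
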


\begin{proof}
Since $C$ is a domain by Corollary \ref{cor.domain} which is generated in degree 1, all hypotheses of \cite[Theorem 4.5]{Smith} hold for $C$ except that $C_{0} = K$ acts centrally on $C$.  However, the proof still works in our context.
\end{proof}

\subsection{Grothendieck's Theorem}
The goal of this section is to classify vector bundles (defined before Corollary \ref{cor.grothendieck}) over $\mathbb{P}(V)$.  We will need a number of preliminary results.
\begin{lemma} \label{lemma.sums}
Let $l \in \mathbb{N}$.  If $x_{1},\ldots, x_{n}$ are nonzero homogeneous elements in $Q^{\oplus l}$ then the graded right $C$-module generated by $x_{1},\ldots, x_{n}$ is contained in a submodule isomorphic to a finite direct sum of shifts of $C$.
\end{lemma}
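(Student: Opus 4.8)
The plan is to realize the graded submodule $M := x_{1}C + \cdots + x_{n}C$ of $Q^{\oplus l}$ inside a single ``fractional lattice'' $t^{-1}C^{\oplus l}$ for a well-chosen nonzero homogeneous $t \in C$, and then to note that every such fractional lattice is a finite direct sum of shifts of $C$.

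First I would clear denominators. Write each generator in coordinates as $x_{i} = (q_{i1}, \dots, q_{il})$, where each nonzero $q_{ij} \in Q$ is homogeneous of degree $\deg x_{i}$. Since $Q = \operatorname{Frac}_{gr}C$ is obtained by inverting the nonzero homogeneous elements of $C$, and these form a left Ore set (as recalled just before the statement), each $q_{ij}$ may be written as a left fraction $q_{ij} = s_{ij}^{-1}a_{ij}$ with $s_{ij}, a_{ij} \in C$ homogeneous and $s_{ij} \neq 0$. For each fixed $i$ I would apply the left Ore condition repeatedly to obtain a nonzero homogeneous common left multiple $t_{i}$ of $s_{i1}, \dots, s_{il}$, so that $t_{i}x_{i} \in C^{\oplus l}$; then I would take a nonzero homogeneous common left multiple $t$ of $t_{1}, \dots, t_{n}$. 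Writing $t = c_{i}t_{i}$ with $c_{i} \in C$ shows $tx_{i} = c_{i}(t_{i}x_{i}) \in C^{\oplus l}$ for every $i$.

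Next I would package this geometrically. Let $\ell_{t}\colon Q^{\oplus l} \to Q^{\oplus l}$ denote left multiplication by $t$; it is a homomorphism of graded right $C$-modules (right $C$-linearity being immediate from associativity) raising degrees by $\deg t$, and since $t$ is inverted in $Q$ it is bijective, with inverse given by left multiplication by $t^{-1}$. Set $N := \ell_{t}^{-1}(C^{\oplus l})$, a graded right $C$-submodule of $Q^{\oplus l}$. By the previous paragraph each $x_{i}$ lies in $N$, hence $M \subseteq N$; and since $\ell_{t}$ is bijective it restricts to an isomorphism of graded right $C$-modules from $N$ onto $C^{\oplus l}$, shifting degrees by $\deg t$. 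Hence $N$ is isomorphic to a direct sum of $l$ shifted copies of $C$ and contains $M$, which is exactly the assertion of the lemma.

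I do not expect a real obstacle here: the argument is pure Ore calculus and uses none of the homological machinery of the surrounding sections. The one point that deserves a line of care is the passage from the pairwise left Ore condition to a single common left denominator $t$ valid for all the $x_{i}$ simultaneously, which is why I have split the construction into the two stages ($t_{i}$ first, then $t$) above; everything else rests on the facts recorded just before the statement, namely that $C$ is a left and right noetherian graded domain with graded ring of fractions $Q$.
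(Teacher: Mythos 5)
Your argument is correct, but it takes a genuinely different and more elementary route than the paper. The paper works by induction on the generators: it chooses a maximal $C$-independent subset $\{x_1,\dots,x_m\}$, uses a dependency relation together with both the left and right Ore conditions to replace these by new elements $y_1,\dots,y_m$ that are still $C$-independent and whose span absorbs the next generator $x_{m+1}$, and iterates until all the $x_i$ lie in a free module of rank $m = \operatorname{rk} M$. You instead clear all denominators simultaneously: a single nonzero homogeneous $t$ makes $t x_i \in C^{\oplus l}$ for every $i$, so $M$ sits inside the ``fractional lattice'' $t^{-1}C^{\oplus l} = \ell_t^{-1}(C^{\oplus l})$, which left multiplication by $t$ identifies with $C^{\oplus l}$ up to a uniform degree shift. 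The paper's version produces a free overmodule of minimal rank (the rank of $M$), which is not needed for the statement; yours produces one of rank $l$ but dispenses entirely with the dependence/independence bookkeeping and with the right Ore condition, which makes it shorter and cleaner. The one point worth a sentence in your write-up is why the common left multiples may be taken homogeneous: since $C$ is a graded domain whose nonzero \emph{homogeneous} elements form the Ore set, for nonzero homogeneous $s,s'$ the intersection $Cs\cap Cs'$ is a nonzero graded left ideal, hence contains a nonzero homogeneous element; iterating gives the homogeneous $t_i$ and then $t$. With that remark inserted, your proof is complete and correct.
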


\begin{proof}
If $x_{1},\ldots, x_{n}$ are independent over $C$ then the result follows.  Otherwise, after relabeling, suppose $x_{1},\ldots,x_{m}$, with $m<n$, is a maximal subset of $\{x_{1},\ldots,x_{n}\}$ which is independent over $C$.  Then there exists $c_{1},\ldots, c_{m} \in C$ homogeneous and not all zero, and $c_{m+1}$ homogeneous and nonzero, such that
$$
x_{1}c_{1}+\cdots+x_{m}c_{m} = x_{m+1}c_{m+1}.
$$
Thus, in $Q^{\oplus l}$, $x_{1}c_{1}c_{m+1}^{-1}+\cdots+x_{m}c_{m}c_{m+1}^{-1}=x_{m+1}$.  By the left Ore condition, for each $1 \leq j \leq m$, there exist nonzero $d_{j}, e_{j} \in C$ such that $c_{j}c_{m+1}^{-1}=e_{j}^{-1}d_{j}$.  Therefore,
$$
x_{1}e_{1}^{-1}d_{1}+\cdots+x_{m}e_{m}^{-1}d_{m}=x_{m+1}.
$$
From this and the right Ore condition, $\{y_{1}:=x_{1}e_{1}^{-1},\ldots,y_{m}:=x_{m}e_{m}^{-1}\}$ is independent over $C$ and generates a graded right $C$-module which contains $x_{1},\ldots, x_{m+1}$.

If $n=m+1$, we are done.  Otherwise, $\{y_{1},\ldots, y_{m},x_{m+2}\}$ is dependent over $C$ since $\{x_{1},\dots, x_{m}, x_{m+2}\}$ is, so we can repeat the argument above to find $z_{1},\ldots,z_{m} \in Q^{\oplus m}$ independent over $C$ generating a graded right $C$-module which contains $x_{1},\ldots,x_{m+2}$.  A repetition of this argument a finite number of times establishes the result.
\end{proof}

\begin{lemma} \label{lemma.pretwosidediso}
For all $i \in \mathbb{Z}$, the unit map $\eta_{i}:e_{i}A \rightarrow \omega \pi e_{i}A$ is an isomorphism.
\end{lemma}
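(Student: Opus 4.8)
The plan is to show that $\eta_{i}$ is an isomorphism by showing that its kernel and cokernel both vanish. Write $\tau_{A}$ for the torsion functor. For any $M \in {\sf Gr }A$ the unit $\eta_{M}:M \rightarrow \omega\pi M$ of the adjunction $(\pi,\omega)$ has $\operatorname{ker}\eta_{M}=\tau_{A}M$ (this being the largest subobject of $M$ lying in ${\sf Tors }A$), and the long exact sequence of the derived torsion triangle $R\tau_{A}M \rightarrow M \rightarrow (R\omega)\pi M$ yields a monomorphism $\operatorname{coker}\eta_{M}\hookrightarrow R^{1}\tau_{A}M$, where $R^{1}\tau_{A}M=\varinjlim_{n}\underline{Ext}^{1}_{{\sf Gr }A}(A/A_{\geq n},M)$ (the relevant machinery is available in our setting by \cite{az}, \cite{vandenbergh} and \cite[Sections 13--15]{chan}). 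So it suffices to prove $\tau_{A}(e_{i}A)=0$ and $R^{1}\tau_{A}(e_{i}A)=0$.

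The first vanishing is immediate. Since $e_{i}A$ is noetherian and supported in degrees $\geq i$, any submodule of $e_{i}A$ lying in ${\sf Tors }A$ is right bounded and left bounded, hence finite-dimensional over $K$, hence has $\operatorname{dim}(-)\leq 0$ and so is contained in the submodule $N_{i}\subseteq e_{i}A$ appearing in the proof of Theorem \ref{thm.domain}. That proof establishes $N_{i}=0$, so $\tau_{A}(e_{i}A)=0$.

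For the second vanishing, filter $A/A_{\geq n}$ by the submodules $A_{\geq m}/A_{\geq n}$ for $0\leq m\leq n$; the successive quotients are finite direct sums of shifts of the simple modules $S_{j}:=e_{j}A/(e_{j}A)_{\geq j+1}$, each of which is a direct summand of $A_{0}=\oplus_{j}S_{j}$. Using the long exact sequences of $\underline{Ext}_{{\sf Gr }A}(-,e_{i}A)$ and then passing to the colimit, it is therefore enough to prove $\underline{Ext}^{j}_{{\sf Gr }A}(A_{0},e_{i}A)=0$ for $j\leq 1$. The case $j=0$ is the second part of Lemma \ref{lemma.homological}, since $e_{i}A$ is free and so has a minimal resolution of length $0$. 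For the case $j=1$, apply $\underline{Hom}_{{\sf Gr }A}(-,e_{i}A)$ to the Koszul-type minimal free resolution
$$
0\rightarrow e_{j+2}A\rightarrow(e_{j+1}A)^{\oplus 2}\rightarrow e_{j}A\rightarrow S_{j}\rightarrow 0
$$
of $S_{j}$ — whose existence follows from Lemma \ref{lemma.minimal}, Corollary \ref{cor.global}, and a Hilbert series count using $\operatorname{dim}_{K}(e_{j}A_{j+i})_{K}=i+1$ \cite[Theorem 6.1.2]{vandenbergh} — and check, using that $A$ is a domain (Theorem \ref{thm.domain}), that the resulting complex of free modules is exact away from its right-hand term. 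This last step is the noncommutative analogue of the familiar computation on $\mathbb{P}^{1}$, and the whole $\underline{Ext}$-computation is also contained in \cite[Theorem 4.4]{duality}.

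I expect the main obstacle to be the vanishing $R^{1}\tau_{A}(e_{i}A)=0$, equivalently the ``depth $\geq 2$'' statement $\underline{Ext}^{1}_{{\sf Gr }A}(A_{0},e_{i}A)=0$; once this and the identification of $\operatorname{coker}\eta_{i}$ with a subgroup of $R^{1}\tau_{A}(e_{i}A)$ are in hand, the remaining steps are routine bookkeeping.
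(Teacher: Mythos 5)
Your proof follows the same reduction as the paper: use the four-term exact sequence $0 \to \tau M \to M \to \omega\pi M \to R^1\tau M \to 0$ and show that $\tau_A(e_iA)=R^1\tau_A(e_iA)=0$. The difference is that the paper obtains both vanishings directly by citing \cite[Theorem 4.11 and Corollary 4.12]{duality}, whereas you attempt to re-derive them internally. Your argument for $\tau_A(e_iA)=0$ is sound and self-contained in the paper's logical ordering (noetherian $\Rightarrow$ a torsion submodule of $e_iA$ is right bounded, left boundedness of $e_iA$ gives finite $K$-dimension, hence $\dim \le 0$, hence contained in $N_i=0$ from the proof of Theorem \ref{thm.domain}); this gives a legitimate alternative to the citation, though it creates a dependence on Theorem \ref{thm.domain} that the paper's version avoids. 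Your treatment of $R^1\tau_A(e_iA)=0$ is correct in spirit — reduce to $\underline{Ext}^{\le 1}_{{\sf Gr}A}(A_0,e_iA)=0$, get the $j=0$ case from Lemma \ref{lemma.homological}, and get $j=1$ from a Koszul-type minimal resolution of $S_j$ — but several of the technical steps are only sketched (the colimit over $A/A_{\geq n}$, the long-exact-sequence induction over the filtration, and the exactness of the $\underline{Hom}$-complex away from the last spot), and you ultimately concede that the whole computation is already in \cite[Theorem 4.4]{duality}. So you reach the same place the paper does, but by a longer route whose remaining gaps are most cleanly filled by the very citation the paper uses. No genuine error, but no saving over the paper's one-line appeal to \cite{duality} either.
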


\begin{proof}
By \cite[Theorem 4.11, and Corollary 4.12]{duality}, $\tau e_{i}A =R^{1}\tau e_{i}A =0$ and for any $M \in {\sf Gr }\mathbb{S}(V)$, there is an exact sequence
\begin{equation} \label{eqn.fourterm}
0 \rightarrow \tau M \rightarrow M \rightarrow \omega \pi M
\rightarrow \operatorname{R}^{1}\tau M \rightarrow 0
\end{equation}
whose center arrow is the unit map.  Therefore, applying (\ref{eqn.fourterm}) in the case $M=e_{i}A$, the third arrow in (\ref{eqn.fourterm}) is an isomorphism of right $A$-modules and the result follows.
\end{proof}

\begin{corollary} \label{cor.twosidediso}
If $\pi e_{i}A \cong \pi e_{j}A$, then $i=j$.
\end{corollary}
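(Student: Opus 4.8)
The plan is to transport the given isomorphism from ${\sf Proj }A$ back to ${\sf Gr }A$ by applying the section functor, and then to read off $i=j$ from the Hilbert function of $e_iA$.

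First I would apply $\omega = \omega_A$ to the hypothesis. Since $\omega$ is a functor, the isomorphism $\pi e_iA \cong \pi e_jA$ in ${\sf Proj }A$ yields an isomorphism $\omega\pi e_iA \cong \omega\pi e_jA$ in ${\sf Gr }A$. By Lemma \ref{lemma.pretwosidediso} the unit maps $\eta_i\colon e_iA \to \omega\pi e_iA$ and $\eta_j\colon e_jA \to \omega\pi e_jA$ are isomorphisms of graded right $A$-modules, so composing gives a degree-preserving isomorphism $e_iA \cong e_jA$ in ${\sf Gr }A$.

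Next I would invoke the defining properties of $\mathbb{S}(V)$: we have $A_{mn}=0$ for $m>n$ and $A_{mm}=K$, so the homogeneous component $(e_iA)_n = A_{i,n}$ vanishes for $n<i$ and equals $K\neq 0$ for $n=i$. Thus the least degree in which $e_iA$ is nonzero is exactly $i$, and likewise for $e_jA$ it is $j$. A degree-preserving isomorphism $e_iA \cong e_jA$ restricts to isomorphisms on each homogeneous component and therefore matches these least nonzero degrees, forcing $i=j$.

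There is essentially no obstacle here; the one point requiring care is that an isomorphism in ${\sf Proj }A$ really does produce a \emph{graded} isomorphism of $A$-modules, which is precisely what Lemma \ref{lemma.pretwosidediso} supplies via the unit of the $(\pi,\omega)$-adjunction. (Alternatively one could argue directly that $e_iA$ is torsion-free, using $\tau e_iA=0$ as in the proof of Lemma \ref{lemma.pretwosidediso}, but routing through $\omega$ is cleaner.)
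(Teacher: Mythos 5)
Your argument is correct, and it shares with the paper the crucial step of using Lemma~\ref{lemma.pretwosidediso} to transport information back from ${\sf Proj\,}A$ to ${\sf Gr\,}A$. The final step, however, differs from the paper's. The paper does not apply $\omega$ to the isomorphism itself; rather it uses the unit isomorphisms to compute
$\operatorname{Hom}_{\mathbb{P}(V)}(\pi e_iA,\pi e_jA)\cong A_{ji}$, observes that the hypothesis forces this to be isomorphic to $K$, and then invokes the dimension formula $\dim_K(e_jA_{j+n})_K=n+1$ from \cite[Theorem 6.1.2]{vandenbergh} to conclude $j=i$. You instead apply $\omega$ directly to obtain a degree-preserving isomorphism $e_iA\cong e_jA$ in ${\sf Gr\,}A$ and read off $i=j$ from the lowest nonvanishing degree, using only the defining facts $A_{mn}=0$ for $m>n$ and $A_{mm}=K$. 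Your route is marginally more elementary in that it avoids the Hilbert-function formula; the paper's route has the side benefit of identifying the $\operatorname{Hom}$-space $\operatorname{Hom}_{\mathbb{P}(V)}(\pi e_iA,\pi e_jA)\cong A_{ji}$, which is the computation actually reused later (e.g.\ in Lemma~\ref{lemma.twosidediso} and Theorem~\ref{thm.twosidedisom}). Both proofs are valid.
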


\begin{proof}
On the one hand, by Lemma \ref{lemma.pretwosidediso},
\begin{eqnarray*}
\operatorname{Hom}_{\mathbb{P}(V)}(\pi e_{i}A,\pi e_{j}A) & \cong & \operatorname{Hom}_{{\sf Gr }A}(e_{i}A, e_{j}A) \\
& \cong & A_{ji}.
\end{eqnarray*}
On the other hand, under the hypothesis,
$$
\operatorname{Hom}_{\mathbb{P}(V)}(\pi e_{i}A, \pi e_{j}A) \cong \operatorname{Hom}_{\mathbb{P}(V)}(\pi e_{i}A,\pi e_{i}A),
$$
so that, again by Lemma \ref{lemma.pretwosidediso}, $\operatorname{Hom}_{\mathbb{P}(V)}(\pi e_{i}A,\pi e_{j}A) \cong K$.  It follows from \cite[Theorem 6.1.2]{vandenbergh} that $j=i$.
\end{proof}

\begin{prop} \label{prop.cproperties}
There is an equivalence ${\sf Proj }C \rightarrow \mathbb{P}(V)$ sending $\pi_{C}C[i]$ to $\pi_{A} e_{-2i}A$.
\end{prop}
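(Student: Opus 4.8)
The plan is to realize the asserted equivalence as a composite
$$
{\sf Proj }C \xrightarrow{\ H\ } {\sf Proj }B \xrightarrow{\ G\ } {\sf Proj }A = \mathbb{P}(V),
$$
where $G$ is a quasi-inverse of the Veronese equivalence attached to the cofinal subset $J=2\mathbb{Z}$, and $H$ is the equivalence that encodes the $2$-periodicity of $B$ recorded in Corollary \ref{cor.periodic}.

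For $G$, I would invoke the theory of Veronese subalgebras of $\mathbb{Z}$-algebras from \cite[Section 2]{quadrics}: since $J$ is cofinal in $\mathbb{Z}$, the Veronese functor ${\sf Gr }A \rightarrow {\sf Gr }B$, $M \mapsto \bigoplus_{j \in J}M_{j}$, carries ${\sf Tors }A$ into ${\sf Tors }B$ and induces an equivalence ${\sf Proj }A \rightarrow {\sf Proj }B$ sending $\pi_{A}e_{i}A$ to $\pi_{B}e_{i}B$ for every $i \in J$ (here $(e_{i}A)^{(J)}=\bigoplus_{j\in 2\mathbb{Z}}A_{ij}=e_{i}B$). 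Take $G$ to be a quasi-inverse, so that $G(\pi_{B}e_{i}B) \cong \pi_{A}e_{i}A$ for every even $i$.

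For $H$, the point is that a $2$-periodic $\mathbb{Z}$-algebra supported on $2\mathbb{Z}$ is, after relabelling $2\mathbb{Z}$ by $\mathbb{Z}$, the same datum as a $\mathbb{Z}$-graded ring, and that ring is exactly $C$. Concretely, view $C$ as the $\mathbb{Z}$-algebra $\widehat{C}$ with $\widehat{C}_{ij}:=C_{j-i}$ and multiplication inherited from $C$, so that ${\sf Gr }C \simeq {\sf Gr }\widehat{C}$ in the standard way, and let $B'$ be the $\mathbb{Z}$-algebra with $B'_{ij}:=B_{2i,2j}$, for which ${\sf Gr }B' = {\sf Gr }B$ is a mere relabelling. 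Iterating the periodicity isomorphisms $\phi_{\bullet}$ of Corollary \ref{cor.periodic} gives $K$-$K$-bimodule isomorphisms $\widehat{C}_{ij}=C_{j-i}=B_{0,2(j-i)} \xrightarrow{\ \sim\ } B_{2i,2j}=B'_{ij}$, and a short diagram chase — using precisely the multiplicativity of the $\phi_{\bullet}$ asserted in Corollary \ref{cor.periodic}, together with the fact that multiplication commutes with the periodicity shifts — shows that these assemble into an isomorphism of $\mathbb{Z}$-algebras $\widehat{C} \xrightarrow{\ \sim\ } B'$, i.e. the twisted multiplication defining $C$ is carried to the multiplication of $B$. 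Composing, ${\sf Gr }C \simeq {\sf Gr }B$; this equivalence visibly preserves right-bounded modules in both directions, hence identifies ${\sf Tors }C$ with ${\sf Tors }B$ and descends to an equivalence $H:{\sf Proj }C \rightarrow {\sf Proj }B$.

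Finally, I would trace $\pi_{C}C[i]$ through the identifications. With the grading-shift convention $M[i]_{n}=M_{n+i}$, the module $C[i]$ corresponds under ${\sf Gr }C \simeq {\sf Gr }\widehat{C}$ to $e_{-i}\widehat{C}$ (since $(e_{-i}\widehat{C})_{n}=\widehat{C}_{-i,n}=C_{n+i}$), hence under $\widehat{C} \cong B'$ to $e_{-i}B' = e_{-2i}B$, and then $G(\pi_{B}e_{-2i}B) \cong \pi_{A}e_{-2i}A$; thus $G\circ H$ sends $\pi_{C}C[i]$ to $\pi_{A}e_{-2i}A$, as required. I expect the only genuine work to be bookkeeping: fixing the shift convention consistently, and checking that the periodicity isomorphisms are compatible not merely with multiplication but also with the relevant module structures, so that the identification $\widehat{C}\cong B'$ is functorial on all of ${\sf Gr }$; once that compatibility is in hand, the statement about $\pi_{C}C[i]$ is a direct index computation.
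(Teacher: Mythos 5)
Your proposal is correct and follows essentially the same route as the paper: both factor the equivalence through ${\sf Proj }B$, both pass from $C$ to $B$ by reassembling the $\mathbb{Z}$-graded ring $C$ into a $\mathbb{Z}$-algebra and using the $2$-periodicity of Corollary \ref{cor.periodic} to identify it with $B$ (the paper cites the proof of \cite[Lemma 2.4]{quadrics} for exactly this identification, where you spell it out by hand), and both then identify ${\sf Proj }B$ with ${\sf Proj }A$ (the paper goes via $-\otimes_B A$ and \cite[Lemma 2.5]{quadrics}, where you take the quasi-inverse of the Veronese functor — these are adjoint and give the same equivalence on ${\sf Proj}$). The paper inserts one extra bookkeeping step, a $2\mathbb{Z}$-graded ring $C'$, and invokes \cite[p.~380]{twistings} for the shift computation $C'[2i]\mapsto e_{-2i}B$ that you carry out directly; your index chase $C[i]\rightsquigarrow e_{-i}\widehat{C}\rightsquigarrow e_{-2i}B$ matches the paper's $C[i]\rightsquigarrow C'[2i]\rightsquigarrow e_{-2i}B$.
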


\begin{proof}
If we let $C'$ be the $2\mathbb{Z}$-graded ring defined by letting $C'_{2i} := C_{i}$, and by letting multiplication be that induced by multiplication in $C$, then there is a canonical equivalence ${\sf Gr }C \rightarrow {\sf Gr }C'$ sending $C[i]$ to $C'[2i]$ and inducing an equivalence ${\sf Proj }C \rightarrow {\sf Proj }C'$.  By the proof of \cite[Lemma 2.4]{quadrics}, if $B$ is the $2\mathbb{Z}$-Veronese of $A$, and if $\check{C'}$ denotes the $2 \mathbb{Z}$-algebra with $\check{C'}_{2i,2j}:=C'_{2j-2i}$ and with multiplication induced by multiplication in $C'$, then $B \cong \check{C'}$. Thus, by \cite[p. 380]{twistings}, there is a canonical equivalence ${\sf Gr }C' \rightarrow {\sf Gr }B$ sending $C'[2i]$ to $e_{-2i}B$ and inducing an equivalence ${\sf Proj }C' \rightarrow {\sf Proj }B$.  Finally, the functor $-\otimes_{B}A:{\sf Gr }B \rightarrow {\sf Gr }A$ defined in \cite[Section 2]{quadrics} sends $e_{j}B$ to $e_{j}A$ and induces an equivalence ${\sf Proj }B \rightarrow {\sf Proj }A$ by \cite[Lemma 2.5]{quadrics}. The proposition follows.
\end{proof}
For the rest of this section, we will identify the categories ${\sf Proj }C$ and $\mathbb{P}(V)$ via the equivalence in Proposition \ref{prop.cproperties} without further comment.

Following \cite{Smith}, we denote the big injective in $\mathbb{P}(V)$ (computed in Corollary \ref{cor.integral}) by $\mathcal{E}$.  Recall from \cite{Smith} that an object $\mathcal{M}$ in $\mathbb{P}(V)$ is {\it torsion} if $\operatorname{Hom}_{\mathbb{P}(V)}(\mathcal{M}, \mathcal{E})=0$, and is {\it torsion-free} if the only submodule of it that is torsion is $0$.  Let ${\sf T}$ denote the full subcategory of $\mathbb{P}(V)$ of torsion objects.  Then ${\sf T}$ is a localizing subcategory, and there is a map of noncommutative spaces $j: \mathbb{P}(V) / {\sf T} \rightarrow \mathbb{P}(V)$ \cite[Section 3]{Smith}.  Furthermore, if $\mathcal{M}$ is an object of $\mathbb{P}(V)$, then there is an exact sequence
\begin{equation}\label{eqn.torsion0}
0 \rightarrow \mathcal{T}_{1} \rightarrow \mathcal{M} \rightarrow j_{*}j^{*}\mathcal{M} \rightarrow \mathcal{T}_{2} \rightarrow 0
\end{equation}
with $\mathcal{T}_{1}, \mathcal{T}_{2} \in {\sf T}$.  In particular, if $\mathcal{M}$ is torsion-free, then there is a short exact sequence
\begin{equation} \label{eqn.torsion}
0 \rightarrow \mathcal{M} \rightarrow j_{*}j^{*}\mathcal{M} \rightarrow \mathcal{N} \rightarrow 0
\end{equation}
where $\mathcal{N} \in {\sf T}$.

\begin{theorem} \label{thm.grothendieck}
Every noetherian torsion-free object in $\mathbb{P}(V)$ is a finite sum of the form $\oplus \pi_{A} e_{i}A $, and
every noetherian object of $\mathbb{P}(V)$ is a direct sum of a torsion object and a torsion-free object.
\end{theorem}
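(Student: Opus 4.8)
The plan is to carry out S.~P.~Smith's ``noncommutative'' proof of Grothendieck's splitting theorem for the commutative projective line (the proof technique acknowledged in the introduction), writing $\mathcal{L}_i:=\pi_A e_iA$ for the candidate line bundles and keeping the dictionary $\mathcal{L}_i\leftrightarrow\mathcal{O}_{\mathbb{P}^1}(-i)$ in mind. Two preliminary observations: by Lemma~\ref{lemma.pretwosidediso} and the adjunction $(\pi_A,\omega_A)$ one has $\operatorname{Hom}_{\mathbb{P}(V)}(\mathcal{L}_i,\mathcal{L}_j)\cong A_{ji}$ — nonzero exactly when $i\ge j$, and equal to $K$ when $i=j$, so in particular any nonzero morphism from a line bundle to a torsion-free object is monic — and the dimension function on ${\sf Gr }A$ defined before Theorem~\ref{thm.domain} gives every noetherian object a well-defined \emph{rank} (the leading coefficient of its eventual Hilbert polynomial). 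I would prove first the classification of noetherian torsion-free objects, then deduce the splitting of an arbitrary noetherian object.

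\emph{Embedding a torsion-free object into a sum of line bundles.} Let $\mathcal{M}$ be noetherian and torsion-free of rank $r$. The canonical map $\mathcal{M}\to j_*j^*\mathcal{M}$ of (\ref{eqn.torsion}) is monic, and since $\mathbb{P}(V)/{\sf T}$ is the module category of a division ring, $j^*\mathcal{M}$ is free of rank $r$, whence $j_*j^*\mathcal{M}\cong\mathcal{E}^{\oplus r}=\pi_C(Q^{\oplus r})$, $\mathcal{E}$ the big injective of Corollary~\ref{cor.integral}. Applying $\omega_C$ and using that $Q$ is saturated (i.e.\ $\omega_C\pi_CQ=Q$, which follows as in Lemma~\ref{lemma.pretwosidediso} from the vanishing of $\tau_CQ$ and $R^1\tau_CQ$), a noetherian submodule $M\subseteq\omega_C\mathcal{M}$ with $\pi_CM=\mathcal{M}$ embeds into $Q^{\oplus r}$; choosing homogeneous generators and invoking Lemma~\ref{lemma.sums}, $M$ lies inside a submodule of $Q^{\oplus r}$ isomorphic to $\bigoplus_{j=1}^sC[n_j]$, so applying the exact functor $\pi_C$ and Proposition~\ref{prop.cproperties} produces a monomorphism $\mathcal{M}\hookrightarrow\bigoplus_{j=1}^s\mathcal{L}_{-2n_j}$. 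When $r=1$ the submodule of $Q^{\oplus 1}$ furnished by Lemma~\ref{lemma.sums} has rank one, hence is a single shift, so a rank-one torsion-free object embeds into a single line bundle with finite-length cokernel.

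\emph{Induction on rank.} The homological inputs are $\operatorname{Ext}^1_{\mathbb{P}(V)}(\mathcal{L}_c,\mathcal{L}_d)=0$ whenever $c\ge d-1$, and $\operatorname{Ext}^1_{\mathbb{P}(V)}(\mathcal{L}_c,\mathcal{T})=0$ for every torsion object $\mathcal{T}$; both follow from the fact that $\mathbb{P}(V)$ has cohomological dimension one and a Serre functor $\mathbb{S}$ \cite[Theorems~15.4,~16.4]{chan} sending $\mathcal{L}_c$ to (a shift of) $\mathcal{L}_{c+2}$, so that $\operatorname{Ext}^1_{\mathbb{P}(V)}(\mathcal{L}_c,\mathcal{F})$ is dual to $\operatorname{Hom}_{\mathbb{P}(V)}(\mathcal{F},\mathcal{L}_{c+2})$ for all $\mathcal{F}$ — alternatively one computes $\operatorname{Ext}^1_{{\sf Proj }A}(\pi e_cA,\pi e_dA)=\varinjlim_m\operatorname{Ext}^1_{{\sf Gr }A}((e_cA)_{\ge m},e_dA)$ from the freeness of $e_cA$ and \cite[Theorem~4.4]{duality}. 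Granting the rank-one case below, induct on $r$: $r=0$ forces $\mathcal{M}=0$ since a torsion-free object of dimension $0$ lies in ${\sf T}$. For $r\ge 1$ put $a:=\min\{i:\operatorname{Hom}_{\mathbb{P}(V)}(\mathcal{L}_i,\mathcal{M})\neq 0\}$; this set is nonempty because $\operatorname{Hom}(\mathcal{L}_i,\mathcal{M})=(\omega_A\mathcal{M})_i\neq 0$ for $i\gg 0$, and bounded below because $\mathcal{M}\hookrightarrow\bigoplus\mathcal{L}_{-2n_j}$. Fix a monic $\mathcal{L}_a\hookrightarrow\mathcal{M}$ and let $\mathcal{N}$ be its saturation in $\mathcal{M}$; then $\mathcal{N}$ is rank-one torsion-free, so $\mathcal{N}\cong\mathcal{L}_{a'}$ by the rank-one case, the inclusion $\mathcal{L}_a\hookrightarrow\mathcal{L}_{a'}$ forces $a'\le a$, and $a'<a$ would contradict minimality of $a$; hence $\mathcal{N}=\mathcal{L}_a$ and $\mathcal{M}/\mathcal{L}_a$ is torsion-free of rank $r-1$, so $\mathcal{M}/\mathcal{L}_a\cong\bigoplus_k\mathcal{L}_{d_k}$ by the inductive hypothesis. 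Applying $\operatorname{Hom}(\mathcal{L}_{a-1},-)$ to $0\to\mathcal{L}_a\to\mathcal{M}\to\bigoplus_k\mathcal{L}_{d_k}\to 0$ and using $\operatorname{Hom}(\mathcal{L}_{a-1},\mathcal{M})=0$ together with $\operatorname{Ext}^1(\mathcal{L}_{a-1},\mathcal{L}_a)=0$ yields $\operatorname{Hom}(\mathcal{L}_{a-1},\bigoplus_k\mathcal{L}_{d_k})=0$, so $d_k\ge a$ for all $k$; then $\operatorname{Ext}^1(\bigoplus_k\mathcal{L}_{d_k},\mathcal{L}_a)=0$, the sequence splits, and $\mathcal{M}\cong\mathcal{L}_a\oplus\bigoplus_k\mathcal{L}_{d_k}$.

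\emph{The rank-one case (the main obstacle) and the splitting assertion.} The step I expect to require the most care is that a rank-one noetherian torsion-free object is isomorphic to some $\mathcal{L}_i$ — this is precisely where one needs that $\mathbb{P}(V)$ is a smooth, not merely integral, curve. Via the embedding above it reduces to: a rank-one torsion-free graded right $C$-module $M$ sitting inside a shift $C[m]$ with finite-length cokernel satisfies $\pi_CM\cong\pi_C C[b]$ for some $b$. I would deduce this from the fact that $C$ is a connected graded noetherian domain of global dimension two (which should follow from the ``regular-of-dimension-two'' homological behaviour of $A$ developed in Section~\ref{section.groth} and the identification $B\cong\check{C'}$ in the proof of Proposition~\ref{prop.cproperties}), via the Auslander--Buchsbaum argument that a reflexive — hence maximal Cohen--Macaulay — rank-one graded $C$-module is free; an alternative is to peel simple torsion objects off $C[m]/M$ one at a time, reducing to the claim that the kernel of a surjection from a line bundle onto a simple torsion object is again a line bundle, established using that $\mathbb{P}(V)$ is hereditary. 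Finally, for the splitting assertion: given a noetherian $\mathcal{M}$ let $\mathcal{T}\subseteq\mathcal{M}$ be its maximal torsion subobject; then $\mathcal{M}/\mathcal{T}$ is noetherian torsion-free, hence $\cong\bigoplus_k\mathcal{L}_{d_k}$ by the first assertion, and since $\operatorname{Ext}^1(\mathcal{L}_{d_k},\mathcal{T})=0$ the extension $0\to\mathcal{T}\to\mathcal{M}\to\bigoplus_k\mathcal{L}_{d_k}\to 0$ splits, giving $\mathcal{M}\cong\mathcal{T}\oplus\bigoplus_k\mathcal{L}_{d_k}$.
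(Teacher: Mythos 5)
Your outline shares the paper's opening move (embed a torsion-free noetherian $\mathcal{M}$ into a finite sum of $\pi e_iA$'s via Lemma~\ref{lemma.sums} and the big injective) and its final splitting step (maximal torsion subobject plus $\operatorname{Ext}^1(\mathcal{L}_c,\mathcal{T})=0$), but the middle is genuinely different: you induct on rank, reducing everything to the claim that a rank-one noetherian torsion-free object is some $\mathcal{L}_i$, whereas the paper never isolates the rank-one case. Instead, having embedded $\mathcal{M}$ into a free object $F$, the paper passes to $\omega_A$, observes that the cokernel $D$ of $\omega_A\mathcal{M}\hookrightarrow F$ satisfies $\tau_A D=0$ (since $D\subseteq\omega_A\mathcal{C}$), invokes Lemma~\ref{lemma.homological} together with \cite[Proposition~3.19]{duality} to conclude $\operatorname{pd}D\le 1$, and hence that the first syzygy $\omega_A\mathcal{M}$ is projective, i.e.\ free by Corollary~\ref{cor.free}. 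That one stroke gives the splitting in every rank simultaneously.

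The genuine gap in your version is exactly the step you flag as ``the main obstacle.'' Your induction is fine once the rank-one base case is known, and your $\operatorname{Ext}$-vanishing facts are the same ones the paper gets from \cite[Proposition~13.2, Theorem~13.3]{chan}. But neither of your two sketched routes to the base case is actually carried through. The Auslander--Buchsbaum route requires establishing that the connected graded (noncommutative, since $K$ does not act centrally) ring $C$ has global dimension two and then a noncommutative reflexive-implies-free statement; the paper has the needed finiteness of projective dimension available, but only at the level of the $\mathbb{Z}$-algebra $A$ (Proposition~\ref{prop.global}, Lemma~\ref{lemma.homological}, Corollary~\ref{cor.free}), and transporting it to $C$ is work you do not do. The peeling route is worse: the assertion that the kernel of a surjection from a line bundle onto a simple torsion object is again a line bundle is itself a (special case of the) statement being proved, and establishing it would require a classification of simple torsion objects, which you neither supply nor cite — ``$\mathbb{P}(V)$ is hereditary'' only gives torsion-freeness and rank~one of the kernel, not membership in $\{\mathcal{L}_i\}$. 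The cleanest repair is to drop the rank induction entirely and run the paper's $\omega_A$-argument: in fact you already have all the ingredients ($\omega_A\mathcal{M}\hookrightarrow$ free, cokernel torsion-free, Lemma~\ref{lemma.homological}, Corollary~\ref{cor.free}); you just did not assemble them.
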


\begin{proof}
We first claim that if $\mathcal{M}$ is torsion-free and noetherian, then $\mathcal{M}$ is a submodule of a finite direct sum of shifts of $\pi_{C}C$.  To this end, we apply $\omega_{C}$ to the short exact sequence (\ref{eqn.torsion}), to get an exact sequence
$$
0 \rightarrow \omega_{C}\mathcal{M} \rightarrow \omega_{C}j_{*}j^{*}\mathcal{M} \rightarrow \omega_{C}\mathcal{N}.
$$
By the proof of \cite[Proposition 3.9]{Smith}, $j_{*}j^{*}\mathcal{M}$ is a finite direct sum of copies of $\mathcal{E}$.  On the other hand, $\omega_{C}\mathcal{E}=\omega_{C} \pi_{C} Q$, and $Q \cong \omega_{C} \pi_{C}Q$ since $Q$ is injective and $\tau_{C}Q=0$.  Therefore, $\omega_{C} \mathcal{M}$ is a submodule of a finite direct sum of copies of $Q$ and the claim now follows from Lemma \ref{lemma.sums}.

Applying $\pi_{C}$ to this containment, we conclude that $\mathcal{M} \subset \pi_{C}(\oplus_{i} C[i])$.  By Proposition \ref{prop.cproperties}, $\mathcal{M} \subset \pi_{A}(\oplus_{i} e_{-2i}A)$.  We denote the cokernel of this inclusion by $\mathcal{C}$.  By Lemma \ref{lemma.pretwosidediso}, $\omega_{A}\mathcal{M} \subset \oplus_{i}e_{-2i}A$, with cokernel $D$ contained in $\omega_{A} \mathcal{C}$.  Furthermore, since $\tau_{A}(\omega_{A} \mathcal{C})=0$ and since $\tau_{A}$ is left exact, we deduce that $\tau_{A}D=0$.  It follows from Lemma \ref{lemma.homological} and \cite[Proposition 3.19]{duality} that $D$ has a minimal resolution with length at most $1$.  Therefore, since there is a short exact sequence
$$
0 \rightarrow \omega_{A}\mathcal{M}  \rightarrow \oplus_{i}e_{-2i}A \rightarrow D \rightarrow 0
$$
in ${\sf gr }A$, we conclude that $\omega_{A}\mathcal{M}$ is projective, hence free by Corollary \ref{cor.free}.

Finally, suppose that $\mathcal{M}$ is a noetherian object of $\mathbb{P}(V)$.  Since $j_{*}j^{*}\mathcal{M}$ is torsion-free, it follows from (\ref{eqn.torsion0}) that $\mathcal{M}$ is an extension of a torsion-free module by a torsion module.  Therefore, by the argument in the first two paragraphs, it suffices to prove that if $\mathcal{T}$ is a noetherian torsion object, then $\mbox{Ext}^{1}_{\mathbb{P}(V)}(\pi_{C} C[i],\mathcal{T})=0$.  To prove this, we first note that by Proposition \ref{prop.cproperties}, it suffices to show that $\mbox{Ext}^{1}_{\mathbb{P}(V)}(\pi_{A} e_{-2i}A,\mathcal{T})=0$.  However, by \cite[Proposition 13.2 and Theorem 13.3]{chan}, $\mbox{Ext}^{1}_{\mathbb{P}(V)}(\pi_{A} e_{-2i}A,\mathcal{T}) \cong \operatorname{Hom}_{\mathbb{P}(V)}(\mathcal{T},\pi_{A} e_{-2i+2}A)$, so it suffices to show that there is no nonzero homomorphism $\mathcal{T} \rightarrow \pi_{A} e_{-2i+2}A$.  However, $\pi_{A} e_{-2i+2}A \cong \pi_{C} C[i-1]$, so the result will follow if we can show this module is torsion-free.  To prove this, we note that, as in the proof of Corollary \ref{cor.integral}, $C[i-1]$ is isomorphic to a subobject of $Q$.  Therefore, by the exactness of $\pi_{C}$ and the fact that $\pi_{C}Q$ is torsion-free, we may conclude that $\pi_{C}C[i-1]$ is torsion-free as desired.
\end{proof}
The notion of {\it rank} of a module over an integral noncommutative space is defined in \cite[Definition 3.3]{Smith}.  The following is a straightforward consequence of \cite[Corollary 3.7]{Smith}.
\begin{lemma} \label{lemma.preserves}
If ${\sf Z}$ and ${\sf W}$ are integral locally noetherian spaces, and if $F:{\sf Z} \rightarrow {\sf W}$ is an equivalence, then $F$ sends torsion-free rank 1 objects to torsion-free rank 1 objects.
\end{lemma}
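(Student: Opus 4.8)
The plan is to reduce the statement to the intrinsic description of torsion-free rank one objects furnished by \cite[Corollary 3.7]{Smith}. Recall that for an integral locally noetherian space $\sf X$ with big injective $\mathcal{E}_{\sf X}$, an object is torsion precisely when it admits no nonzero morphism to $\mathcal{E}_{\sf X}$, torsion-free precisely when it has no nonzero torsion subobject, and the rank of an object is measured via $j^{*}$, where $j:{\sf X}/{\sf T}\rightarrow {\sf X}$ is the canonical map onto the ``generic point''. In these terms \cite[Corollary 3.7]{Smith} identifies the torsion-free rank one objects of $\sf X$ as exactly the nonzero subobjects of $\mathcal{E}_{\sf X}$ (equivalently, the nonzero objects $\mathcal{M}$ admitting a monomorphism $\mathcal{M}\hookrightarrow \mathcal{E}_{\sf X}$). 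So the content of the lemma is that an equivalence $F:{\sf Z}\rightarrow {\sf W}$ carries nonzero subobjects of $\mathcal{E}_{\sf Z}$ to nonzero subobjects of $\mathcal{E}_{\sf W}$.

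The one real step is to check that $F\mathcal{E}_{\sf Z}\cong \mathcal{E}_{\sf W}$. The big injective of an integral space is pinned down, up to isomorphism, by categorical data: it is an indecomposable injective object whose endomorphism ring is a division ring and whose nonzero subobjects constitute exactly the torsion-free rank one objects (these are precisely the properties isolated in the definition of integral space and in \cite[Section 3]{Smith}). An equivalence of Grothendieck categories preserves injectivity, indecomposability, endomorphism rings, the lattice of subobjects, and hence the classes of torsion and torsion-free objects; therefore $F\mathcal{E}_{\sf Z}$ satisfies in $\sf W$ every defining property of the big injective, and the uniqueness of the big injective forces $F\mathcal{E}_{\sf Z}\cong \mathcal{E}_{\sf W}$. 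If \cite{Smith} records the uniqueness of the big injective explicitly, one simply quotes it.

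Granting this, the argument closes quickly. Let $\mathcal{M}$ be a torsion-free rank one object of $\sf Z$; by \cite[Corollary 3.7]{Smith} there is a monomorphism $\mathcal{M}\hookrightarrow \mathcal{E}_{\sf Z}$ with $\mathcal{M}\neq 0$. Since $F$ is exact, applying $F$ and composing with the isomorphism $F\mathcal{E}_{\sf Z}\cong \mathcal{E}_{\sf W}$ yields a monomorphism $F\mathcal{M}\hookrightarrow \mathcal{E}_{\sf W}$ with $F\mathcal{M}\neq 0$. A second invocation of \cite[Corollary 3.7]{Smith}, this time in $\sf W$, shows $F\mathcal{M}$ is torsion-free of rank one, as desired.

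The only point demanding care is the identification $F\mathcal{E}_{\sf Z}\cong \mathcal{E}_{\sf W}$; everything else is formal once one has the subobject-of-$\mathcal{E}$ characterization in hand. I would therefore spend essentially all the write-up verifying that the properties defining ``big injective'' in \cite{Smith} are stable under exact equivalence (or, more economically, pointing to the uniqueness statement for the big injective), after which the two applications of \cite[Corollary 3.7]{Smith} bracketing the functor $F$ finish the proof.
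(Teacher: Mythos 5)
Your proof is correct and follows the route the paper intends: the paper's entire justification is the remark that the lemma is a ``straightforward consequence of \cite[Corollary 3.7]{Smith},'' and what you supply is precisely the content of that remark, namely the characterization of torsion-free rank-one objects as the nonzero subobjects of the big injective together with the observation that an equivalence carries the big injective of $\sf Z$ to that of $\sf W$. The one point to tidy up is that the big injective $\mathcal E$ should be pinned down by an intrinsic property (indecomposable injective with division endomorphism ring, and $\operatorname{Hom}(\mathcal M,\mathcal E)\neq 0$ for every nonzero $\mathcal M$), not by ``its nonzero subobjects are exactly the torsion-free rank-one objects,'' since torsion and rank are themselves defined via $\mathcal E$ and that characterization is circular as a way of recognizing $F\mathcal E_{\sf Z}$.
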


\begin{lemma} \label{lemma.rank1}
For each $i \in \mathbb{Z}$, the module $\pi_{A} e_{i}A$ is torsion-free of rank 1.
\end{lemma}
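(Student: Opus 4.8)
The plan is to show that every $\pi_A e_i A$ is a nonzero subobject of the big injective $\mathcal{E}=\pi_C Q$ of $\mathbb{P}(V)$ (constructed in Corollary \ref{cor.integral}), and then to invoke the elementary properties of the rank function of \cite[Section 3]{Smith}: a subobject of a torsion-free object is torsion-free, the rank of a subobject is at most that of the ambient object, a nonzero torsion-free object has rank at least $1$, and $\operatorname{rank}\mathcal{E}=1$.

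First I would dispose of the even indices. If $i=-2j$, then $\pi_A e_i A\cong\pi_C C[j]$ by Proposition \ref{prop.cproperties}, and in the last paragraph of the proof of Theorem \ref{thm.grothendieck} it is recorded that $C[j]$ is isomorphic to a graded $C$-submodule of $Q$; applying the exact functor $\pi_C$ exhibits $\pi_A e_i A$ as a subobject of $\mathcal{E}$. For arbitrary $i$, I would use the domain property of $A$ just established (Theorem \ref{thm.domain}) to reduce to the even case: since $A_{i-1,i}=V^{(i-1)*}$ has rank $2$ it contains a nonzero element $v$, and left multiplication by $v$ is a morphism of graded right $A$-modules $e_i A\to e_{i-1}A$ which Theorem \ref{thm.domain} forces to be injective. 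Composing finitely many (possibly zero) of these maps realizes $e_i A$ as a submodule of $e_{i'}A$ for the largest even $i'\le i$, and then $\pi_A$, being exact, exhibits $\pi_A e_i A$ as a subobject of $\pi_A e_{i'}A$, hence of $\mathcal{E}$ by the even case. Since $e_i A$ is noetherian and not right bounded it is not in ${\sf Tors }A$, so $\pi_A e_i A\neq 0$; being a nonzero subobject of the torsion-free, rank-one object $\mathcal{E}$, it is therefore torsion-free of rank exactly $1$.

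I do not expect a genuine obstacle here, as all the substance sits in Theorem \ref{thm.domain} and Theorem \ref{thm.grothendieck}. The only point demanding care is matching the bookkeeping of \cite[Section 3]{Smith}: one must make sure that rank, as defined there, vanishes precisely on torsion objects, is monotone under subobjects (this follows from exactness of the quotient functor $j^{*}$ onto the ``generic'' category ${\sf Proj }C/{\sf T}$), and assigns rank $1$ to $\mathcal{E}$, equivalently to the structure object $\pi_C C$; these are immediate from ${\sf Proj }C/{\sf T}\simeq {\sf Mod }D$ for the division ring $D$. Alternatively, once $\pi_C C$ is seen to be torsion-free of rank $1$, the even case can instead be deduced from Lemma \ref{lemma.preserves} applied to the shift autoequivalence $[j]$ of ${\sf Proj }C$ together with Proposition \ref{prop.cproperties}.
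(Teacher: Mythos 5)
Your proof is correct and follows essentially the same route as the paper: exhibit each $\pi_A e_i A$ as a nonzero subobject of the big injective $\mathcal{E}=\pi_C Q$ (reducing odd $i$ to even via Theorem \ref{thm.domain}, even $i$ via Proposition \ref{prop.cproperties}), then conclude from rank monotonicity and $\operatorname{rank}\mathcal{E}=1$. The only cosmetic differences are that the paper proves $\operatorname{rank}\pi_C C=1$ via surjectivity of $\operatorname{Hom}(\pi Q,\pi Q)\to\operatorname{Hom}(\pi C,\pi Q)$ and uses Lemma \ref{lemma.preserves} to handle shifts, whereas you appeal directly to $j^{*}$ and the embedding $C[j]\hookrightarrow Q$, and the paper cites \cite[Corollary 4.12]{duality} for $\pi_A e_i A\neq 0$ while you argue from unboundedness of $e_iA$.
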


\begin{proof}
For the proof we let $\pi$ denote $\pi_{C}$.  We claim that $\pi C$ has rank 1.  Since the shift functor in ${\sf Gr }C$ induces an equivalence in ${\sf Proj }C$, it will follow from the claim and Lemma \ref{lemma.preserves} that $\pi C[i]$ has rank 1 as well.  To prove the claim, we must show that the left $\operatorname{Hom}_{{\sf Proj }C}(\pi Q, \pi Q)$-module $\operatorname{Hom}_{{\sf Proj }C}(\pi C, \pi Q)$ is simple.  Since $\operatorname{Hom}_{{\sf Proj }C}(\pi Q,  \pi Q)$ is a division ring, it thus suffices to show that the map $\operatorname{Hom}_{{\sf Proj }C}(\pi Q,  \pi Q) \rightarrow \operatorname{Hom}_{{\sf Proj }C}(\pi C,  \pi Q)$ induced by inclusion $C \rightarrow Q$ is surjective.  This follows from the fact that $\pi Q$ is an injective object in ${\sf Proj }C$.

To prove the lemma, we note that if $i$ is even, then $\pi_{A} e_{i}A \cong \pi C[-i/2]$ by Proposition \ref{prop.cproperties}, so that in this case, the result follows from the claim.  If $i$ is odd, then since $e_{i}A$ is a submodule of $e_{i-1}A$, we may conclude, by the exactness of $\pi_{A}$, that $\pi_{A} e_{i}A$ is a submodule of $\pi C[-i/2]$, and hence of $\pi Q$.  By \cite[Corollary 4.12]{duality}, $\pi_{A} e_{i}A \neq 0$, and the result follows.
\end{proof}
We define a {\it vector bundle over $\mathbb{P}(V)$} to be a noetherian torsion-free object, and we define a {\it line bundle over $\mathbb{P}(V)$} to be a vector bundle of rank 1.  We have the following classification of vector bundles over $\mathbb{P}(V)$.

\begin{cor} \label{cor.grothendieck}
Every vector bundle over $\mathbb{P}(V)$ is a finite direct sum of line bundles, the line bundles are the objects of the form $\pi e_{i}A$, and $\pi e_{i}A \cong \pi e_{j}A$ implies $i=j$.
\end{cor}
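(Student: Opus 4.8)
The plan is to deduce the corollary from three results established above: Theorem \ref{thm.grothendieck}, which decomposes noetherian torsion-free objects of $\mathbb{P}(V)$; Lemma \ref{lemma.rank1}, which computes the rank of each $\pi_{A} e_{i}A$; and Corollary \ref{cor.twosidediso}, which distinguishes these objects from one another. Essentially nothing new has to be proved; the work is in assembling these pieces correctly, so I expect no serious obstacle.

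First I would prove that every vector bundle is a finite direct sum of line bundles. By definition a vector bundle $\mathcal{M}$ is a noetherian torsion-free object of $\mathbb{P}(V)$, so Theorem \ref{thm.grothendieck} furnishes an isomorphism $\mathcal{M} \cong \bigoplus_{j=1}^{n} \pi_{A} e_{i_{j}}A$ for finitely many $i_{j} \in \mathbb{Z}$. By Lemma \ref{lemma.rank1} each summand $\pi_{A} e_{i_{j}}A$ is torsion-free of rank $1$, hence is a line bundle in the sense of the definition preceding the corollary; in particular this records that every module of the form $\pi e_{i}A$ is a line bundle.

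Next I would establish the converse together with the uniqueness statement. If $\mathcal{L}$ is a line bundle, then it is noetherian and torsion-free, so Theorem \ref{thm.grothendieck} again gives $\mathcal{L} \cong \bigoplus_{j=1}^{n} \pi_{A} e_{i_{j}}A$. Since the localization functor $j^{*}$ occurring in the definition of rank (\cite[Definition 3.3]{Smith}) is exact, rank is additive over finite direct sums, and hence $1 = \operatorname{rank}\mathcal{L} = n$ by Lemma \ref{lemma.rank1}; therefore $\mathcal{L} \cong \pi e_{i_{1}}A$. Finally, $\pi e_{i}A \cong \pi e_{j}A$ forces $i = j$ by Corollary \ref{cor.twosidediso}. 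The only point not handled by a direct citation is the additivity of rank over finite direct sums, which follows formally from the exactness of $j^{*}$ together with the additivity of length; I anticipate this being entirely routine.
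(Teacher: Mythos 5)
Your proposal is correct and takes essentially the same route as the paper: decompose via Theorem \ref{thm.grothendieck}, use Lemma \ref{lemma.rank1} to identify the summands as line bundles (and, via additivity of rank, to see that a line bundle has exactly one summand), and invoke Corollary \ref{cor.twosidediso} for the uniqueness of $i$. The paper's proof is just a one-line citation of these same three ingredients; the only thing you spell out that the paper leaves implicit is the additivity of rank over finite direct sums, which is indeed routine from the definition of rank as a length.
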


\begin{proof}
The first and second results follow from the first part of Theorem \ref{thm.grothendieck} and Lemma \ref{lemma.rank1}, and the last result is exactly Corollary \ref{cor.twosidediso}.
\end{proof}

\section{Canonical equivalences} \label{section.canonical}
In this section we define and study three canonical equivalences between noncommutative projective lines.  We will see, in Corollary \ref{cor.newfactor}, that any equivalence between noncommutative projective lines is a composition of these three.  In order to understand the notation employed throughout this section and the following two, the reader may wish to review the comments on notation and conventions concluding the introduction.  Throughout this section, we will let $P_{i} := e_{i}\mathbb{S}(V)$, $\mathcal{P}_{i} := \pi P_{i}$, $P'_{i}:=e_{i}\mathbb{S}(W)$ and $\mathcal{P}'_{i}:=\pi P'_{i}$.

\subsection{The functor $[i]$} \label{section.shift}
We first define the shift functor.  Suppose $i \in \mathbb{Z}$ and $A$ is a $\mathbb{Z}$-algebra.  For any right $A$-module $M$, we let $M[i]$ denote the right $A(i)$-module with $M[i]_{j}=M_{i+j}$ and with multiplication induced by that of $A$ on $M$.  We define shift on morphisms similarly.  By \cite[Lemma 3.1]{smithmaps}, shift by $i$ induces a functor on the level of ${\sf Proj }$, and we abuse notation by calling this functor $[i]$ as well.

We will also abuse notation repeatedly as follows: since there is a canonical isomorphism $\mathbb{S}(V)(i) \rightarrow \mathbb{S}(V^{i*})$ of $\mathbb{Z}$-algebras over $K$ (see \ref{eqn.canonicalz}), shifting by $[i]$ induces an equivalence ${\sf Gr} \mathbb{S}(V) \rightarrow {\sf Gr} \mathbb{S}(V^{i*})$ and an equivalence $\mathbb{P}(V) \rightarrow \mathbb{P}(V^{i*})$.  We call both of these equivalences $[i]$.  Similarly, shift by $-i$ induces equivalences in the opposite direction, which we call $[-i]$.

\begin{lemma} \label{lemma.shiftp}
Let $W = V^{i*}$.  Under the equivalence $[i]:\mathbb{P}(V) \rightarrow \mathbb{P}(W)$, the image of $\mathcal{P}_{l}$ is isomorphic to $\mathcal{P}'_{l-i}$.
\end{lemma}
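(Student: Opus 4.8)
Let $W = V^{i*}$. Under the equivalence $[i]:\mathbb{P}(V)\to\mathbb{P}(W)$, we have $[i](\mathcal P_l)\cong \mathcal P'_{l-i}$, where $\mathcal P_l = \pi_{\mathbb S(V)} e_l\mathbb S(V)$ and $\mathcal P'_m = \pi_{\mathbb S(W)} e_m\mathbb S(W)$.

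**Proof strategy.** The point is to unwind the definition of the equivalence $[i]$, which is built from the canonical isomorphism of $\mathbb Z$-algebras $\mathbb S(V)(i)\to\mathbb S(V^{i*})$ of \eqref{eqn.canonicalz}, together with the underlying shift $M\mapsto M[i]$ on graded modules. So the plan is: first identify the image of $e_l\mathbb S(V)$ under the shift $M\mapsto M[i]$ as a graded $\mathbb S(V)(i)$-module, then transport along the isomorphism $\mathbb S(V)(i)\cong\mathbb S(W)$, and finally observe that $\pi$ of the result is $\mathcal P'_{l-i}$.

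\begin{pf}
Write $A=\mathbb{S}(V)$ and $B=\mathbb{S}(W)=\mathbb{S}(V^{i*})$, and let $\Theta:A(i)\to B$ denote the canonical isomorphism of $\mathbb{Z}$-algebras over $K$ from \eqref{eqn.canonicalz}.  By definition, the functor $[i]:{\sf Gr }A\to{\sf Gr }B$ is the composition of the shift functor ${\sf Gr }A\to{\sf Gr }A(i)$, $M\mapsto M[i]$, with the equivalence ${\sf Gr }A(i)\to{\sf Gr }B$ induced by $\Theta$.  First we compute $(e_{l}A)[i]$ as a right $A(i)$-module.  By definition of shift, $(e_{l}A)[i]_{j}=(e_{l}A)_{i+j}=A_{l,i+j}$, while $(e_{l}A(i))_{j}=A(i)_{l,j}=A_{i+l,i+j}$.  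Since $A\cong A(i)$ is such that $e_{l}A(i)$ is the module $\oplus_{j}A(i)_{l,j}$, comparing components shows that the shift functor sends $e_{l}A$ to $e_{l-i}\bigl(A(i)\bigr)$: indeed $(e_{l-i}A(i))_{j}=A(i)_{l-i,j}=A_{l,i+j}=(e_{l}A)[i]_{j}$, and one checks directly that this identification is compatible with the $A(i)$-action.  Next, the equivalence ${\sf Gr }A(i)\to{\sf Gr }B$ induced by the $\mathbb{Z}$-algebra isomorphism $\Theta$ sends $e_{m}\bigl(A(i)\bigr)$ to $e_{m}B$ for every $m$, since $\Theta$ is an isomorphism of $\mathbb{Z}$-algebras and hence matches up the corresponding projective generators.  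Composing, the functor $[i]:{\sf Gr }A\to{\sf Gr }B$ sends $e_{l}A$ to $e_{l-i}B$.  Applying $\pi_{B}$ and using that $[i]:\mathbb{P}(V)\to\mathbb{P}(W)$ is the functor induced on ${\sf Proj}$ by $[i]:{\sf Gr }A\to{\sf Gr }B$, we obtain
$$
[i](\mathcal{P}_{l})=[i](\pi_{A}e_{l}A)\cong\pi_{B}\bigl([i](e_{l}A)\bigr)\cong\pi_{B}e_{l-i}B=\mathcal{P}'_{l-i},
$$
as desired.
\end{pf}

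**Where the work is.** There is no deep content here; the whole proof is bookkeeping with indices. The one place to be careful is the very first index computation: one must get straight which of $\mathbb S(V)(i)\to\mathbb S(V^{i*})$ versus its inverse is being used, and correspondingly whether $e_l A$ shifts to $e_{l-i}$ or $e_{l+i}$ of the twisted algebra. I would double-check this against the normalization of \eqref{eqn.canonicalz} and against the degree conventions in Section \ref{section.shift} ($M[i]_j = M_{i+j}$ and $A(i)_{jl}=A_{i+j,i+l}$), since a sign error there would propagate to all later results in Section \ref{section.canonical}. Everything else — that a $\mathbb Z$-algebra isomorphism carries $e_m$ of the source to $e_m$ of the target, and that $\pi$ commutes with the induced equivalences — is immediate from the constructions recalled earlier in the paper.
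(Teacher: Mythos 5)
Your proof is correct, but it takes a genuinely different route from the paper's. The paper does not compute at the level of graded modules; instead it first invokes the classification of line bundles (Theorem \ref{thm.grothendieck}, Lemma \ref{lemma.preserves}, Lemma \ref{lemma.rank1}) to conclude that $\mathcal{P}_{l}[i] \cong \mathcal{P}'_{j}$ for some unknown $j$, then uses Lemma \ref{lemma.pretwosidediso} (the unit map $e_{k}A \to \omega\pi e_{k}A$ is an isomorphism) to pass to an inclusion $P_{l}[i] \hookrightarrow P'_{j}$ of graded modules, and finally pins down $j=l-i$ by comparing the degrees in which the two sides are nonzero, running the argument once with $[i]$ and once with $[-i]$. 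Your argument simply unwinds the definition of $[i]$ as the composition of the shift $M\mapsto M[i]$ with transport of structure along $\mathbb{S}(V)(i)\cong\mathbb{S}(V^{i*})$, and verifies the index bookkeeping $(e_{l}A)[i]\cong e_{l-i}(A(i))$ directly. Your approach is more elementary: it gives the isomorphism already in ${\sf Gr}$ rather than only in ${\sf Proj}$, and it avoids any appeal to the classification of vector bundles (there is no circularity either way, but your proof makes the independence manifest). The paper's approach, on the other hand, is a template that it explicitly reuses for Lemma \ref{lemma.functorvalues1}(1) and Lemma \ref{lemma.functorvalues2}(1) with minimal change, which is presumably why it is phrased that way. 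Your caution about normalization is well placed, but your computation with $M[i]_{j}=M_{i+j}$ and $A(i)_{jl}=A_{i+j,i+l}$ does land on the correct index $l-i$.
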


\begin{proof}
By Theorem \ref{thm.grothendieck}, Lemma \ref{lemma.preserves} and Lemma \ref{lemma.rank1}, $\mathcal{P}_{l}[i] \cong \mathcal{P}'_{j}$ for some $j$.  Therefore, $\omega \pi (P_{l}[i]) \cong P'_{j}$ by Lemma \ref{lemma.pretwosidediso}.  Since $\tau P_{l}=0$, it follows that $\tau_{\mathbb{S}(W)}(P_{l}[i])=0$ so that there is an inclusion $P_{l}[i] \rightarrow P'_{j}$.  Comparing nonzero degrees, we conclude that $l-i \geq j$.  Using the functor $[-i]$ instead of $[i]$ allows us to reverse the argument and conclude that $l-i \leq j$ so that $j=l-i$, whence the result.
\end{proof}

\subsection{The functor $\Phi$}
Suppose $\phi:V \rightarrow W$ is an isomorphism of two-sided vector spaces.  Then it is straightforward to check that $\phi$ induces an isomorphism of $\mathbb{Z}$-algebras $\mathbb{S}(V) \rightarrow \mathbb{S}(W)$, which we also call $\phi$.

We denote by $\Phi$ the equivalence ${\sf Gr }\mathbb{S}(V) \rightarrow {\sf Gr }\mathbb{S}(W)$ defined as follows:  If $M$ is an object in ${\sf Gr }\mathbb{S}(V)$, we define $\Phi(M)_{i}:=M_{i}$ as a set, with $\mathbb{S}(W)$-module structure
$$
\Phi(M)_{i} \otimes  \mathbb{S}(W)_{ij} \overset{1\otimes \phi^{-1}}{\rightarrow} \Phi(M)_{i} \otimes  \mathbb{S}(V)_{ij} \overset{\mu}{\rightarrow} \Phi(M)_{j},
$$
where $\mu$ denotes the $\mathbb{S}(V)$-module multiplication on $M$.  If $f:M \rightarrow N$ is a morphism in ${\sf Gr }\mathbb{S}(V)$ and $m \in M_{i}$, we define $\Phi(f)_{i}(m)=f_{i}(m)$.  By \cite[Lemma 3.1]{smithmaps}, $\Phi$ descends to a functor $\mathbb{P}(V) \rightarrow \mathbb{P}(W)$.  We abuse notation by calling this $\Phi$ as well.  It is elementary to check that if $\phi_{1}:V \rightarrow W$ induces $\Phi_{1}:\mathbb{P}(V) \rightarrow \mathbb{P}(W)$ and $\phi_{2}:W \rightarrow U$ induces $\Phi_{2}:\mathbb{P}(W) \rightarrow \mathbb{P}(U)$ then $\Phi_{2} \Phi_{1}$ is naturally equivalent to the equivalence induced by $\phi_{2}\phi_{1}$.

\begin{lemma} \label{lemma.functorvalues1}
Suppose $\Phi:{\sf Gr }\mathbb{S}(V) \rightarrow {\sf Gr }\mathbb{S}(W)$ is induced by an isomorphism $\phi:V \rightarrow W$.
\begin{enumerate}
\item{} There is an isomorphism $\Phi P_{i} \rightarrow P'_{i}$, and thus $\Phi \mathcal{P}_{i} \cong \mathcal{P}'_{i}$.

\item{} Suppose $f:\Phi P_{i} \rightarrow P'_{i}$ is an isomorphism.  Then $f_{i}:K \rightarrow K$ equals left multiplication by a nonzero element $a \in K$, denoted ${}_{a}\mu$, and $f_{i+1}={}_{a}\mu (\phi^{i*})^{j}$ where $j=1$ if $i$ is even and $j=-1$ if $i$ is odd.
\end{enumerate}
\end{lemma}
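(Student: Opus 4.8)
The plan is to prove both parts by directly unwinding the definition of $\Phi$ together with the description of the induced $\mathbb{Z}$-algebra isomorphism $\phi:\mathbb{S}(V)\to\mathbb{S}(W)$ on homogeneous components; no input beyond the material already developed is needed.

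For part (1): since $\phi$ is an isomorphism of $\mathbb{Z}$-algebras, I would observe that the family $\phi|_{\mathbb{S}(V)_{ij}}:\mathbb{S}(V)_{ij}\to\mathbb{S}(W)_{ij}$, for $j\in\mathbb{Z}$, assembles to a morphism of graded right $\mathbb{S}(W)$-modules $g:\Phi P_{i}\to P'_{i}$. This is immediate from the formula for the $\mathbb{S}(W)$-action on $\Phi P_{i}$: for $m\in\Phi(P_{i})_{j}=\mathbb{S}(V)_{ij}$ and $w\in\mathbb{S}(W)_{jl}$ one has $g_{l}(m\cdot w)=\phi\big(m\,\phi^{-1}(w)\big)=\phi(m)\,w=g_{j}(m)\cdot w$. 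As $\phi$ is bijective, $g$ is an isomorphism. Applying $\pi$ and using that $\Phi$ descends to $\mathbb{P}(V)\to\mathbb{P}(W)$ (so $\pi\Phi\cong\Phi\pi$) yields $\Phi\mathcal{P}_{i}\cong\mathcal{P}'_{i}$.

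For part (2): since $\phi$ is the identity on each diagonal component $\mathbb{S}(V)_{ii}=K=\mathbb{S}(W)_{ii}$, the right $\mathbb{S}(W)_{ii}=K$-action on $\Phi(P_{i})_{i}=K$ is ordinary multiplication, so $f_{i}:K\to K$ is a morphism of right $K$-modules and hence $f_{i}={}_{a}\mu$ for $a:=f_{i}(1)$, with $a\neq0$ because $f$ is an isomorphism. The crucial point for $f_{i+1}$ is identifying the $(i,i+1)$-component of $\phi:\mathbb{S}(V)\to\mathbb{S}(W)$, a map $V^{i*}\to W^{i*}$: it is obtained by iterating right duality from $\phi:V\to W$, and since right duality is contravariant, the iterated dual $\phi^{i*}$ points $V^{i*}\to W^{i*}$ when $i$ is even and $W^{i*}\to V^{i*}$ when $i$ is odd. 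That the correct iterated dual appears --- equivalently, that $\phi$ is compatible with the unit maps $\eta_{i}:K\to V^{i*}\otimes V^{(i+1)*}$ used to build $\mathbb{S}(V)$ --- is exactly the content of Corollary \ref{cor.gen1} and Corollary \ref{cor.gen2}; consequently the $(i,i+1)$-component of $\phi$ is $(\phi^{i*})^{j}$ and that of $\phi^{-1}$ is $(\phi^{i*})^{-j}$, where $j=1$ for $i$ even and $j=-1$ for $i$ odd. Now I exploit $\mathbb{S}(W)$-linearity of $f$ between degrees $i$ and $i+1$: for $w\in\mathbb{S}(W)_{i,i+1}=W^{i*}$ the element $1\cdot w\in\Phi(P_{i})_{i+1}$ equals $(\phi^{i*})^{-j}(w)\in V^{i*}$, while $f_{i}(1)\cdot w=aw={}_{a}\mu(w)$ in $P'_{i}=e_{i}\mathbb{S}(W)$; hence $f_{i+1}\big((\phi^{i*})^{-j}(w)\big)={}_{a}\mu(w)$ for every $w$, and replacing $w$ by $(\phi^{i*})^{j}(v)$ for $v\in V^{i*}$ gives $f_{i+1}={}_{a}\mu(\phi^{i*})^{j}$.

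The main obstacle is precisely the identification in the previous paragraph: one must correctly determine the $(i,i+1)$-component of the $\mathbb{Z}$-algebra isomorphism induced by $\phi$, keeping careful track of the contravariance of right duality, which is what forces the parity-dependent exponent $j$. Everything else is a routine verification, and the unit-compatibility of $\phi$ underlying the identification is supplied by Corollaries \ref{cor.gen1} and \ref{cor.gen2}.
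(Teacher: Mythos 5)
Your proof is correct, and for part (1) you take a genuinely different — and arguably cleaner — route than the paper. The paper says the proof of the first assertion of (1) is ``similar to the proof of Lemma~\ref{lemma.shiftp},'' which deploys the abstract machinery of Theorem~\ref{thm.grothendieck}, Lemma~\ref{lemma.preserves}, and Lemma~\ref{lemma.rank1} to conclude that $\Phi\mathcal{P}_i$ must be isomorphic to some line bundle $\mathcal{P}'_j$, then pins down $j=i$ by comparing the degrees in which the modules are nonzero. You instead construct the isomorphism $g:\Phi P_i \to P'_i$ explicitly from the induced $\mathbb{Z}$-algebra isomorphism $\phi:\mathbb{S}(V)\to\mathbb{S}(W)$ by setting $g_j=\phi|_{\mathbb{S}(V)_{ij}}$ and checking $\mathbb{S}(W)$-linearity directly from the formula $m\cdot w = m\,\phi^{-1}(w)$; the verification $g_l(m\cdot w)=\phi(m\phi^{-1}(w))=\phi(m)w=g_j(m)\cdot w$ is exactly right. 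Your approach is more elementary, yields the isomorphism explicitly rather than just its existence, and bypasses the classification of vector bundles entirely; the paper's approach is more uniform with the proof of Lemma~\ref{lemma.shiftp} (where a direct construction is not available, since the shift changes the algebra) but relies on substantially heavier machinery. For part (2), your argument coincides in substance with the paper's, which says only that $f_i$ is a right $K$-module endomorphism of $K$ and that compatibility with multiplication by $\mathbb{S}(W)_{i,i+1}$ forces the stated form of $f_{i+1}$; you fill in the details by identifying the $(i,i+1)$-component of the $\mathbb{Z}$-algebra isomorphism $\phi$ as $(\phi^{i*})^{j}$ via Corollaries~\ref{cor.gen1} and~\ref{cor.gen2}, and this identification is correct.
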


\begin{proof}
 The proof of the first part of (1) is similar to the proof of Lemma \ref{lemma.shiftp}.  The details are left to the reader.  For the proof of the second part of (1), recall that by abuse of notation, the symbol $\Phi \mathcal{P}_{i}$ denotes $\pi \Phi \omega \mathcal{P}_{i}$.  Therefore, the second part of (1) follows from the first part of (1), as $\pi \Phi \omega \mathcal{P}_{i} \cong \pi \Phi P_{i}$, where in the last isomorphism we have invoked Lemma \ref{lemma.pretwosidediso}.


We now turn to the proof of (2).  Since $f$ is a graded $\mathbb{S}(W)$-module isomorphism, $f_{i}:\mathbb{S}(V)_{ii} \rightarrow \mathbb{S}(W)_{ii}$ is a right $K$-module homomorphism from $K$ to $K$.  Therefore, there exists a nonzero $a \in K$ such that $f_{i}={}_{a}\mu$.  Similarly, the fact that $f$ is compatible with multiplication by elements of $\mathbb{S}(W)_{i,i+1}$ implies that $f_{i+1}:V^{i*} \rightarrow W^{i*}$ has the indicated form.
\end{proof}

\begin{lemma} \label{lemma.bigphi}
Suppose, for $j=1,2$, that $\Phi_{j}:{\sf Gr }\mathbb{S}(V) \rightarrow {\sf Gr }\mathbb{S}(W)$ is induced by an isomorphism $\phi_{j}:V \rightarrow W$.
\begin{enumerate}
\item{}  Suppose $h: \Phi_{1} P_{i} \rightarrow \Phi_{2} P_{i}$ is an isomorphism.  There exists a nonzero $a \in K$ such that $h_{i}={}_{a}\mu$ where ${}_{a}\mu$ denotes left multiplication by $a$, and
$$
h_{i+1}= \begin{cases} {}_{a}\mu  (\phi_{2}^{i*})^{-1}  \phi_{1}^{i*} \mbox{ if $i$ is even, } \\ {}_{a}\mu   (\phi_{2}^{i*})  (\phi_{1}^{i*})^{-1} \mbox{ if $i$ is odd. }\end{cases}
$$
\item{}  Suppose $\eta:\Phi_{1} \rightarrow \Phi_{2}$ is an equivalence, where now $\Phi_{i}$ denotes the induced equivalence $\mathbb{P}(V) \rightarrow \mathbb{P}(W)$.  Then there exists nonzero $a, b \in K$ such that $\phi_{2}^{-1}\phi_{1}(v)= a \cdot v \cdot b$ for all $v \in V$.
\end{enumerate}
\end{lemma}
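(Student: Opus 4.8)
For part (1), the plan is to run the argument in the proof of Lemma~\ref{lemma.functorvalues1}(2), modified so that source and target are now \emph{two} twisted $\mathbb{S}(W)$-module structures on the underlying graded object of $P_{i}$ rather than one twisted and one untwisted. Note first that $\Phi_{1}P_{i}$ and $\Phi_{2}P_{i}$ share the same underlying graded object, with degree-$i$ piece $\mathbb{S}(V)_{ii}=K$ and degree-$(i+1)$ piece $\mathbb{S}(V)_{i,i+1}=V^{i*}$, and that the right $\mathbb{S}(W)_{ii}=K$-action on the degree-$i$ piece is the ordinary one, hence the same for $j=1,2$ (since $\phi_{j}$ restricts to the identity on the diagonal). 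Exactly as in Lemma~\ref{lemma.functorvalues1}(2), $h_{i}\colon K\to K$ is then a homomorphism of right $K$-modules, so $h_{i}={}_{a}\mu$ for a unique $a\in K$, and $a\neq 0$ because $h$ is an isomorphism. To pin down $h_{i+1}$ I would evaluate the module-map identity $h_{i+1}(1\cdot_{\Phi_{1}}w)=h_{i}(1)\cdot_{\Phi_{2}}w$ at $w\in\mathbb{S}(W)_{i,i+1}=W^{i*}$; unwinding the definition of the twisted actions and using that the $(i,i+1)$-component of the induced algebra isomorphism $\phi_{j}\colon\mathbb{S}(V)\to\mathbb{S}(W)$ is $\phi_{j}^{i*}$ for $i$ even and $(\phi_{j}^{i*})^{-1}$ for $i$ odd --- which follows from the compatibility of that isomorphism with the unit maps $\eta_{i},\eta_{i}'$, cf.\ Corollary~\ref{cor.gen1} and Corollary~\ref{cor.gen2} --- produces the asserted formula for $h_{i+1}$.

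For part (2), the idea is to read off the relation on $\phi_{2}^{-1}\phi_{1}$ from naturality of $\eta$ at the morphisms $\mathcal{P}_{1}\to\mathcal{P}_{0}$. First evaluate $\eta$ at $\mathcal{P}_{0}$ and at $\mathcal{P}_{1}$. By Lemma~\ref{lemma.functorvalues1}(1) we have $\Phi_{j}P_{i}\cong e_{i}\mathbb{S}(W)$, hence $\tau\Phi_{j}P_{i}=\operatorname{R}^{1}\tau\Phi_{j}P_{i}=0$ by \cite[Theorem~4.11 and Corollary~4.12]{duality}; combined with the adjunction between $\pi$ and $\omega$ this identifies $\operatorname{Hom}_{\mathbb{P}(W)}(\pi\Phi_{1}P_{i},\pi\Phi_{2}P_{i})$ with $\operatorname{Hom}_{{\sf Gr }\mathbb{S}(W)}(\Phi_{1}P_{i},\Phi_{2}P_{i})$ via $\pi$, and likewise allows one to transport the relevant naturality square for $\eta$ back from $\mathbb{P}(W)$ to ${\sf Gr }\mathbb{S}(W)$. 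Thus $\eta_{\mathcal{P}_{0}}$ and $\eta_{\mathcal{P}_{1}}$ come from isomorphisms $h^{(0)}\colon\Phi_{1}P_{0}\to\Phi_{2}P_{0}$ and $h^{(1)}\colon\Phi_{1}P_{1}\to\Phi_{2}P_{1}$ in ${\sf Gr }\mathbb{S}(W)$, and part (1) (with $i=0$ and with $i=1$) gives $h^{(1)}_{1}={}_{b}\mu$ for some nonzero $b\in K$ and $h^{(0)}_{1}={}_{a}\mu\,\phi_{2}^{-1}\phi_{1}$ for some nonzero $a\in K$, both viewed as maps on $V=(P_{0})_{1}$.

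Next I would use Lemma~\ref{lemma.pretwosidediso} (as in the proof of Corollary~\ref{cor.twosidediso}) to identify $\operatorname{Hom}_{\mathbb{P}(V)}(\mathcal{P}_{1},\mathcal{P}_{0})$ with $\operatorname{Hom}_{{\sf Gr }\mathbb{S}(V)}(e_{1}\mathbb{S}(V),e_{0}\mathbb{S}(V))\cong\mathbb{S}(V)_{01}=V$, under which $v\in V$ corresponds to the morphism $\tilde{g}_{v}\colon P_{1}\to P_{0}$ of left multiplication by $v$. For each such $v$, naturality of $\eta$ at $\pi\tilde{g}_{v}$, transported to ${\sf Gr }\mathbb{S}(W)$, reads $h^{(0)}\circ\Phi_{1}\tilde{g}_{v}=\Phi_{2}\tilde{g}_{v}\circ h^{(1)}$. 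Evaluating in degree $1$, where $(\Phi_{j}\tilde{g}_{v})_{1}\colon K\to V$ is the map $k\mapsto vk$ (the right $K$-action on $V$, the same for $j=1,2$), this becomes $a\cdot\phi_{2}^{-1}\phi_{1}(v)\cdot k=v\cdot b\cdot k$ for all $v\in V$ and $k\in K$; taking $k=1$ and multiplying on the left by $a^{-1}$ yields $\phi_{2}^{-1}\phi_{1}(v)=a^{-1}\cdot v\cdot b$ for every $v\in V$, which is the claim (with the two scalars being $a^{-1}$ and $b$).

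The step I expect to demand the most care is the bookkeeping in part (1): tracking the contravariance of the duality functors and its alternation with the parity of $i$, so that the formula comes out in precisely the stated form. In part (2) the one delicate point is justifying the transport of the naturality square of $\eta$ between $\mathbb{P}(W)$ and ${\sf Gr }\mathbb{S}(W)$, which rests exactly on $\tau\Phi_{j}P_{i}=\operatorname{R}^{1}\tau\Phi_{j}P_{i}=0$ and the consequent identification of the Hom-sets; granting that, the remaining computation is immediate.
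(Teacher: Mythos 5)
Your proof is correct and essentially mirrors the paper's: part (2) is the same naturality-at-$\mathcal{P}_1\to\mathcal{P}_0$ argument transported back to ${\sf Gr}\,\mathbb{S}(W)$ via Lemma~\ref{lemma.pretwosidediso}, and part (1) differs only in that you unwind the twisted $\mathbb{S}(W)$-action directly rather than mediating through the isomorphisms $f_j\colon\Phi_j P_i\to P'_i$ of Lemma~\ref{lemma.functorvalues1} as the paper does. Both computations amount to the same bookkeeping.
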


\begin{proof}
The first assertion of (1) follows from the fact that $(\Phi_{j} P_{i})_{i} = K$.  To prove the second part of (1), we note that by Lemma \ref{lemma.functorvalues1}(1), there exist isomorphisms $f_{1}: \Phi_{1}P_{i} \rightarrow P'_{i}$ and $f_{2}:\Phi_{2}P_{i} \rightarrow P'_{i}$.  Since $h$ is an isomorphism, and since every isomorphism $P'_{i} \rightarrow P'_{i}$ is left multiplication by a nonzero element of $K$, there exists a nonzero $b \in K$ such that the diagram
$$
\begin{CD}
(\Phi_{1}P_{i})_{i+1} & \overset{f_{1}}{\rightarrow} & (P'_{i})_{i+1} \\
@V{h_{i+1}}VV @VV{{}_{b}\mu}V \\
(\Phi_{2}P_{i})_{i+1} & \underset{f_{2}}{\rightarrow} & (P'_{i})_{i+1}
\end{CD}
$$
commutes.  The result now follows from Lemma \ref{lemma.functorvalues1}(2).

We now prove part (2).  We let $v \in V$ and we recall that the notation $\Phi_{i}\mathcal{P}_{j}$ means $\pi \Phi_{i} \omega \pi P_{j}$.  Consider the following commutative diagram
\begin{equation} \label{eqn.bimods}
\begin{CD}
\Phi_{1}\mathcal{P}_{1} & \rightarrow & \Phi_{1}\mathcal{P}_{0} \\
@V{\eta_{\mathcal{P}_{1}}}VV @VV{\eta_{\mathcal{P}_{0}}}V \\
\Phi_{2} \mathcal{P}_{1} & \rightarrow & \Phi_{2}\mathcal{P}_{0}
\end{CD}
\end{equation}
whose horizontals are induced by left multiplication by $v \in V$.  Applying $\omega$ to this diagram and noting that, by Lemma \ref{lemma.functorvalues1}(1), $\Phi_{j} P_{i} \cong P'_{i}$, it follows from Lemma \ref{lemma.pretwosidediso} and the naturality of the unit map that the diagram
$$
\begin{CD}
\Phi_{1} \omega \pi P_{1} & \rightarrow & \Phi_{1} \omega \pi P_{0} \\
@VVV @VVV \\
\Phi_{2} \omega \pi P_{1} & \rightarrow & \Phi_{2} \omega \pi P_{0}
\end{CD}
$$
whose horizontals are induced by left multiplication by $v$, and whose verticals are induced by $\omega \eta$ and the unit map, commutes.  Therefore, by Lemma \ref{lemma.pretwosidediso} and the naturality of the unit map, we deduce the existence of a commutative diagram
$$
\begin{CD}
(\Phi_{1} P_{1})_{1} & \rightarrow & (\Phi_{1} P_{0})_{1} \\
@VVV @VVV \\
(\Phi_{2} P_{1})_{1} & \rightarrow & (\Phi_{2} P_{0})_{1}
\end{CD}
$$
whose horizontals are induced by left multiplication by $v \in V$ and whose verticals are degree 1 components of isomorphisms.  The result now follows from the first part of the lemma.
\end{proof}

We now prove that the equivalence $\Phi$ is compatible with taking the Veronese.  We let $J \subset \mathbb{Z}$ denote the set of even integers and we let $B$ denote the $J$-Veronese of $\mathbb{S}(V)$.  We suppose $\Phi:{\sf Gr }\mathbb{S}(V) \rightarrow {\sf Gr }\mathbb{S}(V)$ is an equivalence induced by an isomorphism of two-sided vector spaces $\phi: V \rightarrow V$, and we let $\Phi_{B}:{\sf Gr }B \rightarrow {\sf Gr }B$ denote the induced equivalence.  If we let $\operatorname{Res }:{\sf Gr }\mathbb{S}(V) \rightarrow {\sf Gr }B$ denote the canonical restriction functor \cite[Section 2]{quadrics}, then it is easy to check that
\begin{equation} \label{eqn.rescommute}
\Phi_{B} \operatorname{Res} = \operatorname{Res} \Phi.
\end{equation}
Since the functors $\operatorname{Res}$, $\Phi$ and $\Phi_{B}$ preserve torsion objects, it follows from \cite[Lemma 3.1]{smithmaps} that they descend to the functors $\underline{\operatorname{Res}}:\mathbb{P}(V) \rightarrow {\sf Proj }B$, $\Phi:\mathbb{P}(V) \rightarrow \mathbb{P}(V)$ and ${\Phi}_{B}:{\sf Proj }B \rightarrow {\sf Proj }B$ having the property that
\begin{equation} \label{eqn.underres}
\pi_{B} \operatorname{Res} \cong \underline{\operatorname{Res}} \pi_{\mathbb{S}(V)},
\end{equation}
\begin{equation} \label{eqn.underdelta}
\pi_{\mathbb{S}(V)} \Phi \cong \Phi \pi_{\mathbb{S}(V)}
\end{equation}
and
\begin{equation} \label{eqn.underdeltab}
\pi_{B} \Phi_{B} \cong {\Phi}_{B} \pi_{B}.
\end{equation}
It is straightforward to check that ${\Phi}_{B}:{\sf Proj }B \rightarrow {\sf Proj }B$ is an equivalence. Furthermore, by \cite[Lemma 2.5]{quadrics}, $\underline{\operatorname{Res}}$ is an equivalence.

\begin{lemma} \label{lemma.rescompat}
Retain the notation above.  Then $\Phi_{B} \underline{\operatorname{Res}} \cong {\underline{\operatorname{Res}}}_{}\Phi$.
\end{lemma}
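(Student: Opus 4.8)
The plan is to descend the strict equality (\ref{eqn.rescommute}) of functors on the level of graded modules to a natural isomorphism on the level of ${\sf Proj}$, using the three comparison isomorphisms (\ref{eqn.underres}), (\ref{eqn.underdelta}), (\ref{eqn.underdeltab}) together with the fact that $\pi_{\mathbb{S}(V)}$ admits a fully faithful right adjoint $\omega_{\mathbb{S}(V)}$.

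First I would apply the quotient functor $\pi_{B}$ to both sides of the equality $\Phi_{B}\operatorname{Res}=\operatorname{Res}\Phi$ of functors ${\sf Gr }\mathbb{S}(V)\rightarrow {\sf Gr }B$. On the left, $\pi_{B}\Phi_{B}\operatorname{Res}\cong \Phi_{B}\pi_{B}\operatorname{Res}$ by (\ref{eqn.underdeltab}), and $\pi_{B}\operatorname{Res}\cong \underline{\operatorname{Res}}\,\pi_{\mathbb{S}(V)}$ by (\ref{eqn.underres}), so the left side is naturally isomorphic to $\Phi_{B}\,\underline{\operatorname{Res}}\,\pi_{\mathbb{S}(V)}$. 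On the right, $\pi_{B}\operatorname{Res}\Phi\cong \underline{\operatorname{Res}}\,\pi_{\mathbb{S}(V)}\Phi$ by (\ref{eqn.underres}), and $\pi_{\mathbb{S}(V)}\Phi\cong \Phi\,\pi_{\mathbb{S}(V)}$ by (\ref{eqn.underdelta}), so the right side is naturally isomorphic to $\underline{\operatorname{Res}}\,\Phi\,\pi_{\mathbb{S}(V)}$. Chaining these gives a natural isomorphism $\Phi_{B}\,\underline{\operatorname{Res}}\,\pi_{\mathbb{S}(V)}\cong \underline{\operatorname{Res}}\,\Phi\,\pi_{\mathbb{S}(V)}$.

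To remove the trailing $\pi_{\mathbb{S}(V)}$, I would precompose this isomorphism with the section functor $\omega_{\mathbb{S}(V)}:\mathbb{P}(V)\rightarrow {\sf Gr }\mathbb{S}(V)$ and invoke the fact that the counit $\pi_{\mathbb{S}(V)}\omega_{\mathbb{S}(V)}\rightarrow \operatorname{Id}_{\mathbb{P}(V)}$ is a natural isomorphism (equivalently, that every object of $\mathbb{P}(V)$ is isomorphic to one of the form $\pi_{\mathbb{S}(V)}M$, so that a natural transformation between functors on $\mathbb{P}(V)$ is detected on such objects). This yields $\Phi_{B}\,\underline{\operatorname{Res}}\cong \underline{\operatorname{Res}}\,\Phi$, which is the claim.

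The argument is essentially bookkeeping; the only points requiring care are that the comparison isomorphisms in play are genuinely natural and compose coherently, and that the final cancellation of $\pi_{\mathbb{S}(V)}$ is legitimate — the latter being exactly the localization property of $\pi_{\mathbb{S}(V)}$ already used (in the form of \cite[Lemma 3.1]{smithmaps}) to produce the descended functors. I do not anticipate a substantive obstacle beyond keeping the natural transformations straight.
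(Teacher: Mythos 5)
Your argument is correct and is essentially the paper's own proof: you chain (\ref{eqn.underres}), (\ref{eqn.underdelta}), (\ref{eqn.underdeltab}) with the strict equality (\ref{eqn.rescommute}) to obtain $\Phi_{B}\,\underline{\operatorname{Res}}\,\pi_{\mathbb{S}(V)}\cong \underline{\operatorname{Res}}\,\Phi\,\pi_{\mathbb{S}(V)}$, then precompose with $\omega_{\mathbb{S}(V)}$ and use $\pi_{\mathbb{S}(V)}\omega_{\mathbb{S}(V)}\cong\operatorname{Id}$ to cancel. The paper states the intermediate isomorphism (\ref{eqn.withpi}) as a target first and then verifies it with the same chain, but the content and the cancellation step are identical.
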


\begin{proof}
It suffices to prove
\begin{equation} \label{eqn.withpi}
{\Phi}_{B} \underline{\operatorname{Res}}\pi_{\mathbb{S}(V)} \cong {\underline{\operatorname{Res}}}_{} \Phi \pi_{\mathbb{S}(V)}
\end{equation}
since right composing both sides of (\ref{eqn.withpi}) by $\omega_{\mathbb{S}(V)}$ implies the result.  Hence we proceed to prove (\ref{eqn.withpi}).  We note that
\begin{eqnarray*}
{\Phi}_{B} \underline{\operatorname{Res}}\pi_{\mathbb{S}(V)} & \cong & {\Phi}_{B}  \pi_{B} \operatorname{Res} \\
& \cong & \pi_{B} \Phi_{B} \operatorname{Res} \\
& = & \pi_{B} \operatorname{Res} \Phi \\
& \cong & \underline{\operatorname{Res}} \pi_{\mathbb{S}(V)} \Phi \\
& \cong & \underline{\operatorname{Res}}_{} {\Phi} \pi_{\mathbb{S}(V)}
\end{eqnarray*}
where the first isomorphism is induced by (\ref{eqn.underres}), the second isomorphism is induced by (\ref{eqn.underdeltab}), the equality follows from (\ref{eqn.rescommute}), the fourth isomorphism is induced by (\ref{eqn.underres}), and the final isomorphism is induced by (\ref{eqn.underdelta}).
\end{proof}

\begin{lemma} \label{lemma.finalbigphi}
Suppose there exists nonzero $a, b \in K$ such that $\phi:V \rightarrow V$ is the bimodule map defined by $\phi(v)=a \cdot v \cdot b$.  Then the induced equivalence $\Phi:\mathbb{P}(V) \rightarrow \mathbb{P}(V)$ is naturally equivalent to the identity functor.
\end{lemma}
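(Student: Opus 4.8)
The plan is to show that the $\mathbb{Z}$-algebra automorphism $\phi\colon\mathbb{S}(V)\to\mathbb{S}(V)$ induced by the bimodule map $\phi(v)=a\cdot v\cdot b$ is \emph{inner} --- it is conjugation by an invertible ``diagonal'' element of $\prod_{i\in\mathbb{Z}}\mathbb{S}(V)_{ii}$ --- and then to use the standard fact that a functor induced by an inner automorphism is canonically naturally equivalent to the identity. This will already hold at the level of ${\sf Gr}\,\mathbb{S}(V)$, and hence descend to $\mathbb{P}(V)$.

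First I would compute the action of $\phi$ on the generating bimodules $\mathbb{S}(V)_{i,i+1}=V^{i*}$. By the construction of the induced map (compare Lemma \ref{lemma.functorvalues1}(2)), $\phi$ carries $V^{i*}$ to itself via the iterated dual $\phi^{i*}$ when $i$ is even and via $(\phi^{i*})^{-1}$ when $i$ is odd. Directly from the definitions of the left and right duals, dualizing a self-map of the form $x\mapsto p\cdot x\cdot q$ produces the self-map $x\mapsto q\cdot x\cdot p$; iterating this, $\phi^{i*}$ is $x\mapsto a\cdot x\cdot b$ for $i$ even and $x\mapsto b\cdot x\cdot a$ for $i$ odd. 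Consequently $\phi$ acts on $\mathbb{S}(V)_{i,i+1}$ by $x\mapsto a\cdot x\cdot b$ when $i$ is even and by $x\mapsto b^{-1}\cdot x\cdot a^{-1}$ when $i$ is odd.

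Next, let $u=(u_i)_{i\in\mathbb{Z}}$ be the diagonal element with $u_i\in\mathbb{S}(V)_{ii}=K$ equal to $a$ for $i$ even and to $b^{-1}$ for $i$ odd; it is invertible, and conjugation by $u$ is an algebra automorphism of $\mathbb{S}(V)$ which is the identity on every $\mathbb{S}(V)_{ii}$ and which acts on $\mathbb{S}(V)_{i,i+1}$ by $x\mapsto u_i\cdot x\cdot u_{i+1}^{-1}$. By the choice of $u$ this is exactly the action computed above, so, since $\mathbb{S}(V)$ is generated by the $\mathbb{S}(V)_{i,i+1}$ (the iterated multiplication maps $T_{ij}\to\mathbb{S}(V)_{ij}$ are epic), $\phi$ equals conjugation by $u$. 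Finally, define $\alpha_M\colon\Phi(M)\to M$ to be, in degree $i$, right multiplication by $u_i^{-1}\in\mathbb{S}(V)_{ii}$; a short verification with the module axioms --- in which the $u_{i+1}$ produced by $\phi^{-1}$ (conjugation by $u^{-1}$) cancels the $u_{i+1}^{-1}$ applied by $\alpha_M$ --- shows that $\alpha_M$ is a morphism of graded right $\mathbb{S}(V)$-modules, visibly an isomorphism, and natural in $M$. Thus $\alpha$ is a natural isomorphism from $\Phi$ to $I_{{\sf Gr}\,\mathbb{S}(V)}$, and since $\alpha$ and $\Phi$ preserve torsion, it follows from \cite[Lemma 3.1]{smithmaps} that $\alpha$ descends to a natural isomorphism $\Phi\cong I_{\mathbb{P}(V)}$.

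The only genuinely delicate point is the first two steps: one must track the iterated duals carefully and notice that the diagonal unit which implements $\phi$ alternates as $(\dots,a,b^{-1},a,b^{-1},\dots)$ rather than as the naive $(\dots,a,b,a,b,\dots)$ --- the latter choice is incompatible with the odd-degree pieces (it would force $a^2=b^2=1$), whereas the former matches both parities. After that the argument is formal. Alternatively, one could pass to the even Veronese via Lemma \ref{lemma.rescompat}: on each $B_{2j,2j+2}$ the factors $b^{-1},a^{-1}$ coming from the odd slot cancel the $b$ coming from the even slot across the tensor product, so on the graded ring $C$ of Section \ref{section.groth} the automorphism becomes honest conjugation by $a\in C_0^{*}=K^{*}$, and conjugation by a degree-zero unit of a graded ring obviously induces a functor naturally equivalent to the identity.
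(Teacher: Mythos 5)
Your proof is correct, and it takes a genuinely cleaner route than the paper. You observe that the induced $\mathbb{Z}$-algebra automorphism $\phi$ of $\mathbb{S}(V)$ is inner: it is conjugation by the diagonal unit $u=(u_i)$ with $u_i=a$ ($i$ even) and $u_i=b^{-1}$ ($i$ odd), which you verify on the generating pieces $\mathbb{S}(V)_{i,i+1}$, and then right multiplication by $u_i^{-1}$ furnishes a natural isomorphism $\Phi\to I$ already in ${\sf Gr\,}\mathbb{S}(V)$; the descent to $\mathbb{P}(V)$ is then immediate. The paper instead factors $\phi=\phi_2\phi_1$ with $\phi_1(v)=v\cdot ab$ (right multiplication) and $\phi_2(v)=a\cdot v\cdot a^{-1}$ (conjugation), handles $\phi_2$ directly in ${\sf Gr\,}\mathbb{S}(V)$ (right multiplication by a \emph{constant} unit), and handles $\phi_1$ by passing to the even Veronese $B$ via Lemma~\ref{lemma.rescompat} and checking $\phi\otimes(\phi^*)^{-1}=\mathrm{id}$ on $V\otimes V^*$, then composes. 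Thus the paper's ``conjugation'' case is precisely the special case of your argument where the diagonal unit is constant, and the paper's ``right multiplication'' case, which requires the Veronese detour, is exactly what the alternating unit absorbs for free. Your last paragraph (cancellation across $V^{2j*}\otimes V^{(2j+1)*}$, reducing to conjugation by $a\in C_0^*$) is in effect the paper's Veronese argument, so you have both routes. One tiny inaccuracy in a side remark: the naive choice $(\dots,a,b,a,b,\dots)$ forces $b^2=1$ but not $a^2=1$; this does not affect the argument.
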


\begin{proof}
We first prove that if $a=b^{-1}$ then $\Phi:{\sf Gr }\mathbb{S}(V) \rightarrow {\sf Gr }\mathbb{S}(V)$ is naturally equivalent to the identity.  To prove this, one checks that right multiplication by $b^{-1}$, $\mu_{b^{-1}}:\Phi(M) \rightarrow M$, is a right module isomorphism natural in $M$.

Next, we suppose $a=1$.  In this case, by Lemma \ref{lemma.rescompat}, it suffices to show that the induced functor $\Phi_{B}:{\sf Gr }B \rightarrow {\sf Gr }B$ is naturally equivalent to the identity.  To this end, we note that if $\phi:V \rightarrow V$ equals right multiplication by $b$, $\mu_{b}$, then since $\mu_{b}^{*}$ is left multiplication by $b$, it follows that $\phi \otimes (\phi^{*})^{-1}: V \otimes V^{*} \rightarrow V \otimes V^{*}$ is the identity map.

The general case follows from the fact that if $\phi(v)=a \cdot v \cdot b$, then $\phi$ is a composition of $\phi_{1}:V \rightarrow V$ defined by $\phi_{1}(v)=v \cdot ab$ and $\phi_{2}:V \rightarrow V$ defined by $\phi_{2}(v)=a \cdot v \cdot a^{-1}$.
\end{proof}

\subsection{The functors $T_{\sigma}$ and $T_{\delta,\epsilon}$} \label{section.t}
Suppose that for each $i \in \mathbb{Z}$, $\sigma_{i} \in \operatorname{Gal }(K/k)$, and let $\sigma := \{\sigma_{i}\}_{i \in \mathbb{Z}}$.  Let $A$ denote a $\mathbb{Z}$-algebra, and let $A_{\sigma}$ denote the $\mathbb{Z}$-algebra with
$$
A_{\sigma, ij} := K_{\sigma_{i}^{-1}} \otimes  A_{ij} \otimes  K_{\sigma_{j}}
$$
and with multiplication induced by that of $A$.  By \cite[Section 4.2]{vandenbergh}, the functor $T_{\sigma}:{\sf Gr }A \rightarrow {\sf Gr }A_{\sigma}$ defined on objects by $T_{\sigma}(M)_{i}:=M \otimes K_{\sigma_{i}}$ with the natural multiplication induced by that of $A$, and on morphisms in the obvious way is an equivalence.  By \cite[Lemma 3.1]{smithmaps}, $T_{\sigma}$ descends to an equivalence ${\sf Proj }A \rightarrow {\sf Proj }A_{\sigma}$, and we abuse notation by calling this $T_{\sigma}$.

The following result is utilized in the proof of Theorem \ref{thm.twosidedisom}.
\begin{prop} \label{prop.secondperiodic}
Suppose $\operatorname{char }k \neq 2$, and for each $i \in \mathbb{Z}$, let $\tau_{i} \in \operatorname{Gal }(K/k)$, and let $\tau := \{\tau_{i}\}_{i \in \mathbb{Z}}$.  If there is an isomorphism $f: \mathbb{S}(V)_{\tau} \rightarrow \mathbb{S}(W)$ over $K$, then  $\tau_{i}=\tau_{0}$ for $i$ even and $\tau_{i}=\tau_{1}$ for $i$ odd.
\end{prop}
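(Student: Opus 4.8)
The plan is to extract, from the isomorphism $f:\mathbb{S}(V)_\tau \to \mathbb{S}(W)$, enough relations on the $\tau_i$ to force $2$-periodicity. The key structural input is that $\mathbb{S}(V)_\tau$ and $\mathbb{S}(W)$ are both noncommutative symmetric algebras, so their degree-$(i,i+1)$ and degree-$(i,i+2)$ components carry rigid data. Concretely, $(\mathbb{S}(W))_{i,i+1} = W^{i*}$ while $(\mathbb{S}(V)_\tau)_{i,i+1} = K_{\tau_i^{-1}} \otimes V^{i*} \otimes K_{\tau_{i+1}}$, and the relation spaces $Q_i$ (the images of the unit maps $K \to V^{i*}\otimes V^{(i+1)*}$) must be carried to one another by $f$. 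First I would observe that $f$ restricted to degree $(i,i+1)$ gives isomorphisms of two-sided vector spaces $K_{\tau_i^{-1}} \otimes V^{i*} \otimes K_{\tau_{i+1}} \cong W^{i*}$ for every $i$, and that compatibility with multiplication $\mu_{i,i+1,i+2}$ plus the defining relation $Q_i$ means these isomorphisms are not independent: the pair in degrees $(i,i+1)$ and $(i+1,i+2)$ together determine, and are constrained by, the relation submodule.

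The heart of the argument is to exploit the $2$-periodicity already established for the even Veronese. By Corollary \ref{cor.periodic}, the $J$-Veronese $B$ of $\mathbb{S}(V)$ satisfies $B \cong B(2)$, and the analogous statement holds for $\mathbb{S}(W)$ and for $\mathbb{S}(V)_\tau$; moreover an isomorphism $f$ over $K$ induces an isomorphism of the Veronese algebras. The idea is to compare the two-sided vector space $B_{0,2}$ computed two ways. On the one hand $B_{0,2} = A_{\tau,0,2} \cong K_{\tau_0^{-1}} \otimes (V^{0*}\otimes V^{1*}/Q_0) \otimes K_{\tau_2}$, and on the other, using periodicity of the untwisted $B$, $B_{2,4} \cong K_{\tau_2^{-1}}\otimes(V^{2*}\otimes V^{3*}/Q_2)\otimes K_{\tau_4}$ must be isomorphic to $B_{0,2}$ as a two-sided vector space twisted appropriately. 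Tracking the left and right $K$-actions through these identifications, the twist by $\tau_0$ on the left versus $\tau_2$ on the left forces $\tau_2 = \tau_0$ after one shows the underlying periodicity isomorphism of the untwisted algebra is $K$-central (i.e.\ genuinely a bimodule isomorphism, which is exactly the content of Lemma \ref{lemma.periodic} and its proof via conjugate pairs). Running the same comparison in odd degrees — using $B_{1,3}$ as the $1$-shifted Veronese, or equivalently applying the argument to $\mathbb{S}(V)(1) \cong \mathbb{S}(V^{1*})$ — gives $\tau_{i+2} = \tau_i$ for $i$ odd, i.e.\ $\tau_i = \tau_1$ for all odd $i$.

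It remains to rule out the a priori possibility that $f$ forces further collapsing, but that is not needed for the statement; what we do need is that the hypothesis $\operatorname{char} k \neq 2$ is genuinely used. I would invoke it via Proposition \ref{prop.structure} (or equivalently Lemma \ref{lemma.autocomp}): when $V$ is simple, the arithmetic data attached to $V^{i*}$ and its twists is controlled by embeddings $K \to \overline K$, and the classification of rank-$2$ two-sided vector spaces in Lemma \ref{lemma.twosidedclass} — which is where $\operatorname{char} k\neq 2$ enters — guarantees there are no "extra" $(\sigma,\sigma)$-derivations, so the only flexibility in an isomorphism $K_{\delta^{-1}}\otimes V^{i*}\otimes K_\epsilon \cong W^{i*}$ is the one recorded by $\delta,\epsilon$. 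This rigidity is what pins the $\tau_i$ down to exactly a $2$-periodic pattern rather than something looser.

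\textbf{Main obstacle.} I expect the delicate point to be the bookkeeping of left versus right $K$-actions through the chain of canonical isomorphisms (the unit maps $\eta_i$, the conjugate-pair isomorphisms $\phi_i:V^{i*}\to V^{i+2*}$ of Lemma \ref{lemma.periodic}, and the Veronese identification of Corollary \ref{cor.periodic}): one must verify that each of these is a genuine $K\otimes_k K$-module map so that twisting on the left by $K_{\tau_i^{-1}}$ really does propagate to a constraint $\tau_{i+2}=\tau_i$, and does not get absorbed into a right-hand twist or an inner automorphism. The cleanest way to handle this is to phrase everything in terms of the functorial adjunctions on ${\sf Mod}\,K$ and the Eilenberg--Watts correspondence, exactly as in Lemma \ref{lemma.periodic} and Corollary \ref{cor.gen1}, so that "compatibility with the relations" is automatic and only the two scalar actions need to be compared; the rest is a finite check in degrees $0,1,2,3$.
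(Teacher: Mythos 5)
Your strategy — using the $2$-periodicity of the even Veronese to compare $B_{0,2}$ with $B_{2,4}$ — differs from the paper's, and the key step has a genuine gap. The isomorphism $f$ combined with periodicity of $B_W$ would give
$$
K_{\tau_0^{-1}} \otimes A_{0,2} \otimes K_{\tau_2} \;\cong\; K_{\tau_2^{-1}} \otimes A_{2,4} \otimes K_{\tau_4},
$$
and using $A_{0,2}\cong A_{2,4}$ this becomes
$$
A_{0,2} \;\cong\; K_{\tau_0\tau_2^{-1}} \otimes A_{0,2} \otimes K_{\tau_4\tau_2^{-1}}.
$$
This is a constraint with twists on \emph{both} sides of a rank-$3$ bimodule, involving the three automorphisms $\tau_0,\tau_2,\tau_4$; it does not "force $\tau_2=\tau_0$." For instance, when $A_{0,2}\cong K\oplus K_\gamma^{\oplus 2}$ with $\gamma$ of order $2$ (the situation in Proposition \ref{prop.structure}(1)), any $\delta\in\operatorname{Gal}(K/k)$ commuting with $\gamma$ gives $A_{0,2}\cong K_{\delta^{-1}}\otimes A_{0,2}\otimes K_\delta$, so the constraint has a nontrivial stabilizer. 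The fact that the periodicity isomorphism of Lemma \ref{lemma.periodic} is a $K\otimes_k K$-module map does not help: nothing says $f$ must respect that particular isomorphism, so the "extra" degrees of freedom are not ruled out by $K$-centrality.

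What makes the statement provable is an ingredient you skipped: compare the degree-$(i,i+1)$ and degree-$(i+1,i+2)$ components of $f$ and take a \emph{right dual}. Since $(K_\sigma)^*\cong K_{\sigma^{-1}}$ and duality reverses tensor order, the right dual of $K_{\tau_i^{-1}}\otimes V^{i*}\otimes K_{\tau_{i+1}}\cong W^{i*}$ is $K_{\tau_{i+1}^{-1}}\otimes V^{(i+1)*}\otimes K_{\tau_i}\cong W^{(i+1)*}$; comparing with $f_{i+1,i+2}$ and cancelling the common left twist gives the one-sided, rank-$2$ constraint
$$
V^{(i+1)*} \cong V^{(i+1)*}\otimes K_{\tau_i\tau_{i+2}^{-1}},
$$
involving only two consecutive $\tau$'s. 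Even this is not immediately conclusive — the paper still runs a three-case analysis via Lemma \ref{lemma.twosidedclass}, and in the nondiagonal and simple cases (where the right-twist stabilizer of $V^{(i+1)*}$ can be nontrivial) it brings in the degree-$2$ data from $A_{i,i+2}$ and Proposition \ref{prop.structure} to finish. So your instinct to look at $A_{i,i+2}$ is sound but is only part of the story; without first establishing the right-dual relation, the constraint you obtain is strictly weaker, and the claimed conclusion does not follow.
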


\begin{proof}
Since $f_{i,i+1}$ and $f_{i+1, i+2}$ are isomorphisms of two-sided vector spaces,
\begin{eqnarray*}
K_{\tau_{i+1}^{-1}} \otimes V^{i+1*} \otimes K_{\tau_{i+2}} & \cong & W^{i+1*} \\
& \cong & (W^{i*})^{*} \\
& \cong & (K_{\tau_{i}^{-1}} \otimes V^{i*} \otimes K_{\tau_{i+1}})^{*}
\end{eqnarray*}
which implies that
\begin{equation} \label{eqn.istar}
V^{i+1*} \cong V^{i+1*} \otimes K_{\tau_{i}\tau_{i+2}^{-1}}
\end{equation}
According to Lemma \ref{lemma.twosidedclass}, there are three possibilities for the structure of $V$.

First, we suppose that there exists $\sigma \in \operatorname{Gal}(K/k)$ such that $V \cong K_{\sigma} \oplus K_{\sigma}$.  By (\ref{eqn.istar}), if $i$ is even, then $K_{\sigma^{-1}} \oplus K_{\sigma^{-1}} \cong K_{\sigma^{-1}\tau_{i}\tau_{i+2}^{-1}} \oplus K_{\sigma^{-1}\tau_{i}\tau_{i+2}^{-1}}$.  It follows that $\tau_{i}=\tau_{i+2}$ for all even $i$.  If $i$ is odd, the result holds by a similar argument.

Next, suppose that there exists $\sigma, \tau \in \operatorname{Gal}(K/k)$ with $\sigma \neq \tau$ such that $V \cong K_{\sigma} \oplus K_{\tau}$.  By (\ref{eqn.istar}), if $i$ is even, we have that
$$
K_{\sigma^{-1}} \oplus K_{\tau^{-1}} \cong K_{\sigma^{-1}\tau_{i}\tau_{i+2}^{-1}} \oplus K_{\tau^{-1}\tau_{i}\tau_{i+2}^{-1}}.
$$
Therefore, either $\tau_{i}=\tau_{i+2}$ for all even $i$, or there exists an even $i$ such that $\tau^{-1}=\sigma^{-1}\tau_{i}\tau_{i+2}^{-1}$ and $\sigma^{-1}=\tau^{-1}\tau_{i}\tau_{i+2}^{-1}$.  In the latter case, it follows that $\sigma \tau^{-1}$ has order 2.  Since $f_{01}$ is an isomorphism, we deduce that $W \cong K_{\sigma'} \oplus K_{\tau'}$ with $\sigma' \neq \tau'$, and since $f_{i,i+2}$ is an isomorphism, there is an isomorphism
\begin{equation} \label{eqn.automorphs}
K_{\tau_{i}^{-1}} \otimes (K \oplus K_{\sigma \tau^{-1}} \oplus K_{\tau \sigma^{-1}}) \otimes K_{\tau_{i+2}} \cong K \oplus K_{\sigma' {\tau'}^{-1}} \oplus K_{\tau' {\sigma'}^{-1}}.
\end{equation}
The right-hand side of (\ref{eqn.automorphs}) has exactly one summand isomorphic to $K$.  On the other hand, since $\sigma \tau^{-1}$ has order 2, the left-hand side of (\ref{eqn.automorphs}) has two isomorphic nontrivial factors.  It follows that if $i$ is even, $\tau_{i}=\tau_{i+2}$ and the result follows in this case.  If $i$ is odd, the result holds by a similar argument.

Finally, we prove the result when $V$ is simple.  We begin with the proof that $\tau_{2i} = \tau_{0}$ for all $i \in \mathbb{Z}$.  There are three cases to consider, according to the structure of $V \otimes V^{*}$ described in Proposition \ref{prop.structure}.  First, suppose $V \cong V(\lambda)$ is described by the first part of Proposition \ref{prop.structure}.  Then, for $i \in \mathbb{Z}$, ${\mathbb{S}(V)}_{2i,2i+2} \cong K \oplus K^{\oplus 2}_{\delta}$ where $\delta \in \operatorname{Gal}(K/k)$ has order 2.  It follows that
\begin{equation} \label{eqn.first}
K_{\tau_{2i}^{-1}} \otimes {\mathbb{S}(V)}_{2i,2i+2} \otimes K_{\tau_{2i+2}} \cong K_{{\tau_{2i}}^{-1} \tau_{2i+2}} \oplus {K_{{\tau_{2i}}^{-1} \delta \tau_{2i+2}}}^{\oplus 2}.
\end{equation}
On the other hand, by our hypothesis, $W$ must also be described by the first part of Proposition \ref{prop.structure}.  Thus,
\begin{equation} \label{eqn.second}
K_{{\tau_{2i}}^{-1}} \otimes {\mathbb{S}(V)}_{2i,2i+2} \otimes K_{\tau_{2i+2}} \cong K \oplus K_{\gamma}^{\oplus 2}
\end{equation}
where $\gamma \in \operatorname{Gal}(K/k)$ has order 2.  Applying $\operatorname{Hom}_{K \otimes_{k} K}(-,K)$ to (\ref{eqn.second}) yields $K$.  Therefore, the application of $\operatorname{Hom}_{K \otimes_{k} K}(-,K)$ to (\ref{eqn.first}) must yield $K$ as well, and this is only possible if $\tau_{2i}=\tau_{2i+2}$ for all $i$.

Next, suppose that $V$ is described by the second or third part of Proposition \ref{prop.structure}, and suppose that $i \in \mathbb{Z}$ is even.  We deduce from (\ref{eqn.istar}) that $V^{*} \cong V^{*} \otimes K_{\tau_{i}{\tau_{i+2}}^{-1}}$.  Thus, by Lemma \ref{lemma.autocomp}, $\mu$ has the same $G$-orbit as $\mu \tau_{i}{\tau_{i+2}}^{-1}$.  If $\tau_{i} \neq \tau_{i+2}$, then the image of $\mu$ equals the image of $\epsilon \mu$, where $\epsilon \in G$ and $\epsilon \mu \neq \mu$.  It follows that $\overline{\lambda} \epsilon \mu$ sends $K$ to $K$, contrary to the assumption on $V$.  Therefore, $\tau_{i}=\tau_{i+2}$ for $i$ even so that $\tau_{i}=\tau_{0}$.

Finally, we must prove that if $V$ is simple then $\tau_{2i+1}=\tau_{1}$ for all $i \in \mathbb{Z}$.  By \cite[Theorem 3.13]{hart}, $V^{*}$ is simple, and the result follows from considering the three possibilities for the structure of $V^{*}$ according to Proposition \ref{prop.structure} and reasoning as above.
\end{proof}

In what follows, we will need a special kind of twist.  If $\delta, \epsilon \in \operatorname{Gal}(K/k)$ and a sequence $\zeta$ is defined by
$$
\zeta_{i} = \begin{cases} \delta \mbox{ if $i$ is even} \\ \epsilon \mbox{ if $i$ is odd,} \end{cases}
$$
then we define $A_{\delta, \epsilon} := A_{\zeta}$.  We will need the following

\begin{lemma} \label{lemma.canonicaladj}
If $i \in \mathbb{Z}$ and $\delta, \epsilon \in \operatorname{Gal}(K/k)$, then there exist canonical adjunctions
$$
(-\otimes (K_{{\delta}^{-1}} \otimes V \otimes K_{\epsilon})^{i*},-\otimes (K_{{\delta}^{-1}} \otimes V \otimes K_{\epsilon})^{i+1*}, \eta_{i},\epsilon_{i})
$$
such that the associated noncommutative symmetric algebra $\mathbb{S}(K_{{\delta}^{-1}} \otimes V \otimes K_{\epsilon})$ is canonically isomorphic to $\mathbb{S}(V)_{\delta, \epsilon}$.
\end{lemma}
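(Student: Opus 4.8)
The plan is to write $W := K_{\delta^{-1}} \otimes V \otimes K_{\epsilon}$ and let $\zeta$ be the sequence with $\zeta_{i} = \delta$ for $i$ even and $\zeta_{i} = \epsilon$ for $i$ odd, so that $\mathbb{S}(V)_{\delta,\epsilon}$ has $(i,j)$-component $K_{\zeta_{i}^{-1}} \otimes \mathbb{S}(V)_{ij} \otimes K_{\zeta_{j}}$ and $\zeta$ is $2$-periodic. First I would record the elementary canonical isomorphisms $K_{\sigma} \otimes K_{\tau} \cong K_{\sigma\tau}$ and $K_{\sigma}^{*} \cong {}^{*}K_{\sigma} \cong K_{\sigma^{-1}}$ for $\sigma,\tau \in \operatorname{Gal}(K/k)$, together with the canonical isomorphisms $(X \otimes Y)^{*} \cong Y^{*} \otimes X^{*}$ and ${}^{*}(X \otimes Y) \cong {}^{*}Y \otimes {}^{*}X$ of right (resp. left) duals of finite rank two-sided vector spaces, which follow from uniqueness of adjoints via \cite[Proposition 3.7]{hart} and \cite[Theorem 1, p. 103]{cats}. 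Combining these and inducting on $|i|$ (using right duals for $i \geq 0$ and left duals for $i < 0$, with base case $W^{0*} = W = K_{\zeta_{0}^{-1}} \otimes V^{0*} \otimes K_{\zeta_{1}}$), I obtain canonical isomorphisms
$$
\theta_{i}: W^{i*} \xrightarrow{\ \sim\ } K_{\zeta_{i}^{-1}} \otimes V^{i*} \otimes K_{\zeta_{i+1}};
$$
the inductive step rewrites $(K_{\zeta_{i}^{-1}} \otimes V^{i*} \otimes K_{\zeta_{i+1}})^{*} \cong K_{\zeta_{i+1}^{-1}} \otimes V^{(i+1)*} \otimes K_{\zeta_{i}}$ and then invokes the $2$-periodicity $\zeta_{i} = \zeta_{i+2}$ to replace the last factor by $K_{\zeta_{i+2}}$.

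Next I would build the required adjunctions. Via $\theta_{i}$, the functor $-\otimes W^{i*}$ is naturally isomorphic to the composite $(-\otimes K_{\zeta_{i+1}}) \circ (-\otimes V^{i*}) \circ (-\otimes K_{\zeta_{i}^{-1}})$. Each factor carries a canonical adjunction: $-\otimes K_{\sigma} \dashv -\otimes K_{\sigma^{-1}}$, with unit and counit the canonical isomorphisms $K \cong K_{\sigma} \otimes K_{\sigma^{-1}}$ and $K_{\sigma^{-1}} \otimes K_{\sigma} \cong K$; and $-\otimes V^{i*} \dashv -\otimes V^{(i+1)*}$, the canonical adjunction from \cite{hart} already used to define $\mathbb{S}(V)$. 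Composing these in the appropriate order produces a canonical adjunction whose right adjoint is $-\otimes (K_{\zeta_{i+1}^{-1}} \otimes V^{(i+1)*} \otimes K_{\zeta_{i}})$; by $2$-periodicity and $\theta_{i+1}$ this is identified with $-\otimes W^{(i+1)*}$. I declare $(\eta_{i},\epsilon_{i})$ to be this composite adjunction, transported along the $\theta$'s. The standard formula for the unit of a composite adjunction, together with the fact that the units and counits of the $-\otimes K_{\sigma}$ adjunctions are isomorphisms (so that composing with them does not change images), then shows that under the canonical isomorphism $W^{i*} \otimes W^{(i+1)*} \xrightarrow{\ \sim\ } K_{\zeta_{i}^{-1}} \otimes (V^{i*} \otimes V^{(i+1)*}) \otimes K_{\zeta_{i+2}}$ induced by $\theta_{i}$ and $\theta_{i+1}$, the unit $\eta_{i}$ corresponds to $1 \otimes \eta_{i}^{V} \otimes 1$, where $\eta_{i}^{V}: K \to V^{i*} \otimes V^{(i+1)*}$ is the unit for $V$. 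Hence the relation space $Q_{i}$ for $\mathbb{S}(W)$ is carried to $K_{\zeta_{i}^{-1}} \otimes Q_{i}^{V} \otimes K_{\zeta_{i+2}}$.

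Finally I would assemble the $\mathbb{Z}$-algebra isomorphism. By construction $\mathbb{S}(W)_{i,i+1} = W^{i*}$, which $\theta_{i}$ identifies with $K_{\zeta_{i}^{-1}} \otimes \mathbb{S}(V)_{i,i+1} \otimes K_{\zeta_{i+1}}$, the $(i,i+1)$-component of $\mathbb{S}(V)_{\delta,\epsilon}$. For $j > i+1$, iterating the $\theta_{l}$ and the cancellations $K_{\zeta_{l+1}} \otimes K_{\zeta_{l+1}^{-1}} \cong K$ identifies the tensor block $T_{ij}$ of $\mathbb{S}(W)$ with $K_{\zeta_{i}^{-1}} \otimes T_{ij} \otimes K_{\zeta_{j}}$ (computed inside $\mathbb{S}(V)$), and the previous paragraph then identifies $R_{ij}$ for $\mathbb{S}(W)$ with $K_{\zeta_{i}^{-1}} \otimes R_{ij} \otimes K_{\zeta_{j}}$, since $R_{ij}$ is assembled from the blocks $T_{\bullet\bullet}$ and the relation spaces $Q_{\bullet}$. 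Because $K_{\sigma} \otimes -$ and $-\otimes K_{\sigma}$ are exact, passing to quotients gives $\mathbb{S}(W)_{ij} \cong K_{\zeta_{i}^{-1}} \otimes \mathbb{S}(V)_{ij} \otimes K_{\zeta_{j}}$, and one checks that the multiplication maps $\mu_{ijk}$ of $\mathbb{S}(W)$ go over to those of $\mathbb{S}(V)$ tensored with identities, so the $\theta$'s assemble into an isomorphism $\mathbb{S}(W) \cong \mathbb{S}(V)_{\delta,\epsilon}$ of $\mathbb{Z}$-algebras over $K$. I expect the main obstacle to be the bookkeeping in the second paragraph --- verifying that the composite-adjunction unit really does become $1 \otimes \eta_{i}^{V} \otimes 1$ after transport along the $\theta$'s, so that the $Q_{i}$ match --- together with the routine but tedious verification that every multiplication map is respected, analogous to the omitted verification in the proof of Corollary \ref{cor.periodic}.
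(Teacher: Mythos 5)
The paper's ``proof'' of this lemma is a one-line citation: it asserts the argument is ``almost identical to the proof of \cite[Theorem 4.1]{izuru}'' and omits the details, so there is no in-text argument to compare against. Your proposal supplies a complete direct construction and it is sound. The base case $W^{0*} = K_{\zeta_0^{-1}} \otimes V \otimes K_{\zeta_1}$, the inductive identification $\theta_i : W^{i*} \xrightarrow{\sim} K_{\zeta_i^{-1}} \otimes V^{i*} \otimes K_{\zeta_{i+1}}$ via $(X\otimes Y)^* \cong Y^* \otimes X^*$ (and its left-dual counterpart for $i<0$) together with $K_\sigma^* \cong K_{\sigma^{-1}}$ and the $2$-periodicity $\zeta_i = \zeta_{i+2}$, and the definition of the adjunction $(-\otimes W^{i*}, -\otimes W^{(i+1)*})$ as the composite of the canonical adjunction for $V$ with the invertible adjunctions $-\otimes K_\sigma \dashv -\otimes K_{\sigma^{-1}}$ all check out. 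Your key computation --- that the unit of the composite adjunction, transported along the $\theta$'s and the cancellation $K_{\zeta_{i+1}} \otimes K_{\zeta_{i+1}^{-1}} \cong K$, becomes $1 \otimes \eta_i^V \otimes 1$, so that $Q_i^W$ is carried to $K_{\zeta_i^{-1}} \otimes Q_i^V \otimes K_{\zeta_{i+2}}$ --- is exactly the point that makes the identification of relation spaces $R_{ij}$ work, and the passage to quotients via exactness of $K_\sigma \otimes -$ and $-\otimes K_\sigma$ then yields the $\mathbb{Z}$-algebra isomorphism. The remaining checks (compatibility with the multiplication maps $\mu_{ijk}$) are indeed of the same ``routine but tedious'' kind the paper itself elides in the proof of Corollary \ref{cor.periodic}. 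In short, you have written out the proof the paper chose to outsource to Mori's paper, and it is correct.
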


\begin{proof}
The proof is almost identical to the proof of \cite[Theorem 4.1]{izuru}, and so we omit the details.
\end{proof}

Let $\delta, \epsilon \in \operatorname{Gal}(K/k)$ and let $T_{\delta,\epsilon}:{\sf Gr }\mathbb{S}(V) \rightarrow {\sf Gr }S(K_{\delta^{-1}} \otimes V \otimes K_{\epsilon})$ denote the composition
\begin{equation} \label{eqn.twisttwo}
{\sf Gr }\mathbb{S}(V) \overset{T_{\zeta}}{\rightarrow} {\sf Gr }\mathbb{S}(V)_{\zeta} \rightarrow {\sf Gr }S(K_{\delta^{-1}} \otimes V \otimes K_{\epsilon})
\end{equation}
whose second composite is the equivalence induced by the isomorphism from Lemma \ref{lemma.canonicaladj}.

\begin{lemma} \label{lemma.functorvalues2}
Suppose $\delta, \epsilon \in \operatorname{Gal}(K/k)$ and let $W=K_{\delta}^{-1} \otimes V \otimes K_{\epsilon}$.
\begin{enumerate}
\item{} There is an isomorphism $T_{\delta, \epsilon}(P_{i}) \rightarrow P'_{i}$ and thus $T_{\delta,\epsilon}(\mathcal{P}_{i}) \cong \mathcal{P}'_{i}$.

\item{} Suppose $f:T_{\delta,\epsilon} P_{i} \rightarrow P'_{i}$ is an isomorphism.  Let $\zeta$ denote the sequence of automorphisms
$$
\zeta_{i} = \begin{cases} \delta \mbox{ if $i$ is even} \\ \epsilon \mbox{ if $i$ is odd.} \end{cases}
$$
There exists a nonzero $a \in K$ such that $f_{i}: \mathbb{S}(V)_{ii} \otimes K_{\zeta_{i}} \rightarrow \mathbb{S}(W)_{ii}$ is defined by $f_{i}(1 \otimes b)={}_{a}\mu \zeta_{i}^{-1}(b)$, where ${}_{a}\mu$ denotes left multiplication by $a$.
\end{enumerate}
\end{lemma}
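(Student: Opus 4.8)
The plan is to prove both parts by tracing through the definition of $T_{\delta,\epsilon}$ given in \eqref{eqn.twisttwo}, i.e. the composite of the twist $T_{\zeta}$ with the equivalence induced by the canonical isomorphism $\mathbb{S}(V)_{\zeta}=\mathbb{S}(V)_{\delta,\epsilon}\cong\mathbb{S}(W)$ of Lemma \ref{lemma.canonicaladj}. For part (1), since the equivalence induced by an isomorphism of $\mathbb{Z}$-algebras $B\rightarrow B'$ carries $e_iB$ to a module canonically isomorphic to $e_iB'$, it suffices to exhibit a graded right $\mathbb{S}(V)_{\zeta}$-module isomorphism $T_{\zeta}(P_i)\rightarrow e_i(\mathbb{S}(V)_{\zeta})$. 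Unwinding definitions, $T_{\zeta}(P_i)_j=\mathbb{S}(V)_{ij}\otimes K_{\zeta_j}$ while $(e_i\mathbb{S}(V)_{\zeta})_j=K_{\zeta_i^{-1}}\otimes\mathbb{S}(V)_{ij}\otimes K_{\zeta_j}$; since $\mathbb{S}(V)_{ij}$ carries a left $\mathbb{S}(V)_{ii}=K$-action, the assignment $x\otimes c\mapsto 1\otimes x\otimes c$ is a bijection in each degree, and I would check against Van den Bergh's formula for the twisted multiplication \cite[Section 4.2]{vandenbergh} that it is compatible with the right $\mathbb{S}(V)_{\zeta}$-actions. (Alternatively one can argue as in the proof of Lemma \ref{lemma.shiftp}: $T_{\delta,\epsilon}$ is a grading-preserving exact equivalence that preserves torsion modules, hence commutes with $\pi$, $\tau$ and $\omega$; Theorem \ref{thm.grothendieck}, Lemma \ref{lemma.preserves} and Lemma \ref{lemma.rank1} then give $T_{\delta,\epsilon}(\mathcal{P}_i)\cong\mathcal{P}'_j$ for some $j$, so by Lemma \ref{lemma.pretwosidediso} and the vanishing of $\tau$ and $R^1\tau$ on $P_i$ we get $T_{\delta,\epsilon}(P_i)\cong\omega\pi T_{\delta,\epsilon}(P_i)\cong P'_j$, and comparing lowest nonzero degrees forces $j=i$.) Finally, since $T_{\delta,\epsilon}(\mathcal{P}_i)$ abbreviates $\pi T_{\delta,\epsilon}\omega\mathcal{P}_i\cong\pi T_{\delta,\epsilon}P_i$ by Lemma \ref{lemma.pretwosidediso}, applying $\pi$ to the isomorphism above yields $T_{\delta,\epsilon}(\mathcal{P}_i)\cong\mathcal{P}'_i$.

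For part (2), observe that, since $T_{\zeta}$ and the equivalence of Lemma \ref{lemma.canonicaladj} are both grading-preserving and the latter does not alter underlying sets, $(T_{\delta,\epsilon}P_i)_i$ equals $\mathbb{S}(V)_{ii}\otimes K_{\zeta_i}$, a one-dimensional right $\mathbb{S}(W)_{ii}=K$-module; moreover, following the twisted multiplication of \cite[Section 4.2]{vandenbergh} through the degree-$ii$ part of the isomorphism of Lemma \ref{lemma.canonicaladj} shows that its right $K$-structure is the $\zeta_i$-twisted one, so that writing a general element as $1\otimes b$ one has $(1\otimes b)\cdot c=1\otimes b\zeta_i(c)$. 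Now $\operatorname{Hom}_{{\sf Gr }\mathbb{S}(W)}(P'_i,P'_i)=\mathbb{S}(W)_{ii}=K$ with nonzero elements acting as left multiplications, so any isomorphism $f:T_{\delta,\epsilon}P_i\rightarrow P'_i$ is of the form ${}_{a}\mu\circ f^0$ for a unique $a\in K^{\times}$, where $f^0$ is the canonical isomorphism constructed in part (1); a direct computation of the degree-$i$ component of $f^0$ gives $f^0_i(1\otimes b)=\zeta_i^{-1}(b)$, and hence $f_i(1\otimes b)={}_{a}\mu\,\zeta_i^{-1}(b)$, with $a\neq 0$ because $f_i$ is bijective.

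The only genuine work lies in the bookkeeping with twisted two-sided vector spaces: the identifications $K_{\sigma}\otimes_K K_{\tau}\cong K_{\sigma\tau}$ and $K_{\sigma}\otimes_K K_{\sigma^{-1}}\cong K$ are not canonical in a vacuous sense --- they involve the automorphisms --- so verifying that $x\otimes c\mapsto 1\otimes x\otimes c$ respects the twisted multiplication, and extracting the exact automorphism $\zeta_i^{-1}$ appearing in the degree-$i$ component of $f^0$, is where care is needed. Everything else is formal: part (1) is definition-chasing (or the soft argument modelled on Lemma \ref{lemma.shiftp}), and part (2) reduces to the computation $\operatorname{Hom}_{{\sf Gr }\mathbb{S}(W)}(P'_i,P'_i)=K$ together with the observation that the ambiguity in $f$ is exactly left multiplication by a scalar.
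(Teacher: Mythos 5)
Your proposal is correct, and the ``soft argument modelled on Lemma \ref{lemma.shiftp}'' you give as the alternative for part (1) is exactly what the paper does (the paper simply says the proof of (1) is similar to that of Lemma \ref{lemma.shiftp}). For part (2) you take a slightly more roundabout route than the paper. You correctly identify the key fact: after identifying $(T_{\delta,\epsilon}P_i)_i$ with $\mathbb{S}(V)_{ii}\otimes K_{\zeta_i}\cong K_{\zeta_i}$, the right $K$-action is the $\zeta_i$-twisted one. The paper then finishes in one line directly from this: $f_i$ is a right $K$-module map $K_{\zeta_i}\to K$, so $f_i(b)=f_i(1\cdot\zeta_i^{-1}(b))=f_i(1)\,\zeta_i^{-1}(b)$, and one sets $a=f_i(1)$. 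You instead fix a specific isomorphism $f^0$ from part (1), factor $f={}_a\mu\circ f^0$ using $\operatorname{Hom}_{{\sf Gr}\mathbb{S}(W)}(P'_i,P'_i)\cong K$, and then assert (without carrying out the computation) that $f^0_i(1\otimes b)=\zeta_i^{-1}(b)$. That assertion is true but is exactly the content you were trying to establish, and it requires tracking the explicit identifications of Lemma \ref{lemma.canonicaladj} in degree $ii$; moreover, if you take the soft route for part (1) there is no explicit $f^0$ to compute with. The paper's version avoids this entirely: the right-$K$-linearity of an \emph{arbitrary} $f_i$ already forces the form, so the detour through $f^0$ buys nothing. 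I would replace your last few lines with the paper's one-line computation.
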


\begin{proof}
The proof of (1) is similar to the proof of Lemma \ref{lemma.shiftp} and we leave the details to the reader.

For the proof of (2), we identify $\mathbb{S}(V)_{ii} \otimes K_{\zeta_{i}}$ with $K_{\zeta_{i}}$.  Under this identification, $f_{i}:K_{\zeta_{i}} \rightarrow K$ is a right $K$-module map so that $f_{i}(b)=f_{i}(1 \cdot \zeta_{i}^{-1}(b))=f_{i}(1) \zeta_{i}^{-1}(b)$.  Thus, the assertion holds if we set $a=f_{i}(1)$.
\end{proof}

The following lemma, which will be employed in the proof of Theorem \ref{thm.finalfinalfinal}, has a straightforward proof, which is omitted.  It describes two compatibilities between $\Phi$ and $T_{\delta,\epsilon}$.

\begin{lemma} \label{lemma.semidirect}
Suppose $\delta, \delta', \epsilon, \epsilon' \in \operatorname{Gal}(K/k)$,
$$
\phi:K_{\delta^{-1}} \otimes V \otimes K_{\epsilon} \rightarrow V
$$
is an isomorphism of two-sided vector spaces with induced equivalence $\Phi$, and
$$
\phi':K_{(\delta \delta')^{-1}} \otimes V \otimes K_{\epsilon \epsilon'} \rightarrow K_{\delta'^{-1}} \otimes K_{{\delta}^{-1}} \otimes V \otimes K_{\epsilon} \otimes K_{\epsilon'}
$$
is the canonical isomorphism with induced equivalence $\Phi'$.

Then the diagram
$$
\begin{CD}
\mathbb{P}(K_{\delta^{-1}} \otimes V \otimes K_{\epsilon}) & \overset{\Phi}{\rightarrow} & \mathbb{P}(V) \\
@V{T_{\delta',\epsilon'}}VV @VV{T_{\delta',\epsilon'}}V \\
\mathbb{P}(K_{\delta'^{-1}} \otimes (K_{\delta^{-1}} \otimes V \otimes K_{\epsilon}) \otimes K_{\epsilon'}) & \rightarrow & \mathbb{P}(K_{\delta'^{-1}} \otimes V \otimes K_{\epsilon'})
\end{CD}
$$
whose bottom horizontal is induced by $K_{\delta'^{-1}} \otimes \phi \otimes K_{\epsilon'}$, and the diagram
$$
\begin{CD}
\mathbb{P}(V) & \overset{T_{\delta,\epsilon}}{\rightarrow} & \mathbb{P}(K_{\delta^{-1}} \otimes V \otimes K_{\epsilon}) \\
@V{T_{\delta \delta', \epsilon \epsilon'}}VV @VV{T_{\delta', \epsilon'}}V \\
\mathbb{P}(K_{(\delta \delta')^{-1}} \otimes V \otimes K_{\epsilon \epsilon'}) & \underset{\Phi'}{\rightarrow} &  \mathbb{P}(K_{\delta'^{-1}} \otimes K_{{\delta}^{-1}} \otimes V \otimes K_{\epsilon} \otimes K_{\epsilon'})
\end{CD}
$$
commutes up to isomorphism.
\end{lemma}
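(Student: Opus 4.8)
The plan is to verify both squares one level down, in ${\sf Gr}$, and then transport the resulting natural isomorphisms to ${\sf Proj}$. Every functor appearing in the two diagrams --- the various $\Phi$'s, $\Phi'$, $T_{\delta,\epsilon}$, $T_{\delta\delta',\epsilon\epsilon'}$, $T_{\delta',\epsilon'}$ --- is, by construction, the descent along $\pi$ of a functor between categories of graded modules that preserves torsion; the same is true of their composites. Since localization at a localizing subcategory carries a natural isomorphism of torsion-preserving functors to a natural isomorphism of the induced functors on ${\sf Proj}$ (this is the natural-transformation version of \cite[Lemma 3.1]{smithmaps}, applied to the relevant composites), it suffices to fill the two squares with natural isomorphisms at the level of graded modules.

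At that level the two identities are consequences of two elementary compatibilities of the twisting functors $T_{\sigma}$ of \cite[Section 4.2]{vandenbergh} with the relabelling functors $\Phi$. First, a base-change property: if $\psi$ is an isomorphism of two-sided vector spaces, so that $\mathbb{S}(\psi)$ is a $\mathbb{Z}$-algebra isomorphism fixing the diagonal components, and $\sigma$ is any twist sequence, then $\mathbb{S}(\psi)$ twists to an isomorphism $\mathbb{S}(\psi)_{\sigma}$ and the square relating $T_{\sigma}$, the equivalence induced by $\mathbb{S}(\psi)$, and the equivalence induced by $\mathbb{S}(\psi)_{\sigma}$ commutes; on objects this is the tautology $T_{\sigma}(M)_{i}=M\otimes K_{\sigma_{i}}$, and the module structures match because $\psi$ is $K\otimes_{k}K$-linear, hence $\mathbb{S}(\psi)$ commutes with $-\otimes K_{\sigma_{i}}$. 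Second, a cocycle property: $T_{\sigma}$ followed by $T_{\rho}$ is naturally isomorphic to $T_{\sigma\rho}$, the identification on objects being $(M\otimes K_{\sigma_{i}})\otimes K_{\rho_{i}}\cong M\otimes(K_{\sigma_{i}}\otimes K_{\rho_{i}})\cong M\otimes K_{\sigma_{i}\rho_{i}}$ via the canonical isomorphism $K_{\sigma_{i}}\otimes K_{\rho_{i}}\cong K_{\sigma_{i}\rho_{i}}$, which is compatible with all multiplications by naturality and associativity of $\otimes$.

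Granting these, the first square reduces as follows. Writing $\zeta'$ for the sequence with $\zeta'_{i}=\delta'$ for $i$ even and $\epsilon'$ for $i$ odd, the base-change property with $\psi=\phi$ gives $T_{\zeta'}\circ\Phi\cong\Phi_{\zeta'}\circ T_{\zeta'}$, where $\Phi_{\zeta'}$ is induced by $\mathbb{S}(\phi)_{\zeta'}$; one then checks that, after pre- and postcomposing with the canonical isomorphisms of Lemma \ref{lemma.canonicaladj} (applied once to $V$ and once to $K_{\delta^{-1}}\otimes V\otimes K_{\epsilon}$), $\Phi_{\zeta'}$ is the equivalence induced by $K_{\delta'^{-1}}\otimes\phi\otimes K_{\epsilon'}$. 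For the second square, write $\zeta$ for the $(\delta,\epsilon)$-periodic sequence; the cocycle property gives $T_{\zeta'}\circ T_{\zeta}\cong T_{\zeta\zeta'}$, and one then checks, again via the canonical isomorphisms of Lemma \ref{lemma.canonicaladj}, that the resulting identification of the target $\mathbb{S}(V)_{\zeta\zeta'}$-modules with $\mathbb{S}(K_{\delta'^{-1}}\otimes K_{\delta^{-1}}\otimes V\otimes K_{\epsilon}\otimes K_{\epsilon'})$-modules is the one effected by the canonical map $\phi'$. In both cases the only remaining issue is that the canonical isomorphism $\mathbb{S}(K_{\delta^{-1}}\otimes V\otimes K_{\epsilon})\cong\mathbb{S}(V)_{\delta,\epsilon}$ of Lemma \ref{lemma.canonicaladj} --- built, as in the proof of \cite[Theorem 4.1]{izuru}, from units and counits of the adjunctions $(-\otimes X,-\otimes X^{*})$ and coherence isomorphisms of $\otimes$ --- intertwines twisting and relabelling; since every map in sight is assembled from the same ingredients, a uniqueness-of-adjoints argument in the style of Lemma \ref{lemma.periodic} and Lemma \ref{lemma.categories} reduces each square to a finite diagram chase.

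I expect the bookkeeping with the isomorphisms of Lemma \ref{lemma.canonicaladj} to be the only real obstacle. Conceptually the squares commute simply because twisting and relabelling of $\mathbb{Z}$-algebras are independent operations, so no new idea is needed; what takes work is making the identifications of source and target algebras explicit enough to see that the two composites literally coincide rather than merely having the same domain and codomain. This is precisely the ``straightforward but tedious'' verification the authors elect to omit.
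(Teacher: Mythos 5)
The paper declares this proof ``straightforward'' and omits it, so there is no argument in the text to compare against; but your outline --- verify both squares at the level of ${\sf Gr}$ via the strict base-change identity $T_{\zeta'}\Phi = \Phi_{\zeta'}T_{\zeta'}$ and the cocycle $T_{\zeta'}T_{\zeta}\cong T_{\zeta\zeta'}$, reconcile with the canonical isomorphisms of Lemma \ref{lemma.canonicaladj}, and then descend along $\pi$ using \cite[Lemma 3.1]{smithmaps} --- is the natural decomposition, it is correct, and it is evidently what the authors had in mind.
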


\section{Classification of noncommutative projective lines} \label{section.isom}
Our goal in this section is to classify noncommutative projective lines up to $k$-linear equivalence.  We first introduce notation that will be employed in this section.  We let $\pi, \omega$ and $\tau$ denote the usual quotient, section, and torsion functors associated to $\mathbb{S}(V)$, we let $P_{i} := e_{i}\mathbb{S}(V)$, we let $\mathcal{P}_{i} := \pi P_{i}$, and we let $H$ denote the $\mathbb{Z}$-algebra with
$$
H_{ij}=\operatorname{Hom }_{{\sf Gr }\mathbb{S}(V)}(P_{j}, P_{i})
$$
and with multiplication induced by composition.  It is straightforward to check that the map
$$
f_{ij}: \mathbb{S}(V)_{ij} \rightarrow \operatorname{Hom }_{{\sf Gr
}\mathbb{S}(V)}(P_{j}, P_{i})
$$
defined by sending $x$ to the function sending $e_{j}$ to $x$
induces an isomorphism of $\mathbb{Z}$-algebras $f: \mathbb{S}(V)
\rightarrow H$.  We let $\mathcal{A}$ denote the $\mathbb{Z}$-algebra with
$$
\mathcal{A}_{ij}=\operatorname{Hom }_{{\sf Proj }{\mathbb{S}(V)}}(\mathcal{P}_{j}, \mathcal{P}_{i})
$$
and with multiplication induced by composition.  Finally, we let $g_{ij}:H_{ij} \rightarrow \mathcal{A}_{ij}$ be the map induced by the functor $\pi$.  Primed versions of this notation correspond to the analogous constructions with $W$ in place of $V$.

We give $H_{ij}$ a two-sided vector space structure through the
map $f$ and we give $\mathcal{A}_{ij}$ a two-sided vector space
structure through the map $gf$.  A similar remark holds for primed objects.

\begin{lemma} \label{lemma.twosidediso}
The map of $\mathbb{Z}$-algebras $g:H \rightarrow \mathcal{A}$ is a $K$-algebra isomorphism.  In particular, for any $i,j \in \mathbb{Z}$, $\mathbb{S}(V)_{ij}$ and $\mathcal{A}_{ij}$ are isomorphic two-sided vector spaces over $K$.
\end{lemma}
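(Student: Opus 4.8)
The plan is to reduce everything to Lemma \ref{lemma.pretwosidediso}, which asserts that the unit map $\eta_i : P_i \to \omega\pi P_i$ is an isomorphism for every $i \in \mathbb{Z}$, combined with the adjunction $(\pi, \omega)$. First I would recall the standard Hom-isomorphism attached to the quotient functor $\pi$: for any $M \in {\sf Gr}\,\mathbb{S}(V)$ and any $X \in {\sf Proj}\,\mathbb{S}(V)$ the adjunction gives a natural bijection $\operatorname{Hom}_{{\sf Proj}}(\pi M, X) \cong \operatorname{Hom}_{{\sf Gr}}(M, \omega X)$, whose inverse carries $\beta : M \to \omega X$ to $\epsilon_X \circ \pi\beta$, where $\epsilon$ denotes the counit. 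Specializing to $M = P_j$ and $X = \pi P_i = \mathcal{P}_i$, and using the triangle identity $\epsilon_{\mathcal{P}_i} \circ \pi\eta_{P_i} = \operatorname{id}_{\mathcal{P}_i}$ together with the naturality of $\eta$, I would check that the map $g_{ij} : \operatorname{Hom}_{{\sf Gr}}(P_j, P_i) \to \operatorname{Hom}_{{\sf Proj}}(\mathcal{P}_j, \mathcal{P}_i)$, $h \mapsto \pi h$, factors as post-composition with $\eta_{P_i}$, namely
$$
\operatorname{Hom}_{{\sf Gr}}(P_j, P_i) \longrightarrow \operatorname{Hom}_{{\sf Gr}}(P_j, \omega\pi P_i), \qquad h \longmapsto \eta_{P_i} \circ h,
$$
followed by the inverse of the above adjunction bijection. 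Since $\eta_{P_i}$ is an isomorphism by Lemma \ref{lemma.pretwosidediso}, post-composition with it is bijective, and hence $g_{ij}$ is a bijection for all $i, j$.

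Next I would observe that $g$ is a morphism of $\mathbb{Z}$-algebras simply because $\pi$ is an additive functor preserving composition; together with the previous paragraph this makes $g$ an isomorphism of $\mathbb{Z}$-algebras, and in fact a $K$-algebra isomorphism, since $K$-linearity of $\pi$ and the identification $\operatorname{End}_{{\sf Gr}}(P_i) \cong K$ coming from $f$ force $\operatorname{End}_{{\sf Proj}}(\mathcal{P}_i)$ to be identified with $K$ compatibly. Finally, the ``in particular'' clause is immediate: $gf : \mathbb{S}(V) \to \mathcal{A}$ is then an isomorphism of $\mathbb{Z}$-algebras, and since $\mathcal{A}_{ij}$ was equipped with its two-sided vector space structure precisely through $gf$, the degree-$(i,j)$ component of $gf$ is the desired $K$-$K$-bimodule isomorphism $\mathbb{S}(V)_{ij} \to \mathcal{A}_{ij}$.

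I do not anticipate a serious obstacle. The only step requiring genuine care is verifying that $g_{ij}$ is the composite described above --- equivalently, that $g_{ij}(h)$ corresponds to $\eta_{P_i} \circ h$ under the adjunction isomorphism --- which follows from the triangle identities and the naturality of the unit; all the substantive input is concentrated in Lemma \ref{lemma.pretwosidediso}.
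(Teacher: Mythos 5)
Your proposal is correct and follows essentially the same route as the paper: both arguments combine Lemma \ref{lemma.pretwosidediso}, the adjunction bijection between $\operatorname{Hom}_{{\sf Proj}}(\pi P_j,\pi P_i)$ and $\operatorname{Hom}_{{\sf Gr}}(P_j,\omega\pi P_i)$, and the naturality of the unit to show $g_{ij}$ is a bijection. The paper phrases this as a commuting square with $\eta_i$-postcomposition on one side and the adjunction isomorphism on the other, which is exactly the factorization you describe.
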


\begin{proof}
By the naturality of the unit map $\eta_{i}:P_{i} \rightarrow \omega \pi P_{i}$, the diagram
$$
\begin{CD}
\Hom_{{\sf Gr }\mathbb{S}(V)}(P_{j},P_{i}) & \rightarrow & \Hom_{{\sf Proj }\mathbb{S}(V)}(\mathcal{P}_{j}, \mathcal{P}_{i}) \\
@VVV @VVV \\
\Hom_{{\sf Gr }\mathbb{S}(V)}(P_{j},\omega \pi P_{i}) & \overset{=}{\rightarrow} & \Hom_{{\sf Gr }\mathbb{S}(V)}(P_{j},\omega \pi P_{i})
\end{CD}
$$
whose left vertical is induced by $\eta_{i}$, whose right vertical is induced by adjointness, and whose top horizontal is induced by $\pi$, commutes.  Since the verticals and bottom horizontal are isomorphisms by Lemma \ref{lemma.pretwosidediso}, so is the top horizontal, whence the result.
\end{proof}

\begin{thm} \label{thm.twosidedisom}
Suppose $F:\mathbb{P}(V) \rightarrow \mathbb{P}(W)$ is a $k$-linear equivalence.  Then there is an associated $i \in \mathbb{Z}$ and an associated sequence $\tau=\{\tau_{m}\}_{m \in \mathbb{Z}}$ in $\operatorname{Gal}(K/k)$, such that there is a $K$-algebra isomorphism
$$
h: \mathcal{A} \rightarrow \mathcal{A'}(i)_{\tau}.
$$
Furthermore, if $F$ is naturally equivalent to $G$ and $G$ has associated integer $j$ and sequence $\sigma$, then $j=i$ and $\sigma=\tau$.

If $\operatorname{char }k \neq 2$, then $\tau$ is 2-periodic.
\end{thm}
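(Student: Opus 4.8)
The plan is to read the integer $i$ and the sequence $\tau$ directly off $F$, to build $h$ from the action of $F$ on $\Hom$--spaces between line bundles, and then to obtain $2$--periodicity by transporting the resulting isomorphism back to noncommutative symmetric algebras and applying Proposition~\ref{prop.secondperiodic}. First I would fix $i$. By Lemma~\ref{lemma.preserves} and Corollary~\ref{cor.grothendieck}, $F$ sends each line bundle $\mathcal{P}_m$ to a line bundle over $\mathbb{P}(W)$, which by the last clause of Corollary~\ref{cor.grothendieck} is $\mathcal{P}'_{\psi(m)}$ for a uniquely determined $\psi(m)\in\mathbb{Z}$. By Lemma~\ref{lemma.twosidediso}, $\Hom_{\mathbb{P}(V)}(\mathcal{P}_l,\mathcal{P}_j)\cong\mathcal{A}_{jl}\cong\mathbb{S}(V)_{jl}$, and $\mathbb{S}(V)_{jl}$ is nonzero exactly when $j\le l$ (it vanishes for $j>l$ by definition, and for $j\le l$ it is nonzero since $\operatorname{dim}_K(e_jA_{j+i})_K=i+1$ by \cite[Theorem 6.1.2]{vandenbergh}); the same holds over $W$. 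Since $F$ is fully faithful, $j\le l$ iff $\Hom(\mathcal{P}'_{\psi(j)},\mathcal{P}'_{\psi(l)})\ne 0$ iff $\psi(j)\le\psi(l)$, so $\psi$ is strictly order preserving; since $F$ is an equivalence $\psi$ is a bijection, and a strictly increasing bijection of $\mathbb{Z}$ must be a translation. Put $i:=\psi(0)$, so that $F\mathcal{P}_m\cong\mathcal{P}'_{m+i}$ for all $m$.

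Next I would produce $\tau$ and $h$. For each $m$, the functor $F$ restricts to a ring isomorphism $K\cong\End_{\mathbb{P}(V)}(\mathcal{P}_m)\to\End_{\mathbb{P}(W)}(F\mathcal{P}_m)\cong\End_{\mathbb{P}(W)}(\mathcal{P}'_{m+i})\cong K$ (here $\End_{\mathbb{P}(V)}(\mathcal{P}_m)\cong K$ by Lemma~\ref{lemma.twosidediso}, and the identification $\End(F\mathcal{P}_m)\cong\End(\mathcal{P}'_{m+i})$ is independent of the chosen isomorphism $F\mathcal{P}_m\to\mathcal{P}'_{m+i}$ because $K$ is commutative, so conjugation is trivial); being $k$--linear, this automorphism lies in $\operatorname{Gal}(K/k)$, and I define $\tau_m$ to be it. Now choose isomorphisms $\theta_m\colon F\mathcal{P}_m\to\mathcal{P}'_{m+i}$ and define $h_{jl}\colon\mathcal{A}_{jl}=\Hom(\mathcal{P}_l,\mathcal{P}_j)\to\Hom(\mathcal{P}'_{l+i},\mathcal{P}'_{j+i})=\mathcal{A}'(i)_{jl}$ by $h_{jl}(x)=\theta_j\circ F(x)\circ\theta_l^{-1}$. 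Functoriality of $F$ makes $h=\{h_{jl}\}$ compatible with composition and identities, hence a $\mathbb{Z}$--algebra isomorphism $\mathcal{A}\to\mathcal{A}'(i)$. Since the two--sided vector space structure on $\mathcal{A}_{jl}$ is given by composition with $\End(\mathcal{P}_j)$ and $\End(\mathcal{P}_l)$ (Lemma~\ref{lemma.twosidediso}), functoriality also yields $h_{jl}(a\,x\,b)=\tau_j(a)\,h_{jl}(x)\,\tau_l(b)$ for $a,b\in K$; unwinding the definitions of $K_\sigma$, of $A_\sigma$, and of $A(i)$, this $\operatorname{Gal}$--semilinearity says exactly that each $h_{jl}$ is a morphism of two--sided vector spaces $\mathcal{A}_{jl}\to K_{\tau_j^{-1}}\otimes\mathcal{A}'(i)_{jl}\otimes K_{\tau_l}=(\mathcal{A}'(i)_\tau)_{jl}$. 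Feeding this back into compatibility with composition (using that the multiplication of $A_\sigma$ is the one induced from $A$) shows that $h\colon\mathcal{A}\to\mathcal{A}'(i)_\tau$ is a $K$--algebra isomorphism. I expect this last reconciliation to be the main obstacle: one must check that the semilinearity recorded by the $\tau_m$ matches precisely the twist built into $\mathcal{A}'(i)_\tau$ and not its inverse or a reindexed variant, which is a matter of chasing the conventions in the three definitions just cited.

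For the uniqueness clause, let $\eta\colon F\Rightarrow G$ be a natural isomorphism. Then $G\mathcal{P}_m\cong F\mathcal{P}_m\cong\mathcal{P}'_{m+i}$, so the translation associated with $G$ is again $m\mapsto m+i$ and $j=i$. Moreover, taking $\theta'_m:=\theta_m\circ\eta_{\mathcal{P}_m}^{-1}\colon G\mathcal{P}_m\to\mathcal{P}'_{m+i}$, the naturality relation $G(a)=\eta_{\mathcal{P}_m}\circ F(a)\circ\eta_{\mathcal{P}_m}^{-1}$ gives $\theta'_m\circ G(a)\circ(\theta'_m)^{-1}=\theta_m\circ F(a)\circ\theta_m^{-1}$ for all $a\in\End(\mathcal{P}_m)\cong K$, so $F$ and $G$ induce the same element of $\operatorname{Gal}(K/k)$ at each $m$; hence $\sigma=\tau$.

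Finally, assume $\operatorname{char}k\ne 2$. Transporting $h$ along the $K$--algebra isomorphisms $gf\colon\mathbb{S}(V)\to\mathcal{A}$ and $g'f'\colon\mathbb{S}(W)\to\mathcal{A}'$ of Lemma~\ref{lemma.twosidediso} yields a $K$--algebra isomorphism $\mathbb{S}(V)\to\mathbb{S}(W)(i)_\tau$. Since the canonical isomorphism $\mathbb{S}(W)(i)\cong\mathbb{S}(W^{i*})$ over $K$ of $(\ref{eqn.canonicalz})$ is compatible with twisting, composing gives a $K$--algebra isomorphism $\mathbb{S}(V)\to\mathbb{S}(W^{i*})_\tau$; untwisting by $\tau^{-1}$ (which uses the componentwise identity $(A_\sigma)_{\sigma^{-1}}\cong A$ over $K$, itself a consequence of $K_\mu\otimes K_{\mu^{-1}}\cong K$) produces a $K$--algebra isomorphism $\mathbb{S}(V)_{\tau^{-1}}\to\mathbb{S}(W^{i*})$. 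Applying Proposition~\ref{prop.secondperiodic} with $\tau^{-1}$ in the role of its sequence and $W^{i*}$ (which again has rank $2$) in the role of its $W$, we get $(\tau^{-1})_m=(\tau^{-1})_0$ for $m$ even and $(\tau^{-1})_m=(\tau^{-1})_1$ for $m$ odd; inverting, $\tau$ is $2$--periodic.
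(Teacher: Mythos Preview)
Your proposal is correct and follows essentially the same approach as the paper. The only notable variation is in establishing $F\mathcal{P}_m\cong\mathcal{P}'_{m+i}$: the paper computes directly that $\dim_K\Hom_{\mathbb{P}(W)}(\mathcal{P}'_{\psi(l)},\mathcal{P}'_{\psi(j)})=\dim_K\Hom_{\mathbb{P}(V)}(\mathcal{P}_l,\mathcal{P}_j)=l-j+1$ to force $\psi(l)-\psi(j)=l-j$, whereas you argue that $\psi$ is an order-preserving bijection of $\mathbb{Z}$ and hence a translation; both routes are valid and equally short.
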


\begin{proof}
By Theorem \ref{thm.grothendieck}, Lemma \ref{lemma.preserves} and Lemma \ref{lemma.rank1}, for any $j \in \mathbb{Z}$, $F(\mathcal{P}_{j}) \cong \mathcal{P}'_{l}$ for some $l \in \mathbb{Z}$.  In particular, $F(\mathcal{P}_{0}) \cong \mathcal{P}'_{i}$ for some $i \in \mathbb{Z}$, and this $i$ is unique by Corollary \ref{cor.twosidediso}.  Furthermore, since $F$ induces a $k$-linear isomorphism
$$
\Hom_{{\sf Proj }\mathbb{S}(V)}(\mathcal{P}_{j},\mathcal{P}_{0}) \rightarrow \Hom_{{\sf Proj }\mathbb{S}(W)}(\mathcal{P}'_{l},\mathcal{P}'_{i}),
$$
we conclude, by Lemma \ref{lemma.twosidediso}, that $l=i+j$.  For each $j \in \mathbb{Z}$, choose an isomorphism $\theta_{j}: F(\mathcal{P}_{j}) \rightarrow \mathcal{P}'_{i+j}$. Then, for all $l,j \in \mathbb{Z}$, the composition $t_{lj}$
defined by:
\begin{eqnarray*}
\mathcal{A}_{lj}=\operatorname{Hom }_{{\sf Proj }\mathbb{S}(V)}(\mathcal{P}_{j}, \mathcal{P}_{l}) & \rightarrow & \operatorname{Hom }_{{\sf
Proj }\mathbb{S}(W)}(F \mathcal{P}_{j}, F \mathcal{P}_{l}) \\
& \rightarrow & \operatorname{Hom }_{{\sf Proj }\mathbb{S}(W)}(\mathcal{P}'_{i+j}, \mathcal{P}'_{i+l}) \\
& = & \mathcal{A'}_{i+l,i+j}
\end{eqnarray*}
whose first arrow is induced by $F$ and whose second arrow is
induced by the isomorphisms $\theta_{j}$ and $\theta_{l}$, is a
$k$-linear isomorphism.

For each $j \in \mathbb{Z}$ there exists a unique bijection $\tau_{j}$
making the diagram
$$
\begin{CD}
\mathcal{A}_{jj} &  &  & \overset{t_{jj}}{\longrightarrow} &
& &
\mathcal{A'}_{i+j,i+j} \\
@A{g_{jj} f_{jj}}AA &  & & & @AA{g_{i+j,i+j}' f_{i+j, i+j}'}A \\
K & & & \overset{\tau_{j}}{\longrightarrow} & & & K
\end{CD}
$$
commute.  Since $f$, $g$, $f'$, $g'$ and $t$ are $k$-linear
isomorphisms which respect multiplication, the bottom horizontal
is a $k$-linear isomorphism of $K$ onto $K$ which respects
multiplication, i.e. $\tau_{j} \in \operatorname{Gal}(K/k)$.

If $a \in K$ and $x
\in \mathcal{A}_{lj}$, we have
\begin{eqnarray*}
t_{lj}(a \cdot x) & = & t_{lj}(g_{ll} f_{ll} (a)
 x) \\
& = & \theta_{l} F(g_{ll} f_{ll}(a)  x)
\theta_{j}^{-1} \\
& = & \theta_{l} F(g_{ll} f_{ll}(a))  F(x)
\theta_{j}^{-1} \\
& = & \theta_{l} F(g_{ll} f_{ll}(a)) \theta_{l}^{-1}
\theta_{l} F(x) \theta_{j}^{-1} \\
& = & g_{i+l,i+l}' f_{i+l,i+l}' (\tau_{l}(a))  t_{lj}(x) \\
& = & \tau_{l}(a) \cdot t_{lj}(x).
\end{eqnarray*}
A similar computation establishes the fact that $t_{lj}(x
\cdot a)=t_{lj}(x) \cdot \tau_{j}(a)$.

We claim the map
$$
h_{lj}:\mathcal{A}_{lj} \rightarrow K_{\tau_{l}^{-1}} \otimes
\mathcal{A'}_{i+l,i+j} \otimes K_{\tau_{j}}
$$
defined by $h_{lj}(x)=1 \otimes t_{lj}(x) \otimes 1$ is
an isomorphism of two-sided vector spaces.  For, if $a \in K$,
\begin{eqnarray*}
h_{lj}(a \cdot x) & = & 1 \otimes t_{lj}(a \cdot x)
\otimes 1 \\
& = & 1 \otimes \tau_{l}(a) \cdot t_{lj}(x) \otimes 1 \\
& = & a \cdot (1 \otimes t_{lj}(x) \otimes 1) \\
& = & a \cdot h_{lj}(x).
\end{eqnarray*}
A similar computation shows that $h_{lj}$ is compatible with
right multiplication by $K$.  The fact that $h_{lj}$ is onto
and compatible with addition is clear, and the fact that
$h_{lj}$ is one-to-one follows by comparing $k$-dimension.  This establishes the claim.

Next we show that $h$ is a morphism of $\mathbb{Z}$-algebras.  In particular, we need to prove that $h$ is compatible with multiplication.  To this end, it suffices to prove that, for all $l,j,k \in \mathbb{Z}$, and for any $x \in \mathcal{A}_{lj}$ and $y \in \mathcal{A}_{jk}$,
$$
t_{lk}(x  y)=t_{lj}(x)  t_{jk}(y).
$$
We compute:
\begin{eqnarray*}
t_{lk}(xy) & = & \theta_{l}  F(xy)  \theta_{k}^{-1} \\
& = & \theta_{l}  F(x)  F(y)  \theta_{k}^{-1} \\
& = & \theta_{l} F(x) \theta_{j}^{-1}  \theta_{j}  F(y)  \theta_{k}^{-1} \\
& = & t_{lj}(x)  t_{kj}(y).
\end{eqnarray*}

To complete the proof of the first part of the result, we must show that if $G:\mathbb{P}(V) \rightarrow \mathbb{P}(W)$ is another $k$-linear equivalence with associated sequence of automorphisms $\sigma$ and associated shift by $j$, then $F$ naturally equivalent to $G$ implies $j=i$ and $\sigma=\tau$.  The proof will show that the automorphisms $\tau_{i}$ in the statement of the theorem are independent of choice of $\theta_{i}$.  The fact that $j=i$ follows from Corollary \ref{cor.twosidediso}.

To prove that $\sigma=\tau$, we must show that the diagram
\begin{equation} \label{eqn.tau}
\begin{CD}
\Hom_{\mathbb{P}(V)}(\mathcal{P}_{j}, \mathcal{P}_{j}) & \overset{F}{\rightarrow} &
\Hom_{\mathbb{P}(W)}(F\mathcal{P}_{j}, F\mathcal{P}_{j}) \\
@V{G}VV @VVV \\
\Hom_{\mathbb{P}(W)}(G\mathcal{P}_{j}, G\mathcal{P}_{j}) & \rightarrow &
\Hom_{\mathbb{P}(W)}(\mathcal{P}'_{j+i}, \mathcal{P}'_{j+i})
\end{CD}
\end{equation}
whose right vertical is induced by an isomorphism $\theta_{j}:F \mathcal{P}_{j} \rightarrow \mathcal{P}'_{j+i}$ and whose bottom horizontal is induced by an isomorphism $\gamma_{j}:G \mathcal{P}_{j} \rightarrow \mathcal{P}'_{j+i}$, commutes.

It is elementary to check that the diagram
$$
\begin{CD}
\Hom_{\mathbb{P}(V)}(\mathcal{P}_{j}, \mathcal{P}_{j}) & \overset{F}{\rightarrow} & \Hom_{\mathbb{P}(W)}(F\mathcal{P}_{j}, F\mathcal{P}_{j}) \\
@V{=}VV @VVV \\
\Hom_{\mathbb{P}(V)}(\mathcal{P}_{j}, \mathcal{P}_{j}) & \underset{G}{\rightarrow} & \Hom_{\mathbb{P}(W)}(G\mathcal{P}_{j}, G\mathcal{P}_{j})
\end{CD}
$$
whose right vertical is induced by an equivalence $F \rightarrow G$, commutes.  Therefore, to show that (\ref{eqn.tau}) commutes, it suffices to show that the diagram
\begin{equation} \label{eqn.lasty}
\begin{CD}
\Hom_{\mathbb{P}(W)}(G\mathcal{P}_{j}, G\mathcal{P}_{j})  & \rightarrow & \Hom_{\mathbb{P}(W)}(F\mathcal{P}_{j}, F\mathcal{P}_{j}) \\
@VVV @VVV \\
\Hom_{\mathbb{P}(W)}(\mathcal{P}'_{j+i}, \mathcal{P}'_{j+i}) & \underset{=}{\rightarrow} & \Hom_{\mathbb{P}(W)}(\mathcal{P}'_{j+i}, \mathcal{P}'_{j+i})
\end{CD}
\end{equation}
whose left vertical is induced by $\gamma$, whose right vertical is induced by $\theta$ and whose top horizontal is induced by the equivalence $\eta: F \rightarrow G$, commutes.  To prove this, let $x$ be an element of the upper left corner of (\ref{eqn.lasty}).  Then, via the upper route of (\ref{eqn.lasty}), $x$ goes to $\theta_{j} \eta^{-1} x \eta \theta_{j}^{-1}$, while via the lower route, it goes to $\gamma_{j} x \gamma_{j}^{-1}$.  Therefore, to complete the proof, we must show that $x={\gamma_{j}}^{-1} \theta_{j} \eta^{-1} x \eta \theta_{j}^{-1} \gamma_{j}$.  But $\eta \theta_{j}^{-1} \gamma_{j} \in \Hom_{\mathbb{P}(W)}(G\mathcal{P}_{j}, G\mathcal{P}_{j})$, and $\Hom_{\mathbb{P}(W)}(G\mathcal{P}_{j}, G\mathcal{P}_{j}) \cong K$ by Lemma \ref{lemma.twosidediso}, which is abelian under composition.  The first part of the theorem follows.

We complete the proof of the theorem by checking that if $\operatorname{char }k \neq 2$, then $\tau$ is 2-periodic.  Let $\tau^{-1}$ denote the sequence $\{\tau_{i}^{-1}\}_{i \in \mathbb{Z}}$ and note that there is an isomorphism of $\mathbb{Z}$-algebras over $K$ given by the composition
$$
\mathbb{S}(V)_{\tau^{-1}} \rightarrow \mathcal{A}_{\tau^{-1}} \rightarrow \mathcal{A'}(i) \rightarrow \mathbb{S}(W)(i)
$$
whose first and third arrows are induced by the isomorphism $g f$ and $(g' f')^{-1}$ defined at the beginning of this section and whose second arrow is induced by the isomorphism $h$.  Therefore, the result follows from Proposition \ref{prop.secondperiodic} in light of the fact that $\mathbb{S}(W)(i) = \mathbb{S}(W^{i*})$.
\end{proof}

\begin{cor} \label{cor.equiv}
If $F:\mathbb{P}(V) \rightarrow \mathbb{P}(W)$ is a $k$-linear equivalence, then there exists $\delta, \epsilon \in \operatorname{Gal}(K/k)$ such that
$$
V \cong K_{\delta} \otimes W \otimes
K_{\epsilon},
$$
or
$$
V \cong K_{\delta} \otimes W^{*} \otimes
K_{\epsilon}.
$$
\end{cor}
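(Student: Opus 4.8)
The plan is to read the result straight off Theorem~\ref{thm.twosidedisom} by inspecting the degree-$(0,1)$ component of the $K$-algebra isomorphism it produces. First I would apply Theorem~\ref{thm.twosidedisom} to $F$ to obtain an integer $i \in \mathbb{Z}$, a sequence $\tau = \{\tau_{m}\}_{m \in \mathbb{Z}}$ in $\operatorname{Gal}(K/k)$, and a $K$-algebra isomorphism $h : \mathcal{A} \rightarrow \mathcal{A}'(i)_{\tau}$. Taking the $(0,1)$-graded piece and unwinding the definitions of shift and twist of a $\mathbb{Z}$-algebra (Section~\ref{section.t}), $h_{01}$ is a two-sided vector space isomorphism
$$
\mathcal{A}_{01} \;\overset{\sim}{\longrightarrow}\; \bigl(\mathcal{A}'(i)_{\tau}\bigr)_{01} \;=\; K_{\tau_{0}^{-1}} \otimes \mathcal{A}'_{i,i+1} \otimes K_{\tau_{1}}.
$$
That $h_{01}$ really respects the two-sided vector space structures, and does not introduce some extra twist, is exactly the step ``$h_{lj}$ is an isomorphism of two-sided vector spaces'' in the proof of Theorem~\ref{thm.twosidedisom}, so it can simply be cited.

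Next I would identify the two ends of this isomorphism. By Lemma~\ref{lemma.twosidediso} (applied to $V$, and to $W$ for the primed objects), $\mathcal{A}_{01} \cong \mathbb{S}(V)_{01}$ and $\mathcal{A}'_{i,i+1} \cong \mathbb{S}(W)_{i,i+1}$ as two-sided vector spaces; and by the definition of the noncommutative symmetric algebra, $\mathbb{S}(V)_{01} = V^{0*} = V$ while $\mathbb{S}(W)_{i,i+1} = W^{i*}$. Substituting, we get $V \cong K_{\tau_{0}^{-1}} \otimes W^{i*} \otimes K_{\tau_{1}}$. Finally, Lemma~\ref{lemma.periodic} (together with $W \cong W^{**}$) gives $W^{i*} \cong W$ when $i$ is even and $W^{i*} \cong W^{*}$ when $i$ is odd, so setting $\delta := \tau_{0}^{-1}$ and $\epsilon := \tau_{1}$, both in $\operatorname{Gal}(K/k)$, yields $V \cong K_{\delta} \otimes W \otimes K_{\epsilon}$ in the even case and $V \cong K_{\delta} \otimes W^{*} \otimes K_{\epsilon}$ in the odd case.

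Since only the existence of $i$, $\tau$ and $h$ from Theorem~\ref{thm.twosidedisom} is used, and the $2$-periodicity of $\tau$ plays no role, the argument needs no hypothesis on $\operatorname{char} k$. The main obstacle, such as it is, is purely bookkeeping: keeping straight that the twist appearing in $\bigl(\mathcal{A}'(i)_{\tau}\bigr)_{01}$ is $K_{\tau_{0}^{-1}} \otimes (-) \otimes K_{\tau_{1}}$, so that $\delta$ comes from $\tau_{0}^{-1}$ and $\epsilon$ from $\tau_{1}$, and matching the parity of $i$ to the two cases of the statement; all of the real work has already been done in Theorem~\ref{thm.twosidedisom}.
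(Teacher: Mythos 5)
Your proof is correct and follows essentially the same route as the paper: apply Theorem \ref{thm.twosidedisom} to get the $K$-algebra isomorphism $h:\mathcal{A}\to\mathcal{A}'(i)_{\tau}$, read off its $(0,1)$-component as a two-sided vector space isomorphism $V\to K_{\tau_0^{-1}}\otimes\mathcal{A}'_{i,i+1}\otimes K_{\tau_1}$ using Lemma \ref{lemma.twosidediso}, and then identify $\mathcal{A}'_{i,i+1}\cong W^{i*}$ with $W$ or $W^*$ according to the parity of $i$. The observation that no hypothesis on $\operatorname{char}k$ is needed is also correct and matches the paper's statement.
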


\begin{proof}
Let $h:\mathcal{A} \rightarrow \mathcal{A'}(i)_{\tau}$ be the $\mathbb{Z}$-algebra isomorphism over $K$ constructed in Theorem \ref{thm.twosidedisom}.  It follows from Lemma \ref{lemma.twosidediso} that $h_{01}$ induces an isomorphism of two-sided vector spaces
$$
V \rightarrow K_{\tau_{0}^{-1}} \otimes
\mathcal{A'}_{i,i+1} \otimes K_{\tau_{1}}.
$$
Again by Lemma \ref{lemma.twosidediso}, $\mathcal{A'}_{i,i+1}$ is isomorphic to either $W$ or $W^{*}$ depending on whether $i$ is even or odd.
\end{proof}
The converse of Corollary \ref{cor.equiv} holds as well, thanks to results in \cite{izuru}.

\begin{prop} \label{prop.izuru} \cite{izuru}
If there exist $\delta, \epsilon \in \operatorname{Gal}(K/k)$ with the property that either
$$
V \cong K_{\delta} \otimes W \otimes
K_{\epsilon},
$$
or
$$
V \cong K_{\delta} \otimes W^{*} \otimes
K_{\epsilon},
$$
then there is a $k$-linear equivalence $\mathbb{P}(V) \rightarrow \mathbb{P}(W)$.
\end{prop}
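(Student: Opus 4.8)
The plan is to realize the desired equivalence $\mathbb{P}(V) \to \mathbb{P}(W)$ as a composite of the three canonical equivalences constructed in Section~\ref{section.canonical}: a twist $T_{\delta,\epsilon}$, a functor $\Phi$ induced by an isomorphism of two-sided vector spaces, and, in the second case, a shift $[1]$. Since each of these was already shown in Section~\ref{section.canonical} to be a $k$-linear equivalence, it suffices to exhibit an appropriate chain. The only preliminary fact needed is the elementary computation $K_{\sigma} \otimes K_{\tau} \cong K_{\sigma\tau}$ for $\sigma, \tau \in \operatorname{Gal}(K/k)$, with $K_{\mathrm{id}}=K$; this is the rank-one instance of the identity used in the proof of Lemma~\ref{lemma.autocomp}, or follows directly from \cite[Lemma 4.4]{papp}. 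In particular $K_{\delta^{-1}} \otimes K_{\delta} \cong K$ and $K_{\epsilon} \otimes K_{\epsilon^{-1}} \cong K$.

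First suppose $V \cong K_{\delta} \otimes W \otimes K_{\epsilon}$. Tensoring this isomorphism on the left by $K_{\delta^{-1}}$ and on the right by $K_{\epsilon^{-1}}$ and cancelling via the identities above produces an isomorphism of two-sided vector spaces $\phi : K_{\delta^{-1}} \otimes V \otimes K_{\epsilon^{-1}} \to W$. By the construction in Section~\ref{section.t}, the twist $T_{\delta,\epsilon^{-1}} : \mathbb{P}(V) \to \mathbb{P}(K_{\delta^{-1}} \otimes V \otimes K_{\epsilon^{-1}})$ is a $k$-linear equivalence, and the isomorphism $\phi$ induces a $k$-linear equivalence $\Phi : \mathbb{P}(K_{\delta^{-1}} \otimes V \otimes K_{\epsilon^{-1}}) \to \mathbb{P}(W)$ as in Section~\ref{section.canonical}. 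The composite $\Phi \circ T_{\delta,\epsilon^{-1}}$ is then the required equivalence.

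Next suppose $V \cong K_{\delta} \otimes W^{*} \otimes K_{\epsilon}$. Exactly as above there is an isomorphism $K_{\delta^{-1}} \otimes V \otimes K_{\epsilon^{-1}} \to W^{*}$, and hence a $k$-linear equivalence $\Phi \circ T_{\delta,\epsilon^{-1}} : \mathbb{P}(V) \to \mathbb{P}(W^{*})$. It remains to produce a $k$-linear equivalence $\mathbb{P}(W^{*}) \to \mathbb{P}(W)$, and for this I would use the shift: by Section~\ref{section.shift} the functor $[1]$ gives a $k$-linear equivalence $\mathbb{P}(W^{*}) \to \mathbb{P}(W^{**})$, since $(W^{*})^{1*}=W^{**}$, and the canonical isomorphism $W^{**} \cong W$ of \cite[Theorem 3.13]{hart} induces a $k$-linear equivalence $\mathbb{P}(W^{**}) \to \mathbb{P}(W)$ as in the previous paragraph. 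Composing these four equivalences yields a $k$-linear equivalence $\mathbb{P}(V) \to \mathbb{P}(W)$.

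There is no substantive obstacle here: all of the content has already been assembled in Section~\ref{section.canonical}, and a proof in much greater generality appears in \cite{izuru}. The only point requiring care is the bookkeeping of the Galois automorphisms --- keeping track of which one appears on which side and with which exponent so that the twists $K_{\delta^{-1}} \otimes (-) \otimes K_{\epsilon^{-1}}$ built into $T_{\delta,\epsilon^{-1}}$ (or whichever variant the composition convention of Section~\ref{section.t} dictates) cancel the $K_{\delta}\otimes(-)\otimes K_{\epsilon}$ of the hypothesis --- and nothing beyond the established properties of $T_{\delta,\epsilon}$, $\Phi$, and $[i]$ is used.
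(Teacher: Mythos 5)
Your proof is correct. The paper's proof of Proposition~\ref{prop.izuru} simply cites Mori's results: the equivalence $\mathbb{P}(W)\to\mathbb{P}(W^{*})$ is referred to \cite[Lemma 4.2]{izuru}, and the equivalence $\mathbb{P}(W)\to\mathbb{P}(K_{\delta}\otimes W\otimes K_{\epsilon})$ is referred to the discussion following \cite[Theorem 4.2]{izuru}, after which the claim follows by composing with the $\Phi$ induced by the hypothesized isomorphism. You instead assemble the equivalence entirely from the paper's own canonical constructions of Section~\ref{section.canonical}: the twist $T_{\delta,\epsilon^{-1}}$, the functor $\Phi$ induced by an isomorphism of two-sided vector spaces, and (in the dual case) the shift $[1]$ followed by $\Phi$ for $W^{**}\cong W$. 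The underlying equivalences being composed are the same ones the paper's citation invokes, so conceptually the routes coincide; the difference is that your version is self-contained within the present paper's framework, makes the composite explicit, and visibly exhibits the converse direction to the factorization established later in Theorem~\ref{thm.finalfactor}, whereas the paper's version is terser and correctly attributes the result to \cite{izuru}, where it is proved in greater generality. Your bookkeeping with $K_{\sigma}\otimes K_{\tau}\cong K_{\sigma\tau}$ and the direction of $T_{\delta,\epsilon}:\mathbb{P}(V)\to\mathbb{P}(K_{\delta^{-1}}\otimes V\otimes K_{\epsilon})$ is accurate, and the use of $[1]:\mathbb{P}(W^{*})\to\mathbb{P}(W^{**})$ together with $W^{**}\cong W$ is the right way to handle the dual case within this toolkit.
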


\begin{proof}
The fact that there is a $k$-linear equivalence $\mathbb{P}(W) \rightarrow \mathbb{P}(W^{*})$ follows from \cite[Lemma 4.2]{izuru} and the fact that there is a $k$-linear equivalence $\mathbb{P}(W) \rightarrow \mathbb{P}(K_{\delta} \otimes W \otimes K_{\epsilon})$ follows from the comments after the proof of \cite[Theorem 4.2]{izuru}.
\end{proof}
Combining our results with Lemma \ref{lemma.twosidedclass}, we obtain the following result, which invokes notation defined in the introduction.
\begin{thm} \label{thm.arithclass}
Suppose $\operatorname{char }k \neq 2$ and suppose that $V_{i}$ is a rank 2 two-sided vector space for $i=1,2$.  Then there is a $k$-linear equivalence $\mathbb{P}(V_{1}) \rightarrow \mathbb{P}(V_{2})$ if and only if
\begin{enumerate}
\item{} there exists $\sigma_{i} \in
\operatorname{Gal}(K/k)$ such that $V_{i} \cong K_{\sigma_{i}} \oplus K_{\sigma_{i}}$.  In this case, $\mathbb{P}(V_{i})$ is equivalent to the commutative projective line over $K$.

\item{} There exists $\sigma_{i}, \tau_{i}
\in \operatorname{Gal}(K/k)$, with $\sigma_{i} \neq \tau_{i}$, $V_{i} \cong K_{\sigma_{i}} \oplus K_{\tau_{i}}$ and under the (right) action of ${Gal(K/k)}^{2}$ on itself defined by $(\sigma, \tau) \cdot (\delta, \epsilon):= (\delta^{-1} \sigma \epsilon,\delta^{-1} \tau \epsilon)$ the orbit of $(\sigma_{1},\tau_{1})$ contains an element of the set
$$
\{(\sigma_{2},\tau_{2}), (\sigma_{2}^{-1},\tau_{2}^{-1}), (\tau_{2},\sigma_{2}), (\tau_{2}^{-1}, \sigma_{2}^{-1})\}.
$$

\item{}  $V_{i} \cong V(\lambda_{i})$, and under the action of ${Gal(K/k)}^{2}$ on $\Lambda(K)$ defined by $\lambda^{G} \cdot (\delta, \epsilon):= (\overline{\delta}^{-1} \lambda \epsilon)^{G}$, the orbit of $\lambda_{1}^{G}$ contains either $\lambda_{2}^{G}$ or $\mu_{2} := (\overline{\lambda_{2}})^{-1}|_{K}$.
\end{enumerate}
\end{thm}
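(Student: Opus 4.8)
The plan is to reduce everything to Corollary \ref{cor.equiv}, Proposition \ref{prop.izuru}, and the trichotomy in Lemma \ref{lemma.twosidedclass}. By Corollary \ref{cor.equiv} together with Proposition \ref{prop.izuru}, a $k$-linear equivalence $\mathbb{P}(V_{1}) \to \mathbb{P}(V_{2})$ exists if and only if there are $\delta, \epsilon \in \operatorname{Gal}(K/k)$ with $V_{1} \cong K_{\delta} \otimes V_{2} \otimes K_{\epsilon}$ or $V_{1} \cong K_{\delta} \otimes V_{2}^{*} \otimes K_{\epsilon}$. So the task becomes: decide when two rank $2$ two-sided vector spaces are related by a two-sided twist by $\operatorname{Gal}(K/k)$ and, optionally, a right dual. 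The first step is to observe that both operations $- \mapsto K_{\delta} \otimes - \otimes K_{\epsilon}$ and $- \mapsto (-)^{*}$ preserve the three cases of Lemma \ref{lemma.twosidedclass}: twisting or dualizing a split space leaves it split with the same number of distinct summands (using $K_{\delta} \otimes K_{\sigma} \otimes K_{\epsilon} \cong K_{\delta \sigma \epsilon}$ and $K_{\sigma}^{*} \cong K_{\sigma^{-1}}$), twisting a simple space leaves it simple by \cite[Lemma 4.4]{papp}, and dualizing a simple space leaves it simple by \cite[Theorem 3.13]{hart}. Hence an equivalence forces $V_{1}$ and $V_{2}$ to be of the same type, and it remains to treat each type.

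In case (1), $V_{i} \cong K_{\sigma_{i}} \oplus K_{\sigma_{i}}$, and since $K_{\sigma_{i}^{-1}} \otimes V_{i} \cong K \oplus K$, Proposition \ref{prop.izuru} gives $\mathbb{P}(V_{i}) \simeq \mathbb{P}(K \oplus K)$; as $\mathbb{S}(K \oplus K)$ is the polynomial ring in two variables over $K$, $\mathbb{P}(K \oplus K)$ is the commutative projective line over $K$ (\cite{vandenbergh}). Thus any two case (1) spaces yield equivalent, indeed commutative, lines, which is exactly clause (1). In case (2), $V_{i} \cong K_{\sigma_{i}} \oplus K_{\tau_{i}}$ with $\sigma_{i} \neq \tau_{i}$, so $V_{i}^{*} \cong K_{\sigma_{i}^{-1}} \oplus K_{\tau_{i}^{-1}}$ and $K_{\delta} \otimes V_{i} \otimes K_{\epsilon} \cong K_{\delta \sigma_{i} \epsilon} \oplus K_{\delta \tau_{i} \epsilon}$. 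Since each $K_{\sigma}$ is simple and $K_{\sigma} \cong K_{\tau}$ iff $\sigma = \tau$, Krull--Schmidt gives $K_{\sigma} \oplus K_{\tau} \cong K_{\sigma'} \oplus K_{\tau'}$ iff $\{\sigma, \tau\} = \{\sigma', \tau'\}$. Feeding this into the criterion of the first paragraph, the equivalence exists iff there are $\delta, \epsilon$ with $\{\sigma_{1}, \tau_{1}\}$ equal to $\{\delta \sigma_{2} \epsilon, \delta \tau_{2} \epsilon\}$ or to $\{\delta \sigma_{2}^{-1} \epsilon, \delta \tau_{2}^{-1} \epsilon\}$; writing $\delta = (\delta')^{-1}$ and comparing with the right action $(\sigma, \tau) \cdot (\delta, \epsilon) = (\delta^{-1} \sigma \epsilon, \delta^{-1} \tau \epsilon)$, this says precisely that the orbit of $(\sigma_{1}, \tau_{1})$ meets $\{(\sigma_{2}, \tau_{2}), (\sigma_{2}^{-1}, \tau_{2}^{-1}), (\tau_{2}, \sigma_{2}), (\tau_{2}^{-1}, \sigma_{2}^{-1})\}$, which is clause (2).

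In case (3), $V_{i}$ is simple of rank $2$, hence $V_{i} \cong V(\lambda_{i})$ for some $\lambda_{i}$ with $|\lambda_{i}^{G}| = 2$ by Theorem \ref{thm.papp} and the canonical form discussed after it, and $V(\lambda_{2})^{*} \cong V(\mu_{2})$ with $\mu_{2} = (\overline{\lambda_{2}})^{-1}|_{K}$ by \cite[Theorem 3.13]{hart}. By Lemma \ref{lemma.autocomp}, $K_{\delta} \otimes V(\lambda) \otimes K_{\epsilon}$ corresponds under Theorem \ref{thm.papp} to $(\overline{\delta} \lambda \epsilon)^{G}$, and by Theorem \ref{thm.papp} two simple two-sided vector spaces are isomorphic iff their orbits coincide. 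Thus the equivalence exists iff $\lambda_{1}^{G}$ lies in $\{(\overline{\delta} \lambda_{2} \epsilon)^{G}\} \cup \{(\overline{\delta} \mu_{2} \epsilon)^{G}\}$ as $\delta, \epsilon$ range over $\operatorname{Gal}(K/k)$; since $\delta \mapsto \delta^{-1}$ is a bijection of $\operatorname{Gal}(K/k)$ and $(\overline{\delta})^{-1}$ is an extension of $\delta^{-1}$, this set is exactly the union of the orbit of $\lambda_{2}^{G}$ and the orbit of $\mu_{2}^{G}$ under $\lambda^{G} \cdot (\delta, \epsilon) := (\overline{\delta}^{-1} \lambda \epsilon)^{G}$, giving clause (3). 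Along the way one records the routine check that this action on $\Lambda(K)$ is well defined, using that $\overline{\delta}$ normalizes $G = \operatorname{Gal}(\overline{K}/K)$ because $\overline{\delta}(K) = K$, and that it is independent of the chosen extension $\overline{\delta}$.

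The substantive input is confined to the simplicity-preservation of twisting/dualizing (from \cite{papp}, \cite{hart}) and the identification $\mathbb{P}(K \oplus K) = \mathbb{P}^{1}_{K}$; the rest is an unwinding of Corollary \ref{cor.equiv}. Accordingly, the main obstacle I anticipate is not conceptual but bookkeeping: keeping the left/right twist conventions, the direction of the Galois automorphisms, and the translation between ``$V_{1}$ is a twist of $V_{2}$'' and ``the orbit of the invariant of $V_{1}$ contains the invariant of $V_{2}$'' perfectly consistent, so that in each case one lands on exactly the four-element set of clause (2) and the two-element set of clause (3), rather than on some permuted or inverted variant.
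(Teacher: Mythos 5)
Your proposal is correct and follows essentially the same route as the paper's proof: both reduce the classification to Corollary \ref{cor.equiv} plus Proposition \ref{prop.izuru}, invoke Lemma \ref{lemma.twosidedclass} to split into cases, and use the fact that $K_{\sigma}\cong K_{\tau}$ iff $\sigma=\tau$ in the decomposable cases and Lemma \ref{lemma.autocomp} with \cite[Theorem 3.13]{hart} in the simple case. Your write-up is actually more explicit than the paper's (the paper omits the type-preservation observation and the bookkeeping in case (2), which it describes as ``follow[ing] from'' the cited results), but the tools and the logical skeleton are the same.
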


\begin{proof}
By Lemma \ref{lemma.twosidedclass}, there are three possibilities for $V_{1}$.  If there exists $\sigma_{1} \in \operatorname{Gal}(K/k)$ such that $V_{1} \cong K_{\sigma_{1}} \oplus K_{\sigma_{1}}$ or there exist $\sigma_{1}, \tau_{1}
\in \operatorname{Gal}(K/k)$, with $\sigma_{1} \neq \tau_{1}$ and $V_{1} \cong K_{\sigma_{1}} \oplus K_{\tau_{1}}$, the result follows from Corollary \ref{cor.equiv} for the forward direction and Proposition \ref{prop.izuru} for the backward direction, in light of the fact that for $\sigma, \tau \in \operatorname{Gal}(K/k)$, $K_{\sigma} \cong K_{\tau}$ if and only if $\sigma=\tau$.

Now suppose $V_{1} \cong V(\lambda_{1})$ for some embedding $\lambda_{1}:K \rightarrow \overline{K}$. If there is an equivalence $\mathbb{P}(V_{1}) \rightarrow \mathbb{P}(V_{2})$, then, by Corollary \ref{cor.equiv}, $V_{2}$ is simple so that $V_{2} \cong V(\lambda_{2})$ and there exist $\delta, \epsilon \in \operatorname{Gal}(K/k)$ such that either $K_{\delta^{-1}} \otimes V(\lambda_{1}) \otimes K_{\epsilon} \cong V(\lambda_{2})$ or $K_{\delta^{-1}} \otimes V(\lambda_{1}) \otimes K_{\epsilon} \cong V(\mu_{2})$, where in the second case we have used \cite[Theorem 3.13]{hart}.  It follows from Lemma \ref{lemma.autocomp} that either $\lambda_{2}^{G}=(\overline{\delta}^{-1} \lambda_{1} \epsilon)^{G}$ or $\mu_{2}^{G}=(\overline{\delta}^{-1} \lambda_{1} \epsilon)^{G}$.  The proof of the converse is similar but uses Proposition \ref{prop.izuru} and we omit the details.
\end{proof}

We will need the following sharpening of Theorem \ref{thm.twosidedisom} to prove Theorem \ref{thm.finalfactor}.
\begin{prop} \label{prop.newnewfactor}
Suppose $\operatorname{char }k \neq 2$.  The isomorphism $h$ from Theorem \ref{thm.twosidedisom} can be chosen so that the composition
$$
\psi: \mathbb{S}(K_{\tau_{0}} \otimes V \otimes K_{\tau_{1}^{-1}}) \rightarrow \mathbb{S}(V)_{\tau^{-1}} \rightarrow \mathbb{S}(W)(i) \overset{\gamma}{\rightarrow} \mathbb{S}(W^{i*})
$$
whose first map is the isomorphism constructed in Lemma \ref{lemma.canonicaladj}, whose second map is induced by $h$, and whose third arrow is the map (\ref{eqn.canonicalz}), is an isomorphism
$$
\mathbb{S}(K_{\tau_{0}} \otimes V \otimes K_{\tau_{1}^{-1}}) \rightarrow \mathbb{S}(W^{i*})
$$
induced by an isomorphism of two-sided vector spaces $K_{\tau_{0}} \otimes V \otimes K_{\tau_{1}^{-1}} \rightarrow W^{i*}$.
\end{prop}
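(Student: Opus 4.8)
The plan is to begin with an arbitrary system of isomorphisms $\theta_j$ as in the proof of Theorem~\ref{thm.twosidedisom}, form the associated $\psi$, and then rescale the $\theta_j$ by suitable scalars so that the new $\psi$ becomes the $\mathbb{Z}$-algebra isomorphism induced by its degree-$(0,1)$ component. Write $U:=K_{\tau_0}\otimes V\otimes K_{\tau_1^{-1}}$ and $U':=W^{i*}$, so that $\psi\colon\mathbb{S}(U)\to\mathbb{S}(U')$; since $\mathbb{S}(-)_{0,1}=(-)^{0*}=(-)$, the component $\chi:=\psi_{0,1}$ is a two-sided vector space isomorphism $U\to U'$. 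Let $\Psi\colon\mathbb{S}(U)\to\mathbb{S}(U')$ be the $\mathbb{Z}$-algebra isomorphism induced by $\chi$ in the sense of Section~\ref{section.canonical}, and set $\Theta:=\Psi^{-1}\psi$, a $\mathbb{Z}$-algebra automorphism of $\mathbb{S}(U)$ with $\Theta_{0,1}=\operatorname{id}_U$.

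\emph{Step 1.} I would first show that every $\Theta_{m,m+1}$ is a scalar operation $\xi\mapsto d_m\xi d_{m+1}^{-1}$ for suitable $d_m\in K^{*}$, where $d_0,d_1$ satisfy $d_0\cdot(-)\cdot d_1^{-1}=\operatorname{id}_U$. Since $\Theta$ is an automorphism, $\Theta_{m,m+1}\otimes\Theta_{m+1,m+2}$ carries the relation space $Q_m=\operatorname{im}\eta_m\subseteq U^{m*}\otimes U^{(m+1)*}$ onto itself; as $\eta_m$ is a monomorphism from $K$, this yields $b_m\in K^{*}$ with $(\Theta_{m,m+1}\otimes\Theta_{m+1,m+2})\circ\eta_m=\eta_m\circ\mu_{b_m}$. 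Now $\eta_m$ is the unit of the adjoint pair $(-\otimes U^{m*},-\otimes U^{(m+1)*})$, so Corollary~\ref{cor.gen2} (for $m\ge0$, where $U^{(m+1)*}=(U^{m*})^{*}$) and Corollary~\ref{cor.gen1} (for $m<0$, where $U^{m*}={}^{*}(U^{(m+1)*})$) give $\Theta_{m+1,m+2}=((\Theta_{m,m+1})^{*})^{-1}\mu_{b_m}$, respectively the left-dual analogue. Running this recursion in both directions out of $\Theta_{0,1}=\operatorname{id}_U$, an induction produces the $d_m$; it stays within scalar operations because the dual of $\xi\mapsto\alpha\xi\beta$ is again one, by the identity $\mu_a^{*}={}_a\mu$ established in the proof of Corollary~\ref{cor.gen2} and its left-dual counterpart. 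Thus $\psi=\Psi\circ\Theta$ with $\Theta$ ``diagonal''.

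\emph{Step 2, and the choice.} Next I would trace the effect on $\psi$ of replacing $\theta_j$ by $c_j\theta_j$ with $c_j\in K^{*}$: rescaling $\theta_j$ post-composes $t_{m,m+1}$, on its target $\mathbb{S}(W)_{i+m,i+m+1}$, with $x\mapsto c_m x c_{m+1}^{-1}$; the $\tau$-twists inserted by $h$ cancel on the target, and carrying this through the isomorphism of Lemma~\ref{lemma.canonicaladj}, the identification~(\ref{eqn.canonicalz}), and the Eilenberg--Watts identifications converts it into precomposition of $\psi_{m,m+1}$, on its source $U^{m*}$, with $\xi\mapsto c_m\xi c_{m+1}^{-1}$. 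So the new $\psi$ has $m$th degree-one component $\Psi_{m,m+1}\circ\Theta_{m,m+1}\circ(\xi\mapsto c_m\xi c_{m+1}^{-1})=\Psi_{m,m+1}\circ\bigl(\xi\mapsto d_mc_m\,\xi\,(d_{m+1}c_{m+1})^{-1}\bigr)$. Finally, take $c_0=c_1=1$ and, for $m\ne0,1$, set $c_m=d_0d_m^{-1}$ for $m$ even and $c_m=d_1d_m^{-1}$ for $m$ odd, so that $d_mc_m$ equals $d_0$ in even and $d_1$ in odd degrees. Then the operations $\xi\mapsto d_mc_m\,\xi\,(d_{m+1}c_{m+1})^{-1}$ are precisely the degree-one components of the $\mathbb{Z}$-algebra automorphism of $\mathbb{S}(U)$ induced by the two-sided vector space automorphism $u\mapsto d_0ud_1^{-1}$ of $U$; but that automorphism is $\operatorname{id}_U$ by Step~1, so all of them are the identity. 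Hence the new $\psi$ agrees with $\Psi$ in every degree $(m,m+1)$, and since both are $\mathbb{Z}$-algebra homomorphisms and $\mathbb{S}(U)$ is generated in degree one, the new $\psi$ equals $\Psi$; its degree-$(0,1)$ component is still $\chi$ since $c_0=c_1=1$. So the new $h$ yields a $\psi$ induced by the two-sided vector space isomorphism $\psi_{0,1}\colon K_{\tau_0}\otimes V\otimes K_{\tau_1^{-1}}\to W^{i*}$, as required (the shift $i$ and the sequence $\tau$ being unchanged, as they depend only on $F$).

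The main obstacle is the bookkeeping in Step~2: verifying that rescaling each $\theta_j$ by $c_j$ propagates through the twist by $\tau^{-1}$, the isomorphism from Lemma~\ref{lemma.canonicaladj}, and the Eilenberg--Watts identifications so as to reduce exactly to precomposition by $\xi\mapsto c_m\xi c_{m+1}^{-1}$, all intervening $\tau$-twists cancelling. A secondary point is keeping Step~1 uniform in the sign of $m$, using Corollary~\ref{cor.gen2} for $m\ge0$ and Corollary~\ref{cor.gen1} for $m<0$ while tracking the canonical isomorphisms $U^{**}\cong U$ underlying the definition of $U^{m*}$.
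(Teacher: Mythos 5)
Your proposal is correct and follows essentially the same route as the paper's proof: both start from an arbitrary choice of $\theta_j$, use Corollaries \ref{cor.gen1} and \ref{cor.gen2} to relate adjacent degree-one components of $\psi$, and then rescale the $\theta_j$ by nonzero scalars so that each $\psi_{m,m+1}$ becomes the appropriate dual power of $\psi_{01}$. The only cosmetic difference is that you first isolate the ``error'' automorphism $\Theta=\Psi^{-1}\psi$, show its degree-one components are scalar, and cancel them in one pass, whereas the paper rescales the $\theta_j$ iteratively, degree by degree, directly on $\psi$.
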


\begin{proof}
The isomorphism $h$ from Theorem \ref{thm.twosidedisom}, and hence $\psi$, depends on the choice of isomorphisms $\theta_{j}:F\mathcal{P}_{j} \rightarrow \mathcal{P}'_{i+j}$ for all $j \in \mathbb{Z}$.  Given an initial choice of $\theta_{j}$'s, we describe a modification of these choices producing an isomorphism
$$
\psi':\mathbb{S}(K_{\tau_{0}} \otimes V \otimes K_{\tau_{1}^{-1}}) \rightarrow \mathbb{S}(W^{i*})
$$
induced by an isomorphism of two-sided vector spaces $K_{\tau_{0}} \otimes V \otimes K_{\tau_{1}^{-1}} \rightarrow W^{i*}$.

By definition, the map $\psi$ is an isomorphism of noncommutative symmetric algebras over $K$.  By Corollary \ref{cor.gen2}, $\psi_{12}=(\psi_{01}^{-1})^{*} \mu_{a},$ where $\mu_{a}$ denotes right multiplication by a nonzero $a \in K$.  On the one hand, if $x \in V^{*}$, then, identifying $K_{\tau_{1}} \otimes V^{*} \otimes K_{\tau_{0}^{-1}}$ with $(K_{\tau_{0}}\otimes V \otimes K_{\tau_{1}^{-1}})^{*}$, we have
\begin{equation} \label{eqn.psi12}
\psi_{12}(1 \otimes x \otimes 1)=(\psi_{01}^{-1})^{*}(1\otimes x \otimes 1\cdot a).
\end{equation}
On the other hand, if ${}_{x}\mu:P_{2} \rightarrow P_{1}$ denotes left multiplication by $x$, then
\begin{equation} \label{eqn.psi12second}
\psi_{12}(1 \otimes x \otimes 1)=\gamma_{12}(g'f')^{-1}(\theta_{1}  F(\pi({}_{x}\mu))  \theta_{2}^{-1}).
\end{equation}
We let $\theta_{2}'=\mu_{a}  \theta_{2}$.  Then $(\theta_{2}')^{-1}=\theta_{2}^{-1}  \mu_{a^{-1}}$, and so
\begin{eqnarray*}
\gamma_{12}(g' f')^{-1}(\theta_{1}  F(\pi({}_{x}\mu))  {\theta_{2}'}^{-1})  & = & \gamma_{12}(g' f')^{-1}(\theta_{1} F(\pi({}_{x}\mu))  \theta_{2}^{-1} \cdot a^{-1}) \\
& = & \gamma_{12} (g' f')^{-1}( \theta_{1}  F(\pi({}_{x}\mu))  \theta_{2}^{-1}) \cdot a^{-1} \\
& = & ((\psi_{01}^{-1})^{*}(1\otimes x \otimes 1 \cdot a)) \cdot a^{-1} \\
& = &  (\psi_{01}^{-1})^{*}(1 \otimes x \otimes 1)
\end{eqnarray*}
where the third equality follows from (\ref{eqn.psi12second}) and (\ref{eqn.psi12}).

Continuing inductively, we define, for $j > 1$, $\theta_{j}'$ such that
$$
\psi_{j,j+1}'  = \begin{cases} \psi_{01}^{j*} \mbox{ if $j$ is even} \\ (\psi_{01}^{-1})^{j*} \mbox{ if $i$ is odd.} \end{cases}
$$
A similar argument, employing Corollary \ref{cor.gen1} in lieu of Corollary \ref{cor.gen2}, allows us to define, for $j < 0$, $\theta_{j}'$ such that
$$
\psi_{j,j+1}'  = \begin{cases} \psi_{01}^{j*} \mbox{ if $j$ is even} \\ (\psi_{01}^{-1})^{j*} \mbox{ if $i$ is odd.} \end{cases}
$$
\end{proof}

\section{Classification of equivalences}
The goal of this section, realized in Theorem \ref{thm.finalmain}, is to classify equivalences between noncommutative projective lines.

Throughout this section, we will employ the following notation:  $F:\mathbb{P}(V) \rightarrow \mathbb{P}(W)$ will denote a $k$-linear equivalence, $\tau=\{\tau_{i}\}$ will denote the associated sequence of automorphisms from Theorem \ref{thm.twosidedisom}, and $\tau^{-1}$ will denote the sequence $\{\tau_{i}^{-1}\}$.  As in Section \ref{section.isom}, we will let $P_{i}:=e_{i}\mathbb{S}(V)$, $\mathcal{P}_{i}:=\pi P_{i}$, $P'_{i}:=e_{i}\mathbb{S}(W)$ and $\mathcal{P}'_{i}:=\pi P'_{i}$.

We let the functor
$$
\Gamma:\mathbb{P}(V) \rightarrow {\sf Gr }\mathcal{A}
$$
be defined on objects by $\Gamma(\mathcal{M}):= \oplus_{j \in \mathbb{Z}} \Hom_{\mathbb{P}(V)}(\mathcal{P}_{j},\mathcal{M})$, with graded module structure induced by composition.  We let
$$
\Psi:{\sf Gr }\mathbb{S}(V) \rightarrow {\sf Gr }\mathcal{A}
$$
denote the equivalence induced by the isomorphism $gf: \mathbb{S}(V) \rightarrow \mathcal{A}$, where $f:\mathbb{S}(V) \rightarrow H$ and $g:H \rightarrow \mathcal{A}$ are isomorphisms defined at the beginning of Section \ref{section.isom}.  We let $\Psi_{\tau}: {\sf Gr }\mathbb{S}(V)_{\tau} \rightarrow {\sf Gr }\mathcal{A}_{\tau}$ denote the induced equivalence, we let $\Psi^{-1}:{\sf Gr }\mathcal{A} \rightarrow {\sf Gr }\mathbb{S}(V)$
denote the equivalence induced by $(g  f)^{-1}:\mathcal{A} \rightarrow \mathbb{S}(V)$, and we let $\Psi^{-1}_{\tau}$ denote the equivalence induced by $\Psi^{-1}$.  We define
$$
\overline{\pi}:{\sf Gr }\mathcal{A} \rightarrow \mathbb{P}(V)
$$
to be the composition $\pi \Psi^{-1}$, and we define
$$
\Delta:{\sf Gr }\mathcal{A}_{\tau^{-1}} \rightarrow {\sf Gr }\mathcal{A'}(i)
$$
be the functor induced by the isomorphism $h$ defined in Theorem \ref{thm.twosidedisom}.  Finally, we let
$$
\Delta':{\sf Gr }\mathbb{S}(V)_{\tau^{-1}} \rightarrow {\sf Gr }\mathbb{S}(W)(i)
$$
be the functor induced by the composition
$$
\mathbb{S}(V)_{\tau^{-1}} \rightarrow \mathcal{A}_{\tau^{-1}} \rightarrow \mathcal{A'}(i) \rightarrow \mathbb{S}(W)(i)
$$
whose first and third arrows are induced by the isomorphism $gf$ and $(g' f')^{-1}$ defined at the beginning of Section \ref{section.isom} and whose second arrow is induced by the isomorphism $h$ constructed in Theorem \ref{thm.twosidedisom}.  We will abuse notation by letting $\Delta'$ also denote the induced equivalence ${\sf Proj }\mathbb{S}(V)_{\tau^{-1}} \rightarrow {\sf Proj }\mathbb{S}(W)(i)$.

We will sometimes abuse notation by using identical notation when replacing $V$ by $W$ and $\mathcal{A}$ by $\mathcal{A'}$ when there is no chance of confusion.

\begin{lemma} \label{lemma.dig1}
The diagram of functors
$$
\begin{CD}
{\sf Gr }\mathbb{S}(V) & \overset{\Psi}{\rightarrow} & {\sf Gr }\mathcal{A} \\
@A{\omega}AA @AA{\Gamma}A \\
\mathbb{P}(V) & \underset{=}{\rightarrow} & \mathbb{P}(V)
\end{CD}
$$
commutes up to natural equivalence.
\end{lemma}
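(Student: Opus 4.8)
The plan is to compute the two functors $\Psi\omega$ and $\Gamma$ from $\mathbb{P}(V)$ to ${\sf Gr}\,\mathcal{A}$ degreewise and to assemble an explicit natural isomorphism between them out of two elementary ingredients: the Yoneda identification of a graded module with its ``homs out of the free generators'', and the adjunction isomorphism of the pair $(\pi,\omega)$.

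First I would record that for any $N\in{\sf Gr}\,\mathbb{S}(V)$, evaluation at the canonical generator of $(P_j)_j=\mathbb{S}(V)_{jj}=K$ gives a $K$-linear isomorphism $\operatorname{Hom}_{{\sf Gr}\,\mathbb{S}(V)}(P_j,N)\xrightarrow{\sim}N_j$, and that, via the $\mathbb{Z}$-algebra isomorphism $f\colon\mathbb{S}(V)\to H$ recalled at the start of Section~\ref{section.isom}, these assemble into an isomorphism of graded $\mathbb{S}(V)$-modules identifying $N$ with $\bigoplus_j\operatorname{Hom}_{{\sf Gr}\,\mathbb{S}(V)}(P_j,N)$ carrying the composition $H$-action. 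Pushing the module structure forward along $g\colon H\to\mathcal{A}$, this amounts to the statement that $\Psi(N)$ is naturally isomorphic to $\bigoplus_j\operatorname{Hom}_{{\sf Gr}\,\mathbb{S}(V)}(P_j,N)$ with the composition $\mathcal{A}$-action pulled back along $g$. Specializing $N=\omega\mathcal{M}$ and then applying the $(\pi,\omega)$-adjunction in each degree, $\operatorname{Hom}_{{\sf Gr}\,\mathbb{S}(V)}(P_j,\omega\mathcal{M})\cong\operatorname{Hom}_{\mathbb{P}(V)}(\pi P_j,\mathcal{M})=\operatorname{Hom}_{\mathbb{P}(V)}(\mathcal{P}_j,\mathcal{M})=\Gamma(\mathcal{M})_j$, would produce the desired degreewise isomorphism $\Psi(\omega\mathcal{M})\to\Gamma(\mathcal{M})$.

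What then has to be checked is that this degreewise isomorphism is $\mathcal{A}$-linear and natural in $\mathcal{M}$. For $\mathcal{A}$-linearity it suffices, by the previous step, to see that the $(\pi,\omega)$-adjunction isomorphism carries the composition $H$-action to the composition $\mathcal{A}$-action compatibly with $g$; since $g_{ij}$ is by definition the map induced by $\pi$ on $\operatorname{Hom}(P_j,P_i)$, and since the adjunction isomorphism sends $\alpha\colon P_j\to\omega\mathcal{M}$ to $\varepsilon_{\mathcal{M}}\circ\pi(\alpha)$ with $\varepsilon\colon\pi\omega\to\operatorname{id}$ the counit, this comes down to the identity $\varepsilon_{\mathcal{M}}\circ\pi(\alpha\circ h)=(\varepsilon_{\mathcal{M}}\circ\pi(\alpha))\circ\pi(h)$, which is just functoriality of $\pi$. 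Naturality in $\mathcal{M}$ is immediate, because both evaluation and the adjunction isomorphism are natural in their argument.

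I do not expect a genuine obstacle here; the whole statement is bookkeeping around the definitions of $f$, $g$, $\Psi$ and $\Gamma$. The only point that deserves care is the one just flagged: confirming that the $(\pi,\omega)$-adjunction is compatible with composition, so that it is an isomorphism of $H$- (respectively $\mathcal{A}$-) modules rather than merely of the underlying graded $K$-vector spaces. In slogan form, the lemma just says that $\Gamma$ is the composite of $\omega$ with the equivalence ${\sf Gr}\,\mathbb{S}(V)\to{\sf Gr}\,H\to{\sf Gr}\,\mathcal{A}$ induced by $f$ and $g$ -- which is exactly $\Psi$ -- and the argument above is the verification of this slogan.
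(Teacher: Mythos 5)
Your proposal is correct and follows essentially the same route as the paper: both define the degreewise isomorphism $\Gamma(\mathcal{M})_j \cong \operatorname{Hom}_{{\sf Gr}\,\mathbb{S}(V)}(P_j,\omega\mathcal{M}) \cong (\omega\mathcal{M})_j$ via the $(\pi,\omega)$-adjunction and evaluation at the degree-$j$ generator, and then check compatibility with the $\mathcal{A}$-module multiplication. The only cosmetic difference is that you phrase the adjunction via the counit and reduce the module-compatibility check to functoriality of $\pi$ (after separately invoking the Yoneda identification), whereas the paper phrases it via the unit and reduces the check to naturality of the unit; these are dual descriptions of the same verification.
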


\begin{proof}
We need to prove that there is an isomorphism of functors $\Gamma \rightarrow \Psi \omega$.  Let $\mathcal{M}$ be an object in $\mathbb{P}(V)$.  We define a function $\eta_{\mathcal{M}}:\Gamma(\mathcal{M}) \rightarrow \Psi \omega \mathcal{M}$ by the composition
\begin{eqnarray*}
\Gamma(\mathcal{M})_{i} & = & \Hom_{\mathbb{P}(V)}(\mathcal{P}_{i}, \mathcal{M}) \\
& \cong & \Hom_{{\sf Gr }{\mathbb{S}(V)}}(P_{i},\omega \mathcal{M}) \\
& \cong & (\omega \mathcal{M})_{i} \\
& = & (\Psi \omega \mathcal{M})_{i}
\end{eqnarray*}
whose second map is induced by adjointness of $\pi$ and $\omega$, and whose third map is evaluation at $1 \in {\mathbb{S}(V)}_{ii}$.  The proof that this map is natural in $\mathcal{M}$ is straightforward and omitted.  Thus, to show that $\eta$ is a natural isomorphism, it remains to check that for all $i,j \in \mathbb{Z}$, the diagram
\begin{equation} \label{eqn.vitaldig}
\begin{CD}
\Gamma(\mathcal{M})_{i} \otimes \mathcal{A}_{ij} & \rightarrow & \Gamma(\mathcal{M})_{j} \\
@V{\eta}VV @VV{\eta}V \\
\Psi(\omega \mathcal{M})_{i} \otimes \mathcal{A}_{ij} & \rightarrow & \Psi(\omega \mathcal{M})_{j}
\end{CD}
\end{equation}
whose horizontals are multiplication, commutes.  To this end, suppose $s \in \Gamma(\mathcal{M})_{i}$ and $x_{ij} \in \mathbb{S}(V)_{ij}$.  We compute the image of $s \otimes gf(x_{ij}) \in \Gamma(\mathcal{M})_{i} \otimes \mathcal{A}_{ij}$ via the two routes of (\ref{eqn.vitaldig}).  Via the upper route, we end up with the element
$$
\omega s  \omega \pi ({}_{x_{ij}}\mu)  \eta_{j}(1)
$$
where $\eta_{j}:P_{j} \rightarrow \omega \pi P_{j}$ is the unit map and ${}_{x_{ij}}\mu:P_{j} \rightarrow P_{i}$ denotes left multiplication by $x_{ij}$.  If we map $s \otimes gf(x_{ij})$ via the lower route, we obtain $(\omega s  \eta_{i}(1)) \cdot x_{ij}$, where $\eta_{i}$ is defined analogously to $\eta_{j}$.  Thus, since $\eta_{i}$ is compatible with right $\mathbb{S}(V)$-module multiplication, it suffices to prove that
$$
\omega \pi({}_{x_{ij}}\mu)  \eta_{j}(1) = \eta_{i}(x_{ij}).
$$
This equality follows from the naturality of the unit of an adjoint pair.
\end{proof}
It follows from Lemma \ref{lemma.dig1} that
\begin{equation} \label{eqn.pibar}
\overline{\pi}\Gamma \cong {\sf id}_{\mathbb{P}(V)}.
\end{equation}

We omit the routine verification of the following
\begin{lemma} \label{lemma.dig2}
The diagram of functors
$$
\begin{CD}
{\sf Gr }\mathbb{S}(V) & \overset{\Psi}{\rightarrow} & {\sf Gr }\mathcal{A} \\
@V{T_{\tau}}VV @VV{T_{\tau}}V \\
{\sf Gr }\mathbb{S}(V)_{\tau}  & \underset{\Psi_{\tau}}{\rightarrow} & {\sf Gr }\mathcal{A}_{\tau}
\end{CD}
$$
commutes exactly.
\end{lemma}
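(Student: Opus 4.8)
The plan is to unwind each of the four functors into its action on underlying graded abelian groups and on morphisms; one then sees at once that the two composites agree on this underlying data, so the only thing left to verify is that the two resulting $\mathcal{A}_{\tau}$-module structures coincide, and this is immediate from the bifunctoriality of $\otimes_{K}$.

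First I would record the three functors in play. By construction $\Psi$ is transport of structure along $gf\colon \mathbb{S}(V)\to\mathcal{A}$: it leaves the underlying graded abelian group of a module unchanged, $\Psi(M)_{i}=M_{i}$, it defines the action of $a\in\mathcal{A}_{ij}$ to be the action of $(gf)^{-1}(a)\in\mathbb{S}(V)_{ij}$, and it sends a morphism to itself as a map of underlying graded groups. Likewise $\Psi_{\tau}$ is transport of structure along the isomorphism $\mathbb{S}(V)_{\tau}\to\mathcal{A}_{\tau}$ which in degree $(i,j)$ is $\mathrm{id}_{K_{\tau_{i}^{-1}}}\otimes (gf)_{ij}\otimes\mathrm{id}_{K_{\tau_{j}}}$. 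Finally, by the definition of $T_{\tau}$ in Section~\ref{section.t}, $T_{\tau}$ sends $M$ to the module with $T_{\tau}(M)_{i}=M_{i}\otimes_{K}K_{\tau_{i}}$, whose action of the $(i,j)$-component of the twisted algebra is obtained by contracting the adjacent factors via the canonical isomorphism $K_{\tau_{i}}\otimes_{K}K_{\tau_{i}^{-1}}\cong K$ and then applying the original module multiplication (tensored with $\mathrm{id}_{K_{\tau_{j}}}$), and it sends a morphism $f$ to $f_{i}\otimes\mathrm{id}_{K_{\tau_{i}}}$ in degree $i$.

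With this in hand, for any $M\in{\sf Gr}\,\mathbb{S}(V)$ both $T_{\tau}\Psi(M)$ and $\Psi_{\tau}T_{\tau}(M)$ have underlying degree-$i$ component $M_{i}\otimes_{K}K_{\tau_{i}}$, and for any morphism $f$ both composites return $f_{i}\otimes\mathrm{id}_{K_{\tau_{i}}}$; so I would be reduced to comparing the two $\mathcal{A}_{\tau}$-actions on $\bigoplus_{i}M_{i}\otimes_{K}K_{\tau_{i}}$. Tracing the definitions, in either case the action of an element of $K_{\tau_{i}^{-1}}\otimes\mathcal{A}_{ij}\otimes K_{\tau_{j}}$ on $M_{i}\otimes_{K}K_{\tau_{i}}$ is built from three operations: applying $(gf)^{-1}$ to the $\mathcal{A}_{ij}$-factor, contracting the $K_{\tau_{i}}\otimes_{K}K_{\tau_{i}^{-1}}$ pair, and multiplying $M_{i}\otimes\mathbb{S}(V)_{ij}\to M_{j}$. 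In $T_{\tau}\Psi$ the contraction precedes the application of $(gf)^{-1}$, whereas in $\Psi_{\tau}T_{\tau}$ it follows it; but these two operations act on disjoint tensor factors and all maps involved are $K$-bilinear, so they commute, and the third operation is performed last in both cases. Hence the two structures coincide on the nose and the square commutes exactly. There is no genuine obstacle here; the only point demanding a little care is the bookkeeping of the canonical isomorphisms $K_{\tau_{i}}\otimes_{K}K_{\tau_{i}^{-1}}\cong K$, together with the observation that equality, rather than mere natural equivalence, holds because every map occurring is either a literal identity on underlying sets or a literal application of $(gf)^{-1}$.
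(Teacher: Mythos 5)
Your proof is correct. The paper explicitly omits this verification as routine, and your argument supplies precisely the intended definitional check: since $\Psi$ and $\Psi_\tau$ are transport of structure along $gf$ and $\mathrm{id}\otimes gf\otimes\mathrm{id}$ while $T_\tau$ only tensors with the $K_{\tau_i}$, the two composite module structures on $\bigoplus_i M_i\otimes_K K_{\tau_i}$ differ only in the order of two operations acting on disjoint tensor factors, so they coincide on the nose.
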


\begin{prop} \label{prop.factor}
The diagram
$$
\begin{CD}
{\sf Gr }\mathcal{A} & \overset{T_{\tau^{-1}}}{\rightarrow} & {\sf Gr }\mathcal{A}_{\tau^{-1}} & \overset{\Delta}{\rightarrow} & {\sf Gr }\mathcal{A'}(i) \\
@A{\Gamma}AA & & @AA{[i]}A \\
\mathbb{P}(V) & \underset{F}{\rightarrow} & \mathbb{P}(W) & \underset{\Gamma}{\rightarrow} & {\sf Gr }\mathcal{A'}
\end{CD}
$$
commutes up to natural equivalence of functors.
\end{prop}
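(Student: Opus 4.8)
The plan is to exhibit a natural isomorphism $\Delta\circ T_{\tau^{-1}}\circ\Gamma\cong[i]\circ\Gamma'\circ F$ (writing $\Gamma'\colon\mathbb{P}(W)\to{\sf Gr}\,\mathcal{A'}$ for the analogue of $\Gamma$ built from $W$, and $[i]$ on the right for the shift functor ${\sf Gr}\,\mathcal{A'}\to{\sf Gr}\,\mathcal{A'}(i)$) by first peeling $\Gamma$ and $\Gamma'$ off with Lemma \ref{lemma.dig1} and Lemma \ref{lemma.dig2}, thereby reducing to a statement about the section functors, $F$, and the algebra isomorphism $h$ of Theorem \ref{thm.twosidedisom}, and then checking that statement degree by degree.

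First I would reduce. By Lemma \ref{lemma.dig1} and its $W$-analogue, $\Gamma\cong\Psi\omega$ and $\Gamma'\cong\Psi'\omega$, where $\Psi'$ is induced by $g'f'$; by the evident analogue of Lemma \ref{lemma.dig2} for the sequence $\tau^{-1}$, $T_{\tau^{-1}}\Psi=\Psi_{\tau^{-1}}T_{\tau^{-1}}$. Because the functor induced by a composite of $\mathbb{Z}$-algebra isomorphisms is the composite of the induced functors, and because $h\colon\mathcal{A}\to\mathcal{A'}(i)_{\tau}$ induces $\Delta$ after the canonical identification $(\mathcal{A'}(i)_{\tau})_{\tau^{-1}}=\mathcal{A'}(i)$, one gets $\Delta\circ\Psi_{\tau^{-1}}=\Psi'(i)\circ\Delta'$, where $\Psi'(i)\colon{\sf Gr}\,\mathbb{S}(W)(i)\to{\sf Gr}\,\mathcal{A'}(i)$ is the equivalence induced by the shifted isomorphism $(g'f')(i)$ and $\Delta'$ is the functor defined just before the statement; likewise $[i]\circ\Psi'=\Psi'(i)\circ[i]$. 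Combining these, $\Delta T_{\tau^{-1}}\Gamma\cong\Psi'(i)\,\Delta'\,T_{\tau^{-1}}\,\omega$ and $[i]\Gamma'F\cong\Psi'(i)\,[i]\,\omega\,F$, so since $\Psi'(i)$ is an equivalence it is enough to produce a natural isomorphism $\Delta'\,T_{\tau^{-1}}\,\omega\cong[i]\,\omega\,F$ of functors $\mathbb{P}(V)\to{\sf Gr}\,\mathbb{S}(W)(i)={\sf Gr}\,\mathbb{S}(W^{i*})$.

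Next I would build that isomorphism on objects. Fix $\mathcal{M}\in\mathbb{P}(V)$. As in the proof of Lemma \ref{lemma.dig1}, evaluation at the identity identifies $(\omega\mathcal{M})_{j}$ with $\Hom_{\mathbb{P}(V)}(\mathcal{P}_{j},\mathcal{M})$ and $(\omega F\mathcal{M})_{i+j}$ with $\Hom_{\mathbb{P}(W)}(\mathcal{P}'_{i+j},F\mathcal{M})$ (using $\tau_{\mathbb{S}(V)}P_{j}=\tau_{\mathbb{S}(W)}P'_{i+j}=0$). Since $T_{\tau^{-1}}$ replaces the degree-$j$ part of $\omega\mathcal{M}$ by $(\omega\mathcal{M})_{j}\otimes K_{\tau_{j}^{-1}}$, since $\Delta'$ leaves underlying sets unchanged, and since the degree-$j$ part of $[i]\omega F\mathcal{M}$ is $(\omega F\mathcal{M})_{i+j}$, I would define, using the same isomorphisms $\theta_{j}\colon F\mathcal{P}_{j}\to\mathcal{P}'_{i+j}$ that were used to construct $h$ in the proof of Theorem \ref{thm.twosidedisom},
$$
\Xi_{\mathcal{M},j}\colon(\Delta'T_{\tau^{-1}}\omega\mathcal{M})_{j}\longrightarrow([i]\omega F\mathcal{M})_{j},\qquad s\otimes 1\longmapsto F(s)\circ\theta_{j}^{-1}.
$$
Each $\Xi_{\mathcal{M},j}$ is bijective because $F$ is an equivalence and $\theta_{j}$ an isomorphism, and $\Xi_{\mathcal{M}}$ is natural in $\mathcal{M}$ by functoriality of $F$. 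To finish, I would check that $\Xi_{\mathcal{M}}$ is $\mathbb{S}(W^{i*})$-linear; since the multiplication maps $\mathbb{S}(W^{i*})_{l,l+1}\otimes\mathbb{S}(W^{i*})_{l+1,m}\to\mathbb{S}(W^{i*})_{lm}$ are epimorphisms, it suffices to verify compatibility with the actions of $\mathbb{S}(W^{i*})_{jj}=K$ and of $\mathbb{S}(W^{i*})_{j,j+1}$ for each $j$. The first reduces, once the twist by $\tau^{-1}$ is tracked through $\Delta'$, to exactly the square defining the automorphism $\tau_{j}$ in the proof of Theorem \ref{thm.twosidedisom} together with the identity $t_{jj}(x)=\theta_{j}F(x)\theta_{j}^{-1}$. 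The second follows from the naturality of the unit of the adjunction $(\pi,\omega)$ — the same computation made in the proof of Lemma \ref{lemma.dig1} — together with the multiplicativity $t_{lk}(xy)=t_{lj}(x)t_{jk}(y)$ established in the proof of Theorem \ref{thm.twosidedisom}, which is precisely how $h$, hence $\Delta'$, interacts with the connecting pieces $\mathcal{A}_{j,j+1}$.

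The hard part will be the bookkeeping in the first compatibility: matching the twist by $\tau^{-1}$ on the $\mathcal{A}$-side against the $\tau_{j}$-conjugation that $h$ produces on the $\mathcal{A'}(i)$-side, while keeping straight the canonical identifications $\mathbb{S}(V)(i)=\mathbb{S}(V^{i*})$, $(B_{\sigma})_{\sigma^{-1}}=B$ and $K_{\sigma^{-1}}\otimes K_{\sigma}\cong K$, and the left/right sides of the various $K$-actions. No single step is deep, but this is where a variance error is easy to make.
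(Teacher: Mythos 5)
Your proposal is correct, and the core of it — the natural isomorphism sending $s\otimes 1$ to $\theta_j F(s)\theta_j^{-1}$ at the level of $\Hom$-spaces, with $\mathcal{A}'(i)$-linearity reduced to the multiplicativity identity $t_{lj}(x)=\theta_l F(x)\theta_j^{-1}$ from the proof of Theorem \ref{thm.twosidedisom} — is exactly the construction the paper uses.

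The one organizational difference is that you first conjugate the whole diagram through the $\Psi$, $\omega$, $\Delta'$ machinery (Lemmas \ref{lemma.dig1} and \ref{lemma.dig2}), reducing the problem from $\mathcal{A}$-modules to $\mathbb{S}(V)$-modules, and then invoke the evaluation-at-identity identification to translate back into $\Hom$-spaces — which is the identification your reduction was meant to strip away. The paper skips this detour entirely and works directly with $\Gamma$, $\Delta$, and the $\mathcal{A}$-side $\Hom$-spaces; the $\Psi$/$\omega$/$\Delta'$ translation is postponed to Proposition \ref{prop.mainfactor}, which is proved \emph{using} Proposition \ref{prop.factor} rather than the other way round. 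Your route is valid and a little more explicit about how the various induced equivalences interleave, but it recreates the machinery of Proposition \ref{prop.mainfactor} before it is needed. The remaining bookkeeping you flag (tracking the $\tau^{-1}$-twist against the $\tau_j$-conjugation built into $h$) is genuine but, as you say, not deep, and it is precisely the computation the paper carries out in the final paragraph of the proof.
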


\begin{proof}
Let $\mathcal{M}$ be an object in $\mathbb{P}(V)$.  We define an isomorphism
\begin{equation} \label{eqn.gamma}
\eta_{\mathcal{M}}:  \Delta T_{\tau^{-1}} \Gamma \mathcal{M} \rightarrow (\Gamma F \mathcal{M})[i]
\end{equation}
in ${\sf Gr }\mathcal{A'}(i)$, and we show that $\eta$ is natural in $\mathcal{M}$.

We begin by noting that $(\Delta T_{\tau^{-1}} \Gamma \mathcal{M})_{l}=\Hom_{\mathbb{P}(V)}(\mathcal{P}_{l},\mathcal{M}) \otimes  K_{\tau_{l}^{-1}}$.  On the other hand, $(\Gamma F \mathcal{M})[i]_{l} = \operatorname{Hom }_{\mathbb{P}(W)}(\mathcal{P}'_{i+l},F\mathcal{M})$, so that in order to define an isomorphism (\ref{eqn.gamma}), we must first define an abelian group isomorphism
$$
(\eta_{\mathcal{M}})_{l}:\operatorname{Hom}_{\mathbb{P}(V)}(\mathcal{P}_{l},\mathcal{M})\otimes K_{\tau_{l}^{-1}} \rightarrow \operatorname{Hom}_{{\mathbb{P}(W)}}(\mathcal{P}'_{i+l}, F\mathcal{M})
$$
and prove that this isomorphism is natural in $\mathcal{M}$ and respects $\mathcal{A'}(i)$-module multiplication.

We define $(\eta_{\mathcal{M}})_{l}$ as the composition
\begin{eqnarray*}
\Hom_{\mathbb{P}(V)}(\mathcal{P}_{l},\mathcal{M}) \otimes K_{\tau_{l}^{-1}} & \rightarrow & \Hom_{\mathbb{P}(V)}(\mathcal{P}_{l}, \mathcal{M}) \\
& \overset{F}{\rightarrow} & \Hom_{\mathbb{P}(W)}(F \mathcal{P}_{l},F\mathcal{M}) \\
& \rightarrow & \Hom_{\mathbb{P}(W)}(\mathcal{P}'_{i+l}, F \mathcal{M})
\end{eqnarray*}
whose first arrow is induced by the assignment $m \otimes a \mapsto m \cdot a$ and whose last arrow is induced by the map $\theta_{l}:F\mathcal{P}_{l} \rightarrow \mathcal{P}'_{i+l}$ chosen in the proof of Theorem \ref{thm.twosidedisom}.  The fact that $(\eta_{\mathcal{M}})_{l}$ is an abelian group isomorphism which is natural in $\mathcal{M}$ is straightforward and the verification is omitted.

We proceed to check that $\eta$ is compatible with $\mathcal{A'}(i)$-module multiplication.  Let $m \in \Hom_{\mathbb{P}(V)}(\mathcal{P}_{l},\mathcal{M})$ and let $y \in \mathcal{A'}_{i+l,i+j}$.  Let $x \in \mathcal{A}_{lj}$ be such that $y=t_{lj}(x)$, where $t:\mathcal{A} \rightarrow \mathcal{A'}(i)$ is the map defined in the proof of Theorem \ref{thm.twosidedisom}.  Then, as the reader can check, the proof of compatibility will follow from the fact that $F(m)F(x) \theta_{j}^{-1} = F(m)\theta_{l}^{-1} y$.  Therefore, it suffices to check that $\theta_{l}F(x)\theta_{j}^{-1}=t_{lj}(x)$.  This last equality follows from the definition of $t$ and the proof follows.
\end{proof}

\begin{cor} \label{cor.factor}
There is an isomorphism
$$
F \cong \overline{\pi} [-i] \Delta T_{\tau^{-1}} \Gamma.
$$
\end{cor}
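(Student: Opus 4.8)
The plan is to derive the factorization of $F$ by post-composing the natural equivalence of Proposition \ref{prop.factor} with two further functors and cancelling. Recall that Proposition \ref{prop.factor} supplies a natural equivalence of functors $\mathbb{P}(V) \to {\sf Gr}\,\mathcal{A'}(i)$, namely $\Delta\, T_{\tau^{-1}}\, \Gamma \cong [i]\, \Gamma\, F$, where on the right $\Gamma$ is the $W$-version of the functor and $[i]$ is the shift carrying ${\sf Gr}\,\mathcal{A'}$ to ${\sf Gr}\,\mathcal{A'}(i)$.

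First I would post-compose both sides with the shift functor $[-i] \colon {\sf Gr}\,\mathcal{A'}(i) \to {\sf Gr}\,\mathcal{A'}$. On the level of graded modules one has $M[i][-i] = M$ literally, so $[-i][i]$ is the identity functor on ${\sf Gr}\,\mathcal{A'}$; hence this step produces a natural equivalence $[-i]\, \Delta\, T_{\tau^{-1}}\, \Gamma \cong \Gamma\, F$ of functors $\mathbb{P}(V) \to {\sf Gr}\,\mathcal{A'}$. Next I would post-compose with $\overline{\pi} \colon {\sf Gr}\,\mathcal{A'} \to \mathbb{P}(W)$, the $W$-analogue of the functor $\pi \Psi^{-1}$ introduced above. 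The $W$-version of Lemma \ref{lemma.dig1}, equivalently the $W$-version of (\ref{eqn.pibar}), gives $\overline{\pi}\, \Gamma \cong {\sf id}_{\mathbb{P}(W)}$, so the right-hand side becomes $\overline{\pi}\, \Gamma\, F \cong F$, and we obtain $\overline{\pi}\, [-i]\, \Delta\, T_{\tau^{-1}}\, \Gamma \cong F$, which is the assertion.

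There is no real obstacle here; the argument is a two-step diagram chase. The only point needing attention is the bookkeeping in the deliberately overloaded notation: the $\Gamma$ that survives in the final expression is the $V$-version $\mathbb{P}(V) \to {\sf Gr}\,\mathcal{A}$, whereas the $\Gamma$ and $\overline{\pi}$ used to cancel at the end are the $W$-versions, and the identity (\ref{eqn.pibar}) must be invoked over $W$ rather than over $V$. One should also note explicitly that $[i]$ and $[-i]$ are mutually inverse (this is already implicit in the way these shift functors were set up via \cite[Lemma 3.1]{smithmaps}), which is what legitimizes the first cancellation.
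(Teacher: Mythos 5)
Your proof is correct and follows essentially the same route as the paper's: restate Proposition~\ref{prop.factor} as $\Gamma F \cong [-i]\Delta T_{\tau^{-1}}\Gamma$ (using that $[i]$ and $[-i]$ are mutually inverse on the underlying graded module categories) and then post-compose with the $W$-version of $\overline{\pi}$, cancelling $\overline{\pi}\Gamma \cong {\sf id}_{\mathbb{P}(W)}$ via the $W$-version of (\ref{eqn.pibar}). Your attention to which of the overloaded symbols $\Gamma$, $\overline{\pi}$ refer to the $V$- versus $W$-side is exactly the right bookkeeping to flag.
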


\begin{proof}
By Proposition \ref{prop.factor}, there is an equivalence $\Gamma F \cong [-i] \Delta T_{\tau^{-1}}\Gamma$.  The result follows by applying $\overline{\pi}$ to the left-hand side of the equation and invoking (\ref{eqn.pibar}).
\end{proof}

\begin{prop} \label{prop.mainfactor}
There is an isomorphism $F \cong \pi [-i] \Delta' T_{\tau^{-1}} \omega_{\mathbb{S}(V)}$.
\end{prop}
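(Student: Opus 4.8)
The plan is to obtain Proposition~\ref{prop.mainfactor} from Corollary~\ref{cor.factor} by rewriting the factorization $F\cong\overline{\pi}[-i]\Delta T_{\tau^{-1}}\Gamma$ so that the functors attached to the $\mathrm{Hom}$-$\mathbb{Z}$-algebras $\mathcal{A}$, $\mathcal{A'}$ are replaced by the functors $\omega_{\mathbb{S}(V)}$, $\pi_{\mathbb{S}(W)}$ and $\Delta'$ attached to the noncommutative symmetric algebras themselves. (Here, by the abuse-of-notation conventions of this section, $\overline{\pi}$ denotes the primed functor $\pi_{\mathbb{S}(W)}(\Psi')^{-1}\colon{\sf Gr }\mathcal{A'}\to\mathbb{P}(W)$.) First I would replace $\Gamma$ by $\Psi\omega_{\mathbb{S}(V)}$: Lemma~\ref{lemma.dig1} furnishes a natural equivalence $\Gamma\cong\Psi\omega_{\mathbb{S}(V)}$, and composing on the left with $\overline{\pi}[-i]\Delta T_{\tau^{-1}}$ yields
$$
F\cong\overline{\pi}[-i]\Delta T_{\tau^{-1}}\Psi\omega_{\mathbb{S}(V)}.
$$

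Next I would move the twist past $\Psi$. The commuting square of Lemma~\ref{lemma.dig2} is stated for the sequence $\tau$, but its proof does not use any property of $\tau$, so the analogous square for $\tau^{-1}$ commutes exactly as well; that is, $T_{\tau^{-1}}\Psi=\Psi_{\tau^{-1}}T_{\tau^{-1}}$, where $\Psi_{\tau^{-1}}\colon{\sf Gr }\mathbb{S}(V)_{\tau^{-1}}\to{\sf Gr }\mathcal{A}_{\tau^{-1}}$ is the equivalence induced by the twist of $gf$. Hence $F\cong\overline{\pi}[-i]\Delta\Psi_{\tau^{-1}}T_{\tau^{-1}}\omega_{\mathbb{S}(V)}$. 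The one step requiring an actual (though routine) verification is the identity $\Delta\Psi_{\tau^{-1}}\cong\Psi'(i)\Delta'$, where $\Psi'(i)\colon{\sf Gr }\mathbb{S}(W)(i)\to{\sf Gr }\mathcal{A'}(i)$ is the equivalence induced by the shift of $g'f'$ and where we use the canonical identification $\mathbb{S}(W)(i)=\mathbb{S}(W^{i*})$ of (\ref{eqn.canonicalz}). This follows from the general fact that the equivalence induced by a composite of $\mathbb{Z}$-algebra isomorphisms is the composite of the induced equivalences, combined with the definition of $\Delta'$: by construction $\Delta'$ is induced by the composite $\mathbb{S}(V)_{\tau^{-1}}\to\mathcal{A}_{\tau^{-1}}\to\mathcal{A'}(i)\to\mathbb{S}(W)(i)$ whose first two arrows are precisely the isomorphisms inducing $\Psi_{\tau^{-1}}$ and $\Delta$ and whose third is the one inducing $\Psi'(i)^{-1}$, so that $\Delta'\cong\Psi'(i)^{-1}\Delta\Psi_{\tau^{-1}}$, i.e. $\Psi'(i)\Delta'\cong\Delta\Psi_{\tau^{-1}}$. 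We thus reach $F\cong\overline{\pi}[-i]\Psi'(i)\Delta'T_{\tau^{-1}}\omega_{\mathbb{S}(V)}$.

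The last step is to commute $[-i]$ past $\Psi'(i)$. Since restriction of scalars along a $\mathbb{Z}$-algebra isomorphism commutes with the shift functor, $[-i]\Psi'(i)\cong\Psi'[-i]$ as functors ${\sf Gr }\mathbb{S}(W)(i)\to{\sf Gr }\mathcal{A'}$, whence
$$
\overline{\pi}[-i]\Psi'(i)=\pi_{\mathbb{S}(W)}(\Psi')^{-1}[-i]\Psi'(i)\cong\pi_{\mathbb{S}(W)}(\Psi')^{-1}\Psi'[-i]\cong\pi_{\mathbb{S}(W)}[-i].
$$
Plugging this into the isomorphism at the end of the previous paragraph gives $F\cong\pi_{\mathbb{S}(W)}[-i]\Delta'T_{\tau^{-1}}\omega_{\mathbb{S}(V)}$, which is the assertion of the proposition (with $\pi=\pi_{\mathbb{S}(W)}$ in the notation of the statement). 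The argument is entirely formal; the only place where one must be careful is the bookkeeping of the twists by $\tau^{-1}$ and the shifts by $i$ in the identity $\Delta\Psi_{\tau^{-1}}\cong\Psi'(i)\Delta'$, which amounts to checking that each ``induced equivalence'' appearing in the definitions of this section is indeed what it is claimed to be.
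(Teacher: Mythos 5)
Your proof is correct and is essentially the paper's argument rearranged as a linear chain of rewritings rather than a single commutative diagram: you start from Corollary~\ref{cor.factor}, substitute $\Gamma\cong\Psi\omega$ (Lemma~\ref{lemma.dig1}), slide $T_{\tau^{-1}}$ past $\Psi$ (the $\tau^{-1}$-version of Lemma~\ref{lemma.dig2}, which is indeed what the paper also invokes), and then use that $\Delta'$ is by definition the composite $\Psi'(i)^{-1}\Delta\Psi_{\tau^{-1}}$ together with the compatibility of $[-i]$ with $\Psi'$ and the definition of $\overline{\pi}$ — exactly the cells the paper's diagram is decomposed into. Your explicit flagging of the abuse of notation in $\overline{\pi}$ and the fact that Lemma~\ref{lemma.dig2} holds verbatim for $\tau^{-1}$ are accurate and welcome clarifications, but do not change the substance of the argument.
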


\begin{proof}
We show that the diagram
$$
\begin{CD}
{\sf Gr }\mathbb{S}(V)_{\tau^{-1}} & \overset{\Psi_{\tau^{-1}}}{\rightarrow} & {\sf Gr }\mathcal{A}_{\tau^{-1}} & \overset{\Delta}{\rightarrow} & {\sf Gr }\mathcal{A'}(i) & \rightarrow & {\sf Gr }\mathbb{S}(W)(i) \\
@A{T_{\tau^{-1}}}AA @AA{T_{\tau^{-1}}}A @VV{[-i]}V @VV{[-i]}V \\
{\sf Gr }\mathbb{S}(V) & \underset{\Psi}{\rightarrow} & {\sf Gr }\mathcal{A} & & {\sf Gr }\mathcal{A'} & \overset{\Psi^{-1}}{\rightarrow} & {\sf Gr }\mathbb{S}(W) \\
@A{\omega}AA @AA{\Gamma}A @V{\overline{\pi}}VV @VV{\pi}V \\
\mathbb{P}(V) & \underset{=}{\rightarrow} & \mathbb{P}(V) & \underset{F}{\rightarrow} & \mathbb{P}(W) & \underset{=}{\rightarrow} & \mathbb{P}(W)
\end{CD}
$$
whose upper right arrow is induced by $\Psi^{-1}$, commutes up to natural equivalence.  The bottom left square commutes by Lemma \ref{lemma.dig1}, the top left square commutes by Lemma \ref{lemma.dig2}, the central rectangle commutes by Corollary \ref{cor.factor}, the upper right square commutes by definition, and the bottom right square commutes by the definition of $\overline{\pi}$.  The result follows from the fact that the top row equals the functor $\Delta'$.
\end{proof}

\begin{cor} \label{cor.newfactor}
The diagram
\begin{equation} \label{eqn.factor}
\begin{CD}
\mathbb{P}(V) & \overset{F}{\rightarrow} & \mathbb{P}(W) \\
@V{T_{\tau^{-1}}}VV @AA{[-i]}A \\
{\sf Proj }\mathbb{S}(V)_{\tau^{-1}} & \underset{\Delta'}{\rightarrow} & {\sf Proj }\mathbb{S}(W)(i)
\end{CD}
\end{equation}
commutes up to natural equivalence.
\end{cor}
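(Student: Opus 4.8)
The plan is to derive this immediately from Proposition \ref{prop.mainfactor}, which already furnishes an isomorphism $F \cong \pi [-i] \Delta' T_{\tau^{-1}} \omega_{\mathbb{S}(V)}$ in which $T_{\tau^{-1}}$, $\Delta'$ and $[-i]$ are the functors at the level of ${\sf Gr}$. First I would observe that each of these three functors preserves torsion objects: $T_{\tau^{-1}}$ because it is a twist (Section \ref{section.t}), $\Delta'$ because it is induced by an isomorphism of $\mathbb{Z}$-algebras, and $[-i]$ because shift preserves ${\sf Tors}$. Hence, by \cite[Lemma 3.1]{smithmaps}, each descends to a functor between the corresponding ${\sf Proj}$ categories — which, following the conventions of the paper, we denote by the same symbols — and in each case the relevant quotient functors intertwine the ${\sf Gr}$- and ${\sf Proj}$-level versions:
\[
\pi_{\mathbb{S}(W)} \circ [-i] \cong [-i] \circ \pi_{\mathbb{S}(W)(i)}, \quad \pi_{\mathbb{S}(W)(i)} \circ \Delta' \cong \Delta' \circ \pi_{\mathbb{S}(V)_{\tau^{-1}}}, \quad \pi_{\mathbb{S}(V)_{\tau^{-1}}} \circ T_{\tau^{-1}} \cong T_{\tau^{-1}} \circ \pi_{\mathbb{S}(V)}.
\]

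Next I would push $\pi = \pi_{\mathbb{S}(W)}$ through the composition in Proposition \ref{prop.mainfactor} from left to right, applying these three natural isomorphisms in turn (first past $[-i]$, then past $\Delta'$, then past $T_{\tau^{-1}}$), obtaining
\[
F \cong [-i] \circ \Delta' \circ T_{\tau^{-1}} \circ \pi_{\mathbb{S}(V)} \circ \omega_{\mathbb{S}(V)},
\]
where now $[-i]$, $\Delta'$ and $T_{\tau^{-1}}$ denote the ${\sf Proj}$-level functors appearing in the statement. Since the counit $\pi_{\mathbb{S}(V)} \circ \omega_{\mathbb{S}(V)} \to {\sf id}_{\mathbb{P}(V)}$ is an isomorphism, the right-hand side is naturally equivalent to $[-i] \circ \Delta' \circ T_{\tau^{-1}}$, which is exactly the assertion that the displayed square commutes up to natural equivalence.

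The only point requiring care is bookkeeping: one must check that the three intertwining isomorphisms compose coherently and that the notational identification of ${\sf Gr}$- and ${\sf Proj}$-level functors used implicitly in Proposition \ref{prop.mainfactor} is the same one used in the statement of the corollary. Both are routine instances of \cite[Lemma 3.1]{smithmaps} and amount to unwinding definitions; I do not expect any genuine obstacle here, the corollary being a purely formal consequence of Proposition \ref{prop.mainfactor}.
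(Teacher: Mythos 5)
Your proposal is correct and takes exactly the same approach as the paper, which likewise derives the corollary directly from Proposition \ref{prop.mainfactor} and \cite[Lemma 3.1(1)]{smithmaps}, leaving precisely the intertwining and bookkeeping details you spell out to the reader.
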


\begin{proof}
The result follows from Proposition \ref{prop.mainfactor} and \cite[Lemma 3.1(1)]{smithmaps}.  We leave the details of the straightforward proof to the reader.
\end{proof}

The next result follows directly from Corollary \ref{cor.newfactor} and Proposition \ref{prop.newnewfactor}.
\begin{theorem} \label{thm.finalfactor}
Suppose $\operatorname{char }k \neq 2$.  There exists $\delta, \epsilon \in \operatorname{Gal}(K/k)$, $i \in \mathbb{Z}$ and an equivalence
$$
\Phi:\mathbb{P}(K_{\delta^{-1}} \otimes V \otimes K_{\epsilon}) \rightarrow \mathbb{P}(W^{i*})
$$
induced by an isomorphism of two-sided vector spaces $\phi: K_{\delta^{-1}} \otimes V \otimes K_{\epsilon} \rightarrow W^{i*}$ such that the diagram
\begin{equation} \label{eqn.finalfactor}
\begin{CD}
\mathbb{P}(V) & \overset{F}{\rightarrow} & \mathbb{P}(W) \\
@V{T_{\delta,\epsilon}}VV @AA{[-i]}A \\
\mathbb{P}(K_{\delta^{-1}} \otimes V \otimes K_{\epsilon}) & \underset{\Phi}{\rightarrow} & \mathbb{P}(W^{i*})
\end{CD}
\end{equation}
commutes up to natural equivalence.
\end{theorem}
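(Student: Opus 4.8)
The plan is to obtain the statement by transporting the diagram of Corollary~\ref{cor.newfactor} through the two canonical identifications of noncommutative symmetric algebras already set up in the paper, and then invoking Proposition~\ref{prop.newnewfactor} to recognize the resulting bottom arrow as a $\Phi$-type functor. First I would exploit the hypothesis $\operatorname{char} k \neq 2$: by Theorem~\ref{thm.twosidedisom} the associated sequence $\tau = \{\tau_m\}$ is then $2$-periodic, so $\tau_m = \tau_0$ for $m$ even and $\tau_m = \tau_1$ for $m$ odd. Setting $\delta := \tau_0^{-1}$ and $\epsilon := \tau_1^{-1}$, the $2$-periodic sequence $\zeta$ attached to $(\delta,\epsilon)$ in Section~\ref{section.t} coincides with $\tau^{-1}$; hence $\mathbb{S}(V)_{\tau^{-1}} = \mathbb{S}(V)_{\delta,\epsilon}$ and $K_{\delta^{-1}} \otimes V \otimes K_{\epsilon} = K_{\tau_0} \otimes V \otimes K_{\tau_1^{-1}}$, which is exactly the two-sided vector space occurring in Proposition~\ref{prop.newnewfactor}, and $i$ is the integer produced by Theorem~\ref{thm.twosidedisom}.

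Next I would make the two identifications explicit. By Lemma~\ref{lemma.canonicaladj}, ${\sf Proj }\mathbb{S}(V)_{\tau^{-1}} = {\sf Proj }\mathbb{S}(V)_{\delta,\epsilon}$ is canonically identified with $\mathbb{P}(K_{\delta^{-1}} \otimes V \otimes K_{\epsilon})$, and by the definition~(\ref{eqn.twisttwo}) of $T_{\delta,\epsilon}$ this identification carries the left-hand vertical $T_{\tau^{-1}}$ of Corollary~\ref{cor.newfactor} to $T_{\delta,\epsilon}$. Symmetrically, the canonical isomorphism~(\ref{eqn.canonicalz}) identifies ${\sf Proj }\mathbb{S}(W)(i)$ with $\mathbb{P}(W^{i*})$ and carries the right-hand vertical $[-i] : {\sf Proj }\mathbb{S}(W)(i) \to \mathbb{P}(W)$ of Corollary~\ref{cor.newfactor} to $[-i] : \mathbb{P}(W^{i*}) \to \mathbb{P}(W)$. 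Since all the functors in the construction preceding Corollary~\ref{cor.newfactor} (in particular $\Gamma$, $\Psi$, $\Delta$, $\Delta'$) depend only on a choice of the isomorphisms $\theta_j$, I am free to fix those $\theta_j$ to be the ones furnished by Proposition~\ref{prop.newnewfactor}.

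With that choice of $h$, Proposition~\ref{prop.newnewfactor} tells us that after transporting $\Delta'$ along the two identifications above it becomes the equivalence induced by an honest isomorphism of two-sided vector spaces $\phi : K_{\delta^{-1}} \otimes V \otimes K_{\epsilon} \to W^{i*}$, namely the degree-$(0,1)$ component of the map $\psi$ in Proposition~\ref{prop.newnewfactor}; call the induced equivalence $\Phi$. With $\delta$, $\epsilon$, $i$, $\phi$, $\Phi$ chosen in this way, diagram~(\ref{eqn.finalfactor}) is precisely the image of the diagram of Corollary~\ref{cor.newfactor} under these two canonical identifications, and therefore commutes up to natural equivalence. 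The one point that requires care is exactly this bookkeeping — verifying that the identification coming from Lemma~\ref{lemma.canonicaladj} genuinely intertwines $T_{\tau^{-1}}$ with $T_{\delta,\epsilon}$ without introducing a spurious twist, and that~(\ref{eqn.canonicalz}) likewise intertwines the two incarnations of $[-i]$ — both of which are definitional, but must be tracked consistently with the conventions fixed in Sections~\ref{section.shift} and~\ref{section.t}.
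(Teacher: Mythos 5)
Your proposal is correct and follows essentially the same route as the paper, which simply states that the theorem "follows directly from Corollary \ref{cor.newfactor} and Proposition \ref{prop.newnewfactor}"; you have just spelled out the bookkeeping that the paper leaves implicit (the $2$-periodicity of $\tau$, the choice $\delta = \tau_0^{-1}$, $\epsilon = \tau_1^{-1}$, the identifications via Lemma \ref{lemma.canonicaladj} and (\ref{eqn.canonicalz}), and the freedom to fix the $\theta_j$ as in Proposition \ref{prop.newnewfactor}).
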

We next prove that the integer $i$ and the automorphisms $\delta, \epsilon \in \operatorname{Gal}(K/k)$ from Theorem \ref{thm.finalfactor} are unique up to natural equivalence, and we explore the dependence of the equivalence $\Phi$ on $F$ from Theorem \ref{thm.finalfactor}.

\begin{prop} \label{prop.unique}
Retain the notation and hypotheses from Theorem \ref{thm.finalfactor}.  The integer $i$, $\delta$, and $\epsilon$ are unique up to natural equivalence.
\end{prop}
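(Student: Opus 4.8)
The plan is to show that $i$, $\delta$ and $\epsilon$ are forced to coincide with data intrinsic to $F$ --- namely the integer and the first two terms of the $2$-periodic sequence of automorphisms produced by Theorem \ref{thm.twosidedisom} --- both of which are unique up to natural equivalence of $F$ by that theorem.

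First I would pin down $i$. Tracing the line bundle $\mathcal{P}_{0}$ through the factorization (\ref{eqn.finalfactor}) and using Lemma \ref{lemma.functorvalues2}(1), Lemma \ref{lemma.functorvalues1}(1) and Lemma \ref{lemma.shiftp}, one gets $F(\mathcal{P}_{0}) \cong \mathcal{P}'_{i}$: each of $T_{\delta,\epsilon}$ and $\Phi$ fixes the index $0$, and $[-i]$ shifts it by $i$. Since the object $F(\mathcal{P}_{0})$ depends only on $F$, two factorizations with integers $i$ and $i'$ give $\mathcal{P}'_{i} \cong \mathcal{P}'_{i'}$, whence $i = i'$ by Corollary \ref{cor.twosidediso}.

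Next I would pin down $\delta$ and $\epsilon$ by an associated-sequence argument. Let $\tau = \{\tau_{m}\}$ be the sequence of automorphisms attached to $F$ by Theorem \ref{thm.twosidedisom}; it is $2$-periodic (since $\operatorname{char} k \neq 2$) and, by that theorem, depends only on the natural equivalence class of $F$. I claim that any factorization as in Theorem \ref{thm.finalfactor} must satisfy $\delta = \tau_{0}^{-1}$ and $\epsilon = \tau_{1}^{-1}$, which gives the uniqueness. To prove the claim I would compute the associated sequence of the composite $[-i] \circ \Phi \circ T_{\delta,\epsilon}$ and match it with $\tau$. The two ingredients are: (a) the associated sequence of an equivalence is unchanged under left composition with $[-i]$ or with $\Phi$, because the shift functor (via the isomorphism (\ref{eqn.canonicalz})) and $\Phi$ (via $\phi$) are induced by isomorphisms of $\mathbb{Z}$-algebras over $K$, hence act as the identity on the degree $(l,l)$ components and therefore on every endomorphism ring $\operatorname{End}(\mathcal{P}_{l}) \cong K$ (conjugation by the comparison isomorphisms $\theta_{l}$ is trivial there since this ring is commutative); and (b) by Lemma \ref{lemma.functorvalues2}(2) the functor $T_{\delta,\epsilon}$ carries multiplication by $a \in K$ on $\mathcal{P}_{l}$ to multiplication by $\zeta_{l}^{-1}(a)$, where $\zeta_{l} = \delta$ for $l$ even and $\zeta_{l} = \epsilon$ for $l$ odd, so its associated sequence is $\{\zeta_{l}^{-1}\}$. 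Combining (a) and (b), the associated sequence of $F$ is $\{\zeta_{l}^{-1}\}$, and uniqueness of the associated sequence (Theorem \ref{thm.twosidedisom}) forces $\tau_{0} = \delta^{-1}$ and $\tau_{1} = \epsilon^{-1}$.

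The main obstacle is the careful verification of (a) and (b) at the level of the definitions in the proof of Theorem \ref{thm.twosidedisom}: for (a) one must unwind the defining square for $\tau_{l}$ and check that conjugation by $\theta_{l}$ together with the canonical identifications $\mathcal{A}_{ll} \cong K$ and $\mathcal{A}'_{i+l,i+l} \cong K$ composes to the identity when $F$ is replaced by $[-i]$ or $\Phi$; for (b) one must read off from the isomorphism $f$ of Lemma \ref{lemma.functorvalues2}(2) exactly how $T_{\delta,\epsilon}$ twists the scalar action at index $l$. Both are routine bookkeeping, but they carry the actual content of the proposition; everything else is formal. (As a consistency check, applying this to the factorization produced in Corollary \ref{cor.newfactor}, where the left vertical is $T_{\tau^{-1}}$, recovers exactly $\delta = \tau_0^{-1}$, $\epsilon = \tau_1^{-1}$.)
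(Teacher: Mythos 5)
Your proposal is correct and takes essentially the same approach as the paper: both pin down $i$ via the action on the line bundles $\mathcal{P}_l$ and Corollary \ref{cor.twosidediso}, and both determine $\delta,\epsilon$ by identifying the associated sequence from Theorem \ref{thm.twosidedisom} with $\{\zeta_l^{-1}\}$. The ``routine bookkeeping'' you defer in points (a) and (b) is exactly what the paper carries out in Steps 1--5, culminating in the computation $\tau_i(c)=\zeta_i^{-1}(c)$.
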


\begin{proof}
First, suppose $[-i] \Phi_{1} T_{\delta_{1}, \epsilon_{1}} \cong [-j] \Phi_{2} T_{\delta_{2}, \epsilon_{2}}$.  By Lemma \ref{lemma.functorvalues1}(1), Lemma \ref{lemma.functorvalues2}(1) and Lemma \ref{lemma.shiftp}, the left-hand side takes $\mathcal{P}_{l}$ to $\mathcal{P}'_{l+i}$, while the right-hand side takes $\mathcal{P}_{l}$ to $\mathcal{P}'_{l+j}$.  Therefore $i=j$ by Corollary \ref{cor.twosidediso}.

Next, we must show that $\delta_{1}=\delta_{2}$ and $\epsilon_{1}=\epsilon_{2}$.  To accomplish this, we show that if $F=\Phi T_{\delta,\epsilon}$ where $\Phi$ is induced by an isomorphism of two-sided vector spaces $\phi:V \rightarrow W$, then the sequence $\tau$ associated to $F$ by Theorem \ref{thm.twosidedisom} is
$$
\tau_{i} = \begin{cases} \delta^{-1} \mbox{ if $i$ is even} \\ \epsilon^{-1} \mbox{ if $i$ is odd.} \end{cases}
$$
The result will then follow from Theorem \ref{thm.twosidedisom}.  By Lemma \ref{lemma.functorvalues1}(1) and Lemma \ref{lemma.functorvalues2}(1), the integer $i$ in Theorem \ref{thm.twosidedisom} is $0$.  After some preliminary steps, we will compute $t_{jj}:\mathcal{A}_{jj} \rightarrow \mathcal{A'}_{jj}$ where $t$ is defined in the proof of Theorem \ref{thm.twosidedisom}, and will use this computation to determine the sequence $\tau$.  Throughout the proof we let $T = T_{\delta,\epsilon}$, we let
$$
\zeta_{i} = \begin{cases} \delta \mbox{ if $i$ is even} \\ \epsilon \mbox{ if $i$ is odd,} \end{cases}
$$
we let $U=K_{\delta^{-1}} \otimes V \otimes K_{\epsilon}$, we let $Q_{i} = e_{i}\mathbb{S}(U)$ and we let $\mathcal{Q}_{i}=\pi Q_{i}$.  By Lemma \ref{lemma.functorvalues1}(1) and Lemma \ref{lemma.functorvalues2}(1), there exist isomorphisms  $h_{1}:\Phi Q_{i} \rightarrow P'_{i}$ and $h_{2}:TP_{i} \rightarrow Q_{i}$.
\newline
\newline
\noindent
{\it Step 1:  We note that the diagram
$$
\begin{CD}
\Hom_{\mathbb{P}(U)}(T \mathcal{P}_{i}, T \mathcal{P}_{i}) & \overset{\Phi}{\rightarrow} & \Hom_{\mathbb{P}(W)}(\Phi T \mathcal{P}_{i}, \Phi T \mathcal{P}_{i}) & \rightarrow & \Hom_{\mathbb{P}(W)}(\Phi \mathcal{Q}_{i}, \Phi \mathcal{Q}_{i}) \\
@VVV & & @VVV \\
\Hom_{\mathbb{P}(U)}(\mathcal{Q}_{i}, \mathcal{Q}_{i}) & \underset{\Phi}{\rightarrow} & \Hom_{\mathbb{P}(W)}(\Phi \mathcal{Q}_{i}, \Phi \mathcal{Q}_{i}) & \rightarrow & \Hom_{\mathbb{P}(W)}(\mathcal{P}'_{i}, \mathcal{P}'_{i})
\end{CD}
$$
whose left vertical is induced by $h_{2}$, whose upper right horizontal is induced by $h_{2}$ and whose bottom right horizontal and right vertical are induced by $h_{1}$, commutes.}  The straightforward verification is left to the reader.
\newline
\newline
\noindent
{\it Step 2: We note that the diagram
$$
\begin{CD}
\Hom_{{\sf Gr }\mathbb{S}(U)}(Q_{i},Q_{i}) & \overset{\Phi}{\rightarrow} & \Hom_{{\sf Gr }\mathbb{S}(W)}(\Phi Q_{i}, \Phi Q_{i}) & \rightarrow & \Hom_{{\sf Gr }\mathbb{S}(W)}(P'_{i}, P'_{i}) \\
@V{\pi}VV & & @VV{\pi}V \\
\Hom_{\mathbb{P}(U)}(\mathcal{Q}_{i},\mathcal{Q}_{i}) & \underset{\Phi}{\rightarrow} & \Hom_{\mathbb{P}(W)}(\Phi \mathcal{Q}_{i},\Phi \mathcal{Q}_{i}) & \rightarrow & \Hom_{\mathbb{P}(W)}(\mathcal{P}'_{i},\mathcal{P}'_{i})
\end{CD}
$$
whose right horizontal maps are induced by $h_{2}$, commutes.}  The routine proof, which employs Lemma \ref{lemma.pretwosidediso}, is omitted.
\newline
\newline
\noindent
{\it Step 3: The diagram
$$
\begin{CD}
\Hom_{{\sf Gr }\mathbb{S}(V)}(P_{i},P_{i}) & \overset{T}{\rightarrow} &  \Hom_{{\sf Gr }\mathbb{S}(U)}(TP_{i},TP_{i}) & \rightarrow & \Hom_{{\sf Gr }\mathbb{S}(U)}(Q_{i},Q_{i}) \\
@V{\pi}VV & & @VV{\pi}V \\
\Hom_{\mathbb{P}(V)}(\mathcal{P}_{i}, \mathcal{P}_{i}) & \underset{T}{\rightarrow} &  \Hom_{\mathbb{P}(U)}(T \mathcal{P}_{i},T \mathcal{P}_{i}) & \rightarrow & \Hom_{\mathbb{P}(U)}(\mathcal{Q}_{i},\mathcal{Q}_{i})
\end{CD}
$$
whose unlabeled arrows are induced by $h_{2}$, commutes.}  The straightforward proof is similar to the proof of Step 2, and is omitted.
\newline
\newline
\noindent
{\it Step 4: Let $H$ denote the algebra described in Section \ref{section.isom}.  The composition $t_{ii} \pi: H_{ii} \rightarrow \mathcal{A'}_{ii}$ defined in the proof of Theorem \ref{thm.twosidedisom} is equal to the composition
\begin{eqnarray*}
\operatorname{Hom }_{{\sf Gr }\mathbb{S}(V)}(P_{i}, P_{i}) & \overset{T}{\rightarrow} & \operatorname{Hom }_{{\sf Gr }\mathbb{S}(U)}(T P_{i}, T P_{i}) \\
& \rightarrow & \operatorname{Hom }_{{\sf Gr }\mathbb{S}(U)}(Q_{i}, Q_{i}) \\
& \overset{\Phi}{\rightarrow} & \operatorname{Hom }_{{\sf Gr }\mathbb{S}(W)}(\Phi Q_{i}, \Phi Q_{i}) \\
& \rightarrow & \operatorname{Hom }_{{\sf Gr }\mathbb{S}(W)}(P'_{i}, P'_{i}) \\
& \overset{\pi}{\rightarrow} & \operatorname{Hom }_{\mathbb{P}(W)}(\mathcal{P}'_{i}, \mathcal{P}'_{i})
\end{eqnarray*}
whose second arrow is induced by $h_{2}$ and whose fourth arrow is induced by $h_{1}$.}  This follows from Step1, Step 2 and Step 3.
\newline
\newline
\noindent
{\it Step 5:  We compute $\tau_{i}$.}  Let $c \in K$, and let ${}_{c}\mu \in \operatorname{Hom}_{{\sf Gr }\mathbb{S}(V)}(P_{i}, P_{i})$ denote left multiplication by $c$.  By Step 4, $t_{ii} \pi ({}_{c}\mu) = \pi (h_{1} \Phi(h_{2}) \Phi T ({}_{c}\mu)\Phi( h_{2}^{-1}) h_{1}^{-1})$. Therefore, by the proof of Theorem \ref{thm.twosidedisom},
\begin{equation} \label{eqn.biglittlecomp}
\tau_{i}(c) = h_{1} \Phi(h_{2}) \Phi T ({}_{c}\mu)\Phi( h_{2}^{-1}) h_{1}^{-1}(1).
\end{equation}
We explicitly compute the right-hand side of (\ref{eqn.biglittlecomp}).  By Lemma \ref{lemma.functorvalues1}(2) and Lemma \ref{lemma.functorvalues2}(2), there exist nonzero $a, b \in K$ such that $(h_{1})_{i}$ is left multiplication by $a$, ${}_{a}\mu$, and $(h_{2})_{i}={}_{b}\mu \zeta_{i}^{-1}$ where ${}_{b}\mu$ denotes left multiplication by $b$.  Recall that $\Phi$ doesn't change the underlying set of a module or the value of a morphism. Similarly, if $h:M \rightarrow N$ is a morphism in ${\sf Gr }\mathbb{S}(V)$, then $T(h)_{i}:M \otimes K_{\zeta_{i}} \rightarrow N \otimes K_{\zeta_{i}}$ is the map $h_{i}\otimes K_{\zeta_{i}}$.  It then follows from direct calculation that $\tau_{i}(c)=\zeta_{i}^{-1}(c)$ and the result follows.
\end{proof}

Summarizing our results in this section, we have the following
\begin{thm} \label{thm.finalmain}
Suppose $\operatorname{char }k \neq 2$ and let $F:\mathbb{P}(V) \rightarrow \mathbb{P}(W)$ be a $k$-linear equivalence.  Up to natural equivalence, there exists a unique $i \in \mathbb{Z}$, unique $\delta, \epsilon \in \mbox{Gal }(K/k)$, and an isomorphism $\phi:K_{\delta^{-1}} \otimes V \otimes K_{\epsilon} \rightarrow W^{i^*}$ inducing an equivalence $\Phi: \mathbb{P}(K_{\delta^{-1}} \otimes V \otimes K_{\epsilon}) \rightarrow \mathbb{P}(W^{i*})$ such that
$$
F \cong [-i] \circ \Phi \circ T_{\delta, \epsilon}.
$$
Furthermore, $\Phi$ is determined up to inner automorphism in the sense that if $\Phi'$ is induced by $\phi':  K_{\delta^{-1}} \otimes V \otimes K_{\epsilon} \rightarrow W^{i^*}$, then $\Phi$ is naturally equivalent to $\Phi'$ if and only if there exist nonzero $a, b \in K$ such that $\phi' \phi^{-1}(w) = a \cdot w \cdot b$ for all $w \in W^{i*}$.
\end{thm}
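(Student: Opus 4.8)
The plan is to deduce Theorem \ref{thm.finalmain} from earlier results with only light bookkeeping. The factorization $F\cong[-i]\circ\Phi\circ T_{\delta,\epsilon}$ with $\Phi$ induced by an isomorphism of two-sided vector spaces $\phi\colon K_{\delta^{-1}}\otimes V\otimes K_{\epsilon}\to W^{i*}$ is exactly the content of Theorem \ref{thm.finalfactor}, and the uniqueness of $i$, $\delta$, $\epsilon$ up to natural equivalence is exactly Proposition \ref{prop.unique}; so I would record these two facts and move immediately to the ``Furthermore'' clause, which is the only part needing a fresh argument. Write $U := K_{\delta^{-1}}\otimes V\otimes K_{\epsilon}$, and suppose $\Phi$ and $\Phi'$ are the equivalences $\mathbb{P}(U)\to\mathbb{P}(W^{i*})$ induced by isomorphisms $\phi,\phi'\colon U\to W^{i*}$ respectively.

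For the forward implication I would assume $\Phi$ and $\Phi'$ are naturally equivalent and invoke Lemma \ref{lemma.bigphi}(2), with $U$ playing the role of the ambient $V$ in that lemma, $W^{i*}$ that of $W$, and $\Phi'$, $\Phi$ in the roles of $\Phi_1$, $\Phi_2$. This produces nonzero $a,b\in K$ with $\phi^{-1}\phi'(v)=a\cdot v\cdot b$ for every $v\in U$. Applying the bimodule map $\phi$ to both sides and substituting $w=\phi(v)$ then gives $\phi'\phi^{-1}(w)=a\cdot w\cdot b$ for all $w\in W^{i*}$, which is precisely the asserted condition.

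For the converse I would start from nonzero $a,b\in K$ with $\phi'\phi^{-1}(w)=a\cdot w\cdot b$ for all $w\in W^{i*}$ and form the automorphism $\theta:=(\phi')^{-1}\circ\phi\colon U\to U$. Setting $w=\phi(v)$ in the hypothesis gives $\phi'(v)=a\cdot\phi(v)\cdot b$, hence $\phi(v)=a^{-1}\cdot\phi'(v)\cdot b^{-1}$, and applying the bimodule map $(\phi')^{-1}$ shows $\theta(v)=a^{-1}\cdot v\cdot b^{-1}$ for all $v\in U$. Lemma \ref{lemma.finalbigphi}, applied with $U$ in place of $V$, then says the equivalence $\Theta\colon\mathbb{P}(U)\to\mathbb{P}(U)$ induced by $\theta$ is naturally equivalent to the identity. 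Since $\phi=\phi'\circ\theta$, the compatibility of the assignment $\phi\mapsto\Phi$ with composition of isomorphisms recorded in Section \ref{section.canonical} gives $\Phi\cong\Phi'\circ\Theta\cong\Phi'$, which finishes the proof.

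I do not anticipate a real obstacle: the substantive work is already done in Theorem \ref{thm.finalfactor}, Proposition \ref{prop.unique}, and Lemmas \ref{lemma.bigphi} and \ref{lemma.finalbigphi}. The one thing to be careful about is the bookkeeping --- keeping straight which isomorphism of two-sided vector spaces induces which functor and in which direction, matching the two equivalent forms of the scalar condition ($\phi'\phi^{-1}$ versus $(\phi')^{-1}\phi$), and applying the composition compatibility with the two factors on the correct sides.
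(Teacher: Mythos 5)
Your proposal is correct and takes essentially the same route as the paper, which also derives the factorization and uniqueness from Theorem \ref{thm.finalfactor} and Proposition \ref{prop.unique} and the characterization of $\Phi$ from Lemma \ref{lemma.bigphi}(2) and Lemma \ref{lemma.finalbigphi}. The only difference is that you spell out the bookkeeping (which the paper leaves implicit), and you do it correctly.
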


\begin{proof}
The existence of the factorization of $F$ follows from Theorem \ref{thm.finalfactor}, the uniqueness of $i$, $\delta$, and $\epsilon$ follow from Proposition \ref{prop.unique}, and the classification of possible $\Phi$ follows from Lemma \ref{lemma.bigphi}(2) and Lemma \ref{lemma.finalbigphi}.
\end{proof}

\section{Automorphism groups of noncommutative projective lines}
For the readers convenience we recall the following notation from the introduction:  We define the {\it automorphism group of }$\mathbb{P}(V)$, denoted $\operatorname{Aut }\mathbb{P}(V)$, to be the set of $k$-linear shift-free equivalences $\mathbb{P}(V) \rightarrow \mathbb{P}(V)$ modulo natural equivalence, with composition induced by composition of functors.  Let $\operatorname{Stab}V$ denote the subgroup of $\mbox{Gal }(K/k)^{2}$ consisting of $(\delta, \epsilon)$ such that $K_{\delta^{-1}} \otimes V \otimes K_{\epsilon} \cong V$ and let $\operatorname{Aut }V$ denote the set of two-sided vector spaces isomorphisms $V \rightarrow V$ modulo the relation defined by setting $\phi' \equiv \phi$ if and only if there exist nonzero $a, b \in K$ such that $\phi'  \phi^{-1}(v)=a \cdot v \cdot b$ for all $v \in V$.  Our goal in this section, realized in Theorem \ref{thm.finalfinalfinal}, is to compute $\operatorname{Aut }\mathbb{P}(V)$.

In this section we will assume $\operatorname{char }k \neq 2$ so that we may employ a divide-and-conquer strategy using the cases in Lemma \ref{lemma.twosidedclass}.  We advise the reader to recall the definition of $V(\lambda)$ and $K(\lambda)$ introduced after the statement of Theorem \ref{thm.papp} from the introduction.  We also introduce the following notation: if $\delta \in \operatorname{Gal }(K/k)$ and $\gamma \in \operatorname{Emb}(K)$, we let ${}_{\delta}K \vee \gamma(K)_{\gamma}$ denote the two-sided vector space over $K$ with underlying set $K \vee \gamma(K)$ and $K$-action defined by $a \cdot v \cdot b := av\gamma(b)$.  Thus, ${}_{1}K\vee \lambda(K)_{\lambda} = V(\lambda)$.

\begin{prop} \label{prop.canonicalmaps}
If $(\delta, \epsilon) \in \operatorname{Stab}V$ then there exists a canonical isomorphism
$$
\psi_{\delta,\epsilon}:V \rightarrow K_{\delta^{-1}} \otimes V \otimes K_{\epsilon}.
$$
Therefore, there is a canonical bijection $\operatorname{Aut }\mathbb{P}(V) \rightarrow \operatorname{Aut}V \times \operatorname{Stab}V$.
\end{prop}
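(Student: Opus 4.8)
The plan is to prove the two assertions of the proposition in turn: first the existence of a canonical isomorphism $\psi_{\delta,\epsilon}\colon V\to K_{\delta^{-1}}\otimes V\otimes K_{\epsilon}$ for each $(\delta,\epsilon)\in\operatorname{Stab}V$, obtained by running through the three cases of Lemma~\ref{lemma.twosidedclass} (this is where $\operatorname{char}k\neq 2$ enters), and then the bijection $\operatorname{Aut}\mathbb{P}(V)\to\operatorname{Aut}V\times\operatorname{Stab}V$, which is formal once the $\psi_{\delta,\epsilon}$ are in hand, given Theorem~\ref{thm.finalmain}. Throughout I would use the elementary canonical isomorphism $K_{\alpha}\otimes_{K}K_{\beta}\xrightarrow{\sim}K_{\alpha\beta}$, $x\otimes y\mapsto x\alpha(y)$, together with the fact that $K_{\mu}\cong K_{\nu}$ as two-sided vector spaces if and only if $\mu=\nu$.

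In the decomposable cases $V\cong K_{\sigma}\oplus K_{\tau}$ (with $\sigma=\tau$ allowed), applying the canonical isomorphism above twice yields a canonical identification $K_{\delta^{-1}}\otimes V\otimes K_{\epsilon}\cong K_{\delta^{-1}\sigma\epsilon}\oplus K_{\delta^{-1}\tau\epsilon}$. Since each $K_{\mu}$ is indecomposable, Krull--Schmidt together with the hypothesis $(\delta,\epsilon)\in\operatorname{Stab}V$ forces $\{\delta^{-1}\sigma\epsilon,\delta^{-1}\tau\epsilon\}=\{\sigma,\tau\}$ as multisets, recovering the descriptions of $\operatorname{Stab}V$ recorded in the introduction. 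I would then take $\psi_{\delta,\epsilon}$ to be the inverse of the canonical identification, precomposed with the matching of summands that carries the $K_{\sigma}$-summand of $V$ to the summand of $K_{\delta^{-1}}\otimes V\otimes K_{\epsilon}$ isomorphic to it; when $\sigma\neq\tau$ this matching is unique, and when $\sigma=\tau$ one uses the identity matching.

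The simple case $V=V(\lambda)$ is the substantive one, and I expect it to be the main obstacle. Unwinding the definitions of $-\otimes-$ and of $K_{\mu}$ shows that $K_{\delta^{-1}}\otimes V(\lambda)\otimes K_{\epsilon}$ is canonically the two-sided vector space on the underlying field $K(\lambda)$ on which $a\in K$ acts on the left by multiplication by $\delta(a)$ and $b\in K$ acts on the right by multiplication by $(\lambda\epsilon)(b)$ (in the notation introduced before the proposition, this is ${}_{\delta}K\vee\lambda(K)_{\lambda\epsilon}$). By Lemma~\ref{lemma.autocomp} and Theorem~\ref{thm.papp}, the condition $(\delta,\epsilon)\in\operatorname{Stab}V$ is exactly $(\overline{\delta}^{-1}\lambda\epsilon)^{G}=\lambda^{G}$, so there is $h\in G=\operatorname{Gal}(\overline{K}/K)$ with $h\lambda=\overline{\delta}^{-1}\lambda\epsilon$. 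Then $\overline{\delta}h$ is a $k$-automorphism of $\overline{K}$ carrying $K$ onto $K$ and $\lambda(K)$ onto $\lambda(K)$, hence restricting to a bijection of $K(\lambda)$; a direct check gives $\psi(av)=\delta(a)\psi(v)$ and $\psi(v\lambda(b))=\psi(v)(\lambda\epsilon)(b)$, so this restriction is an isomorphism $V(\lambda)\to K_{\delta^{-1}}\otimes V(\lambda)\otimes K_{\epsilon}$, which I take for $\psi_{\delta,\epsilon}$. It depends only on the chosen extension $\overline{\delta}$ of $\delta$; translating the orbit condition into this explicit field automorphism with the correct sidedness is the one place where real work is needed, the rest being bookkeeping.

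For the bijection, let $F\colon\mathbb{P}(V)\to\mathbb{P}(V)$ be a shift-free $k$-linear equivalence. By Theorem~\ref{thm.finalmain} applied with $W=V$ (so necessarily $i=0$, as $F$ is shift-free) we have $F\cong\Phi\circ T_{\delta,\epsilon}$ for unique $\delta,\epsilon\in\operatorname{Gal}(K/k)$ and an isomorphism $\phi\colon K_{\delta^{-1}}\otimes V\otimes K_{\epsilon}\to V$, the latter unique up to the relation $\phi'\equiv\phi\iff\phi'\phi^{-1}(v)=a\cdot v\cdot b$; in particular $(\delta,\epsilon)\in\operatorname{Stab}V$, since such a $\phi$ exists. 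I would then define
$$
\operatorname{Aut}\mathbb{P}(V)\longrightarrow\operatorname{Aut}V\times\operatorname{Stab}V,\qquad [F]\longmapsto\bigl([\phi\circ\psi_{\delta,\epsilon}],(\delta,\epsilon)\bigr).
$$
This is well defined since $\phi\circ\psi_{\delta,\epsilon}\colon V\to V$ is an isomorphism and replacing $\phi$ by an equivalent $\phi'$ replaces $\phi\psi_{\delta,\epsilon}$ by $\phi'\psi_{\delta,\epsilon}$ with $(\phi'\psi_{\delta,\epsilon})(\phi\psi_{\delta,\epsilon})^{-1}=\phi'\phi^{-1}$ of the form $v\mapsto a\cdot v\cdot b$, hence equivalent in $\operatorname{Aut}V$. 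Running the same computation in reverse gives injectivity (uniqueness of $(\delta,\epsilon)$ pins down $\psi_{\delta,\epsilon}$, and $\phi'\psi_{\delta,\epsilon}\equiv\phi\psi_{\delta,\epsilon}$ forces $\phi'\equiv\phi$, so $\Phi\cong\Phi'$ and $F\cong F'$ by Theorem~\ref{thm.finalmain}); surjectivity follows by sending $([g],(\delta,\epsilon))$ to the class of $\Phi\circ T_{\delta,\epsilon}$ with $\Phi$ induced by $\phi:=g\circ\psi_{\delta,\epsilon}^{-1}$, which is shift-free and maps back to $([g],(\delta,\epsilon))$.
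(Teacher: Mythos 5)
Your proposal follows essentially the same approach as the paper: a case analysis via Lemma~\ref{lemma.twosidedclass} to construct $\psi_{\delta,\epsilon}$, followed by the bijection $[F]=[\Phi T_{\delta,\epsilon}]\mapsto([\phi\psi_{\delta,\epsilon}],(\delta,\epsilon))$ coming from Theorem~\ref{thm.finalmain}. The maps you produce agree with the paper's in each case --- you phrase the decomposable cases slightly more uniformly via Krull--Schmidt, and in the simple case you write the automorphism of $K(\lambda)$ as $\overline{\delta}h$ where the paper simply re-chooses the extension $\overline{\delta}$ so that $\overline{\delta}^{-1}\lambda\epsilon=\lambda$, but these are the same map.
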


\begin{proof}
We prove the first part in three cases corresponding to the structure of $V$ according to Lemma \ref{lemma.twosidedclass}.

First, suppose there exists a $\sigma \in \operatorname{Gal}(K/k)$ such that $V=K_{\sigma} \oplus K_{\sigma}$.  The fact that $(\delta,\epsilon) \in \operatorname{Stab}V$ then implies that $\epsilon=\sigma^{-1} \delta \sigma$, and it is thus straightforward to check that the map
$$
\psi_{\delta, \epsilon}:V \rightarrow K_{\delta^{-1}} \otimes V \otimes K_{\epsilon}
$$
defined by $(a,b) \mapsto 1 \otimes (\delta(a),\delta(b)) \otimes 1$ is an isomorphism of two-sided vector spaces.

Next, suppose there exist $\sigma, \tau \in \operatorname{Gal}(K/k)$ such that $\sigma \neq \tau$ and $V=K_{\sigma} \oplus K_{\tau}$, and suppose $(\delta, \epsilon) \in \operatorname{Stab}V$.  There are two possibilities.  If $\delta^{-1} \sigma \epsilon= \sigma$ and $\delta^{-1} \tau \epsilon = \tau$, then $\epsilon=\sigma^{-1} \delta \sigma=\tau^{-1} \delta \tau$, and the map
$$
\psi_{\delta, \epsilon}:V \rightarrow K_{\delta^{-1}} \otimes V \otimes K_{\epsilon}
$$
defined by $(a,b) \mapsto 1 \otimes (\delta(a),\delta(b)) \otimes 1$ is an isomorphism of two-sided vector spaces.

If $\delta^{-1} \sigma \epsilon =\tau$ and $\delta^{-1} \tau \epsilon = \sigma$, then $\epsilon=\tau^{-1} \delta \sigma=\sigma^{-1} \delta \tau$, and the map
$$
\psi_{\delta, \epsilon}:V \rightarrow K_{\delta^{-1}} \otimes V \otimes K_{\epsilon}
$$
defined by $(a,b) \mapsto 1 \otimes (\delta(b),\delta(a)) \otimes 1$ is an isomorphism of two-sided vector spaces.

Finally, suppose $V={}_{1}K \vee \lambda(K)_{\lambda}$.  Suppose, further, that $(\delta, \epsilon) \in \operatorname{Stab}V$.  It follows from Lemma \ref{lemma.autocomp} that ${\gamma}^{-1} \lambda \epsilon \in \lambda^{G}$ for some extension $\gamma$ of $\delta$.  Therefore, there exists an extension $\overline{\delta}$ of $\delta$ such that ${\overline{\delta}}^{-1} \lambda \epsilon = \lambda$.  We claim that $\overline{\delta}|_{K(\lambda)}$ is a $k$-linear automorphism of $K(\lambda)$ and that if $\overline{\delta}'$ is another extension of $\delta$ with the property that $(\overline{\delta}')^{-1}\lambda \epsilon = \lambda$ then $\overline{\delta}'|_{K(\lambda)}=\overline{\delta}|_{K(\lambda)}$.  To prove the claims, let $a \in K$ be such that $K=k[a]$.  Then $\lambda(a)$ is not in $K$, and every element of $K(\lambda)$ can be written uniquely in the form $f(a)+g(a)\lambda(a)$ for some $f,g \in k[x]$.  We have
\begin{eqnarray*}
\overline{\delta}(f(a)+g(a)\lambda(a)) & = & f(\delta(a))+g(\delta(a))\overline{\delta}(\lambda(a)) \\
& = & f(\delta(a))+g(\delta(a))\lambda(\epsilon(a)) \in K(\lambda).
\end{eqnarray*}
The first claim follows immediately, while the second claim follows from the fact that if the above computation is preformed using $\overline{\delta}'$ in place of $\overline{\delta}$, the same outcome occurs.

We define $\psi:K(\lambda) \rightarrow K(\lambda)$ to be the function $\overline{\delta}|_{K(\lambda)}$ where $\overline{\delta}$ is some extension of $\delta$ such that ${\overline{\delta}}^{-1} \lambda \epsilon = \lambda$.

We next claim that $\psi$ induces an isomorphism of two-sided vector spaces
$$
\psi:{}_{1}K \vee \lambda(K)_{\lambda} \rightarrow {}_{\delta}K \vee \lambda\epsilon(K)_{\lambda \epsilon}.
$$
The proof is routine and omitted.

Finally, we claim that ${}_{\delta}K \vee \lambda\epsilon(K)_{\lambda \epsilon}$ is canonically isomorphic to $K_{\delta^{-1}} \otimes V \otimes K_{\epsilon}$.  To prove the claim, one checks that the function
\begin{equation} \label{eqn.precomp}
{}_{\delta}K \vee \lambda \epsilon (K)_{\lambda \epsilon} \rightarrow K_{\delta^{-1}} \otimes V \otimes K_{\epsilon}
\end{equation}
defined by sending $v$ to $1 \otimes v \otimes 1$ is a two-sided vector space isomorphism.  We define $\psi_{\delta, \epsilon}$ to be the composition of (\ref{eqn.precomp}) with $\psi$.

For the second part of the proposition, we note that if $F:\mathbb{P}(V) \rightarrow \mathbb{P}(V)$ is a shift-free $k$-linear equivalence, then by Theorem \ref{thm.finalmain}, $F \cong \Phi T_{\delta, \epsilon}$, where $\Phi:\mathbb{P}(K_{\delta^{-1}} \otimes V \otimes K_{\epsilon}) \rightarrow \mathbb{P}(V)$ is induced by an isomorphism of two-sided vector spaces $\phi:K_{\delta^{-1}} \otimes V \otimes K_{\epsilon} \rightarrow V$.  We define a function
$$
\operatorname{Aut }\mathbb{P}(V) \rightarrow \operatorname{Aut}V \times \operatorname{Stab}V
$$
by sending the class of $\Phi T_{\delta, \epsilon}$ to $([\phi  \psi_{\delta, \epsilon}], (\delta, \epsilon))$.
\end{proof}
We now turn to the computation of the groups $\operatorname{Aut }V$ and $\operatorname{Stab }V$.
\begin{lemma} \label{lemma.aut}
The group $\operatorname{Aut}(V)$ is the following:
\begin{enumerate}
\item{} if there exists $\sigma \in \operatorname{Gal}(K/k)$ such that $V=K_{\sigma} \oplus K_{\sigma}$ then
    $$
    \operatorname{Aut}(V) \cong \mbox{PGL}_{2}(K),
    $$

\item{} if there exists $\sigma, \tau \in \operatorname{Gal}(K/k)$, $\sigma \neq \tau$ and $V=K_{\sigma} \oplus K_{\tau}$ then
    $$
    \operatorname{Aut}(V) \cong K^{*} \times K^{*}/\{(a \sigma(b),a \tau(b))|a,b \in K^{*}\}
    $$

\item{} if $V={}_{1}K \vee \lambda(K)_{\lambda}$ then
$$
\operatorname{Aut}(V) \cong K(\lambda)^{*}/K^{*}\lambda(K)^{*}.
$$
\end{enumerate}
\end{lemma}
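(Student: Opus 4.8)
The plan is to realize $\operatorname{Aut}(V)$ as a quotient group and then compute the pieces case by case according to Lemma \ref{lemma.twosidedclass}. Write $\operatorname{Aut}_{K \otimes_{k} K}(V)$ for the group of two-sided vector space automorphisms of $V$ under composition, and let $N \subseteq \operatorname{Aut}_{K \otimes_{k} K}(V)$ be the set of maps $n_{a,b} \colon v \mapsto a \cdot v \cdot b$ for $a, b \in K^{*}$. First I would observe that $N$ is a subgroup (since $n_{a,b} n_{a',b'} = n_{aa', b'b}$ and $n_{a,b}^{-1} = n_{a^{-1}, b^{-1}}$) and in fact a central one: for any two-sided vector space automorphism $\psi$ of $V$ one has $\psi n_{a,b} \psi^{-1}(v) = a \cdot \psi(\psi^{-1}(v)) \cdot b = n_{a,b}(v)$ because $\psi$ is a $K$-$K$-bimodule map. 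Hence the relation defining $\operatorname{Aut}(V)$ is the congruence modulo $N$, so $\operatorname{Aut}(V) \cong \operatorname{Aut}_{K \otimes_{k} K}(V)/N$; it then remains to identify $\operatorname{Aut}_{K \otimes_{k} K}(V)$ and $N$ in each case.

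For cases (1) and (2) I would use the normal form $V \cong K^{2}_{\phi}$. A morphism of two-sided vector spaces is in particular left $K$-linear, and the left $K$-structure on $K^{2}_{\phi}$ is the standard one, so such a morphism is given by a matrix $M \in M_{2}(K)$; compatibility with the right action is then equivalent to $M$ commuting with $\phi(a)$ for every $a \in K$, so $\operatorname{End}_{K \otimes_{k} K}(V)$ is the centralizer of $\phi(K)$ in $M_{2}(K)$ and $\operatorname{Aut}_{K \otimes_{k} K}(V)$ is its unit group. In case (1), $\phi(K)$ consists of scalar matrices, the centralizer is all of $M_{2}(K)$, so $\operatorname{Aut}_{K \otimes_{k} K}(V) = \operatorname{GL}_{2}(K)$; here $n_{a,b}$ is the scalar matrix $a\sigma(b)\operatorname{Id}$, and since $\sigma$ is surjective, $N = K^{*}\operatorname{Id}$, giving $\operatorname{Aut}(V) \cong \operatorname{PGL}_{2}(K)$. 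In case (2), because $\sigma \neq \tau$ the set $\phi(K)$ contains a diagonal matrix with distinct diagonal entries, whose centralizer in $M_{2}(K)$ is the group of diagonal matrices; thus $\operatorname{Aut}_{K \otimes_{k} K}(V) \cong K^{*} \times K^{*}$, the element $n_{a,b}$ corresponds to $(a\sigma(b), a\tau(b))$, and $\operatorname{Aut}(V) \cong K^{*} \times K^{*}/\{(a\sigma(b), a\tau(b)) \mid a, b \in K^{*}\}$.

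For case (3) I would work with the canonical form $V = {}_{1}K \vee \lambda(K)_{\lambda} = V(\lambda)$, whose underlying set is the composite field $K(\lambda)$, with $a \cdot v \cdot b = a v \lambda(b)$. Any two-sided vector space endomorphism of $V(\lambda)$ is additive and commutes with left multiplication by each element of $K$ and with right multiplication, i.e. with multiplication by each element of $\lambda(K)$; since $K(\lambda)$ is generated as a ring by $K \cup \lambda(K)$, such an endomorphism is $K(\lambda)$-linear, hence is multiplication by some element of $K(\lambda)$. Therefore $\operatorname{Aut}_{K \otimes_{k} K}(V) \cong K(\lambda)^{*}$, while $n_{a,b}$ is multiplication by $a\lambda(b)$, so $N = K^{*}\lambda(K)^{*}$ and $\operatorname{Aut}(V) \cong K(\lambda)^{*}/K^{*}\lambda(K)^{*}$.

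I do not expect a genuine obstacle; each case is a short linear algebra computation. The point needing the most care is the identification in case (3) that every bimodule endomorphism of the simple two-sided vector space $V(\lambda)$ is multiplication by an element of $K(\lambda)$ — equivalently, that the commutant of the $K \otimes_{k} K$-action on $V(\lambda)$ is exactly $K(\lambda)$ — which uses the description of $V(\lambda)$ as a composite field; as an alternative one could combine Schur's lemma with Theorem \ref{thm.papp}.
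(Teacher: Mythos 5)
Your proof is correct and takes essentially the same approach as the paper: compute $\operatorname{End}_{K\otimes_k K}(V)$ in each case and quotient by the subgroup of maps $v\mapsto a\cdot v\cdot b$. You add two useful bits of explicit detail the paper elides — the observation that $N$ is central (hence normal) so that $\operatorname{Aut}(V)$ really is a group quotient, and a self-contained commutant argument in case (3) where the paper cites \cite[Proposition 3.6]{papp} — but the underlying computation is the same.
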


\begin{proof}
The result in (1) is elementary.  We now prove the result in case there exists $\sigma, \tau \in \operatorname{Gal}(K/k)$, $\sigma \neq \tau$ and $V=K_{\sigma} \oplus K_{\tau}$.  In this case, the function $K \times K \rightarrow \operatorname{Hom }_{K\otimes_{k}K}(K_{\sigma} \oplus K_{\tau}, K_{\sigma} \oplus K_{\tau})$ sending $(a,b)$ to ${}_{a}\mu \oplus {}_{b}\mu$, where ${}_{a}\mu:K_{\sigma} \rightarrow K_{\sigma}$ and ${}_{b}\mu:K_{\tau} \rightarrow K_{\tau}$ denote left multiplication by $a$ and $b$, is a bijection compatible with multiplication, and the result follows easily from this.

Finally, in case $V={}_{1}K \vee \lambda(K)_{\lambda}$, it follows from \cite[Proposition 3.6]{papp} that the function $K(\lambda) \rightarrow \operatorname{Hom}_{K\otimes_{k}K}(V,V)$ defined by sending $a$ to ${}_{a}\mu$ is a bijection compatible with multiplication, and the result follows.
\end{proof}

\begin{lemma} \label{lemma.stab}
The group $\operatorname{Stab }(V)$ is the following:
\begin{enumerate}
\item{} if there exists $\sigma \in \operatorname{Gal}(K/k)$ such that $V=K_{\sigma} \oplus K_{\sigma}$ then
    $$
    \operatorname{Stab}(V) \cong \operatorname{Gal }(K/k),
    $$

\item{} if there exists $\sigma, \tau \in \operatorname{Gal}(K/k)$, $\sigma \neq \tau$, and $V=K_{\sigma} \oplus K_{\tau}$, then
    $$
    \operatorname{Stab}(V) = \{(\delta , \epsilon) \in \operatorname{Gal}(K/k)^{2} | \{\delta^{-1} \sigma \epsilon, \delta^{-1} \tau \epsilon \}=\{\sigma, \tau\} \}
    $$

\item{} if $V={}_{1}K \vee \lambda(K)_{\lambda}$ then
$$
\operatorname{Stab}(V) = \{(\delta, \epsilon) \in \operatorname{Gal}(K/k)^{2} | (\overline{\delta}^{-1}\lambda \epsilon)^{G}=\lambda^{G}\}.
$$
\end{enumerate}

\end{lemma}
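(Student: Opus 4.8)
The plan is to prove each of the three assertions by the case analysis of Lemma \ref{lemma.twosidedclass}, which is available since we are assuming $\operatorname{char}k \neq 2$. Recall that $\operatorname{Stab}V$ consists of those $(\delta,\epsilon) \in \operatorname{Gal}(K/k)^{2}$ with $K_{\delta^{-1}} \otimes V \otimes K_{\epsilon} \cong V$. The single computational input I will use throughout is that $K_{\delta^{-1}} \otimes K_{\sigma} \otimes K_{\epsilon} \cong K_{\delta^{-1}\sigma\epsilon}$ (a special case of Lemma \ref{lemma.autocomp} applied to the inclusion $\lambda = \mathrm{id}$, since $V(\mathrm{id}) = K_{\mathrm{id}}$), together with the fact that, as $K/k$ is separable, $K \otimes_{k} K$ is a semisimple $k$-algebra, so a finite-dimensional two-sided vector space has a decomposition into simple summands that is unique up to isomorphism and reordering; in particular $K_{\alpha} \cong K_{\beta}$ forces $\alpha = \beta$.

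For (1), write $V = K_{\sigma} \oplus K_{\sigma}$. Since $K_{\delta^{-1}} \otimes - \otimes K_{\epsilon}$ commutes with finite direct sums, $K_{\delta^{-1}} \otimes V \otimes K_{\epsilon} \cong K_{\delta^{-1}\sigma\epsilon} \oplus K_{\delta^{-1}\sigma\epsilon}$, and by uniqueness of the simple decomposition this is isomorphic to $V$ precisely when $\delta^{-1}\sigma\epsilon = \sigma$, i.e. $\epsilon = \sigma^{-1}\delta\sigma$. Thus projection onto the first coordinate is a bijection $\operatorname{Stab}V \to \operatorname{Gal}(K/k)$; I will check it is a group homomorphism, which is immediate from $(\delta_{1},\sigma^{-1}\delta_{1}\sigma)(\delta_{2},\sigma^{-1}\delta_{2}\sigma) = (\delta_{1}\delta_{2}, \sigma^{-1}\delta_{1}\delta_{2}\sigma)$. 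For (2), the same computation gives $K_{\delta^{-1}} \otimes V \otimes K_{\epsilon} \cong K_{\delta^{-1}\sigma\epsilon} \oplus K_{\delta^{-1}\tau\epsilon}$, and uniqueness of the decomposition shows this is isomorphic to $K_{\sigma} \oplus K_{\tau}$ if and only if $\{\delta^{-1}\sigma\epsilon, \delta^{-1}\tau\epsilon\} = \{\sigma,\tau\}$, which is exactly the claimed description; there is nothing further to prove here, since the statement asks only for the underlying set.

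For (3), with $V = {}_{1}K \vee \lambda(K)_{\lambda} = V(\lambda)$, I will first note that $K_{\delta^{-1}} \otimes V(\lambda) \otimes K_{\epsilon}$ is again simple of rank $2$, as in Lemma \ref{lemma.autocomp}, and that under the correspondence of Theorem \ref{thm.papp} it goes to the $G$-orbit of $\overline{\delta}^{-1}\lambda\epsilon$ for any extension $\overline{\delta}$ of $\delta$ to $\overline{K}$ — here one must observe that any two extensions of $\delta^{-1}$ to $\overline{K}$ differ by left composition with an element of $G$, so the orbit $(\overline{\delta}^{-1}\lambda\epsilon)^{G}$ is well defined, and that $(\overline{\delta})^{-1}$ is a legitimate choice of extension of $\delta^{-1}$. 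Since $V(\lambda)$ corresponds to $\lambda^{G}$ and the correspondence of Theorem \ref{thm.papp} is a bijection, $K_{\delta^{-1}} \otimes V \otimes K_{\epsilon} \cong V$ if and only if $(\overline{\delta}^{-1}\lambda\epsilon)^{G} = \lambda^{G}$, which is the stated description.

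None of the three cases presents a genuine difficulty; the only points requiring care are bookkeeping ones — getting the left/right twist conventions in $K_{\delta^{-1}} \otimes K_{\sigma} \otimes K_{\epsilon} \cong K_{\delta^{-1}\sigma\epsilon}$ oriented correctly (which is why I route this through Lemma \ref{lemma.autocomp} rather than computing it by hand), and, in case (3), checking that the $G$-orbit $(\overline{\delta}^{-1}\lambda\epsilon)^{G}$ is independent of the chosen extension of $\delta$ and that the relevant twists remain simple so that Theorem \ref{thm.papp} applies.
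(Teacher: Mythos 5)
Your proof is correct and follows essentially the same route as the paper: cases (1) and (2) reduce to the fact that $K_{\sigma} \cong K_{\sigma'}$ forces $\sigma = \sigma'$ (together with the computation of $K_{\delta^{-1}} \otimes K_{\sigma} \otimes K_{\epsilon}$), while case (3) is an application of Lemma~\ref{lemma.autocomp}. The only slight imprecision is the phrase ``in particular'' — the implication $K_{\alpha} \cong K_{\beta} \Rightarrow \alpha = \beta$ is not a consequence of uniqueness of the semisimple decomposition but a separate (easy) fact about the rank-one simples themselves, which you do need to invoke independently.
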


\begin{proof}
The result in cases (1) and (2) follows from the fact that if $\sigma, \sigma' \in \operatorname{Gal }(K/k)$, then $K_{\sigma} \cong K_{\sigma'}$ if and only if $\sigma=\sigma'$.  In case (3), the result is an application of Lemma \ref{lemma.autocomp}.
\end{proof}
In preparation for Theorem \ref{thm.finalfinalfinal}, we define a group homomorphism
$$
\theta: \operatorname{Stab}(V)^{op} \rightarrow \operatorname{Aut}(\operatorname{Aut}(V))
$$
as follows:

\begin{enumerate}
\item{} In case $\sigma \in \operatorname{Gal}(K/k)$, $V=K_{\sigma} \oplus K_{\sigma}$, and $(\delta, \epsilon) \in \operatorname{Stab}V$, we define $\theta(\delta, \epsilon)$ by sending the class of a matrix $(a_{ij})$ to the class of $(\delta^{-1}(a_{ij}))$, i.e. $\theta(\delta, \epsilon)$ acting on the class of an isomorphism $\phi$ equals the class of $\delta^{-1} \phi \delta$, where $\delta^{-1}$ and $\delta$ act coordinate-wise on $K^{2}$.

\item{} In case $\sigma, \tau \in \operatorname{Gal}(K/k)$, $\sigma \neq \tau$, $V=K_{\sigma} \oplus K_{\tau}$, and $(\delta, \epsilon) \in \operatorname{Stab}V$ is such that $\delta^{-1} \sigma \epsilon = \sigma$, we define $\theta(\delta, \epsilon)$ as the element in
    $$
    \operatorname{Aut}(K^{*} \times K^{*}/\{(a \sigma(b),a \tau(b))|a,b \in K^{*}\})
    $$
    which sends the class of the pair $(c, d) \in K^{*} \times K^{*}$ to the class of the pair $(\delta^{-1}(c), \delta^{-1}(d))$.  In other words, $\theta(\delta, \epsilon)$ acting on the class of an isomorphism $\phi$ equals the class of $\delta^{-1} \phi \delta$, where $\delta^{-1}$ and $\delta$ act coordinate-wise on $K^{2}$.

    If $\delta^{-1} \sigma \epsilon = \tau$, then we define $\theta(\delta, \epsilon)$ as automorphism sending the class of $(c, d)$ to the class of $(\delta^{-1}(d), \delta^{-1}(c))$.  In other words, $\theta(\delta, \epsilon)$ acting on the class of an isomorphism $\phi$ equals the class of $\upsilon \delta^{-1} \phi \delta \upsilon$, where $\delta^{-1}$ and $\delta$ act coordinate-wise on $K^{2}$ and $\upsilon$ is the linear transformation that exchanges the coordinates.

\item{} In case $V={}_{1}K \vee \lambda(K)_{\lambda}$ and $(\delta, \epsilon) \in \operatorname{Stab}V$, we define $\theta(\delta, \epsilon)$ as the function sending the class of $c \in K(\lambda)^{*}$ to the class of $\psi^{-1}(c)$, where $\psi:K(\lambda) \rightarrow K(\lambda)$ is the $k$-algebra isomorphism defined in Proposition \ref{prop.canonicalmaps}.  Thus, $\theta(\delta, \epsilon)$ acting on the class of an isomorphism $\phi$ equals the class of $\psi^{-1} \phi \psi$.
\end{enumerate}
It is straightforward to check that the function $\theta$ defined above is a group homomorphism.

\begin{thm} \label{thm.finalfinalfinal}
The bijection from Proposition \ref{prop.canonicalmaps} induces an isomorphism of groups $\operatorname{Aut }\mathbb{P}(V) \rightarrow \operatorname{Aut }V \rtimes_{\theta} \operatorname{Stab}V^{op}$. \end{thm}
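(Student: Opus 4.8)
The plan is to show that the bijection $\Theta\colon \operatorname{Aut}\mathbb{P}(V) \to \operatorname{Aut}V \times \operatorname{Stab}V$ of Proposition~\ref{prop.canonicalmaps}, which sends the class of $\Phi T_{\delta,\epsilon}$ (with $\Phi$ induced by an isomorphism $\phi\colon K_{\delta^{-1}}\otimes V\otimes K_\epsilon\to V$) to $([\phi\psi_{\delta,\epsilon}],(\delta,\epsilon))$, intertwines composition of functors with the multiplication of $\operatorname{Aut}V\rtimes_\theta\operatorname{Stab}V^{op}$. Since $\Theta$ is already a bijection and the right-hand side is a group, this will simultaneously show that $\operatorname{Aut}\mathbb{P}(V)$ is a group and that $\Theta$ is an isomorphism onto it. Before starting I would record that $\operatorname{Stab}V$ is a subgroup of $\operatorname{Gal}(K/k)^2$ under coordinatewise multiplication: this is immediate from $K_{\sigma^{-1}}\otimes K_{\tau^{-1}}\cong K_{(\tau\sigma)^{-1}}$ together with the defining relation $K_{\delta^{-1}}\otimes V\otimes K_\epsilon\cong V$, so that $\operatorname{Stab}V^{op}$ makes sense. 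The two computational tools are Theorem~\ref{thm.finalmain} (each class in $\operatorname{Aut}\mathbb{P}(V)$ is represented by some $\Phi T_{\delta,\epsilon}$ with $(\delta,\epsilon)\in\operatorname{Stab}V$) and the two compatibility diagrams of Lemma~\ref{lemma.semidirect}.

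First I would carry out the composition computation. Choose representatives $F_j\cong\Phi_j T_{\delta_j,\epsilon_j}$ with $\Phi_j$ induced by $\phi_j\colon K_{\delta_j^{-1}}\otimes V\otimes K_{\epsilon_j}\to V$ for $j=1,2$. Applying the first diagram of Lemma~\ref{lemma.semidirect} slides $T_{\delta_1,\epsilon_1}$ past $\Phi_2$, replacing $T_{\delta_1,\epsilon_1}\Phi_2$ by $\Phi_2'' T_{\delta_1,\epsilon_1}$ with $\Phi_2''$ induced by $K_{\delta_1^{-1}}\otimes\phi_2\otimes K_{\epsilon_1}$; applying the second diagram merges $T_{\delta_1,\epsilon_1}T_{\delta_2,\epsilon_2}$ into $\Phi' T_{\delta_2\delta_1,\epsilon_2\epsilon_1}$ with $\Phi'$ induced by the canonical isomorphism. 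Hence
$$
F_1\circ F_2\cong\Phi_3 T_{\delta_3,\epsilon_3},\qquad \delta_3=\delta_2\delta_1,\quad\epsilon_3=\epsilon_2\epsilon_1,\quad \phi_3=\phi_1\circ(K_{\delta_1^{-1}}\otimes\phi_2\otimes K_{\epsilon_1})\circ\phi',
$$
which in particular shows that the composite is again shift-free and that the $\operatorname{Stab}$-coordinate of $\Theta(F_1\circ F_2)$ is $(\delta_2\delta_1,\epsilon_2\epsilon_1)$, i.e. exactly the product $(\delta_1,\epsilon_1)\cdot(\delta_2,\epsilon_2)$ in $\operatorname{Stab}V^{op}$.

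It remains to match the $\operatorname{Aut}V$-coordinate, i.e. to prove
$$
[\phi_3\psi_{\delta_3,\epsilon_3}]=[\phi_1\psi_{\delta_1,\epsilon_1}]\cdot\theta(\delta_1,\epsilon_1)\bigl([\phi_2\psi_{\delta_2,\epsilon_2}]\bigr)
$$
in $\operatorname{Aut}V$. Writing $g_2:=\phi_2\psi_{\delta_2,\epsilon_2}\in\operatorname{Aut}V$ and substituting the formula for $\phi_3$, this reduces to two identities among the canonical maps: (a) the cocycle relation $\phi'\circ\psi_{\delta_3,\epsilon_3}=(K_{\delta_1^{-1}}\otimes\psi_{\delta_2,\epsilon_2}\otimes K_{\epsilon_1})\circ\psi_{\delta_1,\epsilon_1}$, which turns $\phi_3\psi_{\delta_3,\epsilon_3}$ into $\phi_1\circ(K_{\delta_1^{-1}}\otimes g_2\otimes K_{\epsilon_1})\circ\psi_{\delta_1,\epsilon_1}$; and (b) the identity $\theta(\delta_1,\epsilon_1)([g])=[\psi_{\delta_1,\epsilon_1}^{-1}\circ(K_{\delta_1^{-1}}\otimes g\otimes K_{\epsilon_1})\circ\psi_{\delta_1,\epsilon_1}]$, which identifies the action $\theta$ with conjugation by the canonical isomorphisms $\psi_{\delta,\epsilon}$. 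Granting (a) and (b) one gets $[\phi_3\psi_{\delta_3,\epsilon_3}]=[\phi_1\psi_{\delta_1,\epsilon_1}]\cdot\theta(\delta_1,\epsilon_1)([g_2])$, so $\Theta$ is a group homomorphism; being bijective, it is an isomorphism, and since $\Theta$ sends the identity functor to $([\operatorname{id}_V],(\operatorname{id},\operatorname{id}))$ the theorem follows. Both (a) and (b) are established by a direct, case-by-case unwinding of the explicit descriptions of $\psi_{\delta,\epsilon}$ in Proposition~\ref{prop.canonicalmaps} and of $\theta$ given before the statement, according to the three possibilities for $V$ in Lemma~\ref{lemma.twosidedclass}.

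The main obstacle is precisely this last step: keeping exact track of the chain of canonical isomorphisms ($K_{\sigma\tau}\cong K_\sigma\otimes K_\tau$ sending $1\otimes1\mapsto1$, the associativity constraints, the ``precomp'' maps $v\mapsto 1\otimes v\otimes1$) through the composite, and, in the case $V\cong V(\lambda)$, checking that the extension $\overline{\delta_2}\,\overline{\delta_1}$ of $\delta_3=\delta_2\delta_1$ is an admissible choice (i.e. $(\overline{\delta_2}\,\overline{\delta_1})^{-1}\lambda\epsilon_3=\lambda$) so that, by the uniqueness built into Proposition~\ref{prop.canonicalmaps}, $\psi_{\delta_3,\epsilon_3}$ restricts to $\overline{\delta_2}|_{K(\lambda)}\circ\overline{\delta_1}|_{K(\lambda)}$; identity (b) in that case comes down to the fact that, under the identification of Lemma~\ref{lemma.aut}, left multiplication by $a\in K(\lambda)^*$ is carried by the $k$-algebra automorphism $\psi$ of $K(\lambda)$ to left multiplication by $\psi^{-1}(a)$. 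These verifications are routine but lengthy, and form the analytic heart of the argument.
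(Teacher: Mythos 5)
Your proposal is correct and follows essentially the same path as the paper: use Theorem \ref{thm.finalmain} to write each automorphism as $\Phi T_{\delta,\epsilon}$, apply both diagrams of Lemma \ref{lemma.semidirect} to push twists past $\Phi$'s and merge twists, read off that the $\operatorname{Stab}V^{op}$-coordinate multiplies correctly, and then reduce the remaining $\operatorname{Aut}V$-coordinate equality to a case-by-case check of canonical maps, which the paper also leaves to the reader. Your reorganization of that final check into the cocycle identity (a) for the $\psi_{\delta,\epsilon}$'s and the conjugation identity (b) for $\theta$ is a cleaner way to package the same verification, but it is the same verification.
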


\begin{proof}
We describe the product structure on $\operatorname{Aut }\mathbb{P}(V)$.  For $i=1,2$, we have functors
$$
\mathbb{P}(V) \overset{T_{\delta_{i},\epsilon_{i}}}{\rightarrow} \mathbb{P}(K_{\delta_{i}^{-1}} \otimes V \otimes K_{\epsilon_{i}}) \overset{\Phi_{i}}{\rightarrow} \mathbb{P}(V)
$$
where $\Phi_{i}$ is induced by an isomorphism of two-sided vector spaces $K_{\delta_{i}^{-1}} \otimes V \otimes K_{\epsilon_{i}} \rightarrow V$.  We let
$$
S_{\delta_{2},\epsilon_{2}}:\mathbb{P}(K_{\delta_{1}^{-1}} \otimes V \otimes K_{\epsilon_{1}}) \rightarrow \mathbb{P}(K_{\delta_{2}^{-1}}\otimes K_{\delta_{1}^{-1}} \otimes V \otimes K_{\epsilon_{1}} \otimes K_{\epsilon_{2}})
$$
denote the twist functor, we let $S'$ be a left and right adjoint of $S_{\delta_{2},\epsilon_{2}}$ and we let $\tilde{\Phi}_{1}:\mathbb{P}(K_{\delta_{2}^{-1}} \otimes K_{\delta_{1}^{-1}} \otimes V \otimes K_{\epsilon_{1}} \otimes K_{\epsilon_{2}}) \rightarrow \mathbb{P}(K_{\delta_{2}^{-1}} \otimes V \otimes K_{\epsilon_{2}})$ denote the equivalence induced by $K_{\delta_{2}^{-1}} \otimes \phi_{1} \otimes K_{\epsilon_{2}}$.  Then we have
\begin{eqnarray*}
\Phi_{2} T_{\delta_{2},\epsilon_{2}} \Phi_{1} T_{\delta_{1},\epsilon_{1}} & \cong & \Phi_{2} T_{\delta_{2},\epsilon_{2}} \Phi_{1} S'S_{\delta_{2},\epsilon_{2}}T_{\delta_{1},\epsilon_{1}} \\
& \cong & \Phi_{2} \tilde{\Phi}_{1} S_{\delta_{2},\epsilon_{2}}T_{\delta_{1},\epsilon_{1}} \\
& \cong & \Phi_{2} \tilde{\Phi}_{1} \Phi' T_{\delta_{1}\delta_{2}, \epsilon_{1} \epsilon_{2}}
\end{eqnarray*}
where the second natural equivalence is from the first part of Lemma \ref{lemma.semidirect}, $\Phi'$ is induced by the canonical isomorphism of two-sided vector spaces
$$
K_{(\delta_{1} \delta_{2})^{-1}} \otimes V \otimes K_{\epsilon_{1} \epsilon_{2}} \rightarrow K_{\delta_{2}^{-1}}\otimes K_{\delta_{1}^{-1}} \otimes V \otimes K_{\epsilon_{1}} \otimes K_{\epsilon_{2}},
$$
and the third natural equivalence is from the second part of Lemma \ref{lemma.semidirect}.  Now, $\Phi_{2} \tilde{\Phi}_{1} \Phi'$ is naturally equivalent to the functor induced by the isomorphism of two-sided vector spaces
$$
K_{(\delta_{1} \delta_{2})^{-1}} \otimes V \otimes K_{\epsilon_{1} \epsilon_{2}} \rightarrow K_{\delta_{2}^{-1}}\otimes K_{\delta_{1}^{-1}} \otimes V \otimes K_{\epsilon_{1}} \otimes K_{\epsilon_{2}} \overset{K_{\delta_{2}^{-1}} \otimes \phi_{1} \otimes K_{\epsilon_{2}}}{\rightarrow} K_{\delta_{2}^{-1}} \otimes V \otimes K_{\epsilon_{2}} \overset{\phi_{2}}{\rightarrow} V
$$
whose left arrow is canonical.  If we call the above composition $\phi$, then under the bijection from Proposition \ref{prop.canonicalmaps},
$$
[\Phi_{2}T_{\delta_{2},\epsilon_{2}}][\Phi_{1}T_{\delta_{1}, \epsilon_{1}}] \mapsto ([\phi \psi_{\delta_{1}\delta_{2}, \epsilon_{1}\epsilon_{2}}], (\delta_{1}\delta_{2}, \epsilon_{1} \epsilon_{2})) \in \operatorname{Aut }V \times \operatorname{Stab }V.
$$
Therefore, to complete the proof of the theorem, it remains to check that
$$
[\phi_{2} \psi_{\delta_{2},\epsilon_{2}}] \theta_{\delta_{2},\epsilon_{2}}([\phi_{1}\psi_{\delta_{1},\epsilon_{1}}])=[\phi \psi_{\delta_{1}\delta_{2},\epsilon_{1} \epsilon_{2}}]
$$
in $\operatorname{Aut }(V)$.  This follows from an explicit case-by-case computation.  The straightforward details are left to the reader.
\end{proof}

\end{document}